\documentclass{amsart}

\usepackage{paralist}
\usepackage{color}
\usepackage{mathrsfs}
\usepackage{amssymb}
\usepackage{amsmath}
\usepackage{amsfonts}
\usepackage{stmaryrd}
\usepackage[shortlabels]{enumitem}

\newtheorem*{thma}{Theorem~A}
\newtheorem*{thmb}{Theorem~B}

\newtheorem{theorem}{Theorem}[section]
\newtheorem{lemma}[theorem]{Lemma}
\newtheorem{proposition}[theorem]{Proposition}
\newtheorem{corollary}[theorem]{Corollary}
\newtheorem{claim}[theorem]{Claim}
\newtheorem{subclaim}[theorem]{Subclaim}
\newtheorem{fact}[theorem]{Fact}
\newtheorem{question}[theorem]{Question}

\theoremstyle{definition}
\newtheorem{definition}[theorem]{Definition}
\newtheorem{remark}[theorem]{Remark}

\usepackage[pdftex]{hyperref}
\hypersetup{
    colorlinks=true, 
    linktoc=all,     
    linkcolor=blue,  
    allcolors=blue,
}

\usepackage[framemethod=tikz]{mdframed}

\newcommand{\cf}{\mathrm{cf}}
\newcommand{\dom}{\mathrm{dom}}
\newcommand{\bb}{\mathbb}

\newcommand{\nacc}{\mathrm{nacc}}
\newcommand{\acc}{\mathrm{acc}}
\newcommand{\pred}{\mathrm{pred}}
\newcommand{\height}{\mathrm{ht}}

\newcommand{\mc}{\mathcal}
\newcommand{\power}{\ensuremath{\mathscr{P}}}
\newcommand{\sub}{\subseteq}
\newcommand{\ra}{\rightarrow}

\newcommand{\Add}{\mathrm{Add}}
\newcommand{\Coll}{\mathrm{Coll}}

\newcommand{\Q}{\bb{Q}}
\renewcommand{\P}{\bb{P}}

\newcommand{\M}{\bb{M}}
\newcommand{\K}{\bb{K}}
\renewcommand{\S}{\bb{S}}

\newcommand{\TP}{{\sf TP}}
\newcommand{\ITP}{{\sf ITP}}
\newcommand{\SP}{{\sf SP}}
\newcommand{\ISP}{{\sf ISP}}

\newcommand{\ZFC}{\sf ZFC}

\newcommand{\CH}{\sf CH}
\newcommand{\GCH}{\sf GCH}

\newcommand{\PFA}{\sf PFA}

\newcommand{\wKH}{\sf wKH}
\newcommand{\KH}{\sf KH}

\newcommand{\GMP}{\mathsf{GMP}}
\newcommand{\aut}{\mathrm{Aut}}
\renewcommand{\top}{\mathsf{top}}

\newcommand{\set}[2]{\ensuremath{\{#1 \,|\, #2 \}}}
\newcommand{\seq}[2]{\ensuremath{\langle #1 \,|\, #2 \rangle}}
\newcommand{\rest}[0]{\restriction}
\newcommand{\Ult}{\mathrm{Ult}}

\title[Almost Kurepa Suslin trees and destructibility of $\GMP$]{Almost Kurepa Suslin trees and destructibility of the Guessing Model Property}
\author{Chris Lambie-Hanson}
\address[Lambie-Hanson]{
Institute of Mathematics, 
Czech Academy of Sciences, 
{\v Z}itn{\'a} 25, Prague 1, 
115 67, Czech Republic
}
\email{lambiehanson@math.cas.cz}
\urladdr{https://clambiehanson.github.io}

\author{{\v S}{\'a}rka Stejskalov{\'a}}
\address[Stejskalov{\'a}]{Charles University, Department of Logic,
Celetn{\' a} 20, Prague~1, 
116 42, Czech Republic
}
\email{sarka.stejskalova@ff.cuni.cz}
\urladdr{logika.ff.cuni.cz/sarka}

\address{Institute of Mathematics, Czech Academy of Sciences, {\v Z}itn{\'a} 25, Prague 1, 115 67, Czech Republic}

\subjclass[2020]{03E05, 03E35, 03E55}
\keywords{Kurepa trees, Suslin trees, guessing models, Mitchell forcing}

\thanks{Both authors were supported by the Czech Academy of Sciences 
(RVO 67985840) and the GA\v{C}R project 23-04683S}

\begin{document}

\begin{abstract}
  Building on recent work of Krueger and the second author, we prove the consistency of 
  the Guessing Model Principle at $\omega_2$ together with the existence of an almost 
  Kurepa Suslin tree. In particular, it is consistent that the Guessing Model Principle 
  holds but is destructible by a ccc forcing of size $\omega_1$. We also prove the 
  consistency of the existence of a weak Kurepa tree together with the failure of 
  the Kurepa Hypothesis and a certain guessing model principle that, 
  for example, implies the tree property at $\omega_2$.
\end{abstract}

\maketitle
\tableofcontents

\section{Introduction}

Compactness principles play a central role in contemporary combinatorial set theory, 
particularly around questions concerning large cardinals, forcing, and 
canonical inner models. A paradigmatic example of a compactness principle at a 
cardinal $\kappa$ is the \emph{tree property} at $\kappa$, denoted $\TP(\kappa)$, 
which asserts the non-existence of $\kappa$-Aronszajn trees.
Among inaccessible cardinals, compactness principles are frequently used to 
characterize large cardinals. For example, an inaccessible cardinal $\kappa$ 
is weakly compact if and only if $\TP(\kappa)$ holds. When they hold at large 
cardinals, compactness principles are typically quite robust under forcing 
extensions. For example, if $\kappa$ is weakly compact and $\P$ is any forcing notion 
that has the $\mu$-cc for some $\mu < \kappa$, then $\TP(\kappa)$ continues to 
hold in the forcing extension by $\P$.

However, compactness principles used to characterize large cardinals can also 
consistently hold at accessible cardinals, such as $\omega_2$, and here questions 
of their robustness are much less clear. For example, by results of Mitchell and 
Silver (cf.\ \cite{MIT:tree}), $\TP(\omega_2)$ is equiconsistent with the 
existence of a weakly compact cardinal. Very few definitive answers are known, 
however, about the (consistent) (in)destructibility of $\TP(\omega_2)$ under 
ccc forcing extensions. For example, it is unknown whether it is consistent 
that $\TP(\omega_2)$ holds and is preserved in all ccc forcing extensions, 
but it is also unknown whether it is consistent that $\TP(\omega_2)$ holds 
and there exists a ccc forcing poset $\P$ forcing the failure of $\TP(\omega_2)$. 
For the best partial results on these questions, see \cite{UNGER:1} and 
\cite{hs_indestructibility}.

In this paper, we focus on an important strengthening of the tree property known 
as the \emph{guessing model property} (denoted $\GMP(\kappa)$). Just as $\TP(\kappa)$
characterizes weakly compact cardinals among inaccessible cardinals, $\GMP(\kappa)$ 
characterizes supercompact cardinals among inaccessible cardinals. Also, just as $\TP(\kappa)$ at 
a weakly compact cardinal is robust under forcings with good chain condition, 
$\GMP(\kappa)$ at a supercompact cardinal $\kappa$ continues to hold after 
forcing with any forcing poset that is $\mu$-cc for some $\mu < \kappa$. Here 
we show that this robustness can consistently fail at $\omega_2$; in particular,
letting the simple $\GMP$ denote $\GMP(\omega_2)$, we show that
it is consistent that $\GMP$ holds but there exists a ccc forcing of size 
$\omega_1$ forcing the failure of $\GMP$.

We achieve this task by showing that $\GMP$ is compatible with the existence of 
a particular combinatorial object of interest in its own right: an almost Kurepa 
Suslin tree, i.e., a Suslin tree $T$ such that after forcing with $T$, it becomes
a Kurepa tree (see Section \ref{sect: prelim} for a precise definition). Since $\GMP$ implies that there are no Kurepa trees, it cannot hold after forcing with $T$.

Almost Kurepa Suslin trees were first constructed by Jensen; see \cite{DJ_Souslin} for a sketch of the proof that $\Diamond^+$ implies their existence. Later, Bilaniuk \cite{Bil_thesis} constructed an almost Kurepa Suslin tree from the assumption that $\Diamond$ holds and there exists a Kurepa tree. In both of these constructions, the resulting Suslin tree is almost Kurepa because there are at least $\omega_2$-many automorphisms of the tree which are pairwise almost disjoint. By applying these automorphisms to a generically added cofinal branch, this consequently ensures that the tree will have at least $\omega_2$-many cofinal branches in the generic extension by itself. In the proof of our theorem, the property of being almost Kurepa is also ensured by the existence of $\omega_2$-many pairwise almost disjoint automorphisms of the Suslin tree.

\begin{thma}
  Suppose that there exists a supercompact cardinal. Then there exists a forcing extension in which
  \begin{enumerate}
    \item there exists an almost Kurepa Suslin tree;
    \item $\GMP$ holds.
  \end{enumerate}
  In particular, it is consistent that $\GMP$ holds and yet is destructible by a ccc forcing 
  of cardinality $\omega_1$.
\end{thma}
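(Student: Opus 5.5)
The plan is to start with a supercompact $\kappa$, first force a Suslin tree $S$ with many pairwise almost disjoint automorphisms, and then perform a Mitchell-type forcing collapsing $\kappa$ to $\omega_2$ that forces $\GMP$ while keeping $S$ Suslin. Following Krueger and the second author, I would first force with the poset $\P_S$ of countable approximations. A condition is a countable subtree $t\sub S$ of countable height together with a countable partial family of automorphisms $\langle \pi_\xi \rest t \mid \xi\in a\rangle$, where $a\in[\kappa]^{\le\omega}$. The conditions are required so that for distinct $\xi,\eta$ the maps $\pi_\xi\rest t$ and $\pi_\eta\rest t$ disagree at every node of height at least a prescribed level $\delta_{\xi\eta}$. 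This poset is $\sigma$-closed and, under $\CH$, $\omega_2$-cc. It adds a Suslin tree $S$ together with $\kappa$-many automorphisms $\pi_\xi$ that are pairwise eventually disjoint. Alternatively, to avoid adding $\kappa$ automorphisms before the collapse, I would interleave everything: use a Mitchell forcing $\M(\kappa)$ with Cohen part $\Add(\omega,\kappa)$ and a collapse part, and define a mixed iteration in which, at stages $\alpha<\kappa$, one adds the tree together with $\alpha$-many automorphisms. I would commit to the first, simpler route, $\P_S$ followed by $\M(\kappa)$ as computed in $V^{\P_S}$, and check that the product analysis still works.

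The first key step is almost Kurepa. If $b$ is a generic branch of $S$, then each $\pi_\xi[b]$ is a cofinal branch. By pairwise almost disjointness, $\pi_\xi[b]\ne\pi_\eta[b]$ for $\xi\ne\eta$. In the final model $\kappa=\omega_2$, so forcing with $S$ yields $\omega_2$ branches. One must also check that $S$ remains an $\omega_1$-tree with $\omega_1$ preserved, which is immediate since $S$ is ccc.

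The second key step is that $S$ stays Suslin after the Mitchell forcing. Mitchell forcing is a projection of $\Add(\omega,\kappa)\times\Q$ with $\Q$ $\sigma$-closed in $V^{\P_S}$. The argument then has two parts. (i) $\sigma$-closed forcing cannot add an antichain of size $\omega_1$ to a Suslin tree in a way that survives, or more carefully: $S\times\Q$ is handled via a standard $\omega_1$-chain of countable elementary submodels. (ii) Cohen forcing preserves Suslin trees because $\Add(\omega,\kappa)\times S$ is ccc, as Cohen forcing is Knaster-productive with anything ccc.

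The third and main step is $\GMP$. I would adapt the standard Mitchell-model proof (Viale–Weiss / Cox–Krueger). Take a $\kappa$-supercompact embedding $j$ and pass to the quotient $j(\P_S*\M)/(\P_S*\M)$. One must show that this quotient has the $\omega_1$-approximation property in the extension, so that guessing models $M\prec H(\theta)$ of size $\omega_1$ with $M\cap\omega_2\in\omega_2$ exist stationarily. The obstacle is that the quotient now involves the tail of $j(\P_S)$, which adds more automorphisms of the very tree $S$. One must show that the tail of $j(\P_S)$, and hence the whole quotient, has the $\omega_1$-approximation property. I expect that the right approach is to show that the quotient $j(\P_S)/\P_S$ is $\sigma$-closed and that its product with the Cohen part is still nice, i.e.\ that it is a projection of a product of a $\sigma$-closed forcing with Cohen forcing modulo $S$ being Suslin. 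Then I would apply Unger's/Krueger's lemma that a $\sigma$-closed forcing has the $\omega_1$-approximation property over a model obtained by a ccc (here Knaster-ish) forcing. The delicate point is verifying the needed chain condition of Cohen forcing times $S$ squared; this is where I would spend most effort. The final clause of the theorem follows because forcing with $S$ yields a Kurepa tree, contradicting $\GMP$, and $|S|=\omega_1$.
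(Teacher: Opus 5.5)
Your committed route does not work: it produces a model in which $\GMP$ fails, so the quotient analysis in your third step cannot succeed.

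\textbf{Why $\GMP$ fails.} Let $W=V^{\P_S}$.
\begin{enumerate}
\item Since $\P_S$ is $\sigma$-closed, $W$ has no new reals. So $(2^\omega)^W=\omega_1$ under your $\CH$ assumption, and in any case $(2^\omega)^W<\kappa$.
\item The $\kappa$ distinct generic automorphisms $\pi_\xi$, each coded by a subset of $\omega_1$, give $(2^{\omega_1})^W\geq\kappa$.
\item Let $R=({}^{<\omega_1}2)^W$. Mitchell forcing over $W$ preserves $\omega_1$ and $\kappa$ and collapses every cardinal strictly between them. Hence in $W[\M]$ the tree $R$ has height and size $\omega_1$.
\item Every subset of $\omega_1$ lying in $W$ still determines a distinct cofinal branch of $R$, so $R$ has at least $\kappa=\omega_2$ cofinal branches.
\end{enumerate}
Thus $\wKH$ holds in $W[\M]$, but $\GMP$ implies $\neg\wKH$. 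This is the obstruction the paper points out when motivating its construction: the continuum must grow while $\kappa$ is collapsed, and no intermediate model may already carry $\kappa$ subsets of $\omega_1$ on top of a small continuum.

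\textbf{Where your third step breaks.} The tail of $j(\P_S)$ adds automorphisms $\pi_\xi$, $\xi\geq\kappa$. These are new subsets of $\omega_1$, yet all their countable pieces already lie in $V$, since $j(\P_S)$ adds no countable sets. This is exactly what the $\omega_1$-approximation property forbids. No lemma rescues this. The Unger-type facts only rule out new branches through trees of controlled width. The approximation property of classical Mitchell quotients comes from each $\Add(\omega_1,1)$ iterand being generic over fresh Cohen reals, and your tail automorphisms are not.

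\textbf{What the paper does instead.} It takes a specific form of your discarded interleaving option.
\begin{itemize}
\item Fix a \emph{free} Suslin tree $T$ in $V$ (e.g.\ from $\diamondsuit$). Let $\M=\M^T_\kappa$ be the Mitchell-type poset in which the $\Add(\omega_1,1)$ iterands are replaced by $\bb{A}(T)^{V[\Add(\omega,\gamma)]}$, adding one automorphism of the fixed tree per successor stage $\gamma<\kappa$.
\item Since $\bb{A}(T)$ collapses $2^\omega$ to $\omega_1$, these iterands do the collapsing themselves, and every intermediate model $V[\M_\delta]$ has $2^{\omega_1}<\kappa$.
\item The price is that the term forcing is not $\sigma$-closed; it is not even clear that it preserves $\omega_1$. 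So the product analysis you use in steps two and three is unavailable.
\item The paper instead proves directly, by a fusion along a countable $N\prec H(\theta)$, that every quotient $\M/\M_\delta$ has the $\omega_1$-approximation property and that $T$ stays Suslin. The fusion uses the separation and consistency machinery of Krueger and the second author, together with Suslinity of the derived trees $T_{\vec{x}}$ to find witnesses below $N\cap\omega_1$.
\item $\GMP$ then follows from the standard lifting argument. Almost Kurepa-ness follows, as in your first step, from the $\kappa$ pairwise almost disjoint generic automorphisms.
\end{itemize}
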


$\GMP$ implies the failure of the weak Kurepa hypothesis (i.e., there are no weak Kurepa trees, denoted $\neg\wKH$); hence, Theorem A provides a model where there are no weak Kurepa trees and there is an almost Kurepa Suslin tree. However, to achieve $\neg\wKH$, the assumption of an inaccessible cardinal is sufficient. The following is an immediate corollary of the proof of Theorem A:

\begin{corollary} \label{cor: cor_11}
  Suppose that there exists an inaccessible cardinal. Then there exists a forcing extension in which
  \begin{enumerate}
    \item there exists an almost Kurepa Suslin tree;
    \item $\neg\wKH$ holds.
  \end{enumerate}
  In particular, it is consistent that $\neg\wKH$ holds and yet is destructible by a ccc forcing 
  of cardinality $\omega_1$.
\end{corollary}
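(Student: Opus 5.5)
The approach is to run the same construction as in the proof of Theorem A, with $\kappa$ now only inaccessible, and verify that everything except the guessing models still goes through. Concretely, I would force over a ground model of $\GCH$ with an inaccessible $\kappa$ using a Mitchell-type iteration of length $\kappa$. It interleaves Cohen subsets of $\omega$ (adding $\kappa$ many reals) with countably closed collapses, so that $\kappa$ becomes $\omega_2$. Built into it is a component that adds, by countable approximations, a Suslin tree $T$ on $\omega_1$ together with $\kappa$ many pairwise almost disjoint automorphisms of $T$. This is the Krueger--Stejskalov\'a style forcing used in the proof of Theorem A, with the quotient analysis carried out relative to the tree-and-automorphisms component. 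Since the corollary only asks for an inaccessible, none of the supercompactness-specific steps (lifting elementary embeddings, guessing arguments for $\omega_1$-approximation) are needed.

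For clause (1), I would reuse verbatim the argument from Theorem A. First, $T$ is Suslin in the final model. Second, forcing with $T$ adds a cofinal branch $b$. Third, the images $\sigma_\alpha[b]$ under the $\omega_2$ many almost disjoint automorphisms $\sigma_\alpha$ are pairwise distinct cofinal branches. Hence $T$ becomes a Kurepa tree after forcing with $T$, which makes $T$ an almost Kurepa Suslin tree. None of these steps used supercompactness, only the structure of the forcing (chain condition and closure of the relevant quotients), so they transfer directly.

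For clause (2), I would show $\neg\wKH$ by the standard Mitchell argument. Suppose $S$ is a tree of height and size $\omega_1$ in the final extension. Because the forcing is $\kappa$-cc and of size $\kappa$, $S$ appears at some intermediate stage $\delta<\kappa$ with $\cf(\delta)$ large; reflection at an inaccessible gives us such a $\delta$. At that stage, $S$ has at most $|\delta|<\kappa$ cofinal branches, since $\omega_1^{V[G_\delta]}$ is still $\omega_1$ and $2^{\omega_1}$ there is below $\kappa$. I would then argue that the quotient forcing adds no new cofinal branches to $S$. The quotient is, up to equivalence, a product of a ccc (Cohen-like, possibly with the tree component) part and a countably closed part, i.e.\ it is projected from $\Add(\omega,\kappa)\times\mathbb{C}$ with $\mathbb{C}$ countably closed in the appropriate model. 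Unger's/Silver's branch lemmas then say that countably closed forcing adds no new branches to $\omega_1$-trees when $2^{\omega}\ge\omega_2$, and that squares of ccc forcings are ccc and so add no branches. Together these show $S$ has at most $\omega_1$ branches in the final model, since $|\delta|$ is collapsed to $\omega_1$. Hence there is no weak Kurepa tree. Finally, the ``in particular'' follows because $T$ is ccc of size $\omega_1$ and forcing with it produces a Kurepa tree, which is in particular a weak Kurepa tree, so $\neg\wKH$ fails there.

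The main obstacle is the branch-preservation step in clause (2). One must check that the tree-and-automorphism component, which is not purely Cohen, still sits in a quotient whose ccc part has ccc square (or is Knaster), so that the product branch lemma applies. I would first try to cite the quotient analysis from the proof of Theorem A, which must already establish this Knaster/closure decomposition to obtain $\GMP$.
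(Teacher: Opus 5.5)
Your overall architecture matches the paper's. You reuse the forcing $\M^T_\kappa$ from Theorem A and get clause (1) from the parts of Theorem \ref{theorem: gmp_thm} that do not use supercompactness. For clause (2), you reflect $S$ into some $V[G_\delta]$ and show the quotient adds no cofinal branches. The gap is in how you justify that last step. You want to present $\M/\M_\delta$ as a projection of $\Add(\omega,\kappa)\times\mathbb{C}$ with $\mathbb{C}$ countably closed, and then apply the Silver/Unger branch lemmas. That is the classical Mitchell analysis, and it depends on the iterands $\Add(\omega_1,1)$ being countably closed. Here the iterands are $\bb{A}(T)$, which is not countably closed: a decreasing $\omega$-sequence from $\bb{A}(T)$ with tops $\alpha_n$ need not extend to level $\sup_n\alpha_n$ (cf.\ Proposition \ref{prop: limit_extension}). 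So the term forcing $\Q$ is not $\omega_1$-closed. Remark \ref{remark: term_forcing} says explicitly that the product analysis is unavailable and that it is not even known whether $\Q$ preserves $\omega_1$; Appendix \ref{appendix_a} shows this can really fail for analogous term forcings. Hence Facts \ref{F:ccc_Closed} and \ref{closed_preservation_fact} cannot be applied. The proof of Theorem A also contains no Knaster/closure decomposition for you to cite.

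The missing idea is Theorem \ref{thm: approx}(1). It needs only that $T$ is a free Suslin tree, with no large-cardinal hypothesis on $\kappa$. It says that for every limit $\delta<\kappa$ the quotient $\M/\M_\delta$ has the $\omega_1$-approximation property over $V[G_\delta]$. It is proved directly from the freeness of $T$ via the density lemmas on derived trees, not via any product decomposition. This gives branch preservation at once. If $b$ is a cofinal branch of $S$ in $V[G]$ and $z\in V[G_\delta]$ is countable, pick $t\in b$ of height above all heights of elements of $z$. Then $b\cap z=\{s\in z\mid s<_S t\}\in V[G_\delta]$, so $b\in V[G_\delta]$. Hence $S$ has at most $(2^{\omega_1})^{V[G_\delta]}<\kappa=\omega_2^{V[G]}$ cofinal branches; the bound is $2^{\omega_1}$ of the intermediate model, not $|\delta|$, though the conclusion is unaffected.

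Three smaller corrections. First, the forcing does not add $T$; $T$ is a free Suslin tree already in the ground model (e.g.\ from $\diamondsuit$, which $\GCH$ alone does not supply). Second, there are no separate countably closed collapses; cardinals in $(\omega_1,\kappa)$ are collapsed by the $\bb{A}(T)$ iterands themselves, via Proposition \ref{prop: collapse}. Third, the Suslinity of $T$ in clause (1) comes from Theorem \ref{thm: approx}(2), not from chain-condition or closure properties of quotients.
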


Let us mention several results from the literature to put Theorem A in context.
\begin{itemize}
  \item In \cite{HLHS}, Honzik and the authors prove that $\GMP$ is always indestructible under 
  adding any number of Cohen reals. This shows that Theorem A is sharp in the sense that a countable 
  forcing poset can never destroy $\GMP$.
  \item Relative to the consistency of a supercompact cardinal, it is consistent that $\GMP$ holds 
  and is indestructible under all ccc forcings of cardinality $\omega_1$. In particular, this holds 
  in the extension by the classical Mitchell forcing $\M(\omega, \kappa)$, where $\kappa$ is 
  supercompact in the ground model. For an argument of this fact, see \cite[Theorem 2]{UNGER:1}; 
  the theorem proven there is about the tree property rather than $\GMP$, and thus only requires a 
  weakly compact cardinal rather than a supercompact cardinal, but the proof of the stronger fact 
  is essentially the same. This fact is additionally relevant to our main result due to our method of 
  proof: our forcing extension yielding Theorem A utilizes a variation on the classical Mitchell 
  forcing in which the forcing iterands adding Cohen subsets to $\omega_1$ are replaced by iterands 
  adding automorphisms to a distinguished free Suslin tree. This provides an illustration of the 
  versatility of Mitchell-type forcings, and we expect that a number of further consistency results 
  can be established by appropriately varying the iterands in Mitchell-type constructions.

  \item In \cite{ks}, Krueger and the second author show that, assuming the consistency of an 
  inaccessible cardinal, it is consistent with $\mathsf{CH}$ that the Kurepa Hypothesis fails and 
  yet there exists an almost Kurepa Suslin tree, in the process answering long-open questions of 
  Jin and Shelah \cite{jin_shelah} and of Moore \cite{moore_structural_analysis}. Since 
  $\GMP$ entails the failure of the weak Kurepa Hypothesis, and hence of the Kurepa Hypothesis, 
  Theorem A and Corollary \ref{cor: cor_11} can be seen as a strengthening of this result, at the 
  cost of necessarily forgoing $\mathsf{CH}$, which is incompatible with $\neg\wKH$. Our proof of 
  Theorem A is in large part an adaptation of the techniques developed in \cite{ks} to the setting of 
  Mitchell-type forcing extensions.

\item The notion of a Suslin tree being almost Kurepa is closely connected to the non-saturation of Aronszajn trees. By an observation of Moore, if $S$ is an almost Kurepa Suslin tree, then $S \otimes S$ is a non-saturated Aronszajn tree; for details, see \cite{ks}. Therefore, in the extension from Theorem~A, $\GMP$ holds and there is a non-saturated Aronszajn tree. In \cite{ks2}, Krueger and the second author establish a related result: under the assumption of the existence of a supercompact cardinal, there is a forcing $\P$ such that in the generic extension by $\P$, $\GMP$ holds and there exists a strongly non-saturated Aronszajn tree. Note that in their model, the Aronszajn tree is \emph{strongly} non-saturated\footnote{For the definition, see \cite{ks2}.}, but the forcing $\P$ is designed to add an Aronszajn tree which is strongly non-saturated rather than an almost Kurepa Suslin tree, and it is unclear whether an almost Kurepa Suslin tree exists in their model. On the other hand, the product of the almost Kurepa Suslin tree from Theorem~A with itself is only non-saturated and not strongly non-saturated.
\end{itemize}

In the second main theorem of the paper, we separate the weak Kurepa Hypothesis from the Kurepa 
Hypothesis in the presence of a weakening of $\GMP$ (which is still strong enough to 
imply $\TP(\omega_2)$), refining a previous result of the authors from \cite{kurepa_paper}:

\begin{thmb}
  Suppose that there is a supercompact cardinal $\kappa$. Then there is a forcing extension 
  in which 
  \begin{enumerate}
    \item $2^{\omega}=\omega_2=\kappa$;
    \item $\GMP^{\omega_2}(\omega_2)$ holds;
    \item there are no Kurepa tree, but there is a weak Kurepa tree.
  \end{enumerate}
\end{thmb}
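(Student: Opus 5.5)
We will obtain the model by a variation of Mitchell forcing $\M(\omega,\kappa)$ that, alongside the Cohen reals, adds a weak Kurepa tree without adding a Kurepa tree. Work over a ground model of $\GCH$ with $\kappa$ supercompact. First force with the standard poset $\Q$ that adds a weak Kurepa tree by initial segments: conditions are countable normal subtrees of height $\alpha+1<\omega_1$ of $2^{<\omega_1}$ together with countably many branch names through the top level, and the levels are kept of size $\le\omega_1$. Alternatively, one can use a $\sigma$-closed forcing adding a tree $T\subseteq 2^{<\omega_1}$ with $\omega_1$-sized levels together with $\kappa$ many cofinal branches. Then perform the Mitchell forcing $\M(\omega,\kappa)$ over $V^{\Q}$, or interleave the two. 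Cohen forcing preserves cofinal branches, so $T$ remains a tree of height and size $\omega_1$. It also has at least $\kappa=\omega_2$ cofinal branches, hence is a weak Kurepa tree; this gives part of (3). Item (1) is standard: $\M(\omega,\kappa)$ adds $\kappa$ reals and collapses everything in $(\omega_1,\kappa)$, while $\kappa$-cc keeps $\kappa$ a cardinal.

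For $\GMP^{\omega_2}(\omega_2)$ we follow the argument of \cite{kurepa_paper} and the usual proof that Mitchell forcing yields $\GMP$. Take a supercompactness embedding $j:V\to N$ with large critical point $\kappa$. Use the projection analysis: $j(\M)$ factors as $\M * \dot{\mathbb R}$, where $\dot{\mathbb R}$ is a projection of (Cohen $\times$ $\sigma$-closed)$^{V^{\M}}$. A guessing model $M=j``H(\theta)$ in the extension must be shown to be $\omega_1$-guessing for subsets of size $\omega_2$ (hence the superscript $\omega_2$). For that, we need branch lemmas: ``$\sigma$-closed forcing adds no new branches through trees of height $\omega_1$ with few nodes'' and the corresponding Unger-type fact that the square of the Cohen part is ccc. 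The restriction to $\omega_2$-sized sets is exactly what lets us handle $\Q$, since $\Q$ is $\sigma$-closed of size $\omega_1$ and is absorbed into the $\sigma$-closed part of the quotient.

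The remaining part of (3), that there is no Kurepa tree in the final model, uses the same quotient analysis. Suppose $U$ is a tree of height $\omega_1$ with countable levels. By $\kappa$-cc, $U$ appears in an intermediate model $V[G_\alpha]$ with $\alpha<\kappa$. Every cofinal branch of $U$ in the final model is then added by a quotient that is a projection of Cohen $\times$ $\sigma$-closed. Countable levels together with the Silver and Unger branch lemmas show that no new branches are added. Hence $U$ has at most $|V[G_\alpha]\cap 2^{\omega_1}|<\kappa=\omega_2$ branches.

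The main obstacle is the $\GMP^{\omega_2}$ step: verifying that the quotient $j(\M*\dot\Q)/(\M*\dot\Q)$ still has the approximation property needed for guessing when $\Q$ is present. Our first attempt will be to show that the product of $\Q$ with the $\sigma$-closed term forcing remains $\sigma$-closed, and that Cohen forcing has the $\omega_1$-approximation property over it. The gap between full $\GMP$ and $\GMP^{\omega_2}$ is essential here, because the weak Kurepa tree itself refutes $\GMP$.
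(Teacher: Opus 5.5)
There is a genuine gap in your proof that no Kurepa tree exists, and it is tied to your choice of forcing. Taken literally (countable conditions with countably many designated branches), your $\Q$ is Stewart's forcing. Its generic tree has countable levels and $\kappa=\omega_2$ branches, so it is a Kurepa tree, contradicting (3). Only the variant whose conditions are countable-height trees with levels of size up to $\omega_1$ (the paper's $\K_\kappa$) can work, and that forcing has size $\kappa$, not $\omega_1$.

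More importantly, your argument never uses this width. You say every branch of $U$ is added by a projection of Cohen $\times$ $\sigma$-closed and then invoke Silver and Unger. But once $U$ is reflected into $V[F_\theta][G_\theta][H_\theta]$, the remaining weak-Kurepa part is the quotient $\K_\kappa/H_\theta$. This quotient is not $\sigma$-closed in $V[G_\theta][H_\theta]$; it is only $\omega_1$-distributive. A decreasing sequence $(\eta_n,f_n)$ has a lower bound only if each branch $\bigcup_n f_n(\alpha)$ through the first $\eta=\sup_n\eta_n$ levels has a node on level $\eta$ of the already fixed tree $T_{H_\theta}$. So Facts~\ref{F:Closed} and~\ref{F:ccc_Closed} do not apply. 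An argument using only the $\sigma$-closure of $\Q$ in $V$ would equally ``show'' that Stewart's quotient adds no branches to $T_{H_\theta}$, which is false.

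The paper's Theorem~\ref{Th:NoKurepa} fills exactly this hole with a fusion argument in $V[G_\theta]$. Given a name $\dot b$ for a new branch, it builds a ${}^{<\omega}2$-system of conditions $r_s\in\K_\kappa$ that split $\dot b$. The system is steered by a single decreasing sequence $q_n\in\K_\theta$ with $q_n=r_s\restriction\theta$, and maximal antichains of $\P_\theta$ are handled as in Claim~\ref{C:antichains}. It then builds one $q\in\K_\theta$ whose top level contains, for every $x\in{}^\omega2$, the branches demanded by the $r_{x\restriction n}$. This $q$ forces every limit $r_x$ into the quotient, so $(p^*,q)$ forces a level of $\dot S$ to have size $2^\omega$. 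That step is where wide levels are indispensable, and it is the missing idea.

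Your $\GMP^{\omega_2}(\omega_2)$ step is also aimed at the wrong target. The superscript does not mean ``$\omega_1$-guessing for sets of size $\omega_2$''. It means the models must guess every $(\omega_2,N)$-approximated set, i.e.\ every set whose intersections with all $z\in N$ of size $\omega_1$ lie in $N$. So what you need is that the quotient adds no set of ordinals whose $\omega_1$-sized pieces all lie in $M[G][H]$, which is an $\omega_2$-approximation property.

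Your plan to absorb the weak-Kurepa quotient into a $\sigma$-closed factor and get $\omega_1$-approximation from the Cohen part cannot succeed. $\K_{j(\kappa)}/H$ adds new cofinal branches through $T_H$ all of whose countable pieces lie in $M[G][H]$. So the quotient fails the $\omega_1$-approximation property, as it must, since otherwise the same argument would give full $\GMP$, which the weak Kurepa tree refutes.

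The paper instead uses that $Q_\K$ is $\kappa$-Knaster in $M[G][H]$ (Lemma~\ref{L:quotient-knaster} and Fact~\ref{F:ccc-Knaster}), which gives the $\omega_2$-approximation property. It treats $Q_\M$ over $M[G][H][h]$ via the decomposition $\Add(\omega,\kappa^\dagger)*\M^\kappa$ and Fact~\ref{closed_preservation_fact}, using that the slender list has width at most $\omega_3^{V[G][H]}<2^\omega$ in that model.
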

In contrast to Theorem A, the proof of this theorem makes use of the classical Mitchell forcing, 
together with a natural forcing to add a weak Kurepa tree.

The structure of the remainder of the paper is as follows. In Section 
\ref{sect: prelim}, we present some relevant background information concerning 
trees, guessing models, Mitchell forcing, and automorphisms. In Section 
\ref{sect: thma_sect}, we prove Theorem A and Corollary \ref{cor: cor_11}, 
and in Section \ref{sect: thmb_sect}, 
we prove Theorem B. In Section \ref{sect: kurepa_sect}, we fill some gaps in 
the literature by surveying known results surrounding the consistent 
(in)destructibility of failures of the (weak) Kurepa Hypothesis. We end the 
section by providing a direct proof that the failure of the weak Kurepa 
Hypothesis is always preserved by $\sigma$-centered forcings. In Section 
\ref{sect: questions}, we present a few remaining open questions. Finally, 
Appendix \ref{appendix_a} provides an example, promised in Remark 
\ref{remark: term_forcing} below, of a two step iteration $\Add(\omega,1) 
\ast \dot{\Q}$ such that $\dot{\Q}$ is forced to be totally proper but for 
which forcing with the associated term forcing over $V$ collapses $\omega_1$.

\subsection{Notation and conventions}

We let $\mathrm{On}$ denote the class of all ordinals.
Given $\delta \in \mathrm{On}$, let $\Sigma(\delta)$ denote the set of 
successor ordinals less than $\delta$.

If $\vec{x}$ is a sequence of length $\beta$, then by convention, 
unless explicitly specified otherwise, for all $\alpha < \beta$, 
the $\alpha^{\mathrm{th}}$ element of $\vec{x}$ will be denoted by $x_\alpha$.

If $\kappa$ and $\lambda$ are cardinals, with $\kappa$ regular and infinite, 
then $\mathrm{Add}(\kappa, \lambda)$ is the forcing to add $\lambda$-many 
Cohen subsets to $\kappa$. Concretely, conditions in $\mathrm{Add}(\kappa, \lambda)$ 
are partial functions of cardinality less than $\kappa$ from $\lambda$ to 
${^{<\kappa}}2$. If $p,q \in \mathrm{Add}(\kappa,\lambda)$, then $q \leq p$ iff 
$\dom(q) \supseteq \dom(p)$ and $q(\gamma) \supseteq p(\gamma)$ for all 
$\gamma \in \dom(p)$.

\section{Preliminaries} \label{sect: prelim}

In this section, we review some background information on trees, guessing models, 
Mitchell forcing, and tree automorphisms.

\subsection{Trees and the guessing model property}

\begin{definition}
  A \emph{tree} is a partial order $(T, <_T)$ such that, for all $t \in T$, 
  the set $\pred_T(t) = \{s \in T \mid s <_T t\}$ is well-ordered by $<_T$. 
  If $T$ is a tree and $t \in T$, then we let $\height_T(t)$ denote the 
  order type of $(\pred_T(t), <_T)$. For all $\alpha \in \mathrm{On}$, 
  we let $T_\alpha$ denote $\{t \in T \mid \height_T(t) = \alpha\}$. 
  Expressions such as $T_{<\alpha}$ or $T_{\leq \alpha}$ are defined in the 
  obvious way. We let $\height(T) = \min\{\alpha \in \mathrm{On} \mid 
  T_\alpha = \emptyset\}$. $\height(T)$ is often referred to as the \emph{height} 
  of $T$. 
  
  Given a regular uncountable cardinal $\kappa$, a tree $T$ is called a 
  \emph{$\kappa$-tree} if $\height(T) = \kappa$ and $|T_\alpha| < \kappa$ for all 
  $\alpha < \kappa$. A $\kappa$-tree $T$ is \emph{normal} if it satisfies the 
  following two conditions:
  \begin{itemize}
    \item for all $\alpha < \beta < \kappa$ and all $t \in T_\alpha$, there 
    exists $s \in T_\beta$ such that $t <_T s$;
    \item for every limit ordinal $\alpha < \kappa$ and all $s,t \in T_\alpha$, 
    if $\pred_T(s) = \pred_T(t)$, then $s = t$.
  \end{itemize}
  If $T$ is a tree and $t \in T$, then we let $\mathrm{succ}_T(t)$ denote the 
  set of immediate successors of $t$ in $T$, i.e., the set 
  $\{s \in T \mid t <_T s \text{ and } \height_T(s) = \height_T(t) + 1\}$. 
  We say that $T$ is \emph{infinitely splitting} if 
  $\mathrm{succ}_T(t)$ is infinite for all $t \in T$.
  
  A \emph{branch} through a tree $T$ is a subset $b \subseteq T$ that is 
  $<_T$-downward closed and linearly ordered by $<_T$. We say that a branch 
  $b \subseteq T$ is a \emph{cofinal} branch if, for all $\alpha < \height(T)$, 
  we have $b \cap T_\alpha \neq \emptyset$. A subset $A \subseteq T$ is called 
  an \emph{antichain} if elements of $A$ are pairwise incomparable under $<_T$.
  We say that a subset $E \subset T$ is \emph{dense} if, for all $t \in T$, 
  there is $s \in E$ with $t \leq_T s$, and \emph{open} if, for all $t \in E$ 
  and all $s \in T$ with $t \leq_T s$, we have $s \in E$.
\end{definition}

We will mostly be interested in $\omega_1$-trees in this paper, so much of what follows will 
be in the specific context of $\omega_1$-trees, though it will be clear that essentially everything 
can be generalized to the setting of $\kappa$-trees for other regular uncountable cardinals $\kappa$.
Given a finite sequence $\langle T(i) \mid i < n \rangle$ of $\omega_1$-trees, let 
$T(0) \otimes T(1) \otimes \cdots \otimes T(n-1)$ denote the tree $T^*$ defined as follows. 
The underlying set of $T^*$ is
\[
  \bigcup_{\alpha < \omega_1} \prod_{i < n} T(i)_\alpha.
\]
If $\vec{x},\vec{y} \in T^*$, then we set $\vec{x} \leq_{T^*} \vec{y}$ if and only if 
$x_i \leq_{T(i)} y_i$ for all $i < n$. Note that, for all $\alpha < \omega_1$, we have 
$T^*_\alpha = \prod_{i < n} T(i)_\alpha$.

If $T$ is an $\omega_1$-tree and $x \in T$, then we let $T_x$ denote the tree
$\{y \in T \mid x \leq_T y\}$, with the induced order from $T$. As long as $T$ is normal, 
it follows that $T_x$ is an $\omega_1$-tree for all $x \in T$. We note that there is 
a slight abuse of notation here, given our convention that $T_\alpha$ denotes 
the $\alpha^{\mathrm{th}}$ level of the tree $T$, but in practice there will never be 
any risk of confusion: we will always use Roman letter such as ``$x$" to denote 
elements of a tree and Greek letters such as ``$\alpha$" to denote levels of 
the tree. If $\alpha < \omega_1$ and 
$\vec{x}$ is a finite sequence from $T_\alpha$ of length $n < \omega$, then we let 
\[
  T_{\vec{x}} := T_{x_0} \otimes T_{x_1} \otimes \cdots \otimes T_{x_{n-1}}.
\]
Such a tree $T_{\vec{x}}$ is called a \emph{derived tree of $T$}.
Given $\vec{z} \in T_{\vec{x}}$, we will let $\height_T(\vec{z})$ denote the 
unique $\beta < \omega_1$ such that $z_i \in T_\beta$ for all $i < n$.
Recall that a \emph{Suslin tree} is an $\omega_1$-tree $T$ with no uncountable 
branches and no uncountable antichains. It is routine to prove that an 
(infinitely) splitting $\omega_1$-tree $T$ is Suslin if and only if, for 
every dense, open subset $E \subseteq T$, there is $\beta < \omega_1$ such that 
$T_\beta \subseteq E$.
 
We say that a Suslin tree $T$ is a \emph{free Suslin tree} if, for all $\alpha < \omega_1$ and all finite injective 
sequences $\vec{x}$ from $T_\alpha$, the tree $T_{\vec{x}}$ is Suslin. A \emph{Kurepa tree} is an $\omega_1$-tree $T$ with at least $\omega_2$-many 
  cofinal branches. We say that a Suslin tree $T$ is \emph{almost Kurepa} if after forcing with $T$, $T$ is a Kurepa tree. A \emph{weak Kurepa tree} is a tree $T$ of height and size $\omega_1$ with at least $\omega_2$-many cofinal branches. The \emph{Kurepa Hypothesis} ($\KH$) is the assertion that there exists a Kurepa tree. The \emph{weak Kurepa Hypothesis} ($\wKH$) is the assertion that there exists a weak Kurepa tree. 

Given a regular uncountable cardinal $\kappa$, we say that the \emph{tree property holds} at $\kappa$ 
(denoted $\TP(\kappa)$), if every $\kappa$-tree has a cofinal branch. The tree property can be used to 
characterize weakly compact cardinals: an inaccessible cardinal $\kappa$ is weakly compact if and 
only if $\TP(\kappa)$ holds, and, by work of Mitchell and Silver \cite{MIT:tree}, $\TP(\omega_2)$ 
is equiconsistent with the existence of a weakly compact cardinal. Beginning in the 1970s with 
work of Jech \cite{jech_combinatorial} and Magidor \cite{magidor_combinatorial}, and continuing 
in the 2000s with work of Wei\ss\ \cite{weiss_combinatorial}, two-cardinal analogues of the tree 
property have been shown to characterize strongly compact and supercompact cardinals in a similar 
manner. 

Let $\kappa$ be a regular uncountable cardinal, and let $X$ be a set with $\vert X\vert  \geq \kappa$. 
We say that a sequence $\seq{d_x}{x\in\power_\kappa X}$ is a $(\kappa,X)$-list if $d_x\sub x$ for all $x\in \power_\kappa X$.

\begin{definition} \label{thin_slender_def}
Assume that $D=\seq{d_x}{x\in\power_\kappa X}$ is a $(\kappa,X)$-list.
\begin{enumerate}
\item The \emph{width} of $D$ is the least cardinal $\lambda$ such that 
$|d_x| < \lambda$ for all $x \in \power_\kappa X$.
\item We say that $D$ is $\emph{thin}$ if there is a closed unbounded set $C\sub \power_\kappa X$ such that $\vert \set{d_x\cap y}{y\sub x\in \power_\kappa X}\vert <\kappa$ for every $y\in C$.

\item  \label{slender_item} 
Let $\mu\le\kappa$ be an uncountable cardinal. We say that $D$ is $\mu$-\emph{slender} if for all sufficiently large $\theta$ there is a club $C\sub\power_\kappa H(\theta)$ such that for all $M\in C$ and all $y\in M\cap \power_\mu X$, we have $d_{M\cap X}\cap y\in M$.
\end{enumerate}
\end{definition}

\begin{definition}
Assume that $D=\seq{d_x}{x\in\power_\kappa X}$ is a $(\kappa,X)$-list and $d\sub X$.
\begin{enumerate}
\item We say that $d$ is a \emph{cofinal branch} of $D$ if for all $x\in\power_\kappa X$ there is $z_x\supseteq x$ such that $d\cap x=d_{z_x}\cap x$.
\item We say that $d$ is an \emph{ineffable branch} of $D$ if the set $\set{x\in \power_\kappa X}{d\cap x=d_x}$ is stationary.
\end{enumerate}
\end{definition}

\begin{definition} \label{tp_def}
Assume that $\mu\le\kappa$ is regular. We say that 
\begin{enumerate}
\item the $(\kappa,X)$-tree property holds, denoted $\TP(\kappa,X)$, if every thin $(\kappa,X)$-list has a cofinal branch.

\item the $(\kappa,X)$-ineffable tree property holds, denoted $\ITP(\kappa,X)$, if every thin $(\kappa,X)$-list has an ineffable branch.
\item the $(\mu,\kappa,X)$-slender tree property holds, denoted $\SP^\mu(\kappa,X)$, if every $\mu$-slender $(\kappa,X)$-list has a cofinal branch.
\item the $(\mu,\kappa,X)$-ineffable slender tree property holds, denoted $\ISP^\mu(\kappa,X)$, if every $\mu$-slender $(\kappa,X)$-list has an ineffable branch.
\end{enumerate}
\end{definition}

In this paper, we will be interested only in the ineffable slender tree property; for more about two-cardinal tree properties see \cite{weiss_combinatorial}. We introduce a couple of conventions. We will use notations such as $\mathsf{ISP}^\mu(\kappa)$ to assert that $\mathsf{ISP}^\mu(\kappa, \lambda)$ holds for all $\lambda \geq \kappa$. We let $\mathsf{ISP}$ denote $\mathsf{ISP}^{\omega_1}(\omega_2)$.

By \cite{magidor_combinatorial}, it follows that an inaccessible cardinal $\kappa$ is supercompact if and only if $\ISP^{\omega_1}(\kappa)$ holds. 
Wei\ss\ proves in \cite{weiss_combinatorial} that, if one forces with the standard Mitchell 
forcing $\M(\omega, \kappa)$, where $\kappa$ is supercompact, then $\ISP$ holds in the extension; 
similarly, Viale and Wei\ss\ prove in \cite{viale_weiss} that the Proper Forcing Axiom ($\PFA$) 
implies $\ISP$. It is not known whether, in analogy with the tree property, 
$\ISP$ implies the consistency of the existence of a supercompact cardinal; some partial results in 
this direction can be found in \cite{viale_weiss}.

In \cite{viale_weiss}, Viale and Wei\ss\ reformulate ineffable slender tree properties
in terms of the existence of combinatorial objects known as \emph{guessing models}, which we 
now recall.

\begin{definition} \label{guessing_model_def}
Let $\theta \geq \omega_2$ be a regular cardinal, and let $M \prec H(\theta)$ be an elementary submodel.
  \begin{enumerate}
    \item Given a set $x \in M$, a subset $d \subseteq x$, and an uncountable cardinal $\mu$, 
    we say that 
    \begin{enumerate}
      \item $d$ is \emph{$(\mu, M)$-approximated} if, for every $z \in M \cap \power_{\mu}(M)$, 
      we have $d \cap z \in M$;
      \item $d$ is \emph{$M$-guessed} if there is $e \in M$ such that $d \cap M = e \cap M$.
    \end{enumerate}
   \item $M$ is a \emph{$\mu$-guessing model for $x$} if
    every $(\mu, M)$-approximated subset of $x$ is $M$-guessed.
   \item $M$ is a \emph{$\mu$-guessing model} if, for every $x \in M$, it is a $\mu$-guessing 
    model for $x$.
  \end{enumerate}

Given uncountable cardinals $\mu \leq \kappa \leq \theta$, with $\kappa$ and $\theta$ regular, 
we let $\GMP^\mu(\kappa, \theta)$ denote the assertion that the set of $M \in \power_\kappa(H(\theta))$ 
such that $M$ is a $\mu$-guessing model is stationary in $\power_\kappa(H(\theta))$. Moreover, we let $\GMP^\mu(\kappa)$ denote the assertion that $\GMP^\mu(\kappa, \theta)$ holds for every regular cardinal 
$\theta \geq \kappa$. We let $\GMP(\kappa)$ denote $\GMP^{\omega_1}(\kappa)$ and let $\GMP$ denote $\GMP(\omega_2)$.   
\end{definition}

Viale and Wei\ss\ prove in \cite{viale_weiss} that the ineffable slender tree property is equivalent to the guessing model property. More precisely, they proved the following:

\begin{fact}\label{isp_guessing_fact}
  Suppose that $\mu \leq \kappa$ are regular uncountable cardinals. Then the following are equivalent:
  \begin{enumerate}
    \item $\ISP^\mu(\kappa)$;
    \item $\GMP^\mu(\kappa)$.
  \end{enumerate}
In particular $\ISP$ is equivalent to $\GMP$
\end{fact}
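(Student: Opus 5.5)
The plan is to prove both implications by translating between lists and models through an elementary submodel, as in Viale and Wei\ss. Fix regular uncountable $\mu \le \kappa$. Throughout, the key observation is this. For $M \prec H(\theta)$ with $M \cap \kappa \in \kappa$ and $X \in M$, a set $d \subseteq X$ with $d \cap z \in M$ for all $z \in M \cap \power_\mu(X)$ is exactly a $(\mu,M)$-approximated subset of $X$. Moreover, $d$ is $M$-guessed by some $e \in M$ iff $d \cap (M\cap X) = e \cap M$.

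\textbf{From $\ISP^\mu(\kappa)$ to $\GMP^\mu(\kappa)$.} Suppose the set of $\mu$-guessing models in $\power_\kappa H(\theta)$ is nonstationary. Then there is a stationary set $S$ of $M \prec H(\theta)$ with $M\cap\kappa\in\kappa$ that are not guessing. For each $M \in S$ choose $x_M \in M$ and a $(\mu,M)$-approximated $d_M \subseteq x_M$ that is not $M$-guessed. By pressing down (Fodor for $\power_\kappa$), I may shrink $S$ so that $x_M = x$ is fixed. Replacing $x$ by a bijective copy in $M$, I may assume $x$ is an ordinal or simply $X = H(\theta)$. Define a $(\kappa, H(\theta))$-list by $d_{M} := d_M \cap M$ for $M \in S$ and $d_y = \emptyset$ otherwise. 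The approximation property says precisely that this list is $\mu$-slender: for a club of $N \prec H(\theta')$ with $\theta'\gg\theta$, $N\cap H(\theta)$ either lies in $S$ or gets the empty set, and in either case traces on $y \in N\cap\power_\mu$ lie in $N$. Apply $\ISP^\mu(\kappa,H(\theta))$ to get an ineffable branch $d$. Then stationarily many $M \in S$ satisfy $d \cap M = d_M \cap M$. The remaining task is to show that $d$, or an element of $M$ deciding it, guesses $d_M$. Here I would apply ineffability inside $H(\theta')$: since $d\cap M$ agrees with $d_M$ stationarily, reflecting through a model containing $d$ shows that for stationarily many $M$ there is $e \in M$ with $e\cap M = d\cap M$ (take $e = d$ when $d\in M$, and arrange this by working with models of $H(\theta')$ containing $d$ and projecting). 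This contradicts the choice of $d_M$. The delicate point is the bookkeeping between $\theta$ and $\theta'$, and the lifting of stationarity in $\power_\kappa H(\theta)$ to $\power_\kappa H(\theta')$.

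\textbf{From $\GMP^\mu(\kappa)$ to $\ISP^\mu(\kappa)$.} Let $D$ be a $\mu$-slender $(\kappa,X)$-list, and let $C$ witness slenderness for a large $\theta$. Suppose toward a contradiction that $D$ has no ineffable branch. Then for every $d\subseteq X$ there is a club $C_d$ avoiding $\{x : d\cap x = d_x\}$, and by elementarity such a choice function lies in $H(\theta)$. Take a $\mu$-guessing $M \in C$ with $D, X$ and the choice function in $M$, closed under it. By slenderness, $d_{M\cap X}$ is $(\mu,M)$-approximated, so it is guessed by some $e\in M$ with $e\cap M = d_{M\cap X}$. Then $C_e\in M$ is a club, and $M\cap X \in C_e$ by elementarity and closure. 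But $e\cap (M\cap X) = d_{M\cap X}$, a contradiction. The only subtlety in this direction is that stationarity of guessing models gives such an $M$ inside the intersection of $C$ with the club of models closed under the relevant Skolem functions.

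I expect the main obstacle to be the first direction: producing a genuinely slender list from failures of guessing, and converting an ineffable branch back into a guess inside a single model. The second direction is a routine reflection argument.
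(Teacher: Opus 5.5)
Your second direction ($\GMP^\mu(\kappa)\Rightarrow\ISP^\mu(\kappa)$) is the standard argument and is correct; the paper gives no proof here and simply cites Viale and Wei\ss. The gap is in the first direction, at the step ``Then stationarily many $M\in S$ satisfy $d\cap M=d_M\cap M$.'' You replace the club of non-guessing models by the merely stationary set $S$ obtained by pressing down, and you set the list to be $\emptyset$ off $S$. Nothing prevents $\power_\kappa H(\theta)\setminus S$ from being stationary. In that case $d=\emptyset$ is already an ineffable branch whose agreement set lies entirely outside $S$, and $\ISP$ then tells you nothing about the models in $S$.

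In fact your argument only uses that the non-guessing models form a stationary set. If it were valid, it would show that under $\ISP^\mu(\kappa)$ the $\mu$-guessing models contain a club, and that is false. Take $\mu=\omega_1$ and $\kappa=\omega_2$, and let $M\prec H(\theta)$ have $M\cap\omega_2=\delta$ with $\cf(\delta)=\omega$; such $M$ form a stationary set. A cofinal $\omega$-sequence $d\subseteq\delta$ is $(\omega_1,M)$-approximated: for countable $z\in M$ we have $\sup(z\cap\omega_2)\in\delta$, so $d\cap z$ is finite. But $d$ is not $M$-guessed. If $e\in M$, $e\subseteq\omega_2$, had $e\cap\delta=d$, then elementarity would force $e$ to have order type at most $\omega$, hence $\sup e<\delta$, which is impossible.

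To repair the argument, keep the club. The hypothesis $\neg\GMP^\mu(\kappa,\theta)$ gives a club $C$ of non-guessing $M\prec H(\theta)$ with $M\cap\kappa\in\kappa$.
\begin{enumerate}
\item Choose witnesses $x_M\in M$ and $d_M\subseteq x_M$ for \emph{every} $M\in C$. Let the list take the value $d_M\cap M$ on $C$ and $\emptyset$ off $C$, and do not press down yet. Your slenderness check goes through unchanged, since it only used $x_M\in M$.
\item Let $d$ be an ineffable branch with agreement set $U$. Then $U\cap C$ is stationary, so Fodor gives a stationary $U'\subseteq U\cap C$ and a fixed $x$ with $x_M=x$ for all $M\in U'$.
\item Since $\bigcup U'=H(\theta)$, we get $d=\bigcup_{M\in U'}(d\cap M)\subseteq x$. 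Hence $d\in H(\theta)$, so some $M\in U'$ contains $d$. For that $M$, $d$ itself guesses $d_M$, a contradiction.
\end{enumerate}
The observation $d\subseteq x\in H(\theta)$ is also what should replace your unspecified ``bookkeeping between $\theta$ and $\theta'$.'' No reflection to $H(\theta')$ is needed to put the branch into a model.
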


\subsection{Preservation lemmas} In this short subsection we recall some useful preservation lemmas 
regarding product forcings and adding cofinal branches through trees.
The following fact is due to Baumgartner (see \cite{BAUM:iter}).
 
\begin{fact}\label{F:Knaster}
Let $\kappa$ be a regular cardinal and assume that $\P$ is a $\kappa$-Knaster forcing notion. If $T$ is a tree of height $\kappa$, then forcing with $\P$ does not add new cofinal branches to $T$.
\end{fact}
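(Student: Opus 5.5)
We prove the contrapositive. Suppose $\dot b$ is a $\P$-name and some $p_0 \in \P$ forces that $\dot b$ is a cofinal branch through $T$ with $\dot b \notin V$. From this we will build a family of $\kappa$ pairwise incompatible conditions, which contradicts the fact that $\P$ is $\kappa$-Knaster (even just $\kappa$-cc).

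\textbf{Splitting.} Since $\dot b$ is forced not to lie in $V$, the following holds for every $q \le p_0$ and every $\alpha < \kappa$: there are $\beta \ge \alpha$, conditions $r_0, r_1 \le q$, and distinct nodes $t_0 \neq t_1$ in $T_\beta$ such that $r_i \Vdash t_i \in \dot b$. To see this, suppose it fails for some $q$ and $\alpha$. Then above level $\alpha$ every node of level $\beta$ that some extension of $q$ forces into $\dot b$ is uniquely determined. So $q$ decides $\dot b \restriction [\alpha,\kappa)$, and by downward closure $q$ decides all of $\dot b$. Then $\dot b$ is forced to equal a ground model branch, a contradiction.

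\textbf{Construction.} For each $\xi < \kappa$, use splitting to choose a level $\beta_\xi$, nodes $t^\xi_0 \neq t^\xi_1$ in $T_{\beta_\xi}$, and conditions $r^\xi_0, r^\xi_1 \le p_0$ with $r^\xi_i \Vdash t^\xi_i \in \dot b$. Arrange the levels so that $\beta_\xi > \sup_{\eta<\xi}\beta_\eta$; regularity of $\kappa$ makes this possible.

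Now use the Knaster property. It gives an unbounded set $I \subseteq \kappa$ such that the conditions $r^\xi_0$ for $\xi \in I$ are pairwise compatible. Apply it again inside $I$ to get an unbounded $J \subseteq I$ on which the conditions $r^\xi_1$ are also pairwise compatible.

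\textbf{Contradiction.} Take $\eta < \xi$ in $J$. A common extension of $r^\eta_i$ and $r^\xi_j$ forces both $t^\eta_i \in \dot b$ and $t^\xi_j \in \dot b$. Since $\dot b$ is a branch, this forces $t^\eta_i <_T t^\xi_j$. Because compatibility holds within each $i$, we get
\[
t^\eta_0 <_T t^\xi_0 \quad\text{and}\quad t^\eta_1 <_T t^\xi_1 .
\]

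This alone gives no contradiction, since the two nodes can sit on different ground model branches. The main obstacle is to force mixed compatibility across the two halves.

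Here is the fix. Below $t^\xi_0$ there is a unique node at level $\beta_\eta$, and $t^\eta_0 \neq t^\eta_1$. So $t^\xi_0$ cannot lie above both $t^\eta_0$ and $t^\eta_1$. Hence $r^\xi_0$ and $r^\eta_1$ are incompatible, because a common extension would force $t^\eta_1 <_T t^\xi_0$.

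To finish we need Knaster in its full "pairs" form. Force with $\P \times \P$, which is $\kappa$-cc because $\P$ is $\kappa$-Knaster. Consider the pair conditions $(r^\xi_0, r^\xi_1)$ for $\xi \in J$. Their compatibility only gives the within-coordinate relations above.

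Instead, use the standard argument. In $V^{\P\times\P}$ the two generic interpretations $\dot b_{\mathrm{left}}$ and $\dot b_{\mathrm{right}}$ are cofinal branches of $T$. Choose $(r_0, r_1)$ that forces $\dot b_{\mathrm{left}} \neq \dot b_{\mathrm{right}}$; this is possible by splitting applied in each coordinate. For each $\xi$, extend to a pair deciding the nodes of both branches at level $\gamma_\xi$, where the $\gamma_\xi$ are chosen increasing and above the splitting level. These $\kappa$ pair conditions are pairwise incompatible: compatibility of the pairs for $\eta$ and $\xi$, taken in crossed coordinates, would force the left branch to pass through a right-branch node, contradicting the fact that the two branches split below $\gamma_\eta$.

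To realize the crossing, let the $\xi$-th condition be the pair $(r^\xi_0, r^\xi_1)$ and the $\eta$-th be the swapped pair $(r^\eta_1, r^\eta_0)$. The $\kappa$-Knaster property of $\P$ applied to the single coordinate sequence $r^\xi_0$ and the sequence $r^\xi_1$ as above already produces the needed incompatibility. This yields $\kappa$ pairwise incompatible conditions, a contradiction, and so $\P$ adds no new cofinal branches to $T$.
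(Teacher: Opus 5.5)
Your splitting lemma and the Knaster thinning are fine. The observation that $r^\xi_0 \perp r^\eta_1$ for $\eta \neq \xi$ in $J$ is also correct. The proof breaks at the end, though: you never produce $\kappa$ pairwise incompatible conditions, and the mechanism you describe cannot produce them.

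\begin{itemize}
\item Compatibility in $\P \times \P$ is coordinatewise, so there is no compatibility ``taken in crossed coordinates.'' Two pairs deciding nodes of $b_{\mathrm{left}}$ and $b_{\mathrm{right}}$ at levels $\gamma_\eta < \gamma_\xi$ can be compatible; the nodes just line up in each coordinate.
\item Swapping does not help. The orientation of each pair must be fixed per index in advance. Then $\kappa$ many indices in $J$ share an orientation, and those pairs are pairwise compatible by your own choice of $J$.
\item Knaster yields compatibility, not incompatibility.
\end{itemize}

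More fundamentally, the pattern $r^\xi_0 \parallel r^\eta_0$, $r^\xi_1 \parallel r^\eta_1$, $r^\xi_0 \perp r^\eta_1$ is harmless for a $\kappa$-cc poset. At that stage you merely have two distinct ground-model branches $c_0, c_1$, generated by $\{t^\xi_0\}$ and $\{t^\xi_1\}$. The contradiction has to come from using $\dot b \notin V$ once more, not from an antichain.

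The missing step is short. By the $\kappa$-cc and regularity of $\kappa$, some $q \le p_0$ forces that $\{\xi \in I \mid r^\xi_0 \in \dot G\}$ is unbounded in $\kappa$. Otherwise $p_0$ would force this set to be bounded by a fixed $\gamma^* < \kappa$, which any $r^\xi_0$ with $\xi \in I \setminus \gamma^*$ refutes. Then $q$ forces $\dot b$ and $c_0$ to share nodes on unboundedly many levels, so $q \Vdash \dot b = \check c_0$, contradicting $p_0 \Vdash \dot b \notin V$. This uses a single thinning; the splitting lemma and the second family $r^\xi_1$ are unnecessary. This is the standard proof of Baumgartner's fact, which the paper only cites.

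Your $\P \times \P$ idea can also be completed, but not by an antichain.
\begin{enumerate}
\item A density argument using your splitting lemma shows $(p_0,p_0) \Vdash b_{\mathrm{left}} \neq b_{\mathrm{right}}$.
\item The $\kappa$-cc of $\P\times\P$ then bounds their splitting level below $(p_0,p_0)$ by some $\delta^* < \kappa$.
\item Take any $p \le p_0$ deciding $\dot b \cap T_{\delta^*}$. The \emph{diagonal} condition $(p,p)$ forces both branches through the same node at level $\delta^*$, contradicting the bound.
\end{enumerate}
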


The following lemma appeared in \cite{EASTONregular}.

\begin{fact}\label{L:Easton}\emph{(Easton)}
Let $\kappa>\omega$ be a regular cardinal and assume that $\P$ and $\Q$ are forcing notions, where $\P$ is $\kappa$-cc and $\Q$ is $\kappa$-closed. Then the following hold:
\begin{enumerate}[(i)]
\item $\P$ forces that $\Q$ is $\kappa$-distributive.\label{L:Easton_Dis}
\item $\Q$ forces that $\P$ is $\kappa$-cc.\label{L:Easton_cc}
\end{enumerate}
\end{fact}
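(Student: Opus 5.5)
The statement to prove is Easton's lemma (Fact~\ref{L:Easton}). I would prove (\ref{L:Easton_cc}) first, since (\ref{L:Easton_Dis}) will rest on it.

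For (\ref{L:Easton_cc}), suppose toward a contradiction that some $q \in \Q$ forces that $\seq{\dot p_\xi}{\xi<\kappa}$ is an antichain in $\P$; each $\dot p_\xi$ is forced to lie in the ground-model set $\P$. Using $\kappa$-closure of $\Q$, I would build a descending sequence $\seq{q_\xi}{\xi<\kappa}$ below $q$ together with conditions $p_\xi\in\P$ such that $q_\xi\Vdash \dot p_\xi = p_\xi$. At successor stages we extend to decide $\dot p_\xi$. At limit stages $\xi<\kappa$ we take a lower bound, which exists because the sequence has length less than $\kappa$. For distinct $\xi,\eta$, the condition $q_{\max(\xi,\eta)}$ forces that $p_\xi$ and $p_\eta$ are incompatible. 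Incompatibility in $\P$ is absolute, so $p_\xi\perp p_\eta$ holds in $V$. This gives an antichain of size $\kappa$ in $V$, contradicting the $\kappa$-cc of $\P$.

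For (\ref{L:Easton_Dis}), let $\mu<\kappa$. Let $\dot f$ be a $\P$-name for a $\Q$-name $\dot g$ for a function $\mu\to\mathrm{On}$. It suffices to show that in $V^{\P}$, densely many conditions of $\Q$ decide $\dot g$ entirely. I would work in $V$ with the product $\Q\times\P$, which is equivalent to $\P*\check\Q$. Fix $(q_0,p_0)$. By $\kappa$-closure of $\Q$, I would build a descending sequence $\seq{q_i}{i\le\mu}$ below $q_0$. At stage $i$, I want $q_{i+1}\le q_i$ such that the set $A_i=\{p\in\P : \exists\gamma\ (q_{i+1},p)\Vdash \dot g(i)=\gamma\}$ is predense below $p_0$.

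To get $q_{i+1}$, I would run an inner construction of length $<\kappa$. Repeatedly extend $q$ to some $q'$ and find $p$, incompatible with all previously chosen $p$'s, such that $(q',p)$ decides $\dot g(i)$. Stop once the chosen $p$'s are predense below $p_0$. The chosen $p$'s form an antichain of $\P$, so the inner construction must halt before $\kappa$ steps by the $\kappa$-cc. Since the inner construction has length $<\kappa$, the lower bounds needed along the way exist by closure. The decisions made earlier persist when $q$ is strengthened, so the final $q$ works. At limit stages $i\le\mu$ of the outer construction, take lower bounds, which exist since $\mu<\kappa$.

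With $q_\mu$ in hand, work in $V[G]$ with $p_0\in G$. For each $i$, genericity gives some $p\in G\cap A_i$, so $q_\mu$ decides $\dot g(i)$. Hence $q_\mu$ decides all of $\dot g$, and by density this gives distributivity.

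The main obstacle is the bookkeeping in the inner construction. One must make sure that, by the time the antichain becomes maximal below $p_0$, every $p$ in it still decides $\dot g(i)$ together with the final $q$. This works because forcing decisions are preserved under strengthening the $\Q$-coordinate.
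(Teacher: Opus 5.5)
Your proposal is correct. The paper gives no proof of this statement; it is quoted as a known fact with a reference to Easton, so there is no internal argument to compare against. Your argument for (ii) is the standard one: decide $\dot p_\xi$ along a descending sequence of length $\kappa$, use closure only at limit stages ${<}\kappa$, and use the absoluteness of incompatibility. One small bookkeeping point there: at a limit stage, after taking the lower bound you still have to extend once more to decide $\dot p_\xi$.

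Your argument for (i) is a correct direct fusion construction. Contrary to your opening remark, though, it never uses (ii). It only uses that $\P$ is $\kappa$-cc in $V$ (the inner antichains are built in $V$) together with the ${<}\kappa$-closure of $\Q$.

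The common shorter route to (i) is the one that genuinely rests on (ii). Let $H$ be $\Q$-generic. In $V[H]$ the poset $\P$ is still $\kappa$-cc, so a $\P$-name for a function $\mu\to\mathrm{On}$ with $\mu<\kappa$ is determined by $\mu$ maximal antichains, each of size ${<}\kappa$, together with the decided values. By regularity of $\kappa$ this is a set of size ${<}\kappa$ of elements of $V$. It therefore already lies in $V$, because $\Q$ adds no new ${<}\kappa$-sequences over $V$. Hence the function lies in $V[G]$.

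That route trades your nested inner/outer bookkeeping for the chain-condition preservation in (ii). Your version is self-contained and proves (i) independently of (ii).
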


The following fact can be found in \cite{C:trees}. The argument is attributed to Magidor.

\begin{fact}\label{F:ccc-Knaster}
Let $\kappa>\omega$ be a regular cardinal and assume that $\P$ and $\Q$ are forcing notions such that $\P$ is $\kappa$-Knaster and $\Q$ is $\kappa$-cc. Then $\Q$ forces that $\P$ is $\kappa$-Knaster.
\end{fact}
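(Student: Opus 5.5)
We will argue directly from the definitions, using a density argument in $\Q$. Let $q \in \Q$ and let $\langle \dot p_\alpha \mid \alpha < \kappa \rangle$ be a sequence of $\Q$-names such that $q$ forces each $\dot p_\alpha$ to lie in $\P$. It suffices to find $q' \leq q$ and a $\Q$-name $\dot Y$ such that $q'$ forces the following: $\dot Y$ is an unbounded subset of $\kappa$, and $\{\dot p_\alpha \mid \alpha \in \dot Y\}$ consists of pairwise compatible conditions of $\P$. Since $\Q$ adds no new elements of $\P$ and compatibility in $\P$ is absolute, this will show that $\Q$ forces $\P$ to be $\kappa$-Knaster.

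The first step pulls the problem back to the ground model. For each $\alpha < \kappa$, choose $q_\alpha \leq q$ and $p_\alpha \in \P$ such that $q_\alpha \Vdash \dot p_\alpha = \check p_\alpha$. The sequence $\langle p_\alpha \mid \alpha < \kappa\rangle$ lies in $V$, so the $\kappa$-Knaster property of $\P$ in $V$ gives an unbounded $X \subseteq \kappa$ such that $\{p_\alpha \mid \alpha \in X\}$ is pairwise compatible. Let $\dot Y$ be a name for $\{\alpha \in X \mid q_\alpha \in \dot G\}$. Whenever $q_\alpha \in G$ we have $\dot p_\alpha^G = p_\alpha$, so $\{\dot p_\alpha \mid \alpha \in \dot Y\}$ is forced to be pairwise compatible. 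It therefore remains to find $q' \leq q$ forcing that $\dot Y$ is unbounded.

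This last step is where the $\kappa$-cc of $\Q$ is used, and it is the only real obstacle. Suppose, toward a contradiction, that no such $q'$ exists, so that $q \Vdash$ ``$\dot Y$ is bounded in $\kappa$''. Let $\dot\beta$ name $\sup \dot Y$. Choose a maximal antichain $A$ below $q$ of conditions deciding $\dot\beta$. By the $\kappa$-cc, $|A| < \kappa$. Since $\kappa$ is regular, the supremum $\beta^*$ of the decided values is below $\kappa$, and $q \Vdash \dot Y \subseteq \beta^*$. Now pick $\alpha \in X$ with $\alpha > \beta^*$, which is possible since $X$ is unbounded. Then $q_\alpha \leq q$ and $q_\alpha \Vdash \alpha \in \dot Y$. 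This contradicts $q_\alpha \Vdash \dot Y \subseteq \beta^*$. Hence some $q' \leq q$ forces $\dot Y$ to be unbounded, which completes the density argument.
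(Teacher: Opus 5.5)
Your proof is correct and is the standard argument, due to Magidor, as found in \cite{C:trees}; the paper itself gives no proof and only cites that source. As there, you first apply the Knaster property of $\P$ in $V$ to the decided values $p_\alpha$, then use the $\kappa$-cc of $\Q$ to get a condition forcing that unboundedly many of the corresponding $q_\alpha$ (with $\alpha \in X$) lie in the generic. Two cosmetic points: the bound should be $q \Vdash \dot Y \subseteq \beta^*+1$, since the supremum may be attained, and you implicitly use that $\kappa$ stays regular in $V^{\Q}$ (true by the $\kappa$-cc) to pass from ``unbounded'' to ``of size $\kappa$''.
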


The following fact is due to Silver (see \cite{abraham} for more details).

\begin{fact}\label{F:Closed}
Let $\kappa$, $\lambda$ be regular cardinals with $2^{\kappa}\geq\lambda$. Assume that $\P$ is a $\kappa^{+}$-closed forcing notion. If $T$ is a $\lambda$-tree, then forcing with $\P$ does not add new cofinal branches to $T$.
\end{fact}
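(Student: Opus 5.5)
The plan is the standard splitting-tree argument. Suppose toward a contradiction that some $p \in \P$ forces that $\dot b$ is a cofinal branch through $T$ with $\dot b \notin V$. I will build a binary tree of conditions of height $\kappa+1$ below $p$ whose branches decide pairwise distinct nodes on a single level $T_\alpha$ with $\alpha<\lambda$. This yields $2^\kappa$ nodes on one level, which contradicts $|T_\alpha|<\lambda\le 2^\kappa$.

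First I make two reductions. If $\lambda\le\kappa$, then $\P$ is $\kappa^+$-closed and so adds no new subsets of $T$ of size $\le\kappa$. Since a cofinal branch of a $\lambda$-tree has size $\lambda$, there is nothing to prove in this case. So assume $\kappa<\lambda$. Next, let $\kappa'\le\kappa$ be least with $2^{\kappa'}\ge\lambda$. Then $\P$ is still $\kappa'^+$-closed, so I may replace $\kappa$ by $\kappa'$. After this, $2^\mu<\lambda$ for every $\mu<\kappa$, and hence $2^{|i|}<\lambda$ for every $i<\kappa$. This is exactly what is needed to keep suprema below $\lambda$, using the regularity of $\lambda$.

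The basic splitting observation is the following. Since $\dot b$ is forced to be new, for every $q\le p$ there are $q^0,q^1\le q$ and a level $\beta<\lambda$ such that $q^0$ and $q^1$ force distinct nodes of $T_\beta$ into $\dot b$. Otherwise $\{t\mid \exists r\le q\,(r\Vdash t\in\dot b)\}$ would be a ground-model branch, and $q$ would force $\dot b$ to equal it.

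I then construct, by recursion on $i\le\kappa$, conditions $p_s$ for $s\in 2^{i}$ together with ordinals $\alpha_i<\lambda$, subject to three requirements:
\begin{enumerate}
\item the conditions decrease along extension of $s$;
\item $p_\emptyset=p$;
\item for each $s\in 2^i$, the conditions $p_{s^\frown 0}$ and $p_{s^\frown 1}$ force incompatible nodes into $\dot b$ at some level below $\alpha_{i+1}$.
\end{enumerate}
At successor stages, I apply the splitting observation to each $p_s$ with $s\in 2^i$. Since there are $2^{|i|}<\lambda$ many such $s$, the supremum of the relevant levels $\beta_s$ is some $\alpha_{i+1}<\lambda$. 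At limit stages $i\le\kappa$, each $s\in 2^i$ determines a decreasing sequence of length $<\kappa^+$, and $\kappa^+$-closure gives a lower bound $p_s$. Let $\alpha=\sup_{i<\kappa}\alpha_i$; then $\alpha<\lambda$ because $\kappa<\lambda$ and $\lambda$ is regular.

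For each $f\in 2^\kappa$, I extend $p_f$ to decide the node $t_f\in T_\alpha\cap\dot b$. Now take $f\neq g$ and let $s$ be their longest common initial segment. The extensions of $p_f$ and $p_g$ then force incompatible nodes of $\dot b$ below level $\alpha$. Since $\dot b$ is forced to be a branch, $t_f$ lies above one of these nodes and $t_g$ above the other, so $t_f\ne t_g$. Thus $|T_\alpha|\ge 2^\kappa\ge\lambda$, contradicting that $T$ is a $\lambda$-tree.

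The main obstacle I expect is the bookkeeping that keeps all the $\alpha_i$, and their supremum, below $\lambda$. This is precisely where the minimality reduction $2^{<\kappa}<\lambda$ is needed, and where the recursion must handle all $s$ of a given length simultaneously rather than one at a time.
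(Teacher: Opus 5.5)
Your proof is correct, and it is the standard Silver argument. The paper does not prove this fact but cites it to Silver via Abraham; your reduction to the least $\kappa'$ with $2^{\kappa'}\ge\lambda$, together with the separate treatment of the case $\lambda\le\kappa$ by distributivity, is exactly what keeps every supremum of levels below $\lambda$.
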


These fact can be generalized as follows \cite{UNGER:1}.

\begin{fact}\label{F:ccc_Closed}
Let $\kappa < \lambda$ be regular cardinals and $2^{\kappa}\geq\lambda$. Assume that $\P$ and $\Q$ are forcing notions such that $\P$ is $\kappa^{+}$-cc and $\Q$ is $\kappa^{+}$-closed. If $T$ is a 
$\lambda$-tree in $V[\P]$, then forcing with $\Q$ over $V[\P]$ does not add cofinal branches to $T$.
\end{fact}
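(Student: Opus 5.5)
The plan is the standard ``binary splitting tree of conditions'' argument. All the splitting is done in $V$, where $\Q$ is $\kappa^+$-closed, and the $\kappa^+$-cc of $\P$ is used to make the splitting hold uniformly over all $\P$-generics.

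\textbf{Setup.} Suppose the conclusion fails. Then there are $p^* \in \P$, $q^* \in \Q$, a $\P$-name $\dot T$ for a $\lambda$-tree and a $\P\times\Q$-name $\dot b$ such that $(p^*,q^*)$ forces: $\dot b$ is a cofinal branch of $\dot T$ and $\dot b \notin V[\dot G_\P]$. Note that $\P$ is $\kappa^+$-cc and $\lambda \ge \kappa^+$, so $\lambda$ remains a regular cardinal in $V[\P]$.

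\textbf{Splitting lemma (in $V$).} Given $q \le q^*$, I want $q_0,q_1\le q$ and an ordinal $\beta<\lambda$ such that
\[
p^* \Vdash_\P \text{``}q_0 \text{ and } q_1 \text{ force incompatible values of } \dot b\cap \dot T_\beta\text{''}.
\]
\begin{enumerate}
\item First, a local version. Since $\dot b$ is forced to be new, for any $p\le p^*$ the following holds in $V[G]$ for any generic $G$ with $p\in G$: there exist $q_0,q_1\le q$ and $\beta$ such that $q_0$ and $q_1$ force different nodes of $\dot b$ at level $\beta$.
\item These witnesses may be taken in $V$, because $\Q\in V$. So some $p'\le p$ decides particular $q_0,q_1,\beta$ with this property.
\item Now globalise. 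Fix a maximal antichain $\langle p_i \mid i<\theta\rangle$ below $p^*$ of such deciding conditions; by the $\kappa^+$-cc, $\theta\le\kappa$.
\item Build descending sequences $q_0^i$, $q_1^i$ for $i\le\theta$. At step $i$, apply the local argument to the current pair and $p_i$, so that $p_i$ forces $q_0^{i+1}, q_1^{i+1}$ to disagree at some level $\beta_i$. At limits take lower bounds, using the $\kappa^+$-closure of $\Q$ in $V$.
\item Let $\beta=\sup_i\beta_i<\lambda$. Disagreement below $\beta$ implies disagreement at $\beta$, since $\dot b$ is a branch. Hence $p^*$ forces the final pair to disagree at level $\beta$.
\end{enumerate}

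\textbf{Building the splitting tree.} Iterate the splitting lemma in $V$ to obtain conditions $q_s$ for $s\in 2^{\le\kappa}$ and levels $\beta_s<\lambda$.
\begin{itemize}
\item The conditions are descending along each $s$.
\item $q_{s^\frown 0}$ and $q_{s^\frown 1}$ come from splitting $q_s$ at level $\beta_s$.
\item At limit lengths, lower bounds exist by $\kappa^+$-closure.
\end{itemize}
Let $\beta^*=\sup_s \beta_s$. This is a supremum of at most $2^{<\kappa}\cdot\kappa$ many ordinals below $\lambda$. The step I expect to be the main obstacle is ensuring $\beta^*<\lambda$. It is clear if $2^{<\kappa}<\lambda$. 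Otherwise I would first try to refine the construction: restrict to $2^{<\kappa}$-many nodes at each level, or choose levels uniformly along a club of length $\kappa$ by interleaving a bookkeeping, so that $\kappa$ many $\beta$'s suffice for the final sup. Using $\kappa<\lambda$ with $\lambda$ regular, this should give $\beta^*<\lambda$.

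\textbf{Counting nodes in $V[G]$.} Now let $G$ be $\P$-generic with $p^*\in G$, and work in $V[G]$.
\begin{itemize}
\item For each $f\in 2^\kappa\cap V$, fix in $V$ a lower bound $q_f$ of $\langle q_{f\restriction i}\mid i<\kappa\rangle$.
\item Extend $q_f$ to a condition (in $\Q$) deciding the node $t_f\in T_{\beta^*}$ of $\dot b$.
\item For $f\ne g$, let $s=f\wedge g$. The conditions below $q_{s^\frown 0}$ and $q_{s^\frown 1}$ force incompatible nodes at level $\beta_s<\beta^*$, so $t_f\ne t_g$.
\end{itemize}
Thus $|T_{\beta^*}|\ge (2^\kappa)^V\ge\lambda$. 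This contradicts $T$ being a $\lambda$-tree in $V[G]$, since $\lambda$ is still a cardinal there.
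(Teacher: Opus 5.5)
Your overall architecture matches the standard proof of this fact; the paper gives no proof and just cites Unger. That architecture is: split in $V$ using the $\kappa^+$-closure of $\Q$, make each split hold below $p^*$ via maximal antichains of $\P$ of size $\le\kappa$, then count distinct nodes on one level in $V[G]$. However, the step you flagged, $\beta^*<\lambda$, is a genuine gap, and neither remedy you suggest closes it.

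Under the hypotheses one can have $2^{<\kappa}\ge\lambda$, for example $\kappa=\omega_1$ and $\lambda=\omega_2=2^{\omega}$. Then the tree $2^{<\kappa}$ has $\lambda$ many splitting levels $\beta_s$, and nothing prevents $\sup_s\beta_s=\lambda$.
\begin{itemize}
\item Restricting to $2^{<\kappa}$ nodes per level is vacuous: level $i$ has only $2^{|i|}\le 2^{<\kappa}$ nodes anyway.
\item Uniformizing the levels does not help either. Every node at stage $i$ must be split before stage $i+1$, or the branches through it are not separated. So the stage-$i$ level must dominate the splitting levels of all $2^{|i|}$ nodes, and once $2^{|i|}\ge\lambda$ that supremum can already be $\lambda$. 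Bookkeeping only controls the supremum across the $\kappa$ stages, not within a stage.
\end{itemize}

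The missing idea is to shorten the tree. Let $\mu$ be the least cardinal with $2^\mu\ge\lambda$, so $\mu\le\kappa<\lambda$. Then $2^{<\mu}<\lambda$:
\begin{itemize}
\item if $\mu=\nu^+$, this is just minimality;
\item if $\mu$ is a limit cardinal, $2^{<\mu}$ is a supremum of $\mu<\lambda$ cardinals each below the regular $\lambda$.
\end{itemize}
Build $\langle q_s\mid s\in 2^{<\mu}\rangle$ instead. Since there are fewer than $\lambda$ nodes, $\beta^*=\sup_s\beta_s<\lambda$. Because $\mu\le\kappa$, $\kappa^+$-closure still gives lower bounds $q_f$ for $f\in 2^\mu$. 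Your counting argument then yields $|T_{\beta^*}|\ge(2^\mu)^V\ge\lambda$ in $V[G]$. (In the paper's own application of this fact, in the proof of Theorem B's no-Kurepa part, $\kappa=\omega$ and $\lambda=\omega_1$, where $2^{<\kappa}<\lambda$ trivially; the general statement needs the $\mu$ trick.)

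A smaller point concerns your splitting lemma: the antichain cannot be fixed in advance. A $p_i$ chosen to decide a split below the original $q$ need not force a split of the current pair $(q^i_0,q^i_1)$. Applying the local argument below $p_i$ only yields some $p'\le p_i$ that forces it.

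Instead, build the antichain during the recursion:
\begin{itemize}
\item at step $i$, take $p$ incompatible with all earlier $p_j$;
\item in $V[G]$ with $p\in G$, split $q^i_0$ into two extensions forcing different nodes at some level, and extend $q^i_1$ to decide its node at that level; one of the two extensions of $q^i_0$ disagrees with it;
\item let $p_i\le p$ force this configuration.
\end{itemize}
Stop once the antichain is maximal below $p^*$, which happens before $\kappa^+$ by the chain condition.
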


In \cite{kurepa_paper}, we prove an analogue of the previous lemma for more general trees, and as a corollary we obtain the following fact:


\begin{fact}\label{closed_preservation_fact}
Let $\xi$ be a cardinal and $\mu < \kappa\le\lambda$ be regular cardinals such that $2^\mu\ge \xi$ and $2^{<\mu}<\kappa$. Let $\Q$ be a $\mu^+$-closed forcing notion and $\P$ be a $\mu^+$-cc forcing notion. If $T$ is a $\power_\kappa\lambda$-tree with width at most $\xi$ in $V[\P]$, then forcing with $\Q$ over $V[\P]$ does not add a cofinal branch through $T$.
\end{fact}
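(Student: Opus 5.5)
The plan is to run the Silver/Unger splitting argument from Fact~\ref{F:ccc_Closed}, adapted to $\power_\kappa\lambda$-trees. The two places that need new work are the choice of a single index $x\in\power_\kappa\lambda$ to play the role of a level, and a pigeonhole argument at level $T_x$ that uses the width bound. Suppose the conclusion fails. Fix a $\P$-name $\dot T$ for a $\power_\kappa\lambda$-tree of width at most $\xi$. Fix conditions $(p^*,q^*)\in\P\times\Q$ and a $\P\times\Q$-name $\dot b$ such that $(p^*,q^*)$ forces that $\dot b$ is a cofinal branch through $\dot T$ and $\dot b\notin V[\dot G_\P]$. Since $\P\times\Q\cong\Q\times\P$, all of the construction below can be carried out in $V$, where $\Q$ really is $\mu^+$-closed.

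\textbf{Splitting lemma (in $V$).} For every $q\le q^*$ there are $q^0,q^1\le q$ and a set $y\in\power_\kappa\lambda$ such that
\[
(p^*,q^0)\text{ and }(p^*,q^1)\text{ force }\dot b\cap\dot T_y\text{ to be different}.
\]
Here $\dot T_y$ is the level of $\dot T$ at $y$, and $\dot b\cap \dot T_y$ is its unique element. A single pair $q^0,q^1$ cannot work outright, because the disagreement is only forced after deciding information in $\P$. So I would argue as follows. In $V[G_\P]$, $\dot b$ is not in $V[G_\P]$, so below any condition of $\Q$ there are two extensions that decide $\dot b$ differently at some level. Fix a maximal antichain $A\subseteq\P$ below $p^*$ consisting of conditions that decide witnesses. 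It has size at most $\mu$. I handle its elements one at a time along a descending sequence in $\Q$ of length at most $\mu$, using $\mu^+$-closure at limits, and I also split each branch. At the end I take lower bounds $q^0,q^1$ and let $y$ be the union of the at most $\mu$ many levels involved. Then $y\in\power_\kappa\lambda$ because $\mu<\kappa$ and $\kappa$ is regular. Since the tree is a $\power_\kappa\lambda$-tree, disagreement at some $y'\subseteq y$ propagates to disagreement at $y$. Every $p\in A$ forces the disagreement, so $p^*$ does.

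\textbf{The tree of conditions.} Iterating the splitting lemma, I build $\langle q_s\mid s\in 2^{\le\mu}\rangle$ and $\langle y_s\mid s\in 2^{<\mu}\rangle$. The $q_s$ are descending along $\subseteq$, with lower bounds at limit lengths by $\mu^+$-closure. The pair $q_{s^\frown 0},q_{s^\frown 1}$ together with $y_s$ witnesses the splitting lemma below $q_s$. Let $x=\bigcup_{s\in 2^{<\mu}}y_s$. Because $2^{<\mu}<\kappa$ and $\kappa$ is regular, $x\in\power_\kappa\lambda$. Now let $f\ne g$ be in ${}^\mu 2$ and let $s=f\cap g$. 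Then $(p^*,q_f)$ and $(p^*,q_g)$ force different values of $\dot b$ at $y_s\subseteq x$, so they force different values of $\dot b\cap\dot T_x$.

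\textbf{Pigeonhole contradiction.} Let $G$ be $\P$-generic with $p^*\in G$. In $V[G]$, $T_x$ has size $<\xi\le 2^\mu$. Moreover $({}^\mu 2)^V$ still has size $2^\mu$ in $V[G]$, since $\P$ is $\mu^+$-cc and preserves cardinals $\ge\mu^+$. For each $f$, force further with $\Q$ below $q_f$; there is some $t\in T_x$ such that some extension of $q_f$ forces $\dot b\cap T_x=t$. Choose one such $t_f$. By pigeonhole there are $f\ne g$ with $t_f=t_g$. This gives extensions of $q_f$ and $q_g$ that, together with a condition in $G$ below $p^*$, force the same value at $x$, which is a contradiction.

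\textbf{Main obstacle.} The delicate step is the splitting lemma: producing the splits in $V$ uniformly over $\P$-generics. The argument uses the $\mu^+$-cc to keep the antichain bookkeeping at length $\le\mu$, and $\mu<\kappa$ to keep $y$ in $\power_\kappa\lambda$.
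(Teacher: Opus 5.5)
Your proposal is correct and follows the standard Silver--Unger splitting argument on which this fact rests. The paper itself only cites \cite{kurepa_paper} for it, and the same scheme appears in Claims \ref{C:diff}--\ref{cl:last} of the proof of Theorem \ref{Th:NoKurepa}: $2^{<\mu}<\kappa$ keeps $x$ in $\power_\kappa\lambda$, and $\xi\le 2^\mu$ drives the final pigeonhole.

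One step in your splitting lemma should be stated more carefully. The maximal antichain cannot be fixed in advance; it has to be built during the recursion, at each stage choosing a condition incompatible with those already chosen and then extending it. Each step must also split below the two distinct current conditions $q^0_{<\beta},q^1_{<\beta}$, not below a single condition. This is possible because, if two extensions of $q^1_{<\beta}$ disagree on whether $\alpha\in\dot b$, then any extension of $q^0_{<\beta}$ deciding this disagrees with one of them.
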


\subsection{Mitchell forcing} \label{mitchell_section}

Let $\kappa$ be an ordinal with $\cf(\kappa) > \omega$. In practice, $\kappa$ will typically be (at least) an inaccessible cardinal, but we will sometimes need to consider $\kappa$ of the form $j(\lambda)$, where $j:V \rightarrow M$ is an 
elementary embedding with critical point $\lambda$, in which case $\kappa$ is inaccessible in $M$ but 
may not even be a cardinal in $V$.

For this subsection, let $\P$ denote the forcing $\Add(\omega, \kappa)$ for adding $\kappa$-many Cohen subsets 
to $\omega$.
For $\gamma < \kappa$, let $\P_\gamma$ denote the suborder $\Add(\omega, \gamma)$. We can now 
define the version of the Mitchell forcing that we will use, $\M=\M(\omega, \kappa)$, as follows. First, let $A$ be some unbounded subset of $\kappa$ with $\min(A) > \omega$. Recall that $\acc(A)$, the set of \emph{accumulation points} 
of $A$, is defined to be $\{\alpha \in A \mid \sup(A \cap \alpha) = \alpha\}$, and $\nacc(A) := A \setminus \acc(A)$. Also, $\acc^+(A):= \{\alpha < \sup(A) \mid \sup(A \cap \alpha) 
= \alpha\}$. Though our definition of $\M$ will depend on our choice of $A$, we suppress mention of 
$A$ in the notation; if we need to make the choice of $A$ explicit, we will speak of ``the Mitchell 
forcing $\M(\omega, \kappa)$ defined using the set $A$". 
Unless otherwise specified, we will always let $A$ be the set of
all inaccessible cardinals in the interval $(\omega, \kappa)$, but the definition of $\M$ and the properties 
listed below work for any choice of $A$.  Conditions in $\M$ are all pairs 
$(p, q)$ such that 
\begin{itemize}
  \item $p \in \bb{P}$;
  \item $q$ is a function and $\dom(q) \in [\nacc(A)]^{\leq \omega}$;
  \item for all $\alpha \in \dom(q)$, $p \restriction \alpha \Vdash_{\P_\alpha}$ 
  ``$q(\alpha) \in \Add(\omega_1,1)^{V^{\P_\alpha}}$''.
\end{itemize}
If $(p_0,q_0), (p_1,q_1) \in \M$, then $(p_1,q_1) \leq_\M (p_0,q_0)$ if and only if
\begin{itemize}
  \item $p_1 \leq_{\P} p_0 $;
  \item $\dom(q_1) \supseteq \dom(q_0)$;
  \item for all $\alpha \in \dom(q_0)$, $p_1 \restriction \alpha \Vdash_{\P_\alpha}$ 
  ``$q_1(\alpha) \leq q_0(\alpha)$''. 
\end{itemize} 
For $\delta < \kappa$, we let $\M_\delta$ denote the suborder of $\M$ consisting of all conditions 
$(p,q)$ such that the domains of both $p$ and $q$ are contained in $\delta$.

\begin{remark} \label{mitchell_remark}
The following are some of the key properties of $\M$ (cf.\ 
\cite{abraham}, \cite{MIT:tree} for further details and proofs):
\begin{enumerate}
  \item If $\kappa$ is inaccessible, then $\M$ is $\kappa$-Knaster.
  \item $\M$ has the $\omega_1$-covering and $\omega_1$-approximation properties. Together with the previous 
  item, this implies that, if $\kappa$ is inaccessible, then forcing with $\M$ preserves $\omega_1$ and all cardinals greater than or equal to $\kappa$.
  \item If $\kappa$ is inaccessible, then $\Vdash_{\M} $``$2^\omega = \kappa = \omega_2$''.
  \item There is a projection onto $\M$ from a forcing of the form $\Add(\omega, \kappa) \times 
  \Q$, where $\Q$ is $\omega_1$-closed (here $\Q$ is what is often called the \emph{term forcing} associated with $\M$; it can be thought of as the set of all pairs $(\emptyset,q)$ from $\M$, with the order inherited from $\M$).\label{mitchell_product}
  \item \label{quotient_approx_prop} 
  For all inaccessible cardinals $\delta \in \acc^+(A)$,
  there is a projection from $\M$ to $\M_\delta$ and, in $V^{\M_\delta}$, the quotient 
  forcing $\M/\M_\delta$ has the $\omega_1$-approximation property.
  \item \label{quotient_product_prop} 
  For all inaccessible cardinals $\delta \in \acc^+(A)$, let $\delta^\dagger$ denote 
  $\min(A \setminus \delta+1)$. Then, in $V^{\M_\delta}$, the quotient 
  forcing $\M/\M_\delta$ is of the form $\Add(\omega, \delta^\dagger - \delta) \ast \dot{\M}^\delta$, where, 
  in $V^{\M \ast \Add(\omega, \delta^\dagger - \delta)}$, there is a projection onto $\M^\delta$ from a forcing 
  of the form $\Add(\omega, \kappa - \delta^\dagger) \times \Q^*_\delta$, where $\Q^*_\delta$ is $\omega_1$-closed.
\end{enumerate}
\end{remark}

\subsection{Automorphisms of trees}

If $T$ is a tree, then an \emph{automorphism} of $T$ is a bijective 
map $g:T \ra T$ such that $g$ and $g^{-1}$ both preserve the order $<_T$.
Note that an automorphism must be level-preserving, i.e., for all 
$\alpha < \height(T)$, $g[T_\alpha] = T_\alpha$. We let $\mathrm{Aut}(T)$ denote 
the set of all automorphisms of $T$.

Fix for now a normal, infinitely splitting $\omega_1$-tree $T$.
If $\alpha < \beta < \omega_1$ and $x \in T_\beta$, then $x \restriction 
\alpha$ denotes the unique predecessor of $x$ in $T_\alpha$. If $X \subseteq T_\beta$, 
then $X \restriction \alpha$ denotes $\{x \restriction \alpha \mid x \in X\}$. We 
say that $X$ has \emph{unique drop-downs to $\alpha$} if, for all distinct 
$x,y \in X$, we have $x \restriction \alpha \neq y \restriction \alpha$. 
Notice that, if $\alpha < \omega_1$, then an element of $\mathrm{Aut}(T_{\leq \alpha})$ 
is uniquely determined by its values on $T_\alpha$.

Using the normality of $T$, the following proposition is immediate.

\begin{proposition} \label{prop: limit_extension}
  Suppose that $\beta < \omega_1$ is a limit ordinal and $g \in \mathrm{Aut}(T_{<\beta})$. 
  Then there exists at most one $g' \in \mathrm{Aut}(T_{\leq \beta})$ such that 
  $g' \supseteq g$, and the following are equivalent:
  \begin{enumerate}
    \item there exists $g' \in \mathrm{Aut}(T_{\leq \beta})$ such that $g' \supseteq g$;
    \item for all $x \in T_\beta$ and $j \in \{-1,1\}$, there exists $y \in T_\beta$ such 
    that, for all $\alpha < \beta$, we have $g^j(x \restriction \alpha) = y \restriction \alpha$.
  \end{enumerate}
\end{proposition}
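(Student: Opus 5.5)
The argument uses only the normality of $T$, mainly the second clause: at a limit level $\beta$, a node of $T_\beta$ is determined by its set of predecessors. Since $T_{\leq\beta} = T_{<\beta}\cup T_\beta$, any extension $g'$ of $g$ is determined by its values on $T_\beta$.

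\emph{Uniqueness.} Let $g'\supseteq g$ be an automorphism of $T_{\leq\beta}$ and let $x\in T_\beta$. Because $g'$ preserves the order in both directions and is level-preserving, $\pred_T(g'(x)) = g[\pred_T(x)]$. Concretely, $g'(x)\restriction\alpha = g(x\restriction\alpha)$ for all $\alpha<\beta$. By normality at the limit level $\beta$, at most one node of $T_\beta$ has this predecessor set. So $g'(x)$ is determined by $g$, and there is at most one such $g'$.

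\emph{(1)$\Rightarrow$(2).} Suppose $g'$ exists. For $j=1$, take $y = g'(x)$; the computation above gives $g(x\restriction\alpha) = y\restriction\alpha$ for all $\alpha<\beta$. For $j=-1$, apply the same computation to $(g')^{-1}$. This map is also an automorphism of $T_{\leq\beta}$ and extends $g^{-1}$, so $y=(g')^{-1}(x)$ works.

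\emph{(2)$\Rightarrow$(1).} For $x\in T_\beta$, normality gives a unique $y\in T_\beta$ with $y\restriction\alpha = g(x\restriction\alpha)$ for all $\alpha<\beta$, and (2) with $j=1$ guarantees it exists. Define $g'(x)$ to be this $y$, and set $g' = g$ on $T_{<\beta}$. Similarly, (2) with $j=-1$ yields a map $h$ on $T_\beta$ given by $h(x)\restriction\alpha = g^{-1}(x\restriction\alpha)$.

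Now check that $g'\restriction T_\beta$ and $h$ are mutually inverse. We have $h(g'(x))\restriction\alpha = g^{-1}(g(x\restriction\alpha)) = x\restriction\alpha$ for all $\alpha<\beta$. By uniqueness of limit nodes this gives $h(g'(x)) = x$, and the other composition is symmetric. Hence $g'$ is a bijection of $T_{\leq\beta}$.

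For order preservation, it suffices to handle comparisons $s<_T x$ with $s\in T_{<\beta}$ and $x\in T_\beta$, since $g$ already handles $T_{<\beta}$ and $T_\beta$ is an antichain. If $s = x\restriction\alpha$, then $g(s) = g'(x)\restriction\alpha$, so $g(s)<_T g'(x)$. The converse direction follows by the same argument applied to $h$.

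There is no real obstacle here. The only care needed is to use the $j=-1$ clause of (2) to obtain surjectivity onto $T_\beta$, or equivalently to show that the inverse preserves the order.
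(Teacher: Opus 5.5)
Your proof is correct and is exactly the routine argument the paper has in mind: the paper gives no written proof and simply calls the proposition immediate from the normality of $T$, namely that a node on a limit level is determined by its predecessors. Your use of the $j=-1$ clause to build $h$, which yields bijectivity and shows that $(g')^{-1}$ preserves the order, correctly fills in the details the paper omits.
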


The straightforward proof of the following proposition, showing that we have some 
control over extensions of automorphisms of initial segments of $T$, is left 
to the reader.

\begin{proposition} \label{prop: ext_prop}
  Suppose that $\alpha < \omega_1$ and $g \in \mathrm{Aut}(T_{\leq \alpha})$. 
  Suppose moreover that
  \begin{itemize}
    \item $X \subseteq T_{\alpha+1}$ is finite;
    \item for all $x \in X$ and $j \in \{-1,1\}$, we are given 
    $z^j_x \in T_{\alpha+1}$ in such a way that
    \begin{itemize}
      \item $g^j(x \restriction \alpha) = z^j_x \restriction \alpha$;
      \item if $x_0$ and $x_1$ are distinct elements of $X$ and $j \in \{-1,1\}$, 
      then $z^j_{x_0} \neq z^j_{x_1}$;
      \item if $x_0 \in X$, $j \in \{-1,1\}$, and $z^j_{x_0} = x_1 \in X$, then 
      $z^{1-j}_{x_1} = x_0$.
    \end{itemize}
  \end{itemize}
  Then there exists $h \in \mathrm{Aut}(T_{\leq \alpha+1})$ such that 
  \begin{itemize}
    \item $h \supseteq g$; and 
    \item for all $(x,j) \in X \times \{-1,1\}$, we have $h^j(x) = z^j_x$.
  \end{itemize}
\end{proposition}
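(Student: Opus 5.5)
The plan is to build $h$ level by level. We set $h \restriction T_{\leq \alpha} = g$ and then define $h$ on $T_{\alpha+1}$ separately above each node of $T_\alpha$. Since $g$ is an automorphism of $T_{\leq\alpha}$, any $h$ extending it must send $\mathrm{succ}_T(x)$ bijectively onto $\mathrm{succ}_T(g(x))$ for each $x \in T_\alpha$. Conversely, any family of such bijections, together with $g$, gives a level-preserving bijection of $T_{\leq \alpha+1}$. This bijection respects $<_T$ in both directions, because order relations between level $\alpha+1$ and lower levels are determined by drop-downs to $\alpha$, and $g$ preserves the order below. So it suffices to choose these bijections compatibly with the prescribed values. (I read the third bullet's ``$z^{1-j}_{x_1}$'' as $z^{-j}_{x_1}$, the intended meaning.)

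First I will translate the requirements into a finite set $R$ of pairs that $h \restriction T_{\alpha+1}$ must contain:
\[
  R = \{(x, z^1_x) \mid x \in X\} \cup \{(z^{-1}_x, x) \mid x \in X\}.
\]
The second type encodes $h^{-1}(x) = z^{-1}_x$. Each pair $(a,b) \in R$ satisfies $g(a \restriction \alpha) = b \restriction \alpha$. For the first type this is the hypothesis with $j=1$. For the second type, $g^{-1}(x\restriction\alpha) = z^{-1}_x \restriction \alpha$ gives $g(z^{-1}_x\restriction\alpha) = x\restriction\alpha$.

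The key step, and the only one needing care, is to check that $R$ is a partial injective function. Two pairs of the first type have distinct first coordinates, since they come from distinct elements of $X$. They also have distinct second coordinates, by the injectivity hypothesis with $j=1$. Pairs of the second type are handled symmetrically using injectivity of $x \mapsto z^{-1}_x$.

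For mixed pairs, suppose $(x_0, z^1_{x_0})$ and $(z^{-1}_{x_1}, x_1)$ have the same first coordinate, so $z^{-1}_{x_1} = x_0 \in X$. Applying the third hypothesis to $x_1$ with $j = -1$ gives $z^{1}_{x_0} = x_1$, so the two pairs coincide. If instead they share the second coordinate, then $z^1_{x_0} = x_1 \in X$, and the third hypothesis with $j=1$ gives $z^{-1}_{x_1} = x_0$; again the pairs coincide.

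Next, for each $x \in T_\alpha$, let $R_x$ be the set of pairs in $R$ whose first coordinate lies in $\mathrm{succ}_T(x)$. By the compatibility above, $R_x$ is a finite partial injection from $\mathrm{succ}_T(x)$ into $\mathrm{succ}_T(g(x))$. Both successor sets are countably infinite, since $T$ is infinitely splitting and levels of an $\omega_1$-tree are countable. Removing finitely many elements from each leaves the complements of $\dom(R_x)$ and $\mathrm{ran}(R_x)$ countably infinite. We then extend $R_x$ by an arbitrary bijection between these complements to get a bijection $h_x : \mathrm{succ}_T(x) \to \mathrm{succ}_T(g(x))$.

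Finally, let $h = g \cup \bigcup_{x \in T_\alpha} h_x$. By the first paragraph this is an automorphism of $T_{\leq\alpha+1}$ extending $g$. It contains $R$, so $h(x) = z^1_x$ and $h^{-1}(x) = z^{-1}_x$ for every $x \in X$. The main obstacle is just the bookkeeping in the consistency check for mixed pairs, which is exactly what the third hypothesis is designed to handle.
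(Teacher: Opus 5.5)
Your proof is correct. The paper leaves this proof to the reader, and your argument is the intended straightforward one: show that the prescribed values form a finite partial injection $R$ respecting $g$ on drop-downs (which is exactly where the third hypothesis, read with $-j$, is used), then extend each piece $R_x$ to a bijection $\mathrm{succ}_T(x)\to\mathrm{succ}_T(g(x))$ using infinite splitting and countable levels. This is the same successor-set-by-successor-set construction the paper uses in the proof of Proposition~\ref{prop: 54}.
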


We now recall some notions and basic results from \cite{ks}. Some definitions and lemmas are 
given descriptive names for ease of later referral; these names also come from \cite{ks}.

\begin{definition}[Consistency] \label{def: con}
  Suppose that $\alpha < \alpha' \leq \beta < \omega_1$ and 
  $g \in \aut(T_{\leq \beta})$. Suppose 
  moreover that $X \subseteq T_{\alpha'}$ has unique drop-downs to $\alpha$. We say that 
  $X \restriction \alpha$ and $X$ are \emph{$g$-consistent} if, for all $x,y \in X$, 
  we have $g(x) = y$ iff $g(x \restriction \alpha) = g(y \restriction \alpha)$. 
  Similarly, if $n < \omega$ and $\vec{x}$ is an injective $n$-tuple from $T_{\alpha'}$, 
  we say that $\vec{x} \restriction \alpha$ and $\vec{x}$ are 
  $g$-consistent if, for all $i,j < n$, we have $g(x_i) = x_j$ iff $g(x_i \restriction 
  \alpha) = x_j \restriction \alpha$.
\end{definition}

\begin{proposition} \label{prop: 54}
  Suppose that $\alpha < \omega_1$, $g \in \mathrm{Aut}(T_{\leq \alpha})$, 
  $X \subseteq T_{\alpha+1}$ is a finite set with unique drop-downs to $\alpha$, and 
  $Y \subseteq T_{\alpha+1}$ is a finite set disjoint from $X$. Then there is 
  $g' \in \mathrm{Aut}(T_{\leq \alpha + 1})$ such that
  \begin{itemize}
    \item $g' \supseteq g$;
    \item $X \restriction \alpha$ and $X$ are $g'$-consistent;
    \item $g'[Y] \cap (X \cup Y) = \emptyset$.
  \end{itemize}    
\end{proposition}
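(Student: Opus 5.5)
The plan is to obtain $g'$ directly from Proposition~\ref{prop: ext_prop}, applied to the finite set $X \cup Y \subseteq T_{\alpha+1}$. The whole task is to choose the prescribed values $z^j_w$ for $w \in X \cup Y$ and $j \in \{-1,1\}$ so that the three hypotheses of that proposition hold and the resulting extension has the two required properties. I read $g'$-consistency in the intended sense: for $x,y \in X$, $g'(x) = y$ iff $g(x \restriction \alpha) = y \restriction \alpha$. The left-to-right direction is automatic for any level-preserving extension $g' \supseteq g$, since $g'(x) = y$ forces $g(x\restriction\alpha) = g'(x)\restriction\alpha = y\restriction\alpha$. So only the right-to-left direction has to be arranged, together with the disjointness clause for $Y$.

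First I define the \emph{forced} values. For $x \in X$ and $j \in \{-1,1\}$, if there is $y \in X$ with $g^j(x \restriction \alpha) = y \restriction \alpha$, then $y$ is unique because $X$ has unique drop-downs to $\alpha$, and I set $z^j_x := y$. This choice is symmetric. If $z^1_x = y$, then $g(x\restriction\alpha) = y\restriction\alpha$, hence $g^{-1}(y\restriction\alpha) = x\restriction\alpha$, so by the same rule $z^{-1}_y = x$; the case $j=-1$ is analogous. These forced values are injective in $x$ for fixed $j$, since $g^j$ is injective on $T_\alpha$ and the elements of $X$ have distinct drop-downs. With these values in place, every instance of the right-to-left direction of consistency holds for any $h$ realizing the prescription.

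Next I assign \emph{fresh} values to all remaining pairs. These are the pairs $(x,j)$ with $x \in X$ for which no match exists, and all pairs $(y,j)$ with $y \in Y$. Enumerating these finitely many pairs, I choose $z^j_w$ to be an immediate successor of $g^j(w \restriction \alpha)$ that lies outside $X \cup Y$ and differs from every value chosen so far. This is possible because $T$ is normal and infinitely splitting, so each node of $T_\alpha$ has infinitely many successors in $T_{\alpha+1}$, while only finitely many values must be avoided.

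Finally I verify the hypotheses of Proposition~\ref{prop: ext_prop}.
\begin{itemize}
\item The drop-down condition $z^j_w \restriction \alpha = g^j(w\restriction\alpha)$ holds by construction.
\item Injectivity for fixed $j$ holds: forced values are mutually distinct, and fresh values avoid $X \cup Y$ and all earlier choices, so they differ from forced values and from each other.
\item The inverse condition, that $z^j_{x_0} = x_1 \in X\cup Y$ implies $z^{-j}_{x_1} = x_0$, only arises for forced values, because fresh values lie outside $X \cup Y$. For forced values it is exactly the symmetry noted above.
\end{itemize}
Proposition~\ref{prop: ext_prop} then yields $g' \supseteq g$ in $\aut(T_{\le\alpha+1})$ realizing all these values. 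Consistency follows: if $g(x\restriction\alpha) = y\restriction\alpha$ with $x,y\in X$, then $g'(x) = z^1_x = y$. The clause $g'[Y]\cap(X\cup Y)=\emptyset$ holds because each $g'(y) = z^1_y$ was chosen fresh.

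The only real obstacle is this bookkeeping: making the forced values symmetric and ensuring that fresh values never land in $X \cup Y$. The latter is what prevents accidental identifications, which would violate either consistency or the disjointness clause for $Y$.
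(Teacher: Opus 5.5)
Your argument is correct, but it reaches the result by a different route from the paper.

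\textbf{What the paper does.} The paper proves this proposition from scratch, without invoking Proposition~\ref{prop: ext_prop}. For each $x\in T_\alpha$ it fixes an enumeration $\langle x_n \mid n<\omega\rangle$ of $\mathrm{succ}_T(x)$. It records the index $n_X(x)$ of the unique successor of $x$ lying in $X$, and the finite set $A_Y(x)$ of indices of successors lying in $Y$. It then chooses a permutation $\pi_x$ of $\omega$ such that $\pi_x(n_X(x)) = n_X(g(x))$ whenever both are defined, and such that $\pi_x[A_Y(x)]$ avoids $A_Y(g(x))\cup\{n_X(g(x))\}$. Finally it sets $g'(x_n) = g(x)_{\pi_x(n)}$.

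\textbf{What you do differently.} You package all the constraints as a finite prescription of values $z^j_w$ on $X\cup Y$, and let Proposition~\ref{prop: ext_prop} do the node-by-node extension. The underlying mechanism is the same in both proofs: extending a finite partial injection between two countably infinite successor sets. The bookkeeping differs, though.
\begin{itemize}
\item Proposition~\ref{prop: ext_prop} prescribes both $h$ and $h^{-1}$. So you must make the forced values symmetric, and keep the fresh values outside $X\cup Y$ so that its inverse-compatibility hypothesis is vacuous for them.
\item The paper specifies $g'$ only in the forward direction, and bijectivity comes for free from the permutations $\pi_x$.
\end{itemize}

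\textbf{What each approach buys.} Given Proposition~\ref{prop: ext_prop}, your version is shorter. It also yields, as a byproduct, $g'^{-1}[Y]\cap(X\cup Y)=\emptyset$, which the statement does not require. The paper's version is self-contained. That matters slightly here, because the paper leaves the proof of Proposition~\ref{prop: ext_prop} to the reader, and that proof is essentially the same successor-set argument the paper carries out directly.
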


\begin{proof}
  For each $x \in T_\alpha$, let $\langle x_n \mid n < \omega \rangle$ be an injective 
  enumeration of $\mathrm{succ}_T(x)$. Let $X_0 := \{x \in T_\alpha \mid 
  X \cap \mathrm{succ}_T(x)\} \neq \emptyset$. Note that, since $X$ has unique drop-downs 
  to $\alpha$, for each $x \in X_0$, there is a unique $n < \omega$ such that 
  $x_n \in X$; denote this $n$ by $n_X(x)$. Similarly, let $Y_0 = \{x \in T_\alpha 
  \mid Y \cap \mathrm{succ}_T(x)\} \neq \emptyset$ and, for all $x \in Y_0$, let 
  $A_Y(x) = \{n < \omega \mid x_n \in Y\}$. Then each $A_Y(x)$ is a finite subset of 
  $\omega$ and does not contain $n_X(x)$ if the latter is defined. If $x \notin Y_0$, 
  let $A_Y(x) = \emptyset$.
  
  Now, for each $x \in T_\alpha$, fix a permutation $\pi_x$ of $\omega$ with the 
  following properties:
  \begin{itemize}
    \item if $x, g(x) \in X_0$, then $\pi_x(n_X(x)) = n_X(g(x))$;
    \item if $x \in Y_0$, then $\pi_x[A_Y(x)]$ is disjoint from 
    $A_Y(g(x))$ and does not contain $n_X(g(x))$ if the latter is defined.
  \end{itemize}
  Now define $g' \in \mathrm{Aut}(T_{\leq \alpha + 1})$ by setting 
  $g'(x_n) = g(x)_{\pi_x(n)}$ for all $(x,n) \in T_\alpha \times \omega$. 
  It is easily verified that $g'$ is as desired.
\end{proof}

\begin{definition}[Separation]
  Suppose that $\alpha \leq \beta < \omega_1$, $\mathcal{G} = \{g_\tau \mid \tau \in I\}$ is 
  an indexed family from $\aut(T_{\leq \beta})$, $n < \omega$ and $\vec{x}$ is an injective 
  $n$-tuple from $T_\alpha$. We say that 
  $\mc{G}$ is \emph{separated on $\vec{x}$} if, for all $m < n$, the following 
  statements hold:
  \begin{enumerate}
    \item for all $\tau \in I$, $g_\tau(x_m) \neq x_m$;
    \item there exists at most one triple $(k,j,\tau)$ such that $k < m$, $j \in \{-1,1\}$, 
    $\tau \in I$, and $g^j_\tau(x_m) = x_k$.
  \end{enumerate}
  If $X \subseteq T_\alpha$ is finite, then we say that $\mc{G}$ is \emph{separated on $X$} 
  if there is some injective enumeration $\vec{x}$ of $X$ such that $\mc{G}$ is separated on 
  $\vec{x}$. We say that $\mc{G}$ is \emph{separated} if it is separated on $X$ for every finite 
  $X \subseteq T_\beta$.
\end{definition}

The next five technical results are proved in \cite{ks}; we direct the reader there for 
the proofs.

\begin{proposition}[Persistence] {\cite[Lemma 5.7]{ks}} \label{prop: ksLem57}
  Suppose that $\alpha < \beta < \omega_1$, $\mc{G}$ is an indexed 
  family from $\mathrm{Aut}(T_{\leq\beta})$, and $X \subseteq T_\beta$ is a finite 
  set with unique drop-downs to $\alpha$. If $\mc{G}$ is 
  separated on $X \restriction \alpha$, then $\mc{G}$ is separated on $X$.
\end{proposition}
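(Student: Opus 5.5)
The plan is to transfer the enumeration witnessing separation from $X \restriction \alpha$ up to $X$, and then to show that each of the two clauses of separation can only become easier to satisfy when we pass from level $\alpha$ to level $\beta$. The transfer works because automorphisms of $T_{\leq\beta}$ preserve the tree order and are level-preserving. Hence for every $g \in \mathcal{G}$, every $j \in \{-1,1\}$ and every $x \in T_\beta$ we have
\[
g^j(x) \restriction \alpha = g^j(x \restriction \alpha).
\]
This identity is the only property of automorphisms we will need.

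First, since $X$ has unique drop-downs to $\alpha$, the map $x \mapsto x \restriction \alpha$ is a bijection from $X$ onto $X \restriction \alpha$. Fix an injective enumeration $\vec{y} = \langle y_m \mid m < n \rangle$ of $X \restriction \alpha$ on which $\mathcal{G}$ is separated. For each $m < n$, let $x_m$ be the unique element of $X$ with $x_m \restriction \alpha = y_m$. Then $\vec{x}$ is an injective enumeration of $X$, and we will show that $\mathcal{G}$ is separated on $\vec{x}$.

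For clause (1), suppose towards a contradiction that $g_\tau(x_m) = x_m$ for some $\tau$. Restricting both sides to level $\alpha$ gives $g_\tau(y_m) = y_m$, which contradicts separation on $\vec{y}$.

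For clause (2), suppose $(k,j,\tau)$ is a triple with $k < m$ and $g^j_\tau(x_m) = x_k$. Restricting to level $\alpha$ gives $g^j_\tau(y_m) = y_k$. So every triple witnessing a hit for $\vec{x}$ at index $m$ also witnesses a hit for $\vec{y}$ at index $m$. Since there is at most one such triple for $\vec{y}$, there is at most one for $\vec{x}$.

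There is no real obstacle in this argument. The only points needing care are two. First, the indexing of $\vec{x}$ must be inherited from the witnessing enumeration of $X \restriction \alpha$, and not taken arbitrarily. Second, we need the fact that automorphisms commute with restriction to lower levels, i.e. the displayed identity above.
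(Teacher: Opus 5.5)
Your proof is correct: you transfer the witnessing enumeration of $X \restriction \alpha$ to $X$ via the bijection $x \mapsto x \restriction \alpha$. Then the identity $g^j(x)\restriction\alpha = g^j(x\restriction\alpha)$ sends any violation of either clause at level $\beta$ to the same violation at level $\alpha$.

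The paper gives no proof of its own and defers to \cite[Lemma 5.7]{ks}; your argument is the standard projection argument for this persistence fact, and it uses the unique drop-downs hypothesis in the right place.
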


\begin{lemma}[Key Property] {\cite[Proposition 5.11]{ks}} \label{lemma: ksProp511}
  Suppose that $\alpha < \beta < \omega_1$, $n < \omega$, $\vec{x}$ is an injective 
  $n$-tuple from $T_\alpha$, and $\mc{G} = \{g_\tau \mid \tau \in I\}$ is a finite indexed 
  set from $\mathrm{Aut}(T_{\leq \beta})$ that is 
  separated on $\vec{x}$. Let $t \subseteq T_\beta$ be finite. Then there is an 
  $n$-tuple $\vec{y}$ from $T_\beta \setminus t$ such that
  \begin{itemize}
    \item for all $i < n$, we have $x_i <_T y_i$;
    \item for all $\tau \in I$, $\vec{x}$ and $\vec{y}$ are $g_\tau$-consistent.
  \end{itemize}
\end{lemma}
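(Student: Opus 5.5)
We build $\vec{y}$ by following the injective enumeration $\vec{x} = \langle x_0, \ldots, x_{n-1} \rangle$ that witnesses separation. The idea is that, for each $m < n$, the value $y_m$ is either chosen freely or is forced by exactly one earlier coordinate. Throughout, "$g_\tau$-consistent" means: for all $i, k < n$ we have $g_\tau(y_i) = y_k$ iff $g_\tau(x_i) = x_k$.

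\emph{Step 1: one direction of consistency is automatic.} Suppose $x_i <_T y_i$ for all $i$. Each $g_\tau$ is an automorphism of $T_{\leq\beta}$, so it is level preserving and order preserving. Hence $g^j_\tau(y_i)\restriction\alpha = g^j_\tau(x_i)$. Consequently, if $g^j_\tau(y_i) = y_k$, then $g^j_\tau(x_i) = x_k$. So only the forward implication needs to be arranged: whenever $g^j_\tau(x_m) = x_k$, we must ensure $g^j_\tau(y_m) = y_k$. Since $\vec{x}$ is injective, $\vec{y}$ is automatically injective. By the first clause of separation, $g_\tau(x_m) \neq x_m$, so $g_\tau(y_m) \neq y_m$ and no constraint arises with $i = k$. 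By symmetry of $j \in \{-1,1\}$, it suffices to handle pairs $k < m$.

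\emph{Step 2: a forest of forced values.} For each $m$, the second clause of separation gives at most one triple $(k,j,\tau)$ with $k<m$ and $g^j_\tau(x_m) = x_k$.
\begin{itemize}
\item If there is no such triple, call $m$ a \emph{root}.
\item If there is one, call $k$ the \emph{parent} of $m$, and require $y_m := g^{-j}_\tau(y_k)$. Then $y_m >_T g^{-j}_\tau(x_k) = x_m$, as needed.
\end{itemize}
Uniqueness of the triple means the requirement for $m$ is imposed once and never conflicts with another: every relation $g^{j'}_{\tau'}(x_m) = x_{k'}$ with $k'<m$ is exactly the parent relation. Unwinding parents, each non-root $m$ has a root $r(m)$ and a word $w_m$ (a finite composition of the $g_\tau^{\pm1}$, as elements of $\mathrm{Aut}(T_{\leq\beta})$) such that $y_m = w_m(y_{r(m)})$. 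So the whole tuple is determined by the choice of $y_r$ for the roots $r$, and Step 1 then gives $g_\tau$-consistency for every $\tau \in I$.

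\emph{Step 3: avoiding $t$.} This is the only real obstacle. A forced value $w_m(y_r)$ could land in $t$ even when $y_r$ does not. For each root $r$, consider the finite set
\[
  F_r := \bigcup\{ w_m^{-1}[t] \mid r(m) = r\} \cup t,
\]
which is finite because $t$ and $I$ are finite and there are finitely many $m$. Since $T$ is normal and infinitely splitting and $\alpha < \beta$, the set of nodes of $T_\beta$ above $x_r$ is infinite. We therefore choose $y_r \in T_\beta$ with $x_r <_T y_r$ and $y_r \notin F_r$. Then every $y_m$ in the component of $r$ lies outside $t$, and this completes the construction.
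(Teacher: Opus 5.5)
Your proof is correct and complete. Separation turns the constraints into a forest, so each non-root value is forced by its unique parent edge, and Step~1 makes the other direction of consistency automatic; pulling $t$ back along the finitely many words $w_m$ to the roots handles avoidance of $t$. The paper gives no argument of its own here, since it imports the result from \cite[Proposition 5.11]{ks}, so your direct construction is a self-contained proof of the cited statement.
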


\begin{lemma}[1-Key Property] {\cite[Proposition 5.12]{ks}} \label{lemma: ksProp512}
  Suppose that $\alpha < \beta < \omega_1$, $n < \omega$, $\vec{x}$ is an injective 
  $n$-tuple from $T_\alpha$, and $\mc{G} = \{g_\tau \mid \tau \in I\}$ is a finite 
  indexed set from $\mathrm{Aut}(T_{\leq \beta})$ that is separated on $\vec{x}$. Let $m < n$, and fix $y^* \in T_\beta$ 
  with $x_m <_T y^*$. Then there is an $n$-tuple $\vec{y}$ from $T_\beta$ such that
  \begin{itemize}
    \item $y_m = y^*$;
    \item for all $i < n$, we have $x_i <_T y_i$;
    \item for all $\tau \in I$, $\vec{x}$ and $\vec{y}$ are $g_\tau$-consistent.
  \end{itemize}
\end{lemma}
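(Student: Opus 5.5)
The plan is to build $\vec{y}$ by propagating values along a graph that encodes how the automorphisms in $\mc{G}$ move the entries of $\vec{x}$. We will see that this graph is a forest, so we can root the component of $x_m$ at $x_m$ and start the propagation from $y^*$. In particular, the lemma should follow from separation alone, without appealing to Lemma \ref{lemma: ksProp511}.

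Let $\Gamma$ be the graph on vertex set $n$ with an edge between $k < i$ labelled $(j,\tau)$ whenever $g^j_\tau(x_i) = x_k$. Note that $g_\tau(x_i) = x_k$ iff $g_\tau^{-1}(x_k) = x_i$, so each relation of the form $g_\tau(x_a) = x_b$ with $a \neq b$ is recorded exactly once, at the larger index. By clause (1) of separation there are no loops. By clause (2), each vertex $i$ has at most one edge to a smaller vertex; in particular there are no multiple edges between a pair. Hence $\Gamma$ is a forest: in any cycle, the vertex of largest index would have two distinct edges going down. This is the step I expect to need the most care, namely checking that clause (2) really rules out both parallel edges and longer cycles. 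The argument just given seems sufficient.

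Next, re-root each tree of $\Gamma$. The component containing $m$ is rooted at $m$ and gets $y_m := y^*$. Every other component is rooted at its least vertex $r$, and $y_r$ is any element of $T_\beta$ above $x_r$, which exists by normality. We then propagate outward: if $i$ is already assigned and $k$ is a new neighbour via an edge with $g^j_\tau(x_i) = x_k$, set $y_k := g^j_\tau(y_i)$. Because $\Gamma$ is a forest, every vertex is reached along a unique path, so this is well defined. Since automorphisms are level- and order-preserving, $x_k = g^j_\tau(x_i) <_T g^j_\tau(y_i) = y_k$. Thus $x_i <_T y_i$ for all $i$, and the $y_i$ are distinct because the $x_i$ are distinct and each $y_i$ lies above $x_i$.

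Finally, we verify $g_\tau$-consistency for each $\tau$. If $g_\tau(x_i) = x_k$, then $i \neq k$ and this relation is an edge of $\Gamma$, and by construction $y_k = g_\tau(y_i)$ (or equivalently $y_i = g_\tau^{-1}(y_k)$). Conversely, suppose $g_\tau(y_i) = y_k$. Then $g_\tau(x_i)$ and $x_k$ both lie in $T_\alpha$ below the same node $g_\tau(y_i) = y_k$, so $g_\tau(x_i) = x_k$. Hence $\vec{x}$ and $\vec{y}$ are $g_\tau$-consistent for every $\tau \in I$, and $y_m = y^*$, as required.
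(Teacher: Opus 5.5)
Your proof is correct and complete. Note that the paper does not reprove this lemma; it quotes it from \cite[Proposition 5.12]{ks}, so there is no in-paper argument to compare against line by line.

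Your argument isolates the combinatorial content of separation. Clause (2) says each index has at most one link to a smaller index, so the relation graph $\Gamma$ is a forest and each edge carries a unique label. Clause (1) handles the diagonal case $i=k$ in the consistency check. The reverse direction of consistency is automatic, since automorphisms commute with restriction to level $\alpha$.

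It helps to compare with the natural index-by-index construction one would use for Lemma~\ref{lemma: ksProp511}. There you choose $y_i$ freely unless $x_i$ is linked to some earlier $x_k$, in which case $y_i$ is forced to be $g^{-j}_\tau(y_k)$. That amounts to rooting each component of $\Gamma$ at its least vertex. It is exactly what fails when $m$ is not the least index of its component, because then $y_m$ would be dictated by its lower neighbour rather than equal to $y^*$. Re-rooting the component of $m$ at $m$ is the right fix, and the forest structure is what makes it legitimate.

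One step deserves to be stated rather than left to ``by construction''. In a rooted tree every edge is a parent--child edge, so every relation $g_\tau(x_i)=x_k$ is actually traversed in the propagation. Together with the uniqueness of edge labels, this gives $g_\tau(y_i)=y_k$.

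Your route also shows the lemma needs only normality of $T$ (to extend each root to level $\beta$), not infinite splitting. The same forest yields Lemma~\ref{lemma: ksProp511} as well: choose each root value above $x_r$ outside the finitely many preimages of $t$ under the composed automorphisms along paths from the root. That finite set can be avoided because $x_r$ has infinitely many extensions at level $\beta$.
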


\begin{lemma}[Extension] {\cite[Proposition 5.15]{ks}} \label{lemma: ksProp515}
  Suppose that $\alpha < \beta < \omega_1$, $X \subseteq T_\beta$ is a finite set with 
  unique drop-downs to $\alpha$, $\{f_\tau \mid \tau \in I\}$ is a countable indexed set 
  from $\mathrm{Aut}(T_{\leq \alpha})$, and $A \subseteq I$ is finite. 
  Then there exists a collection $\{g_\tau \mid \tau \in I\}$ from 
  $\mathrm{Aut}(T_{\leq \beta})$ such that
  \begin{itemize}
    \item for all $\tau \in I$, we have $f_\tau \subseteq g_\tau$;
    \item for all $\tau \in A$, $X \restriction \alpha$ and $X$ are $g_\tau$-consistent;
    \item if $\{f_\tau \mid \tau \in A\}$ is separated on $X \restriction \alpha$, 
    then $\{g_\tau \mid \tau \in I\}$ is separated.
  \end{itemize}
\end{lemma}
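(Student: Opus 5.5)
The plan is to build all the $g_\tau$ at once by a single back-and-forth construction of length $\omega$. We avoid going level by level from $\alpha$ to $\beta$, because at a limit level $\gamma \le \beta$ the normal tree $T$ need not contain a top for every branch. Proposition \ref{prop: limit_extension} shows that a naive level-by-level extension can therefore get stuck.

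\textbf{Setup.} Since $I$ and $T_{\le\beta}$ are countable, fix an enumeration of all requirements: triples consisting of a point $x \in T_{\le\beta}\setminus T_{\le\alpha}$, an index $\tau\in I$, and a sign $j\in\{-1,1\}$. At stage $k$ we will have, for finitely many $\tau$, a finite partial map $p^k_\tau$ on $T_{\le\beta}\setminus T_{\le\alpha}$ with the following properties.
\begin{itemize}
\item Its domain and range are closed under meets and under restriction to levels $>\alpha$ that occur among the heights involved.
\item It preserves heights, $<_T$, and meets.
\item It is compatible with $f_\tau$: $p^k_\tau(x)\restriction\alpha=f_\tau(x\restriction\alpha)$.
\end{itemize}

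\textbf{Stage $0$ (consistency).} For $\tau\in A$, map the downward closure of $X$ above $\alpha$ as consistency demands. If $f_\tau(x\restriction\alpha)=y\restriction\alpha$ with $x,y\in X$, send the branch below $x$ onto the branch below $y$. Otherwise send $x$ to a point above $f_\tau(x\restriction\alpha)$ that avoids the finite set $X$ and every image already chosen. Injectivity across distinct $x$ comes from unique drop-downs together with the injectivity of $f_\tau$ on $T_\alpha$.

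\textbf{Extension step.} To add a new point $x$ to $\dom(p_\tau)$ (the inverse direction is symmetric), look at the meets of $x$ with the current domain. Let $x\wedge d$ be the longest such meet; its image is already defined. We need $y\in T_{\height(x)}$ above $p_\tau(x\wedge d)$ that branches off at the correct height and in a new immediate successor direction. Such a $y$ exists by normality (every node has extensions on every higher level) together with infinite splitting (an immediate successor can be chosen outside any finite set). The same freedom lets us choose $y$ outside any prescribed finite set $t$.

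\textbf{Separation.} We use that freedom to keep images ``generic'': each new image avoids all points already mentioned in any $p_{\tau'}$ and in $X$. Then a new coincidence $g^j_\tau(u)=v$ among points above $\alpha$ arises only if it is forced. It is forced either by the consistency requirements on $X$, or by being above an identification already present in $f_\tau$ on $T_{\le\alpha}$. The union $g_\tau=f_\tau\cup\bigcup_k p^k_\tau$ is total and onto on $T_{\le\beta}$, so it is an automorphism extending $f_\tau$.

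For separation, take a finite $Z\subseteq T_\beta$. Points of $Z$ not lying above $X$ along forced identifications have $g_\tau$-orbits that were chosen fresh. Hence none of them is fixed, and each has at most one earlier link in a suitable enumeration, since a fresh point is never sent onto an already-listed point. Points in the cone over $X$ inherit separation from the hypothesis on $X\restriction\alpha$, via Persistence (Proposition \ref{prop: ksLem57}) and the consistency arranged at stage $0$.

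\textbf{Main obstacle.} The step I expect to be hardest is this last verification of global separation for all of $\{g_\tau\mid\tau\in I\}$, including $\tau\notin A$. We must check that the genericity bookkeeping really rules out every second link $(k,j,\tau)$ and every fixed point. The worry is about meets forced to coincide with earlier data: images of points whose drop-downs to $\alpha$ are related by the $f_\tau$ that were never controlled. I would first try to handle this by arguing at the level of immediate successors of meet points. There each forced identification is unique, and every other choice was made in a fresh successor direction.
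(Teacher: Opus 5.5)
Your argument is correct. The paper gives no proof of this lemma; it quotes it from \cite{ks}. The closest in-paper analogue is the limit step in the proof of Theorem~\ref{thm: approx}. There, exactly the limit-level problem you point out is handled level by level along a cofinal sequence, keeping growing finite subsets of the top level consistent so that Proposition~\ref{prop: limit_extension} applies at the end.

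Your single back-and-forth avoids that bookkeeping altogether. It is cleanest to run it on $T_\beta$ alone. By normality, $T_{\le\beta}$ is recovered from $T_\beta$ together with $\Delta(u,v)=\min\{\gamma\le\beta \mid u\restriction\gamma\neq v\restriction\gamma\}$, which is never a limit ordinal. So a $\Delta$-preserving bijection $h_\tau$ of $T_\beta$ with $h_\tau(u)\restriction\alpha=f_\tau(u\restriction\alpha)$ induces $g_\tau\in\aut(T_{\le\beta})$ extending $f_\tau$. This also makes the meet/restriction closure conditions on your partial maps unnecessary. The level-by-level route stays inside the consistency/persistence machinery that the paper later Mitchellizes. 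Yours makes separation a simple graph argument.

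On that point, the ``main obstacle'' you flag is not a real one. For $u\in T_\beta$, a meet constraint only fixes an initial segment of the image and requires it to leave through a new immediate successor. Hence, apart from the stage-$0$ values forced by consistency, every value $g_\tau^j(u)$ can be chosen outside any prescribed finite set. In particular, an identification $f_\tau(u\restriction\alpha)=v\restriction\alpha$ never forces $g_\tau(u)=v$.

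Consider the $I$-labelled graph on $T_\beta$ with edges $u\to g_\tau(u)$. It grows only by pendant edges from the stage-$0$ graph on $X$. That graph is a forest without loops or multiple edges precisely because $X$ has unique drop-downs and $\{f_\tau\mid\tau\in A\}$ is separated on $X\restriction\alpha$.

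Now take a finite $Z\subseteq T_\beta$. List $Z\cap X$ first, in the order witnessing separation of $X\restriction\alpha$, and then the remaining points in order of first appearance in the construction. Then no $g_\tau$ fixes a point of $Z$, and each point has at most one link to an earlier one.

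This replaces your appeal to Persistence, which as phrased (``the cone over $X$'') does not fit. $X$ already lies on the top level, and Persistence only controls the subfamily indexed by $A$. It says nothing about links through $\tau\notin A$ or links between $X$ and the other points of $Z$.
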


We will also need the following slight variation of the preceding lemma; it is 
a special case of \cite[Lemma 5.39]{ks}, so we refer the reader there for a proof.

\begin{lemma} \label{lemma: ksLem539}
  Suppose that $\alpha < \omega_1$, $X_0$ and $X_1$ are finite subsets of 
  $T_{\alpha+1}$ with unique drop-downs to $\alpha$ and $X_0 \cap X_1 = \emptyset$ 
  (though possibly $(X_0 \restriction \alpha) \cap (X_1 \restriction \alpha) \neq 
  \emptyset$), and $g \in \mathrm{Aut}(T_{\leq \alpha})$. Then there is 
  $g' \in \mathrm{Aut}(T_{\leq \alpha+1})$ such that $g' \supseteq g$ and, for each
  $i < 2$, $X_i \restriction \alpha$ and $X_i$ are $g'$-consistent.
\end{lemma}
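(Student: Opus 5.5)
The plan is to build $g'$ directly, in the style of the proof of Proposition~\ref{prop: 54}, by choosing for each $x \in T_\alpha$ a bijection $\sigma_x : \mathrm{succ}_T(x) \to \mathrm{succ}_T(g(x))$ and setting $g' := g \cup \bigcup_{x \in T_\alpha} \sigma_x$. Any such $g'$ is an automorphism of $T_{\leq \alpha+1}$ extending $g$. So the only task is to choose the $\sigma_x$ so that the consistency requirements hold. Here I read $g'$-consistency of $X_i \restriction \alpha$ and $X_i$ as: for $x,y \in X_i$, $g'(x) = y$ iff $g(x \restriction \alpha) = y \restriction \alpha$.

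The direction ``$g'(x) = y \Rightarrow g(x\restriction\alpha) = y \restriction \alpha$'' is automatic, since $g' \supseteq g$ and $g'$ is order preserving. So we only need the following: whenever $i < 2$, $x, y \in X_i$ and $g(x \restriction \alpha) = y \restriction \alpha$, we have $g'(x) = y$. Let $R$ be the set of all such pairs $(x,y)$, ranging over both $i<2$. First I check that $R$ is a finite partial injection from $T_{\alpha+1}$ to $T_{\alpha+1}$.
\begin{itemize}
\item \emph{Function.} Since $X_0 \cap X_1 = \emptyset$, each $x$ lies in exactly one $X_i$. Within that $X_i$ there is at most one $y$ with $y \restriction \alpha = g(x \restriction \alpha)$, by unique drop-downs. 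So each $x$ has at most one partner.
\item \emph{Injective.} Suppose $(x,y), (x',y) \in R$ with $x \neq x'$. Then $x, x'$ lie in the same $X_i$ as $y$, and $g(x\restriction\alpha) = y \restriction\alpha = g(x'\restriction\alpha)$. Since $g$ is injective, $x \restriction \alpha = x' \restriction \alpha$, contradicting unique drop-downs of $X_i$.
\item \emph{Level-compatible.} Every $(x,y) \in R$ satisfies $y \restriction \alpha = g(x \restriction \alpha)$.
\end{itemize}
The one point needing care is the possible overlap of $X_0 \restriction \alpha$ and $X_1 \restriction \alpha$. Two successors of the same node, one in $X_0$ and one in $X_1$, may be sent to two different successors of $g$ of that node. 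This is harmless precisely because $X_0$ and $X_1$ are disjoint, so the demands never collide at a single point; this is what the injectivity check above verifies.

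Now fix $x \in T_\alpha$. Then $R_x := R \cap (\mathrm{succ}_T(x) \times \mathrm{succ}_T(g(x)))$ is a finite partial injection between two countably infinite sets, using that $T$ is infinitely splitting. Hence it extends to a bijection $\sigma_x : \mathrm{succ}_T(x) \to \mathrm{succ}_T(g(x))$: match the finitely many remaining elements by enumerations of the two cofinite remainders. By level-compatibility, $R = \bigcup_x R_x$.

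Then $g'$ satisfies $g'(x) = y$ for all $(x,y) \in R$. Its bijectivity and order preservation follow from $g$ being an automorphism and each $\sigma_x$ being a bijection. This gives the required $g'$. There is no real obstacle: the argument is finite combinatorics, and the injectivity check for $R$ is the only step needing attention.
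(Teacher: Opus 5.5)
Your argument is correct and complete. Your reading of consistency is the right one: the displayed condition in Definition~\ref{def: con} literally says $g(x\restriction\alpha)=g(y\restriction\alpha)$, which is a typo (it would force $x=y$). The tuple version in the same definition confirms the intended condition $g(x\restriction\alpha)=y\restriction\alpha$.

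The paper does not prove this lemma directly. It cites it as a special case of \cite[Lemma 5.39]{ks}, a more general result that the paper describes as a variation of Extension (Lemma~\ref{lemma: ksProp515}). You instead give a self-contained one-level construction, of the same kind as the paper's proof of Proposition~\ref{prop: 54}. Your verification shows that the whole content is that the combined demand relation $R$ from $X_0$ and $X_1$ is a finite partial injection compatible with $g$ on $T_\alpha$, and it shows exactly where each hypothesis enters:
\begin{itemize}
\item disjointness of $X_0$ and $X_1$ is what keeps $R$ a function and an injection even when the drop-downs overlap;
\item unique drop-downs of each $X_i$, together with injectivity of $g$, handle pairs inside a single $X_i$;
\item infinite splitting lets each $R_x$ extend to a bijection $\mathrm{succ}_T(x)\to\mathrm{succ}_T(g(x))$.
\end{itemize}

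The citation buys brevity and places the lemma inside the general machinery of \cite{ks}. Your route buys transparency: it makes clear that for a single automorphism and a single level step the lemma is elementary finite combinatorics, with no separation-type hypothesis needed. One minor wording slip: when extending $R_x$, you match the \emph{infinitely} many remaining elements, namely the two cofinite remainders, not ``finitely many''.
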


We now recall the forcing poset designed to add a generic automorphism to a fixed 
$\omega_1$-tree by initial segments of successor height. This poset will be a building 
block in the generalized Mitchell forcing used to prove Theorem A in the next section.

\begin{definition}
  Given an $\omega_1$-tree $T$, let $\bb{A}(T)$ be the forcing poset whose underlying set 
  is $\bigcup \{\aut(T_{\leq \alpha}) \mid \alpha < \omega_1\}$, ordered by reverse inclusion. 
  Given $g \in \bb{A}(T)$, let $\top(g)$ be the unique $\alpha < \omega_1$ such that 
  $g \in \aut(T_{\leq \alpha})$. We will sometimes refer to $\top(g)$ as the 
  \emph{top level} of $g$.
\end{definition}

\begin{remark}
  If $T$ is a normal, infinitely splitting $\omega_1$-tree and $G$ is 
  an $\bb{A}(T)$-generic filter over 
  $V$, then, by Lemma \ref{lemma: ksProp515} and a standard genericity argument, 
  $g^* := \bigcup G$ is an automorphism of $T$. Note that, in general, 
  forcing with $\bb{A}(T)$ may collapse $\omega_1$, but under certain assumptions 
  on $T$, for instance, if $T$ is a free Suslin tree, $\bb{A}(T)$ will be 
  totally proper and hence will preserve $\omega_1$ (cf.\ \cite[Theorem 5.34]{ks}).
\end{remark}

Although $\bb{A}(T)$ may preserve $\omega_1$, the next proposition shows that, if 
$\CH$ fails, then it always collapses the continuum.

\begin{proposition} \label{prop: collapse}
  Suppose that $T$ is an infinitely splitting, normal $\omega_1$-tree. Then 
  \[
    \Vdash_{\bb{A}(T)} ``|(2^\omega)^V| = |\omega_1^V|".
  \]
\end{proposition}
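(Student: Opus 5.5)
We will show that the generic automorphism $g^* = \bigcup G$ codes every ground-model real by a countable ordinal, giving an injection from $(2^\omega)^V$ into $\omega_1^V$ in $V[G]$. Fix in $V$, for each $x \in T$, an injective enumeration $\langle x_n \mid n < \omega \rangle$ of $\mathrm{succ}_T(x)$; this uses that $T$ is infinitely splitting. Also fix in $V$ a countable set $\{t_k \mid k < \omega\} \subseteq T_0 \cup T_1$, or more simply a single node $r \in T$ and its successors.

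To each limit ordinal $\alpha < \omega_1$ and node $x \in T_\alpha$ with $g^*(x) \neq x$, associate the real
\[
 a_{\alpha,x} := \{ n < \omega \mid g^*(x_0) = g^*(x)_{n} \},
\]
or, more usefully, the function $n \mapsto m$ where $g^*(x_n) = g^*(x)_m$. This is a permutation $\pi_x$ of $\omega$, which decodes a real.

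The main step is a density argument. Given $r \in (2^\omega)^V$ and a condition $g \in \bb{A}(T)$ with $\top(g) = \alpha$, choose any $x \in T_\alpha$. Extend $g$ to $g' \in \aut(T_{\leq \alpha+1})$ by choosing, for each $y \in T_\alpha$, a permutation $\pi_y$ of $\omega$ and setting $g'(y_n) = g(y)_{\pi_y(n)}$, exactly as in the proof of Proposition \ref{prop: 54}. The construction is free, so we may take $\pi_x$ to be a permutation coding $r$; for example, $\pi_x(2n) = 2n+r(n)$ and $\pi_x(2n+1) = 2n+1-r(n)$. Then $g'$ is a condition, and $r$ is recoverable from $g^* \restriction T_{\leq \alpha+1}$ together with the ground-model enumerations. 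The only subtle point is ensuring that distinct reals receive distinct ordinal codes. To arrange this, use a fixed well-order of $T$ of type $\omega_1$ (each level is countable). For each $r$, let $\alpha_r$ be the least $\alpha$ such that the $\prec$-least node $x$ of $T_\alpha$ has $\pi_x$ coding $r$ via the fixed decoding scheme, where $\pi_x$ is computed from $g^*$ as $\pi_x(n) = $ the unique $m$ with $g^*(x_n) = g^*(x)_m$.

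The density argument shows $\alpha_r$ exists for every $r$: given $g$ with top $\alpha$, extend as above at the $\prec$-least node of $T_\alpha$. Since $\pi_x$ determines $r$ via a fixed decoding, $r \mapsto \alpha_r$ is injective. For a limit top, first extend to a successor top, which is trivial since every condition already has some top level and we work at level $\top(g)$. Hence $V[G] \models |(2^\omega)^V| \le |\omega_1^V|$. The reverse inequality is clear from $\omega_1^V \le (2^\omega)^V$ in $V$. I expect no serious obstacle. The only care needed is checking that the decoding $\pi \mapsto r$ is a fixed function, which it is by the parity scheme, and that extensions at successor levels always exist, which Proposition \ref{prop: 54}'s construction provides.
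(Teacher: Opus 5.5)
Your argument is correct and is essentially the paper's proof. The paper defines exactly your $\pi_x$ (there called $\rho_{g^*}(x)$) and uses the same one-level-extension density argument to show that $x \mapsto \rho_{g^*}(x)$ maps $T$ onto $(S_\infty)^V$; you instead fix one designated node per level and use a parity coding to get an explicit injection $r \mapsto \alpha_r$ from $(2^\omega)^V$ into $\omega_1^V$, which amounts to the same thing. The preliminary false starts play no role and can simply be deleted: the set $\{t_k\}$, the real $a_{\alpha,x}$, the restriction to limit $\alpha$, and the remark about limit tops. Extending a condition from $T_{\leq\alpha}$ to $T_{\leq\alpha+1}$ works for every $\alpha$, limit or not.
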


\begin{proof}
  In $V$, fix, for each $x \in T$, an enumeration $\langle x_n \mid n < \omega \rangle$ 
  of $\mathrm{succ}_T(x)$. Recall that 
  $S_\infty$ denotes the group of permutations of $\omega$. If $h$ is an automorphism of 
  $T$, then $h$ induces a map $\rho_h : T \ra S_\infty$ as follows: for each 
  $x \in T$ and each $n < \omega$, let $\rho_h(x)(n)$ be the unique 
  $m < \omega$ such that $h(x_n) = h(x)_m$. A standard genericity argument 
  shows that, if $G$ is $\bb{A}(T)$-generic over $V$ and 
  $g^* = \bigcup G$, then, in $V[G]$, $\rho_{g^*}$ is a surjection from 
  $T$ to $(S_\infty)^V$. Since $|T| = \omega_1^V$ and 
  $|(S_\infty)^V| = |(2^\omega)^V|$, the desired conclusion follows.
\end{proof}

\section{Guessing models with an almost Kurepa Suslin tree} \label{sect: thma_sect}

In this section, we present our proof of Theorem A, constructing a model of $\ZFC$ 
in which $\GMP$ holds and there exists an almost Kurepa Suslin tree. 
In order to motivate the construction, let us first recall the construction from 
\cite{ks} of a model in which there exists an almost Kurepa Suslin tree but there 
does not exist a Kurepa tree. There, the authors begin in a model of $\ZFC$ with an 
inaccessible cardinal $\kappa$. They then force with the product 
$\mathrm{Coll}(\omega_1, {<}\kappa) \times \P$, where $\P$ is a countable support 
product of $\kappa$-many copies of $\bb{A}(T)$, where $T$ is a fixed free Suslin tree in $V$.
We note that collapsing an inaccessible cardinal is necessary here: if there are no 
Kurepa trees in a model $W$ of $\ZFC$, then $(\omega_2)^W$ is inaccessible in $\mathrm{L}$.
Note that the model $V[\mathrm{Coll}(\omega_1, {<}\kappa) \times \P]$ satisfies $\CH$; 
and therefore contains weak Kurepa trees. If we want to arrange a model of 
$\GMP$, or even of its consequence $\neg \wKH$, then we must simultaneously increase the value 
of the continuum as we are collapsing the large cardinal $\kappa$. This requirement 
leads us naturally to consider Mitchell forcing. Since we also want our final model to 
contain an almost Suslin Kurepa tree, we will work with a variation of the classical 
Mitchell forcing obtained by replacing $\mathrm{Add}(\omega_1,1)$ from the classical Mitchell 
forcing with the forcing $\bb{A}(T)$ from the end of the previous section.

\subsection{A Mitchell variation}
Fix for this section an $\omega_1$-tree $T$ and a cardinal $\kappa$. Let 
$\P = \P_\kappa = \Add(\omega, \kappa)$; for $\gamma < \kappa$, let $\P_\gamma = \Add(\omega, \gamma)$. Define a forcing poset $\M = \M^T_\kappa$ as follows, recalling that $\Sigma(\kappa)$ denotes the 
set of successor ordinals less than $\kappa$:
\begin{itemize}
  \item Conditions of $\M$ are all pairs $(p,q)$ such that 
  \begin{itemize}
    \item $p \in \P$;
    \item $q$ is a function whose domain is a countable subset of 
    $\Sigma(\kappa)$;
    \item for all $\alpha \in \dom(q)$, $q(\alpha)$ is a $\P_\alpha$-name 
    for an element of $\bb{A}(T)^{V[\P_\alpha]}$.
  \end{itemize}
  \item Given $(p_0,q_0),(p_1,q_1) \in \M$, we set $(p_1,q_1) \leq_{\M} 
  (p_0,q_0)$ if and only if
  \begin{itemize}
    \item $p_1 \leq_{\P} p_0$;
    \item $\dom(q_1) \supseteq \dom(q_0)$;
    \item for all $\alpha \in \dom(q_0)$, $p_1 \restriction \alpha 
    \Vdash_{\P_\alpha} ``q_1(\alpha) \leq_{\bb{A}(T)} q_0(\alpha)"$.
  \end{itemize}
\end{itemize}
For $\delta < \kappa$, let $\M_\delta$ denote the 
set of $(p,q) \in \M$ such that $p \in \P_\delta$ and $\dom(q) \subseteq \delta$.
Note that $\M_\delta$ is a regular suborder of $\M$. Given 
$(p,q) \in \M$, we let $(p,q) \restriction \delta$ denote 
$(p \restriction \delta, q \restriction \delta) \in \M_\delta$. It is easily 
verified that the map $(p,q) \mapsto (p,q) \restriction \delta$ is a projection 
from $\M$ to $\M_\delta$.

We let $\Q$ denote the \emph{term forcing} associated with $\M$. More precisely, 
conditions in $\Q$ are all functions $q$ such that $(1_{\P},q) \in \M$. If 
$q,q' \in \Q$, then we let $q' \leq_{\Q} q$ if and only if $(1_{\P}, q') \leq_{\M} (1_{\P}, q)$.

\begin{remark} \label{remark: term_forcing}
  As with the classical Mitchell forcing (recall Subsection \ref{mitchell_section}), 
  there is a natural projection from $\P \times \Q$ to $\M$. Many arguments involving 
  Mitchell forcing, including those in Section \ref{sect: thmb_sect} below, proceed 
  via considerations of this product $\P \times \Q$, which is nicely behaved due to 
  the fact that in the classical setting $\Q$ is $\omega_1$-closed. In the setting of 
  this section, though, $\Q$ is certainly not $\omega_1$-closed, and even though 
  we will see that $\M$ is nicely behaved, it is not even clear to us whether 
  forcing with $\Q$ over $V$ preserves $\omega_1$. (For an example of a natural 
  two-step iteration $\Add(\omega,1) \ast \dot{\Q}$ such that $\dot{\Q}$ is forced 
  to be $\omega_1$-distributive (and even totally proper) and yet forcing with the 
  associated term forcing over $V$ collapses $\omega_1$, see Appendix \ref{appendix_a}
  below.) 
  For this reason, the arguments in this section do not proceed via the familiar 
  product analysis of Mitchell-type forcings.
\end{remark}

\begin{lemma} \label{lemma: constant_height}
  Let $\Q' = \{q \in \Q \mid \exists \alpha < \omega_1 \forall \gamma \in 
  \dom(q) \ \Vdash_{\P_\gamma} ``\top(q(\gamma)) = \alpha"\}$. Then 
  $\Q'$ is dense in $\Q$.
\end{lemma}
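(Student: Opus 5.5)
Given $q \in \Q$, we will find $q' \le_\Q q$ lying in $\Q'$. Since $\dom(q)$ is countable, enumerate it as $\langle \gamma_n \mid n < \omega \rangle$ (the finite case is the same). For each $n$, $q(\gamma_n)$ is a $\P_{\gamma_n}$-name for an element of $\bb{A}(T)^{V[\P_{\gamma_n}]}$, so $\top(q(\gamma_n))$ is forced to be a countable ordinal. Our first step is to bound these values uniformly in the ground model. $\P_{\gamma_n}=\Add(\omega,\gamma_n)$ is ccc, so there is a countable maximal antichain deciding $\top(q(\gamma_n))$. Hence there is a countable set $B_n \subseteq \omega_1$ with $\Vdash_{\P_{\gamma_n}} \top(q(\gamma_n)) \in B_n$. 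Let $\alpha := \sup_n \sup B_n + 1 < \omega_1$; then for every $n$, $\Vdash_{\P_{\gamma_n}} \top(q(\gamma_n)) < \alpha$.

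The second step is to extend each coordinate up to height exactly $\alpha$. We need that in $V[\P_{\gamma_n}]$, every $g \in \aut(T_{\le \beta})$ with $\beta < \alpha$ extends to some $g' \in \aut(T_{\le \alpha})$. This follows from the Extension Lemma~\ref{lemma: ksProp515}, applied with $I$ a singleton, $A=\emptyset$, $X=\emptyset$, and levels $\beta < \alpha$. For this we must check that $T$ remains a normal, infinitely splitting $\omega_1$-tree in $V[\P_{\gamma_n}]$; this is clear, since these properties concern only the fixed countable levels of $T$ and are absolute. By the maximum principle there is then a $\P_{\gamma_n}$-name $q'(\gamma_n)$ such that
\[
\Vdash_{\P_{\gamma_n}} ``q'(\gamma_n) \le_{\bb{A}(T)} q(\gamma_n) \text{ and } \top(q'(\gamma_n)) = \alpha".
\]

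Finally, set $\dom(q') = \dom(q)$, using the names $q'(\gamma_n)$ just defined. Then $(1_\P, q') \in \M$, and $(1_\P,q') \le_\M (1_\P,q)$ holds coordinatewise with $p_1 \restriction \gamma = 1$. Every coordinate has top level forced to be $\alpha$, so $q' \in \Q'$.

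The main point needing care is the first step: obtaining a single ground-model bound $\alpha$ rather than one that depends on the generic. The ccc of $\Add(\omega,\gamma)$ together with the countability of $\dom(q)$ handles this. The extension step is routine once the Extension Lemma is available in the intermediate models.
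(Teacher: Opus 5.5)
Your proof is correct and follows essentially the same route as the paper. You first obtain a single ground-model bound $\alpha<\omega_1$ on all $\top(q(\gamma))$, using the ccc of $\Add(\omega,\gamma)$ and the countability of $\dom(q)$; you then use Extension (Lemma~\ref{lemma: ksProp515}) in $V[\P_\gamma]$, together with the maximum principle, to extend each coordinate to height exactly $\alpha$. The differences from the paper (taking $\sup+1$ for a strict bound, and a single application of Extension rather than ``repeated instances'') are inessential.
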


\begin{proof}
  Fix $q_0 \in \Q$. For each $\gamma \in \dom(q_0)$, let 
  \[
    \alpha_\gamma = \sup\{\beta < \omega_1 \mid \exists p \in \P_\gamma 
    \ [p \Vdash \top(q_0(\gamma)) = \beta]\},
  \]
  and let $\alpha = \sup\{\alpha_\gamma \mid \gamma \in \dom(q_0)\}$. Since each $\P_\gamma$ has the 
  ccc and $\dom(q_0)$ is countable, it follows that $\alpha < \omega_1$ and, for all 
  $\gamma \in \dom(q_0)$, $\Vdash_{\P_\gamma} ``\top(q_0(\gamma)) 
  \leq \alpha"$. Let $q$ be a function such that $\dom(q) = \dom(q_0)$ and, 
  for all $\gamma \in \dom(q)$, $q(\gamma)$ is a $\P_\gamma$-name such that
  \[
    \Vdash_{\P_\gamma} ``q(\gamma) \leq q_0(\gamma) \text{ and } 
    \top(q(\gamma)) = \alpha."
  \]
  (To see that such a name can be found, repeatedly apply instances of Lemma \ref{lemma: ksProp515}.)
  Then $q \in \Q'$ and $q \leq_{\Q} q_0$.
\end{proof}

%

Going forward, we will typically assume without comment that the conditions in $\Q$ 
that we are considering come from $\Q'$, and, if $q \in \Q'$, then we will let 
$\top(q)$ denote the unique $\alpha < \omega_1$ witnessing that $q \in \Q'$.
Similarly, we will typically assume without comment that all conditions $(p,q) 
\in \M$ that we are considering are such that $q \in \Q'$. 

We now introduce some definitions and lemmas to connect this modified Mitchell 
forcing with some of the concepts and results from the end of the previous section.

\begin{definition}[Mitchellized consistency]
  Suppose that $(p,q) \in \M$, $A \subseteq \dom(q)$ is finite, $\beta = 
  \top(q)$, $\alpha < \alpha' \leq \beta$, and $X \subseteq T_{\alpha'}$ 
  has unique drop-downs to $\alpha$. We say that $X \restriction \alpha$ 
  and $X$ are \emph{$((p,q), A)$-consistent} if, for all 
  $\gamma \in A$, 
  \[
    p \restriction \gamma \Vdash_{\P_\gamma} ``X \restriction \alpha 
    \text{ and } X \text{ are } q(\gamma)\text{-consistent}".
  \]
  We simply say that $X \restriction \alpha$ and $X$ are 
  \emph{$(q,A)$-consistent} if they are $((1_{\P},q),A)$-consistent.
  
  As in Definition \ref{def: con}, we similarly define the notion of 
  $((p,q),A)$-consistency for $\vec{x} \restriction \alpha$ and 
  $\vec{x}$, where $\vec{x}$ is an injective tuple from 
  $T_{\alpha'}$ with unique drop-downs to $\alpha$.
\end{definition}

\begin{definition}[Mitchellized separation]
  Suppose that $(p,q) \in \M$, $A \subseteq \dom(q)$ is finite, 
  $\alpha \leq \beta = \top(q)$, 
  $0 < n < \omega$, and $\vec{x}$ is an injective $n$-tuple from $T_\alpha$. 
  \begin{itemize}
    \item We say that $p$ \emph{decides $q$ on $(A,\vec{x})$} if, 
    for all $\gamma \in A$ and $m < n$, $p \restriction \gamma$ decides the value of 
    $q(\gamma)(x_m)$ and $q(\gamma)^{-1}(x_m)$, say as $g_\gamma(x_m)$ and 
    $g^{-1}_\gamma(x_m)$.
    \item We say that $(p,q)$ is \emph{$A$-separated on $\vec{x}$} if
    \begin{itemize}
      \item $p$ decides $q$ on $(A,\vec{x})$, with $g_\gamma(x_m)$ and 
      $g^{-1}_\gamma(x_m)$ for $\gamma \in A$ and $m < n$ as in the previous point;
      \item for all $\gamma \in A$ and $m < n$, $g_\gamma(x_m) \neq x_m$; and
      \item for all $m < n$, there exists at most triple $(k,j,\gamma)$ such that $k < m$, 
      $j \in \{-1,1\}$, $\gamma \in A$, and $g^j_\gamma(x_m) = x_k$.
    \end{itemize}
    \item We say that $q$ is \emph{$A$-separated on $\vec{x}$ below $p$} if, for all 
    $p' \leq p$, if $p'$ decides $q$ on $(A,\vec{x})$, then $(p',q)$ is 
    $A$-separated on $\vec{x}$. We will simply say that $q$ is \emph{$A$-separated 
    on $\vec{x}$} if it is $A$-separated on $\vec{x}$ below $1_{\P}$.
\end{itemize}
  If $X$ is a finite subset of $T_\alpha$, we say that $p$ decides $q$ on 
  $(A,X)$ if for some (equivalently, any) enumeration $\vec{x}$ of $X$, 
  $p$ decides $q$ on $(A,\vec{x})$, and we say that $(p,q)$ is 
  $A$-separated on $X$ if there is an enumeration $\vec{x}$ of $X$ 
  such that $(p,q)$ is $A$-separated on $\vec{x}$. We say that $q$ is 
  $A$-separated on $X$ below $p$ if, for all $p' \leq p$, if $p'$ decides $q$ 
  on $(A,X)$, then $(p',q)$ is $A$-separated on $X$.
\end{definition}

Note that, if $(p',q') \leq_{\M} (p,q)$ and $(p,q)$ is $A$-separated on 
$\vec{x}$ for some appropriate choice of $A$ and $\vec{x}$, then $(p',q')$ 
is also $A$-separated on $\vec{x}$. Similarly, if $q$ is $A$-separated on $\vec{x}$ below $p$, then $q'$ is $A$-separated on $\vec{x}$ below $p'$. The following proposition is an immediate consequence of the definitions and Proposition 
\ref{prop: ksLem57}.

\begin{proposition} [Mitchellized persistence]\label{prop: persistent_separation}
  Suppose that 
  \begin{itemize}
    \item $(p,q) \in \M$ with $\top(q) = \beta$;
    \item $\alpha < \alpha' \leq \beta$;
    \item $A \in [\dom(q)]^{<\omega}$ and $X \in [T_{\alpha'}]^{<\omega}$ 
    has unique dropdowns to $\alpha$.
  \end{itemize}
  If $q$ is $A$-separated on $X \restriction \alpha$ below $p$, then 
  $q$ is also $A$-separated on $X$ below $p$. \qed
\end{proposition}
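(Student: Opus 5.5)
We reduce the statement to Proposition~\ref{prop: ksLem57} one extension of $p$ at a time. Fix $p' \leq p$ that decides $q$ on $(A,X)$. We must show that $(p',q)$ is $A$-separated on $X$.

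\emph{Step 1: $p'$ decides $q$ on $(A, X\restriction\alpha)$.} For each $\gamma \in A$, $p'\restriction\gamma$ forces that $q(\gamma)$ is an automorphism of $T_{\leq\beta}$. Hence for each $x \in X$ and $j \in \{-1,1\}$, the value $q(\gamma)^j(x\restriction\alpha) = q(\gamma)^j(x)\restriction\alpha$ is decided by $p'\restriction\gamma$, namely as $g^j_\gamma(x)\restriction\alpha$. So $p'$ decides $q$ on $(A,X\restriction\alpha)$.

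\emph{Step 2: apply the hypothesis and pass to a ground-model family.} By hypothesis, $(p',q)$ is $A$-separated on $X\restriction\alpha$. Fix an enumeration $\vec{x}$ of $X$ for which $\vec{x}\restriction\alpha$ witnesses this. We now want a ground-model family of automorphisms so that Proposition~\ref{prop: ksLem57} applies. Since $A$ is finite, extend $p'$ to some $p'' \leq p'$ and choose, for each $\gamma \in A$, some $h_\gamma \in \aut(T_{\leq\beta})$ in $V$ with $p''\restriction\gamma \Vdash q(\gamma) = h_\gamma$. This is possible because $q(\gamma)$ is forced to lie in $\bb{A}(T)^{V[\P_\gamma]}$, and we can decide its value by extending within $\P_\gamma$; we handle the finitely many $\gamma$ in increasing order, using that $\P$ is a product so extensions on different coordinates combine. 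In fact we only need the values of the $h_\gamma$ on the finitely many relevant nodes, and these are already decided by $p'$, so any $h_\gamma$ compatible with those values suffices.

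\emph{Step 3: invoke Persistence.} The family $\{h_\gamma \mid \gamma\in A\}$ agrees with the decided values on $X\restriction\alpha$ and on $X$. It is therefore separated on $\vec{x}\restriction\alpha$, and $X$ has unique drop-downs to $\alpha$. If $\alpha' = \beta$, Proposition~\ref{prop: ksLem57} gives that the family is separated on $X$. If $\alpha' < \beta$, we apply it to the restrictions $h_\gamma\restriction T_{\leq\alpha'}$. Since separation on $X$ depends only on the values $h^j_\gamma(x)$ for $x\in X$, which equal the values decided by $p'$, the pair $(p',q)$ is $A$-separated on $X$, witnessed by the same enumeration.

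The only delicate point is that Persistence is stated for ground-model automorphisms rather than names. This is handled by observing that both separation notions refer only to finitely many decided values, which coincide with those of some genuine automorphism.
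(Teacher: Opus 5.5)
Your overall plan is exactly the paper's argument, which it calls an ``immediate consequence of the definitions and Proposition~\ref{prop: ksLem57}'': observe that $p'$ also decides $q$ on $(A,X\restriction\alpha)$, invoke the hypothesis, then lift separation to $X$ via Persistence. Step~1 is correct.

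\textbf{The gap is in Step~2.} You claim one can extend $p'$ to $p''$ with $p''\restriction\gamma \Vdash q(\gamma) = h_\gamma$ for some ground-model $h_\gamma$. This is false in general. $\P_\gamma$ adds new automorphisms of $T_{\le\beta}$, for instance one that permutes the immediate successors of a fixed node according to a Cohen real added by $\P_\gamma$. If $q(\gamma)$ names such an automorphism, no condition forces it to equal any object of $V$. Your fallback, that ``any $h_\gamma$ compatible with those values suffices,'' needs an argument that some $h_\gamma \in \aut(T_{\le\alpha'})$ in $V$ realizes the finitely many values decided by $p'$. This is true, e.g.\ by absoluteness for the countable structure $T_{\le\alpha'}$, but you do not supply it.

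\textbf{The detour is unnecessary.} It rests on a misconception: Persistence is a ZFC theorem about arbitrary families of automorphisms, not specifically ground-model ones. There are two simple fixes.
\begin{enumerate}
\item Take $G$ $\P$-generic with $p'\in G$ and apply Proposition~\ref{prop: ksLem57} in $V[G]$ to the evaluations of $q(\gamma)$, $\gamma\in A$, restricted to $T_{\le\alpha'}$. This is just as the paper applies the Key Property inside $V[G]$ in Lemma~\ref{lemma: mitchell_key}.
\item Rerun the two-line proof of Persistence directly on the decided values. Write $g^j_\gamma(x_m)$ for the values decided by $p'\restriction\gamma$. Then $g^j_\gamma(x_m)\restriction\alpha = g^j_\gamma(x_m\restriction\alpha)$. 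Hence $g_\gamma(x_m)\neq x_m$ follows from $g_\gamma(x_m\restriction\alpha)\neq x_m\restriction\alpha$. Moreover, every triple $(k,j,\gamma)$ with $k<m$ and $g^j_\gamma(x_m)=x_k$ is also a triple with $g^j_\gamma(x_m\restriction\alpha)=x_k\restriction\alpha$, of which there is at most one.
\end{enumerate}
With Step~2 replaced by either of these, your proof is complete.
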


\subsection{Technical lemmas}
In this subsection, we present some technical lemmas concerning the construction of conditions 
in $\M$ having certain desired properties. Since these lemmas are motivated primarily by 
their eventual deployment in the proof of Theorem A, the reader may wish to skip this subsection 
on first read and return to it when the relevant lemmas are invoked in later subsections.
\begin{lemma} \label{lemma: separated_extension}
  Suppose that
  \begin{itemize}
    \item $q \in \Q$, with $\top(q) = \alpha$;
    \item $X$ is a finite subset of $T_{\alpha+1}$ with unique drop-downs to $\alpha$;
    \item $A \in [\dom(q)]^{<\omega}$ and $\delta \in \kappa \setminus A$;
    \item $q$ is $A$-separated on $X \restriction \alpha$.
  \end{itemize}
  Then there is $q' \leq_{\Q} q$ such that
  \begin{enumerate}
    \item $\top(q') = \alpha+1$;
    \item $\delta \in \dom(q')$;
    \item $X \restriction \alpha$ and $X$ are $(q',A)$-consistent;
    \item $q'$ is $(A \cup \{\delta\})$-separated on $X$.
  \end{enumerate}
\end{lemma}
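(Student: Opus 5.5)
We construct $q'$ coordinatewise, working inside $V^{\P_\gamma}$ for each relevant $\gamma$ and choosing $\P_\gamma$-names by the maximum principle. Set $\dom(q') = \dom(q) \cup \{\delta\}$; note $\delta$ must be a successor ordinal for this to be a legitimate domain, which we assume as implicit in the statement. The plan is to fix the values on the finitely many relevant coordinates so that they are decided outright in $V$ (at least on the finite set $X$), and to extend the remaining coordinates arbitrarily.

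\textbf{Coordinates outside $A \cup \{\delta\}$.} For $\gamma \in \dom(q) \setminus A$ let $q'(\gamma)$ be a $\P_\gamma$-name for any element of $\aut(T_{\leq \alpha+1})$ extending $q(\gamma)$. Such an extension exists by Proposition~\ref{prop: ext_prop} with $X = \emptyset$, or by Lemma~\ref{lemma: ksProp515}. These coordinates play no role in (3) or (4).

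\textbf{Coordinates in $A$.} The difficulty is that $q(\gamma)$ is only a name, so its values on $X \restriction \alpha$ are not decided in $V$. We want the extended values on $X$ to be decided by the trivial condition, so that (4) holds below $1_\P$. For each $\gamma \in A$, work in $V^{\P_\gamma}$ with $g = q(\gamma)$. Since $X$ has unique drop-downs, we can apply Proposition~\ref{prop: ext_prop}, or better the proof of Proposition~\ref{prop: 54} with its permutations $\pi_x$ fixed in $V$, to choose $g' \supseteq g$ in $\aut(T_{\leq\alpha+1})$ with the following property. If $x \in X$ and $g^j(x\restriction\alpha) = y\restriction\alpha$ for some $y \in X$, then $g'^j(x) = y$. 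Otherwise $g'^j(x)$ is the successor of $g^j(x\restriction\alpha)$ with a fixed index, computed in $V$ from the successor enumerations. This is exactly Proposition~\ref{prop: 54} with the $Y$-clause dropped, and it gives consistency (3).

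Separation of $q'$ on $X$ for the coordinates in $A$ then follows from Mitchellized persistence (Proposition~\ref{prop: persistent_separation}): $q'$ is $A$-separated on $X$ because $q$ is $A$-separated on $X\restriction\alpha$. Although $g'^j(x)$ itself is not decided in $V$, the relation ``$g'^j(x) = x_k$'' holds iff $g^j(x\restriction\alpha) = x_k\restriction\alpha$. So separation transfers from $X \restriction \alpha$ for every $p'$ deciding $q'$ on $(A,X)$.

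\textbf{The new coordinate $\delta$.} Here we use that $\delta \notin \dom(q)$, so we have full freedom. Define $q'(\delta)$ in $V$ (hence a check name) as an automorphism of $T_{\leq\alpha+1}$ that makes the enlarged family separated on $X$. Fix the enumeration $\vec{x}$ of $X$ witnessing $A$-separation. We need $h(x_m) \neq x_m$, and we must add no new triple $(k,j,\delta)$ with $k<m$ whenever $x_m$ already has a triple from $A$.

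The simplest choice is to make $h^{\pm1}$ map $X$ entirely outside $X$. Start with any $h_0 \in \aut(T_{\leq\alpha})$, for example the identity, and apply Proposition~\ref{prop: 54} with its $X$ empty and its $Y := X$. This yields $h \supseteq h_0$ with $h[X] \cap X = \emptyset$. Since $X$ is finite we also need $h^{-1}[X] \cap X = \emptyset$, but $h^{-1}[X]\cap X=\emptyset$ iff $h[X]\cap X=\emptyset$, so this comes for free. Then every $\delta$-value on $X$ misses $X$, and no new triples appear: separation of the family indexed by $A \cup \{\delta\}$ on $\vec{x}$ holds for any $p'$ deciding $q'$ on $(A\cup\{\delta\},X)$.

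\textbf{Main obstacle.} The step I expect to be the real obstacle is the coordinates in $A$: making the choice uniform so that it is a legitimate name, and checking that separation holds below every $p'$ rather than just generically. The ``iff'' transfer above is the point to verify carefully.
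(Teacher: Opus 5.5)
Your route is essentially the paper's. You extend each coordinate inside $V^{\P_\gamma}$ via the maximum principle. For $\gamma\in A$ you get consistency from an extension result (the paper uses Lemma~\ref{lemma: ksProp515} in $V[\P_\gamma]$; Proposition~\ref{prop: 54} with $Y=\emptyset$ does the same job), and you get $A$-separation on $X$ from Mitchellized persistence. For the index $\delta$ you use Proposition~\ref{prop: 54} to push $X$ off itself.

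The step you flag as the main obstacle is harmless. Persistence only needs the automatic direction: if $g'^j(x)=x_k$ then $g^j(x\restriction\alpha)=x_k\restriction\alpha$, because $g'\supseteq g$. So $A$-separation on $X$ holds for any extension. The only care needed is to choose the witnessing enumeration separately for each deciding $p'$ rather than once and for all.

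\textbf{The gap is in your treatment of $\delta$.} The hypothesis is $\delta\in\kappa\setminus A$, not $\delta\notin\dom(q)$. You write ``here we use that $\delta\notin\dom(q)$'', but that is not given. The case $\delta\in\dom(q)\setminus A$ can occur where the lemma is applied in the proof of Theorem~\ref{thm: approx}, with $\delta=\gamma_{n+1}$ and $q=q_n$. In that case:
\begin{itemize}
\item $q'(\delta)$ must be forced to extend the possibly nontrivial name $q(\delta)$, so a check name for an extension of the identity is in general not below $q(\delta)$.
\item An arbitrary extension, as in your first paragraph, may fix some $x_m$ or create extra triples, which breaks (4).
\end{itemize}

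The repair is the same move you already make for $A$. In $V^{\P_\delta}$, apply Proposition~\ref{prop: 54} to $g=q(\delta)$ with empty $X$ and $Y:=X$. Then take a $\P_\delta$-name forced to extend $q(\delta)$ with $q'(\delta)[X]\cap X=\emptyset$, hence also $q'(\delta)^{-1}[X]\cap X=\emptyset$. Your separation check then goes through unchanged: it only uses that the $\delta$-values on $X$ are forced to lie outside $X$, not that they are decided in $V$. This is what the paper does, after first reducing to the case $\delta\in\dom(q)$ by adding $\delta$ to the domain with an arbitrary name for an element of $\aut(T_{\leq\alpha})$.
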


\begin{proof}
  First note that we can assume that $\delta \in \dom(q)$, as otherwise we first 
  extend $q$ to the condition $q^* \leq_{\Q} q$ defined by setting 
  $\dom(q^*) = \dom(q) \cup \{\delta\}$ and $q^* \restriction \dom(q) = q$, and 
  letting $q^*(\delta)$ be an arbitrary $\P_\delta$-name for an element of 
  $\mathrm{Aut}(T_{\leq \alpha})$.
  
  Now let $q' \leq_{\Q} q$ be such that
  \begin{itemize}
    \item $\dom(q') = \dom(q)$;
    \item for all $\gamma \in A$, $q'(\gamma)$ is a $\P_\gamma$-name for an element of 
    $\mathrm{Aut}(T_{\leq\alpha+1})$ extending $q(\gamma)$ such that
    \[
      \Vdash_{\P_\gamma} ``X \restriction \alpha \text{ and } 
      X \text{ are } q'(\gamma)\text{-consistent}"
    \]
    (this is possible by Lemma \ref{lemma: ksProp515} applied in $V[\P_\gamma]$);
    \item $q'(\delta)$ is a $\P_\gamma$-name for an element of 
    $\mathrm{Aut}(T_{\leq\alpha+1})$ extending $q(\delta)$ such that, for all 
    $x \in X$, we have
    \[
      \Vdash_{\P_\delta} ``q'(\delta)(x) \notin X."
    \]
    (this is possible by Proposition \ref{prop: 54});
    \item for $\gamma \in \dom(q) \setminus (A \cup \{\delta\})$, 
    $q'(\gamma)$ is an arbitrary $\P_\gamma$-name for an element of 
    $\mathrm{Aut}(T_{\leq\alpha+1})$ extending $q(\gamma)$.
  \end{itemize}
  Then $q'$ is as desired, with requirement (4) following from a combination of 
  Proposition \ref{prop: persistent_separation} and the choice of 
  $q'(\delta)$.
\end{proof}

\begin{lemma}[Mitchellized Key Property] \label{lemma: mitchell_key}
  Suppose that
  \begin{itemize}
    \item $(p,q) \in \M$ with $\top(q) = \alpha$;
    \item $0 < n < \omega$ and $\vec{x}$ is an injective $n$-tuple from 
    $T_\alpha$;
    \item $A \in [\dom(q)]^{<\omega}$;
    \item $(p,q)$ is $A$-separated on $\vec{x}$;
    \item $(p',q') \leq (p,q)$ with $\top(q') = \beta$;
    \item $t \in [T_\beta]^{<\omega}$.
  \end{itemize}
  Then there are $p'' \leq_{\P} p'$ and an $n$-tuple $\vec{y}$ from 
  $T_\beta \setminus t$ such that $\vec{y} \in T_{\vec{x}}$ and such that 
  $\vec{x}$ and $\vec{y}$ are $((p'',q'),A)$-consistent.
\end{lemma}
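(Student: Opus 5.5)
The plan is to reduce to the non-Mitchellized Key Property (Lemma \ref{lemma: ksProp511}) by deciding enough of the finitely many relevant names. Since $A$ is finite and $T_\beta$ is countable, there are only countably many facts to decide, but we only need finitely many: for each $\gamma \in A$, the restriction of $q'(\gamma)$ to the finite part of $T_{\le\beta}$ that matters.

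First note that $(p',q') \le (p,q)$ implies $A \subseteq \dom(q')$, and for each $\gamma \in A$, $p'\restriction\gamma \Vdash q'(\gamma) \supseteq q(\gamma)$. Also, since $(p,q)$ is $A$-separated on $\vec{x}$, $p$ decides $q(\gamma)^{\pm1}(x_m)$ for all $\gamma\in A$ and $m<n$, and the decided values satisfy the separation conditions. Hence $p'$ forces that $\{q'(\gamma)\mid \gamma\in A\}$, viewed as an indexed family of automorphisms of $T_{\le\beta}$, is separated on $\vec{x}$ in the sense of the earlier definition, with the same decided values.

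The difficulty is that Lemma \ref{lemma: ksProp511} is applied to actual automorphisms, but the $q'(\gamma)$ live in different models $V[\P_\gamma]$. To handle this, enumerate $A=\{\gamma_0<\dots<\gamma_{k-1}\}$. The key observation is that whether $\vec{x}$ and $\vec{y}$ are $g$-consistent depends only on the values $g(y_i)$ for $i<n$ and $g(x_i)$. So it suffices to find $p''\le p'$ and $\vec y$ such that $p''\restriction\gamma$ decides $q'(\gamma)(y_i)$ for all $\gamma\in A$, $i<n$, with the decided values witnessing consistency. To get this, pass to a generic: let $G$ be $\P$-generic with $p'\in G$. In $V[G]$, the realizations $g_\gamma = q'(\gamma)^{G\cap\P_\gamma}$ for $\gamma\in A$ form a finite family in $\aut(T_{\le\beta})$ that is separated on $\vec x$. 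Apply Lemma \ref{lemma: ksProp511} in $V[G]$ with the finite set $t$ to obtain $\vec y\in T_\beta\setminus t$ above $\vec x$ with $\vec x$ and $\vec y$ $g_\gamma$-consistent for each $\gamma\in A$. Since $T_\beta$ is in $V$, $\vec y\in V$. Now choose $p''\in G$, $p''\le p'$, deciding the finitely many statements ``$q'(\gamma)(y_i) = g_\gamma(y_i)$'' for $\gamma \in A$ and $i<n$. Because each such statement is a $\P_\gamma$-statement, its truth in $V[G]$ is forced by some element of $G\cap\P_\gamma$. Taking a common extension in $G$ of these finitely many conditions and $p'$ gives $p''$. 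Then $p''\restriction\gamma$ lies in $G \cap \P_\gamma$ and extends the relevant deciding conditions restricted to $\gamma$, since $\P$ is a full product and restriction commutes with extension.

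Finally, we verify $((p'',q'),A)$-consistency. For $\gamma\in A$, the condition $p''\restriction\gamma$ forces both the values $q'(\gamma)(x_i)$, already decided by $p$, and the values $q'(\gamma)(y_i)$. The consistency statement is a Boolean combination of these finitely many values, and it is true in $V[G]$, so it is forced. The main point needing care is exactly this step: ensuring that the condition deciding a $\P_\gamma$-name can be taken with support in $\gamma$. This holds because a $\P_\gamma$-name's value is decided by an element of $\P_\gamma$, which is a regular suborder, and $p''\restriction\gamma$ is below it. If one wants to avoid generics, the same argument works by density: the set of $r\le p'$ for which some such $\vec y$ exists with all values decided by $r\restriction\gamma$ is nonempty, as witnessed above.
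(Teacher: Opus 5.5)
Your proposal is correct and is essentially the paper's proof. You pass to a $\P$-generic $G$ containing $p'$, apply the Key Property (Lemma~\ref{lemma: ksProp511}) in $V[G]$ to the evaluations $g_\gamma$ of $q'(\gamma)$ for $\gamma \in A$ (separated on $\vec x$ because $p$ already decides the relevant values), and then choose $p'' \in G$ below $p'$ so that each $p''\restriction\gamma$ forces the resulting consistency.

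The only thing the paper adds is that it sets aside the degenerate case $\beta=\alpha$ at the start, since Lemma~\ref{lemma: ksProp511} requires $\alpha<\beta$. In that case $\vec y$ is forced to equal $\vec x$, so it only goes through when $t$ avoids the entries of $\vec x$, which holds in all the paper's applications.
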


\begin{proof}
  If $\beta = \alpha$, then there is nothing to prove, so assume that $\beta > \alpha$.
  Let $G$ be $\P$-generic over $V$ with $p' \in G$. For all $\gamma \in A$, 
  let $g_\gamma$ be the evaluation of $q'(\gamma)$ in $V[G]$. Then 
  $\mc{G} = \{g_\gamma \mid \gamma \in A\}$ is a finite indexed set of automorphisms 
  of $T_{\leq \beta+1}$ and $\mc{G}$ is separated on 
  $\vec{x}$. By Lemma \ref{lemma: ksProp511}, we can find an $n$-tuple 
  $\vec{y}$ from $T_{\beta} \setminus t$ such that $\vec{y} \in T_{\vec{x}}$  
  and, for all $\gamma \in A$, $\vec{x}$ and $\vec{y}$ are $g_\gamma$-consistent. We can 
  then find $p'' \in G$ such that $p'' \leq_{\P} p'$ and, for all $\gamma \in A$, 
  \[
    p'' \restriction \gamma \Vdash_{\P_\gamma} ``\vec{x} \text{ and } \vec{y} 
    \text{ are } q'(\gamma)\text{-consistent}".
  \]
  Now $p''$ and $\vec{y}$ are as desired.
\end{proof}

\begin{lemma}[Mitchellized 1-Key Property] \label{lemma: mitchell_1_key}
  Suppose that
  \begin{itemize}
    \item $(p,q) \in \M$ with $\top(q) = \alpha$;
    \item $0 < n < \omega$ and $\vec{x}$ is an injective $n$-tuple from $T_\alpha$;
    \item $A \in [\dom(q)]^{<\omega}$;
    \item $(p,q)$ is $A$-separated on $\vec{x}$;
    \item $(p',q') \leq (p,q)$ with $\top(q') = \beta$;
    \item $m < n$, $y^* \in T_\beta$, and $x_m <_T y^*$.
  \end{itemize}
  Then there are $p'' \leq_{\P} p'$ and an $n$-tuple $\vec{y}$ from $T_\beta$ 
  such that $y_m = y^*$ and $\vec{y} \in T_{\vec{x}}$, and such that $\vec{x}$ 
  and $\vec{y}$ are $((p'',q'),A)$-consistent.
\end{lemma}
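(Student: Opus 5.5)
The plan is to mirror the proof of Lemma \ref{lemma: mitchell_key} (Mitchellized Key Property), replacing the appeal to Lemma \ref{lemma: ksProp511} with an appeal to Lemma \ref{lemma: ksProp512} (1-Key Property). If $\beta = \alpha$, then $y^* = x_m$ and we simply take $\vec{y} = \vec{x}$ and $p'' = p'$; consistency of $\vec{x}$ with itself is trivial. So we assume $\beta > \alpha$.

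First, fix a $\P$-generic filter $G$ over $V$ with $p' \in G$. For each $\gamma \in A$, let $g_\gamma$ denote the evaluation of $q'(\gamma)$ by $G \cap \P_\gamma$; this is an element of $\aut(T_{\leq \beta})$ extending the evaluation of $q(\gamma)$. We need to check that $\mc{G} = \{g_\gamma \mid \gamma \in A\}$ is separated on $\vec{x}$ in $V[G]$. Since $(p,q)$ is $A$-separated on $\vec{x}$, the condition $p$ decides $q$ on $(A,\vec{x})$, with values satisfying the two separation clauses. Moreover $p' \le p$, and $p' \restriction \gamma$ forces that $q'(\gamma)$ extends $q(\gamma)$. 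Hence $g_\gamma(x_m)$ and $g_\gamma^{-1}(x_m)$ agree with the values decided by $p$. The separation clauses are exactly the clauses in the definition of $\mc{G}$ being separated on $\vec{x}$. Note that $T$ is the same tree in $V$ and $V[G]$, and $T_\alpha$, $T_\beta$ are unchanged.

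Next, work in $V[G]$ and apply Lemma \ref{lemma: ksProp512} to $\alpha < \beta$, $\vec{x}$, the finite family $\mc{G}$, $m$ and $y^*$. This produces an $n$-tuple $\vec{y}$ from $T_\beta$ with $y_m = y^*$, $x_i <_T y_i$ for all $i<n$ (so $\vec{y} \in T_{\vec{x}}$), and $\vec{x}$, $\vec{y}$ being $g_\gamma$-consistent for all $\gamma \in A$. The tuple $\vec{y}$ is a finite object drawn from $T_\beta \in V$, so $\vec{y} \in V$.

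Finally, $A$ is finite and, for each $\gamma \in A$, the statement ``$\vec{x}$ and $\vec{y}$ are $q'(\gamma)$-consistent'' holds in $V[G \cap \P_\gamma]$. By the forcing theorem, we can choose for each $\gamma$ a condition in $G \cap \P_\gamma$ forcing it, and then let $p'' \in G$ be a common extension of these conditions and $p'$. Then $p'' \restriction \gamma$ extends the chosen condition for $\gamma$ and hence forces the corresponding consistency statement. Thus $\vec{x}$ and $\vec{y}$ are $((p'',q'),A)$-consistent.

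The only real care point is the first step: checking that separation transfers from the decided values at level $\alpha$ to the generic evaluations of $q'(\gamma)$. This needs the fact that extension in $\M$ forces $q'(\gamma) \supseteq q(\gamma)$ below $p' \restriction \gamma$. The rest is routine.
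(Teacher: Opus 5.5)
Your proposal is correct and follows the paper's proof exactly. It is the argument for the Mitchellized Key Property with Lemma \ref{lemma: ksProp512} in place of Lemma \ref{lemma: ksProp511}: pass to a $\P$-generic $G \ni p'$, check that the evaluated family is separated on $\vec{x}$, apply the 1-Key Property in $V[G]$, and pull back a $p'' \in G$ forcing the finitely many consistency statements. One small correction: the case $\beta = \alpha$ is vacuous, because $x_m <_T y^*$ forces $\height_T(y^*) > \alpha$, so it does not reduce to $y^* = x_m$ as you wrote.
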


\begin{proof}
  This is proven in exactly the same way as Lemma \ref{lemma: mitchell_key}, 
  using Lemma \ref{lemma: ksProp512} in place of Lemma \ref{lemma: ksProp511}.
\end{proof}

\begin{lemma} \label{lemma: one_step}
  Suppose that 
  \begin{itemize}
    \item $\delta < \kappa$ and $\alpha < \omega_1$;
    \item $(p_0,q_0), (p_1,q_1) \in \M$ are such that
    \begin{itemize}
      \item $\top(q_0) = \top(q_1) = \alpha$;
      \item $\dom(q_0) = \dom(q_1)$;
      \item $(p_0,q_0) \restriction \delta = (p_1,q_1) \restriction \delta$;
    \end{itemize}
    \item $X \in [T_{\alpha+1}]^{<\omega}$ and $y \in T_{\alpha+1}$ are such that 
    $X \cup \{y\}$ has unique drop-downs to $\alpha$;
    \item $A \in [\dom(q_0)]^{<\omega}$, and $(p_0,q_0)$ and $(p_1,q_1)$ are both $A$-separated on 
    $X \restriction \alpha$.
  \end{itemize}
  Then there exist $(\hat{p}_0, \hat{q}_0), (\hat{p}_1, \hat{q}_1) \in \M$ 
  and $Y \in [T_{\alpha+1}]^{<\omega}$ such that
  \begin{itemize}
    \item $X \cup \{y\} \subseteq Y$;
    \item for each $i < 2$, we have 
    \begin{itemize}
      \item $(\hat{p}_i, \hat{q}_i) \leq (p_i,q_i)$;
      \item $\top(\hat{q}_i) = \alpha + 1$;
      \item $\dom(\hat{q}_i) = \dom(q_i)$;
      \item $(\hat{p}_i,\hat{q}_i)$ is $A$-separated on $Y$;
      \item $X \restriction \alpha$ and $X$ are 
      $((\hat{p}_i, \hat{q}_i),A)$-consistent;
      \item for all $\gamma \in A$, $x \in X \cup \{y\}$, and $j \in \{-1,1\}$,
      \[
        \hat{p}_i \restriction \gamma \Vdash_{\P_\gamma} ``
        \hat{q}_i(\gamma)^j(x) \in Y";
      \]
    \end{itemize}          
    \item $(\hat{p}_0, \hat{q}_0) \restriction \delta = (\hat{p}_1, \hat{q}_1) 
    \restriction \delta$.
  \end{itemize}
\end{lemma}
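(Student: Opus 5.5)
Since the two conditions agree below $\delta$, I will build the extensions coordinatewise: first handle coordinates $\gamma<\delta$ once, for both conditions simultaneously, and then handle coordinates $\gamma\geq\delta$ separately for each $i$.

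\textbf{Step 1: decide the relevant values.} Let $A=\{\gamma_0<\dots<\gamma_{k-1}\}$. I extend $p_0$ and $p_1$ so that they decide $q_i(\gamma)^{\pm1}(x\restriction\alpha)$ for all $\gamma\in A$ and $x\in X\cup\{y\}$. These values lie at level $\alpha$, and each $T_\alpha$ is countable.
\begin{itemize}
\item For $\gamma\in A\cap\delta$, only $p\restriction\delta$ matters. So I extend the common part $p_0\restriction\delta=p_1\restriction\delta$ once and use the same extension for both $i$.
\item For $\gamma\in A\setminus\delta$, I extend $p_0\restriction[\delta,\kappa)$ and $p_1\restriction[\delta,\kappa)$ independently.
\end{itemize}
Because $\P$ is a finite-support Cohen product, these pieces amalgamate. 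I get $p_i^*\leq p_i$ with $p_0^*\restriction\delta=p_1^*\restriction\delta$, and $p_i^*$ decides everything needed.

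\textbf{Step 2: choose $Y$.} Let $Y_0$ be the set of all decided images $g^{\pm1}_{\gamma,i}(x\restriction\alpha)$, for $i<2$, $\gamma\in A$, $x\in X\cup\{y\}$. This is a finite subset of $T_\alpha$. I choose $Y\supseteq X\cup\{y\}$ as follows. For each node $z\in Y_0$ that is not already the restriction of an element of $X\cup\{y\}$, I add a fresh immediate successor of $z$. Infinite splitting lets me keep $Y$ with unique drop-downs to $\alpha$, and it also lets me avoid any finitely many prescribed nodes.

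\textbf{Step 3: extend the automorphisms.} Now I extend each $q_i(\gamma)$ for $\gamma\in A$ to level $\alpha+1$. The target properties are:
\begin{enumerate}
\item[(a)] $X\restriction\alpha$ and $X$ are consistent;
\item[(b)] images of $X\cup\{y\}$ land in $Y$;
\item[(c)] $A$-separation holds on $Y$.
\end{enumerate}

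For (a) and (b) I apply Lemma \ref{lemma: ksLem539} in $V[\P_\gamma]$ below $p^*_i\restriction\gamma$. I take $X_0=X$ and $X_1$ to be the set of intended images, specified by Proposition \ref{prop: ext_prop}. For $\gamma<\delta$ this is done once, yielding a common name for both $i$. For $\gamma\geq\delta$ it is done separately for each $i$. Coordinates outside $A$ are extended arbitrarily, for instance via Lemma \ref{lemma: ksProp515}, with $\dom(\hat q_i)=\dom(q_i)$. I then strengthen $p^*_i$ again, in the same amalgamated way as in Step 1, to decide $\hat q_i$ on $(A,Y)$; this gives $\hat p_i$.

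\textbf{Main obstacle: separation on $Y$.} Proposition \ref{prop: persistent_separation} gives separation on $X$, since $X$ has unique drop-downs. For the new points of $Y$, and for $y$, I order the enumeration with $X$ first and then the remaining points of $Y$. Each new point is a fresh successor of its node, so it should have at most one earlier $A$-image relation. I will ensure this by choosing, via Proposition \ref{prop: ext_prop}, the images of these fresh points to be further fresh nodes that are not fixed and avoid $Y$ except along the one intended edge. The delicate part is making this work uniformly for both $i$ while sharing the coordinates below $\delta$. Since the coordinates below $\delta$ see only $\hat p\restriction\delta$, which is common to both conditions, I expect this to be routine bookkeeping; I will verify it carefully.
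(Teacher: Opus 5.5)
Your plan has a genuine gap, and it comes from Step 2 rather than from the bookkeeping you flag at the end. You build $Y$ with unique drop-downs to $\alpha$, adding a fresh successor only above nodes of $Y_0$ that are not already restrictions of points of $X\cup\{y\}$. Then $Y$ has exactly one point above each level-$\alpha$ image, so requirement (b) forces $\hat q_i(\gamma)^j(x)$ to be that point. The action of $\hat q_i$ on $Y$ is therefore a copy of the action of $q_i$ on $(X\cup\{y\})\restriction\alpha$ and $Y_0$.

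But separation is assumed only on $X\restriction\alpha$; the hypotheses say nothing about how $q_i(\gamma)$ acts on $y\restriction\alpha$. Here is what goes wrong.
\begin{itemize}
\item If $p_i\restriction\gamma$ forces $q_i(\gamma)(y\restriction\alpha)=y\restriction\alpha$ for some $\gamma\in A$, your $Y$ forces $\hat q_i(\gamma)(y)=y$. This violates the first clause of separation under every enumeration.
\item If $q_i(\gamma)(y\restriction\alpha)=q_i(\gamma')(y\restriction\alpha)=x\restriction\alpha$ with $\gamma\neq\gamma'$ and $x\in X$, you are forced into two edges between $y$ and $x$, which no enumeration tolerates.
\item Within a single $i$, two different tuples $(\gamma,x,j)\neq(\gamma',x',j')$ may have the same level-$\alpha$ image outside $X\restriction\alpha$. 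Your one fresh successor then receives both edges. If $x,x'$ are already linked inside $X\restriction\alpha$, this closes a cycle, and no enumeration can separate it.
\end{itemize}
So your claim that each new point ``should have at most one earlier relation'' is false for your $Y$, and you never address the edges of $y$ itself. Also, Lemma \ref{lemma: ksLem539} only yields consistency, i.e.\ inheritance of the level-$\alpha$ pattern. That is exactly what must be broken for $y$.

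The missing idea is to give up unique drop-downs for $Y$.
\begin{itemize}
\item After deciding the level-$\alpha$ values $z^j_{i,\gamma,x}$, keep consistency only for tuples with $x\in X$ and $z^j_{i,\gamma,x}\in X\restriction\alpha$.
\item For every other tuple, choose a successor $\hat z^j_{i,\gamma,x}$ of $z^j_{i,\gamma,x}$ lying outside $X\cup\{y\}$. This includes every tuple with $x=y$, even when $z^j_{i,\gamma,y}$ lies in $(X\cup\{y\})\restriction\alpha$.
\item Make these choices pairwise distinct, except that the choice is shared by $i=0,1$ when $\gamma<\delta$.
\item Let $Y$ be $X\cup\{y\}$ together with these points.
\item Use Proposition \ref{prop: ext_prop}, not Lemma \ref{lemma: ksLem539}, to get $\hat q_i(\gamma)$. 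It should extend $q_i(\gamma)$, keep $X\restriction\alpha$ and $X$ consistent, and send $x$ to $\hat z^j_{i,\gamma,x}$. It should also send every fresh point outside $Y$ under every $\hat q_i(\gamma)^{\pm1}$, except back along its single intended edge. Use identical names for $\gamma\in A\cap\delta$.
\end{itemize}
Now enumerate $Y$ as follows: first $X$, in the order witnessing separation of $X\restriction\alpha$, then $y$, then the fresh points. Under this enumeration $y$ is not fixed and has no earlier edges, and each fresh point has exactly one earlier edge.
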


\begin{proof}
  By extending $p_0$ if necessary (and correspondingly extending $p_1 \restriction \delta$ 
  to maintain the fact that $p_0 \restriction \delta = p_1 \restriction \delta$), we can 
  assume that $p_0$ decides $q_0$ on $(A, \{y \restriction \alpha\})$. Similarly, we can 
  assume that $p_1$ decides $q_1$ on $(A, \{y \restriction \alpha\})$. For each 
  $i < 2$, $\gamma \in A$, $x \in X \cup \{y\}$, and $j \in \{-1,1\}$, let 
  $z^j_{i,\gamma,x} \in T_\alpha$ be such that
  \[
    p_i \restriction \gamma \Vdash_{\P_\gamma} ``q_i(\gamma)^j(x \restriction \alpha) = 
    z^j_{i,\gamma,x}".
  \]
  Note that, if $\gamma \in A \cap \delta$, $x \in X \cup \{y\}$, and $j \in \{-1,1\}$, 
  then $z^j_{0,\gamma,x} = z^j_{1,\gamma,x}$. Let $W_0$ be the set of all 
  $(i,\gamma,x,j) \in 2 \times A \times X \times \{-1,1\}$ such that $z^j_{i,\gamma,x} 
  \in X \restriction \alpha$, and let $W_1 = (2 \times A \times (X \cup \{y\}) \times 
  \{-1,1\}) \setminus W_0$. For each $(i,\gamma,x,j) \in W_1$, choose 
  $\hat{z}^j_{i,\gamma,x} \in T_{\alpha+1}$ in such a way that
  \begin{itemize}
    \item $\hat{z}^j_{i,\gamma,x} \restriction \alpha = z^j_{i,\gamma,x}$;
    \item $\hat{z}^j_{i,\gamma,x} \notin X \cup \{y\}$;
    \item if $(0,\gamma,x,j) \in W_1$ and $\gamma < \delta$, then 
    $\hat{z}^j_{0,\gamma,x} = \hat{z}^j_{1,\gamma,x}$ (note that the hypothesis implies 
    that $(1,\gamma,x,j) \in W_1$ as well);
    \item if $(i,\gamma,x,j)$ and $(i',\gamma',x',j')$ are distinct elements of $W_1$ such 
    that $(\gamma,x,j) \neq (\gamma',x',j')$ or $\gamma \geq \delta$, 
    (i.e., the pair does not fall into the scope of the previous bullet point), then 
    $\hat{z}^j_{i,\gamma,x} \neq \hat{z}^{j'}_{i',\gamma',x'}$.
  \end{itemize}
  Let $Y = X \cup \{y\} \cup \{\hat{z}^j_{i,\gamma,x} \mid (i,\gamma,x,j) \in W_1\}$. For 
  each $\gamma \in A$ and $i < 2$, let $\hat{q}_i(\gamma)$ be a $\P_\gamma$-name for an  
  element of $\mathrm{Aut}(T_{\leq \alpha+1})$ that is forced by $p_i \restriction \gamma$ 
  to have the following properties, with the additional requirement that 
  $\hat{q}_0(\gamma) = \hat{q}_1(\gamma)$ if $\gamma \in A \cap \delta$:
  \begin{itemize}
    \item $\hat{q}_i(\gamma) \supseteq q_i(\gamma)$;
    \item $X \restriction \alpha$ and $X$ are $\hat{q}_i(\gamma)$-consistent;
    \item for all $(x,j) \in (X \cup \{y\}) \times \{-1,1\}$ such that $(i,\gamma,x,j) \in W_1$, 
    we have
    \begin{itemize}
      \item $\hat{q}_i(\gamma)^j(x) = \hat{z}^j_{i,\gamma,x}$;
      \item $\hat{q}_i(\gamma)^j(\hat{z}^j_{i,\gamma,x}) \notin Y$;
    \end{itemize}
    \item for all $(i',\gamma',x',j') \in W_1$ such that $(i',\gamma') \neq (i,\gamma)$ 
    and it is not the case that $\gamma' = \gamma \in A \cap \delta$ (i.e., 
    $\hat{z}^{j'}_{i',\gamma',x'}$ does not fall within the scope of the previous bullet point), 
    and for all $j \in \{-1,1\}$, we have $\hat{q}_i(\gamma)^j(\hat{z}^{j'}_{i',\gamma',x'}) 
    \notin Y$.
  \end{itemize}
  It is possible to find such a $\hat{q}_i(\gamma)$ by Proposition \ref{prop: ext_prop}. For all 
  $i < 2$ and $\gamma \in \dom(q_i) \setminus A$, let $\hat{q}_i(\gamma)$ be an arbitrary 
  $\P_\gamma$-name for an element of $\mathrm{Aut}(T_{\leq \alpha+1})$ extending 
  $q_i(\gamma)$, again subject to the requirement that $\hat{q}_0(\gamma) = \hat{q}_1(\gamma)$ 
  for all $\gamma \in \dom(q_i) \cap \delta$. Finally, for each $i < 2$, find $\hat{p}_i \leq_{\P} 
  p_i$ such that
  \begin{itemize}
    \item $\hat{p}_i$ decides $\hat{q}_i$ on $(A,Y)$;
    \item $\hat{p}_0 \restriction \delta = \hat{p}_1 \restriction \delta$.
  \end{itemize}
  Then $(\hat{p}_0,\hat{q}_0)$,$(\hat{p}_1,\hat{q}_1)$, and $Y$ are as desired. 
  For instance, to see that $(\hat{p}_0, \hat{q}_0)$ is $A$-separated on 
  $Y$, first let $\vec{x}$ be an injective enumeration of $X$ such that 
  $\vec{x} \restriction \alpha$ witnesses that $(p_0,q_0)$ is 
  $A$-separated on $X \restriction \alpha$, let $\vec{z}$ be an arbitrary injective 
  enumeration of $Y \setminus (X \cup \{y\})$, and let $\vec{y} = 
  \vec{x} \ {}^\frown \langle y \rangle ^\frown \vec{z}$. The construction and choice 
  of $\vec{x}$ then easily yield the conclusion that $(\hat{p}_0, \hat{q}_0)$ 
  is $A$-separated on $\vec{y}$. 
\end{proof}

\subsection{Dense subsets of derived trees}
This subsection contains two crucial lemmas proving that certain subsets of derived trees
of $T$ are dense. The first plays a key role in the eventual proof that $\M$ and its quotients have the $\omega_1$-approximation property, which 
will later lead to the fact that, if $\kappa$ is supercompact, then $\GMP$ holds in $V[\M]$. The second plays a similar role in the proof that, if $T$ is a free 
Suslin tree, then it remains a Suslin tree in $V[\M]$.

\begin{lemma} \label{lemma: dense}
  Suppose that
  \begin{itemize}
    \item $(p,q) \in \M$ and $\alpha = \top(q)$;
    \item $\delta < \kappa$;
    \item $A$ is a finite subset of $\dom(q)$, $\vec{x}$ is a finite sequence from 
    $T_\alpha$, and $(p,q)$ is $A$-separated on $\vec{x}$;
    \item $\lambda$ is a cardinal and $\dot{b}$ is an $\M$-name for a subset of 
    $\lambda$ such that $\Vdash_{\M} \dot{b} \notin V[\M_\delta]$.
  \end{itemize}
  Let $D$ be the set of all $\vec{z} \in T_{\vec{x}}$ for which there exist 
  $(p_0,q_0),(p_1,q_1) \in \M$ and $\varepsilon < \lambda$ such that
  \begin{itemize}
    \item for $i < 2$, we have:
    \begin{itemize}
      \item $(p_i,q_i) \leq (p,q)$;
      \item $\top(q_i) = \height_T(\vec{z})$;
      \item $\vec{x}$ and $\vec{z}$ are $((p_i,q_i),A)$-consistent;
    \end{itemize}
    \item $(p_0,q_0) \restriction \delta = (p_1,q_1) \restriction \delta$;
    \item $(p_0,q_0) \Vdash ``\varepsilon \notin \dot{b}"$ and $(p_1,q_1) 
    \Vdash ``\varepsilon \in \dot{b}"$.
  \end{itemize}
  Then $D$ is a dense open subset of $T_{\vec{x}}$.
\end{lemma}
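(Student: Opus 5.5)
Openness is the easy half. Suppose $\vec{z} \in D$, witnessed by $(p_0,q_0),(p_1,q_1),\varepsilon$, and let $\vec{z}' \in T_{\vec{x}}$ lie above $\vec{z}$ at height $\beta' > \beta = \height_T(\vec{z})$. Extend each $q_i$ to top level $\beta'$ by a \emph{common} rule below $\delta$. For $\gamma\in\dom(q_0)\cap\delta$ we have $q_0(\gamma)=q_1(\gamma)$, so we can use the same name for both. We also require that $\vec{z}$ and $\vec{z}'$ be consistent for every $\gamma \in A$. This is arranged level by level using Lemma \ref{lemma: ksLem539}, i.e.\ by applying Lemma \ref{lemma: ksProp515} with $X=\vec{z}'$ in $V[\P_\gamma]$, with the same choices made on both sides for $\gamma<\delta$. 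Consistency of $\vec{x}$ with $\vec{z}$ and of $\vec{z}$ with $\vec{z}'$ composes to consistency of $\vec{x}$ with $\vec{z}'$, since $\vec{z}'$ has unique drop-downs. The forcing statements about $\varepsilon$ persist to the extensions. So $\vec{z}' \in D$.

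For density, fix $\vec{w} \in T_{\vec{x}}$ at height $\eta$. First extend $(p,q)$ to $(p',q')$ with $\top(q') \ge \eta$. Apply Lemma \ref{lemma: mitchell_key} (Mitchellized Key Property) to get $p''\le p'$ and $\vec{y}$ at level $\top(q')$ above $\vec{x}$ that is $((p'',q'),A)$-consistent with $\vec{x}$. To ensure $\vec{y}$ is above $\vec{w}$, I would instead first extend $q$ to height $\eta$ while making $\vec{x},\vec{w}$ consistent, as in the openness argument. By Mitchellized persistence (Proposition \ref{prop: persistent_separation}) the new condition is then $A$-separated on $\vec{w}$, and it suffices to find an element of $D$ above $\vec{w}$ relative to this condition. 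So we may assume $\vec{w}$ is at $\top(q)$, and we work with $\vec{x}$ replaced by $\vec{w}$.

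The main step is splitting. Since $\dot b\notin V[\M_\delta]$, the condition $(p,q)$ does not decide $\dot b$ modulo $\M_\delta$. Hence there are $(p_0,q_0),(p_1,q_1)\le(p,q)$ and $\varepsilon$ with $(p_0,q_0)\restriction\delta$ and $(p_1,q_1)\restriction\delta$ compatible, $(p_0,q_0)\Vdash\varepsilon\notin\dot b$ and $(p_1,q_1)\Vdash\varepsilon\in\dot b$. Otherwise $\dot b$ would be computed in $V[\M_\delta]$ from the $\M_\delta$-generic below $(p,q)\restriction\delta$, as the set of $\varepsilon$ forced into $\dot b$ by some condition whose restriction lies in the generic. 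Moreover we can arrange $(p_0,q_0)\restriction\delta=(p_1,q_1)\restriction\delta$. Strengthen to a common lower bound $r$ of the two restrictions, replace each $(p_i,q_i)$ by $r$ glued with its part above $\delta$ (using the projection structure), and then equalize the domains and top levels by extending with common names below $\delta$, as in Lemma \ref{lemma: constant_height}.

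Now let $\beta$ be the common top level. Using Lemma \ref{lemma: mitchell_key} twice, first for $i=0$ with output $p_0''$ and then for $i=1$, we would like a single $\vec{z}$ at level $\beta$ above $\vec{w}$ that is consistent for both conditions. This is the expected obstacle: the two conditions' automorphisms above $\delta$ differ, so the Key Property applied separately yields different tuples. My plan is to avoid it by building the two conditions simultaneously up from $\top(q)$ by $\omega$-many applications of Lemma \ref{lemma: one_step}. At each stage both conditions stay $A$-separated on a growing finite set and keep $X\restriction\alpha$, $X$ consistent, with agreement below $\delta$. The tuple is carried along as $X$ and extended one level at a time, and at limits we take the union of the branches, which is possible by normality and because the construction chooses cofinal levels. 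The deciding conditions for $\varepsilon$ are absorbed by first extending before the ladder and then noting that the ladder only extends the conditions. Alternatively, we can first extend both conditions beyond $\beta$ and then choose $\vec{z}$ successor-step-wise via Lemma \ref{lemma: one_step} so that it stays consistent for both, and verify the limit steps via separation and Proposition \ref{prop: limit_extension}.
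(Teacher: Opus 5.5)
Your openness argument, the reduction to $\vec{w}$ at level $\top(q)$, and the splitting step with $(p_0,q_0)\restriction\delta=(p_1,q_1)\restriction\delta$ are fine and match the paper.

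\textbf{The gap is the main step.} This is exactly the obstacle you flagged, and neither of your proposed fixes resolves it. Once the splitting pair is fixed with common top level $\beta$, the names $q_0(\gamma)$ and $q_1(\gamma)$ for $\gamma\in A\setminus\delta$ are already determined on every level in $(\top(q),\beta]$. A common consistent tuple need not exist.

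For example, suppose $\vec{w}=(w_0,w_1)$ and $(p,q)$ decides $q(\gamma)(w_1)=w_0$ for some $\gamma\in A\setminus\delta$; separation allows this. Then any admissible $\vec z$ forces $z_1\restriction(\top(q)+1)$ to be a successor $u$ of $w_1$ such that $p_0\restriction\gamma$ and $p_1\restriction\gamma$ force $q_0(\gamma)(u)$ and $q_1(\gamma)(u)$ to be the same node. If the splitting pair forces these two automorphisms to disagree on every element of $\mathrm{succ}_T(w_1)$, then no $\vec z$ exists at any height, and no further extension can repair this. Nothing in your construction rules out such a pair.

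Lemma \ref{lemma: one_step} cannot help, because it only extends two conditions by one level from their \emph{current} common top level; it cannot control levels already fixed below $\beta$. You have two options, and both fail:
\begin{itemize}
\item If you run the ladder from $\top(q)$ before splitting, the resulting conditions do not decide $\varepsilon$, and the later extension needed to split $\varepsilon$ recreates the problem.
\item If you run it from $\beta$ after splitting, you need a level-$\beta$ tuple consistent with $\vec w$ for both conditions, which is what you were trying to find.
\end{itemize}
So ``absorbing the deciding conditions before the ladder'' is not coherent, and the same objection applies to your alternative. The limit steps are beside the point.

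\textbf{The missing idea} is never to ask for one tuple that is consistent for both splitting conditions simultaneously. The paper proceeds as follows.
\begin{enumerate}
\item Apply the Mitchellized Key Property (Lemma \ref{lemma: mitchell_key}) \emph{separately}. This gives $\vec{y}_0$ for $(p_0,q_0)$ and $\vec{y}_1$ for $(p_1,q_1)$, both at level $\beta$ above $\vec w$, chosen so that they are already disjoint at level $\top(q)+1$.
\item Using Lemma \ref{lemma: ksLem539} followed by Extension (Lemma \ref{lemma: ksProp515}), build a \emph{third} condition $(p_2,q_2)\le(p,q)$ with top level $\beta$. It copies the splitting pair below $\delta$, but is defined afresh above $\delta$ so that both $\vec y_0$ and $\vec y_1$ are consistent with $\vec w$ for it.
\item Extend it to some $(p_3,q_3)$ deciding ``$\varepsilon\in\dot b$''. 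This extension disagrees with one member of the pair, say $(p_0,q_0)$.
\item Choose $\vec z$ above $\vec y_0$ via the Key Property for $(p_3,q_3)$.
\item Extend $(p_0,q_0)$ in a \emph{controlled} way: copy $(p_3,q_3)$ below $\delta$, and use Extension above $\delta$ to make $\vec y_0$ and $\vec z$ consistent.
\end{enumerate}
The point is that the two uncontrolled extensions (splitting $\varepsilon$, then deciding it again) happen in sequence, with a tuple fixed in between, rather than in parallel.
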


\begin{proof}
  The fact that $D$ is open follows easily from its definition. To show that 
  $D$ is dense, fix $\vec{y} \in T_{\vec{x}}$, and let $\alpha' = \height_T(\vec{y})$. 
  We will find $\vec{z} \in D \cap T_{\vec{y}}$.
  By repeatedly applying Extension (Lemma \ref{lemma: ksProp515}), we can find 
  $q'$ such that
  \begin{itemize}
    \item $(p,q') \leq_{\M} (p,q)$;
    \item $\top(q') = \alpha'$;
    \item $\vec{x}$ and $\vec{y}$ are $((p,q'),A)$-consistent.
  \end{itemize}
  Since $\Vdash_{\M} ``\dot{b} \notin V[\M_\delta]"$, we can find 
  $(p'_0, q'_0), (p'_1,q'_1) \leq_{\M} (p,q')$ and $\varepsilon < \lambda$ such that
  \begin{enumerate}
    \item $(p'_0,q'_0) \restriction \delta = (p'_1,q'_1) \restriction \delta$;
    \item $(p'_0,q'_0) \Vdash_{\M} ``\varepsilon \notin \dot{b}"$;
    \item $(p'_1,q'_1) \Vdash_{\M} ``\varepsilon \in \dot{b}"$.
  \end{enumerate}
  By extending the conditions if necessary, we may assume that there is 
  a countable ordinal $\beta > \alpha'$ such that 
  $\top(q'_0) = \top(q'_1) = \beta$. By extending 
  $p'_0$ if necessary (and also correspondingly extending 
  $p'_1 \restriction \delta$ to ensure item (1) above) and applying the 
  Mitchellized Key Property (Lemma \ref{lemma: mitchell_key}), we can fix 
  $\vec{y}_0 \in T_{\vec{y}}$ such that 
  $\height_T(\vec{y}_0) = \beta$ and such that $\vec{y}$ and $\vec{y}_0$ are 
  $((p'_0,q'_0),A)$-consistent. 
  Let $\vec{y}_{00} = \vec{y}_0 \restriction (\alpha' + 1)$. 
  By extending $p'_1$ (and $p'_0 \restriction \delta$ if necessary) and 
  applying the Mitchellized Key Property, we can fix $\vec{y}_{10} \in T_{\vec{y}}$ 
  such that
  \begin{itemize}
    \item $\height_T(\vec{y}_{10}) = \alpha' + 1$;
    \item $\vec{y}_{00}$ and $\vec{y}_{10}$ have no entries in common;
    \item $\vec{y}$ and $\vec{y}_{10}$ are $((p'_1,q'_1),A)$-consistent.
  \end{itemize}
  By the same reasoning as above, we can fix $\vec{y}_1 \in T_{\vec{y}_{10}}$ 
  such that $\height_T(\vec{y}_1) = \beta$ and such that $\vec{y}_{10}$ 
  and $\vec{y}_1$ are $((p'_1,q'_1),A)$-consistent.
  
  Now repeatedly apply Lemma \ref{lemma: ksLem539} followed by Extension
  (Lemma \ref{lemma: ksProp515}) to find $(p'_2,q'_2) \leq_\M (p,q')$ such that
  \begin{itemize}
    \item $\height(q'_2) = \beta$;
    \item $(p'_2, q'_2) \restriction \delta = (p'_0,q'_0) \restriction \delta$;
    \item for each $i < 2$, $\vec{y}$ and $\vec{y}_i$ are 
    $((p'_2,q'_2),A)$-consistent.
  \end{itemize}
  Now find $(p_2,q_2) \leq_{\M} (p'_2,q'_2)$ deciding the truth value of ``$\varepsilon \in 
  \dot{b}$" For concreteness, suppose that $(p_2,q_2) \Vdash_{\M} ``\varepsilon \in 
  \dot{b}"$; the other case is symmetric, with $(p'_1,q'_1)$ playing the role 
  of $(p'_0,q'_0)$ in the following argument. Let $\beta^* = \top(q_2)$. 
  By applying the Mitchellized Key Property and extending $p_2$ if necessary, 
  we can fix $\vec{z} \in T_{\vec{y}_0}$ 
  such that $\height_T(\vec{z}) = \beta^*$ and such that $\vec{y}_0$ and $\vec{z}$ 
  are $((p_2,q_2),A)$-consistent. Finally, repeatedly apply Extension to find $(p_0,q_0) \leq_{\M} (p'_0,q'_0)$ such that 
  \begin{itemize}
    \item $\top(q_0) = \beta^*$;
    \item $(p_0,q_0) \restriction \delta = (p_2,q_2) \restriction \delta$;
    \item $\vec{y}_0$ and $\vec{z}$ are $((p_0,q_0),A)$-consistent.
  \end{itemize}
  Then $\vec{z} \in D$, as witnessed by $(p_0,q_0)$, $(p_2,q_2)$, and $\varepsilon$.
\end{proof}

\begin{lemma} \label{lemma: dense_1}
  Suppose that
  \begin{itemize}
    \item $(p,q) \in \M$ and $\alpha = \top(q)$;
    \item $A$ is a finite subset of $\dom(q)$, $\vec{x}$ is an injective 
    $n$-tuple from $T_\alpha$ for some $0 < n < \omega$, and 
    $(p,q)$ is $A$-separated on $\vec{x}$;
    \item $m < n$ and $\dot{E}$ is an $\M$-name for a dense open subset of $T$.
  \end{itemize}
  Let $D$ be the set of all $\vec{z} \in T_{\vec{x}}$ for which there exists 
  $(p^*,q^*) \leq_{\M} (p,q)$ such that
  \begin{itemize}
    \item $\top(q^*) = \height_T(\vec{z})$;
    \item $(p^*,q^*) \Vdash_{\M} ``z_m \in \dot{E}"$;
    \item $\vec{x}$ and $\vec{z}$ are $((p^*,q^*),A)$-consistent.
  \end{itemize}
  Then $D$ is a dense open subset of $T_{\vec{x}}$.
\end{lemma}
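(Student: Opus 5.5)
Openness is immediate from the definition. If $\vec{z} \in D$ is witnessed by $(p^*,q^*)$ and $\vec{z}' \in T_{\vec{z}}$, then we extend $q^*$ to $q^{**}$ with $\top(q^{**}) = \height_T(\vec{z}')$ by repeatedly applying Extension (Lemma \ref{lemma: ksProp515}) inside each $V[\P_\gamma]$, arranging that $\vec{z}$ and $\vec{z}'$ are $((p^*,q^{**}),A)$-consistent. Consistency composes: $\vec{x}$ versus $\vec{z}$ and $\vec{z}$ versus $\vec{z}'$ together give $\vec{x}$ versus $\vec{z}'$. Since $E$ is open, $z'_m \ge_T z_m$ is forced into $\dot{E}$. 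So $(p^*,q^{**})$ witnesses $\vec{z}' \in D$.

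For density, fix $\vec{y} \in T_{\vec{x}}$ with $\alpha' = \height_T(\vec{y})$. We proceed in four steps.

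First, exactly as at the start of the proof of Lemma \ref{lemma: dense}, we use Extension to find $q' \le q$ with $\top(q') = \alpha'$ such that $\vec{x}$ and $\vec{y}$ are $((p,q'),A)$-consistent. By Mitchellized persistence (Proposition \ref{prop: persistent_separation}), after extending $p$ so that it decides $q'$ on $(A,\vec{y})$, the condition $(p,q')$ is $A$-separated on $\vec{y}$.

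Second, since $\dot{E}$ is forced to be dense, we find $(p_1,q_1) \le (p,q')$ and a node $w \ge_T y_m$ with $(p_1,q_1) \Vdash w \in \dot{E}$. By extending further, we may assume $\top(q_1) = \beta \ge \height_T(w)$. We then replace $w$ by some $y^* \in T_\beta$ above $w$; openness of $\dot{E}$ gives $(p_1,q_1) \Vdash y^* \in \dot{E}$.

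Third, we apply the Mitchellized 1-Key Property (Lemma \ref{lemma: mitchell_1_key}) to the separated condition $(p,q')$ on $\vec{y}$, the extension $(p_1,q_1)$, and $y^*$. This gives $p^* \le p_1$ and $\vec{z} \in T_{\vec{y}}$ with $z_m = y^*$ such that $\vec{y}$ and $\vec{z}$ are $((p^*,q_1),A)$-consistent. Setting $q^* = q_1$, we get $(p^*,q^*) \le (p,q)$, $\top(q^*) = \height_T(\vec{z})$, and $(p^*,q^*) \Vdash z_m \in \dot{E}$.

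Fourth, we must check that $\vec{x}$ and $\vec{z}$ are $((p^*,q^*),A)$-consistent. Work below $p^* \restriction \gamma$ for $\gamma \in A$. The tuple $\vec{x}$ versus $\vec{y}$ is consistent for $q'(\gamma)$, and $q^*(\gamma)$ extends $q'(\gamma)$. The tuple $\vec{y}$ versus $\vec{z}$ is consistent for $q^*(\gamma)$. Since automorphisms commute with restriction, we get: $g(x_i) = x_j$ iff $g(y_i) = y_j$ iff $g(z_i) = z_j$.

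The main subtle step is the third one. The 1-Key Property requires $(p,q')$ to be $A$-separated on the tuple $\vec{y}$ at the lower level, so separation must be transported from $\vec{x}$ to $\vec{y}$ via persistence. We also need the decisions to be made by $p$ itself rather than by the stronger $p_1$; if necessary, we first strengthen $p$ to decide $q'$ on $(A,\vec{y})$ before choosing $(p_1,q_1)$.
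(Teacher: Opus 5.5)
Your proposal is correct and follows the paper's proof essentially step for step. You use Extension to reach the level of $\vec{y}$, strengthen $p$ to decide $q'$ on $(A,\vec{y})$ so that Mitchellized persistence gives separation on $\vec{y}$, push a node of $\dot{E}$ above $y_m$ up to the top level via openness, and then apply the Mitchellized 1-Key Property; your explicit checks of openness and of the chaining of consistency from $\vec{x}$ through $\vec{y}$ to $\vec{z}$ fill in details the paper leaves implicit.
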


\begin{proof}
  The fact that $D$ is open follows easily from the definition. To show that $D$ 
  is dense, fix $\vec{y} \in T_{\vec{x}}$, and let $\alpha' = \height_T(\vec{y})$. 
  Fix $q'$ exactly as in the proof of Lemma \ref{lemma: dense}. Find $p' 
  \leq_{\P} p$ such that $p'$ decides $q'$ on $(A, \vec{y})$. Note that, 
  since $(p,q)$ is $A$-separated on $\vec{x}$, it follows from Mitchellized 
  Persistence (Proposition \ref{prop: persistent_separation}) that $(p',q')$ is $A$-separated on 
  $\vec{y}$. Now find 
  $(p'',q'') \leq_{\M} (p',q')$ and $t \in T$ such that $y_m \leq_T t$ and 
  $(p'',q'') \Vdash_{\M} ``t \in \dot{E}"$. By strengthening $q''$ if 
  necessary, we may 
  assume that $\top(q'') \geq \height(t)$, and since $\dot{E}$ is forced 
  to be open, we can extend $t$ to assume that we in fact have 
  $\top(q'') = \height(t)$; let $\beta$ denote this height. 
  Now apply the Mitchellized 1-Key Property (Lemma \ref{lemma: mitchell_1_key}) 
  to find $p^* \leq_{\P} p''$ and 
  an $n$-tuple $\vec{z}$ from $T_\beta$ such that $z_m = t$ and
  $\vec{z} \in T_{\vec{y}}$, and such that $\vec{y}$ and $\vec{z}$ are 
  $((p^*,q''),A)$-consistent. Then $\vec{z} \in D$, as witnessed 
  by $(p^*, q'')$.
\end{proof}

\subsection{Approximation and the proof of Theorem A}
This subsection finally contains the proof of Theorem A. We first recall what 
it means for a forcing poset to satisfy the \emph{$\mu$-approximation property}.

\begin{definition}
  Let $\mu$ be an uncountable cardinal, and let $V \subseteq W$ be two models of 
  $\ZFC$. We say that $(V,W)$ has the \emph{$\mu$-approximation property} 
  if, whenever $x \in W$ is a set of ordinals such that $x \cap z \in V$ for all 
  $z \in ([\mathrm{On}]^{<\mu})^V$, it follows that $x \in V$. Working in a model 
  $V$ of $\ZFC$, we say that a forcing poset $\bb{R}$ has the 
  $\mu$-approximation property if it is forced by $1_{\mathbb{R}}$ that 
  $(V,V[\bb{R}])$ has the $\mu$-approximation property.
\end{definition}

We begin with the following 
preliminary result, establishing the crucial properties of the forcing extension by $\M$.
In what follows, we let $\dot{G}$ be the canonical $\M$-name for the generic filter. 
For all $\delta < \kappa$, let $\dot{G}_\delta$ be the canonical name for the generic 
filter over $\M_\delta$ induced by $\dot{G}$. Let $\dot{\M}_{\delta,\kappa}$ 
be the canonical $\M_\delta$-name for the quotient forcing $\M/\dot{G}_\delta$.
\begin{theorem} \label{thm: approx}
  Suppose that $T$ is a free Suslin tree in $V$. Then
  \begin{enumerate}
    \item for all limit ordinals $\delta < \kappa$ (including $\delta = 0$),
    \[
      \Vdash_{\M_\delta} ``\dot{\M}_{\delta,\kappa} \text{ has the } 
      \omega_1\text{-approximation property}";
    \]
    \item $\Vdash_{\M} ``T \emph{ is Suslin}"$.
  \end{enumerate}
\end{theorem}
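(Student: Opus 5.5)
Both clauses will be proved by a single fusion-type construction: build a decreasing sequence of conditions whose top levels climb through the countable ordinals, steer them along a branch of a derived tree of $T$ that we control, and use the Suslinity of that derived tree (which holds because $T$ is free) to stop after countably many steps. I treat (2) first, since it is the simpler instance of the pattern, and then adapt it to (1).

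For (2), let $\dot E$ be an $\M$-name for a dense open subset of $T$ and $(p,q)\in\M$. It suffices to find $(p^*,q^*)\le(p,q)$ and $\beta<\omega_1$ with $(p^*,q^*)\Vdash T_\beta\subseteq\dot E$; this is the paper's criterion for Suslinity of infinitely splitting trees.

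\begin{itemize}
\item Pass to a countable elementary submodel $N\prec H(\theta)$ containing all the relevant parameters, and let $\eta=N\cap\omega_1$.
\item Build a descending sequence $(p_k,q_k)\in N$ with $\top(q_k)\to\eta$, and finite sets $A_k\subseteq\dom(q_k)$ whose union is $\bigcup_k\dom(q_k)$. Use Lemma~\ref{lemma: separated_extension} to keep each $(p_k,q_k)$ $A_k$-separated on a finite set $X_k$ of nodes. Use Lemma~\ref{lemma: one_step} / Proposition~\ref{prop: ext_prop} to arrange that every node of $T_{<\eta}$ eventually enters some $X_k$, and that the $X_k$ are closed under the relevant automorphisms.
\item At stage $k$, for a node $x_m$ in the current tuple $\vec x$, apply Lemma~\ref{lemma: dense_1}. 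The resulting set $D$ is dense open in the derived tree $T_{\vec x}$, which is Suslin because $T$ is free. By elementarity there is a level $\beta_k<\eta$ with $(T_{\vec x})_{\beta_k}\subseteq D$.
\item Choose the next extension witnessing $D$ at the node of $T_{\vec x}$ through which the construction continues. This forces $z_m\in\dot E$ for the relevant successor $z_m$ of $x_m$.
\item Bookkeep over all pairs $(x_m,k)$.
\item At the limit $\eta$, Proposition~\ref{prop: limit_extension} together with the consistency conditions lets us extend each $q(\gamma)$ to $T_{\le\eta}$. Every $x\in T_\eta$ lies above some bookkept node already forced into $\dot E$, so the limit condition $(p^*,q^*)$ forces $T_\eta\subseteq\dot E$.
\end{itemize}

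The delicate point is that the automorphism names are only decided by $\P$-conditions. The limit extension therefore has to be done in $V[\P_\gamma]$ for each $\gamma$, using Mitchellized consistency, so that each $q^*(\gamma)$ is a genuine $\P_\gamma$-name for an automorphism of $T_{\le\eta}$.

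For (1), fix a limit $\delta$ and suppose, towards a contradiction, that $\dot b$ names a subset of $\lambda$ that is not in $V[\M_\delta]$ but all of whose countable approximations are. The same fusion applies, now with Lemma~\ref{lemma: dense} in place of Lemma~\ref{lemma: dense_1}, carried out in a way that builds a binary tree of conditions.

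\begin{itemize}
\item For each node $s\in{}^{<\omega}2$ build a condition $(p_s,q_s)$. Sibling conditions agree below $\delta$ and force opposite values of $\varepsilon_s\in\dot b$. Along each branch the conditions have compatible top levels and consistency on a common derived tuple, and Lemma~\ref{lemma: dense} keeps this possible at every splitting.
\item At the end, each branch $f\in{}^\omega2$ yields a lower bound $(p_f,q_f)$ at level $\eta$. All the $(p_f,q_f)\restriction\delta$ can be taken equal.
\item Work in $V[G_\delta]$ containing that common restriction. Take a countable set $z\in V$ covering $\{\varepsilon_s\}$ (available by ccc covering arguments). The approximation hypothesis says $\dot b\cap z$ is forced into $V[G_\delta]$, so some condition decides it. But the $2^\omega$ many branch-conditions force pairwise distinct values of $\dot b\cap z$, and a standard argument pushing the deciding condition into one branch yields the contradiction.
\end{itemize}

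More carefully, (1) is proved in the usual style: it suffices to rule out a name $\dot b$ of minimal $\lambda$, with all countable approximations in the ground, which is forced not to lie in $V[\M_\delta]$. Uncountability of $\lambda$ and elementarity ensure the relevant $\varepsilon$'s stay in $N$.

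The main obstacle is the limit step of these fusions. Proposition~\ref{prop: limit_extension} requires every branch through $T_{<\eta}$ that is actually realized at level $\eta$ to be mapped by each $q(\gamma)^{\pm1}$ onto a branch that is also realized. Guaranteeing this requires that the bookkeeping handle all of the countably many $\gamma\in\bigcup\dom(q_k)$ and both signs $j\in\{-1,1\}$ simultaneously, via the consistency and separation machinery, uniformly in the $\P$-names. I would follow the strategy of \cite[Theorem 5.34]{ks}: enumerate $T_\eta$ in $V$ and, at stage $k$, commit the image of the $k$-th top-level node's predecessors under the $k$-th automorphism name. The Key Property is what makes these commitments compatible with the density requirements.
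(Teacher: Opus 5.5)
Your $\Q$-side skeleton broadly matches the paper's: a countable $N\prec H(\theta)$, separation on finite sets of nodes, Lemmas~\ref{lemma: dense_1} and~\ref{lemma: dense} applied to derived trees, and freeness plus elementarity to find a whole level of $T_{\vec{x}}$ inside the dense set below $N\cap\omega_1$. The gap is the Cohen coordinate.

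You fuse full conditions $(p_k,q_k)\in\M$ (and in (1) a binary tree $(p_s,q_s)$) and then take lower bounds $(p^*,q^*)$ and $(p_f,q_f)$. Each application of those lemmas, or of the Mitchellized Key Property, returns a strengthened $\P$-part. Since $\P=\Add(\omega,\kappa)$ has finite conditions, these $\omega$-sequences have no lower bound in $\M$. Likewise, the values $q(\gamma)^{\pm1}(x)$ that your limit step must ``commit'' are only decided below particular $\P$-conditions. Only the $\Q$-coordinate can be fused, so the $\P$-parts have to be absorbed into the names.

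That is the idea you are missing. The paper builds only $q_n\in\Q\cap N$ and enumerates $\P\cap N$ as $\langle p_n\mid n<\omega\rangle$ with every condition repeated infinitely often. At stage $n$ it produces $\hat p_{n,0},\hat p_{n,1}\le p_n$ in $N$, equal below $\delta$ and incompatible at coordinate $\delta$. Then $q_{n+1}(\gamma)$ is the mixed $\P_\gamma$-name that equals $\hat q_{n+1,i}(\gamma)$ below $\hat p_{n+1,i}\restriction\gamma$, and equals a default $r(\gamma)$ elsewhere; $r$ is itself separated and consistent on $X_{n+1}$. The fusion bound $q^*$ then has trivial $\P$-part. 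Any $(p^{**},q^{**})\le(1_\P,q^*)$ is handled by taking $n$ with $p^{**}\cap N=p_n$, so that both $\hat p_{n,0}$ and $\hat p_{n,1}$ are compatible with $p^{**}$. This is exactly what makes the limit step (Subclaim~\ref{subclaim: extension}) and the final verification of $T_\beta\subseteq\dot E$ go through.

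For (1), the binary-tree endgame fails even if lower bounds existed. The values of $\dot b\cap z$ forced along different branches lie in $\power(z)\cap V[G_\delta]\supseteq\power(z)\cap V$. So $(2^\omega)^V$ pairwise distinct values gives no cardinality clash. Pushing a condition that decides $\dot b\cap z=\dot c$ into one branch discards the sibling branch, which is the only thing that disagrees with it.

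What is needed is a single pair of conditions with the same restriction to $\delta$, deciding some $\varepsilon_n\in\dot b$ oppositely, and \emph{both} compatible with an arbitrary extension $(p^{**},q^{**})$ of the fusion bound. Suppose $(p^{**},q^{**})\Vdash\dot b\cap(N\cap\lambda)=\dot c$ with $\dot c$ an $\M_\delta$-name. Decide $\varepsilon_n\in\dot c$ by an $\M_\delta$-condition below $\bigl((p^{**}\wedge\hat p_{n,0})\restriction\delta,\,q^{**}\restriction\delta\bigr)$. Then graft it onto the upper parts of $p^{**}\wedge\hat p_{n,i}$ and $q^{**}$, for the $i$ that disagrees, to get a contradiction. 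So the right shape is one fusion sequence with a disagreeing pair at each stage, not a binary tree, and it again relies on the enumeration of $\P\cap N$.
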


\begin{proof}
  We will work towards proving both statements simultaneously.
  Fix a condition $(p,q) \in \M$, a limit ordinal $\delta < \kappa$, 
  a cardinal $\lambda$, an $\M$-name $\dot{b}$ such that 
  \[
    (p,q) \Vdash_{\M} ``\dot{b} \subseteq \lambda \wedge \dot{b} \notin 
    V[\dot{G}_\delta]",
  \]
  and an $\M$-name $\dot{E}$ such that 
  \[
    (p,q) \Vdash_{\M} ``\dot{E} \text{ is a dense open subset of } T".
  \]
  For notational simplicity, assume without loss of generality that $p = 1_{\P}$ 
  (for the general case, simply work below $p$ in $\P$).
  Fix a sufficiently large regular cardinal $\theta$, a well-ordering $\vartriangleleft$
  of $H(\theta)$, and a countable elementary submodel $N \prec (H(\theta), \in, 
  \vartriangleleft)$ containing everything relevant (including $q$, $\dot{b}$, 
  and $\dot{E}$). Let $\beta = N \cap \omega_1$. 
  We will find $q^* \leq_{\Q} q$ such that
  \begin{itemize}
    \item $(1_{\P},q^*) \Vdash_{\M} ``\dot{b} \cap N \notin V[\dot{G}_\delta]"$;
    \item $(1_{\P},q^*) \Vdash_{\M} ``T_\beta \subseteq \dot{E}"$.
  \end{itemize}
  This will clearly suffice to prove the theorem.
  
  Let $\P_N$ denote $\P \cap N$. Let $\langle x_n \mid n < \omega \rangle$ 
  enumerate $T_\beta$, let $\langle \gamma_n \mid n < \omega \rangle$ enumerate 
  $\kappa \cap N$, let $\langle \beta_n \mid n < \omega \rangle$ be an increasing 
  sequence of ordinals cofinal in $\beta$, and let $\langle p_n \mid n < \omega \rangle$ 
  enumerate $\P_N$ so that every element of $\P_N$ equals $p_n$ for infinitely many 
  $n < \omega$. For all $n < \omega$, let $A_n = \{\gamma_m \mid m \leq n\}$.
  
  We will now recursively build objects $\langle q_n, \hat{p}_{n,0}, \hat{p}_{n,1}, X_n, 
  \varepsilon_n \mid n < \omega \rangle$ satisfying the following requirements:
  \begin{enumerate}
    \item $\langle q_n \mid n < \omega \rangle$ is a $\leq_{\Q}$-decreasing sequence 
    of conditions from $\Q \cap N$ with $q_0 \leq_{\Q} q$;
    \item for all $n < \omega$, we have $A_n \subseteq \dom(q_n)$ and 
    $\top(q_n) \geq \beta_n$;
    \item for all $n < \omega$, $\hat{p}_{n,0}$ and $\hat{p}_{n,1}$ are elements of $\P_N$ 
    such that $\hat{p}_{n,0},\hat{p}_{n,1} \leq_{\P} p_n$ and $\hat{p}_{n,0} \restriction \delta 
    = \hat{p}_{n,1} \restriction \delta$;
    \item for all $m < n < \omega$ and $i < 2$, 
    \[
      (\hat{p}_{n,i},q_n) \Vdash_{\M} ``x_m \restriction \top(q_n) \in \dot{E}";
    \]
    \item for all $n < \omega$, $\varepsilon_n$ is an element of $N \cap \lambda$ 
    such that $(\hat{p}_{n,0},q_n) \Vdash_{\M} ``\varepsilon_n \notin \dot{b}"$ and 
    $(\hat{p}_{n,1},q_n) \Vdash_{\M} ``\varepsilon_n \in \dot{b}"$;
    \item for all $n < \omega$, $X_n$ is a finite subset of $T_\beta$ such that
    \begin{enumerate}
      \item $\{x_m \mid m \leq n\} \subseteq X_n \subseteq X_{n+1}$;
      \item $X_n$ has unique drop-downs to $\top(q_n)$;
      \item for each $i < 2$, $\hat{p}_{n,i}$ decides $q_n$ on $(A_n, X_n 
      \restriction \top(q_n))$;
      \item for each $i < 2$, $\gamma \in A_n$, $m \leq n$, and $j \in \{-1,1\}$, 
      there is $y \in X_n$ such that
      \[
        \hat{p}_{n,i} \Vdash_{\P} ``q_n(\gamma)^j(x_m \restriction \top(q_n)) 
        = y \restriction \top(q_n)";
      \]
    \end{enumerate}
    \item for all $n < \omega$, $q_n$ is $A_n$-separated on $X_n \restriction 
    \top(q_n)$;
    \item for all $n < \omega$, $X_n \restriction \top(q_n) \text{ and }
    X_n \restriction \top(q_{n+1})$ are $(q_{n+1},A_n)$-consistent.
  \end{enumerate}
  For ease of notation, let $q_{-1} = q$ and $X_{-1} = \emptyset$. Fix $-1 \leq n < \omega$ and 
  suppose that we have constructed $q_n$ and $X_n$. First, apply Lemma \ref{lemma: separated_extension} 
  inside $N$ to find $q^*_{n+1} \leq_{\Q} q_n$ such that
  \begin{itemize}
    \item $q^*_{n+1} \in N$;
    \item $\top(q^*_{n+1}) = \top(q_n) + 1$;
    \item $A_{n+1} \subseteq \dom(q^*_{n+1})$;
    \item $q^*_{n+1}$ is $A_{n+1}$-separated on $X_n \restriction \top(q^*_{n+1})$;
    \item for all $\gamma \in A_n$, $X_n \restriction \top(q_n)$ and 
    $X_n \restriction \top(q^*_{n+1})$ are $(q^*_{n+1},A_n)$-consistent.
  \end{itemize}
  Fix $p_{n+1}^* \leq_{\P} p_{n+1}$ with $p_{n+1}^* \in N$ 
  and an injective enumeration $\vec{x^n}$ of $X_n$ such that $(p_{n+1}^*, q^*_{n+1})$ is 
  $A_{n+1}$-separated on $\vec{x^n} \restriction \top(q^*_{n+1})$.
  
  \begin{claim} \label{claim: dense_claim}
     Fix $0 \leq m \leq n$ and $(\bar{p},\bar{q}) \leq_{\M} (p^*_{n+1}, q^*_{n+1})$ such 
     that $(\bar{p},\bar{q}) \in N$, $\bar{p}$ decides $\bar{q}$ on $(A_{n+1}, \vec{x^n} 
     \restriction \top(\bar{q}))$, and such that $\vec{x^n} \restriction 
     \top(q^*_{n+1})$ and $\vec{x^n} \restriction \top(\bar{q})$ are 
     $((\bar{p},\bar{q}),A_{n+1})$-consistent. Then there is $(\bar{\bar{p}}, 
     \bar{\bar{q}}) \leq_{\M} (\bar{p}, \bar{q})$ such that
     \begin{itemize}
       \item $(\bar{\bar{p}}, \bar{\bar{q}}) \in N$;
       \item $(\bar{\bar{p}}, \bar{\bar{q}}) \Vdash_{\M} ``x_m \restriction \top(\bar{\bar{q}}) 
       \in \dot{E}"$;
       \item $\vec{x} \restriction \top(\bar{q})$ and $\vec{x} \restriction 
       \top(\bar{\bar{q}})$ are $((\bar{\bar{p}}, \bar{\bar{q}}),A_{n+1})$-consistent.
     \end{itemize} 
  \end{claim}
  
  \begin{proof}
    Fix $m^* < \omega$ such that $x_m  = x^n_{m^*}$. Let $D$ be the set of 
    all $\vec{z} \in T_{\vec{x^n} \restriction \top(\bar{q})}$ for which there exists 
    $(\bar{\bar{p}}, \bar{\bar{q}}) \leq_{\M} (\bar{p}, \bar{q})$ such that 
    \begin{itemize}
      \item $\top(\bar{\bar{q}}) = \height_T(\vec{z})$;
      \item $(\bar{\bar{p}}, \bar{\bar{q}}) \Vdash_{\M} ``z_{m^*} \in \dot{E}"$;
      \item $\vec{x} \restriction \top(\bar{q})$ and $\vec{z}$ are 
      $((\bar{\bar{p}}, \bar{\bar{q}}),A_{n+1})$-consistent.
    \end{itemize}
    By Lemma \ref{lemma: dense_1}, $D$ is a dense open subset of 
    $T_{\vec{x^n} \restriction \height(\bar{q})}$. Since $T$ is a free Suslin tree, there is 
    $\eta < \omega_1$ such that every $\vec{z} \in T_{\vec{x^n} \restriction \height(\bar{q})}$ of 
    height at least $\eta$ is in $D$. Since $D \in N$, the least such $\eta$ is in $N$, and hence 
    is less than $\beta$. In particular, there is some $\eta^* \in [\top(\bar{q}), \beta)$ such that 
    $\vec{x^n} \restriction \eta^* \in D$. Then any condition $(\bar{\bar{p}}, \bar{\bar{q}}) \in 
    N$ witnessing that $\vec{x^n} \restriction \eta^* \in D$ witnesses the conclusion of the claim.
  \end{proof}
  By repeatedly applying Claim \ref{claim: dense_claim}, fix a condition $(\bar{p}_{n+1}, 
  \bar{q}_{n+1}) \leq_{\M} (p^*_{n+1}, q^*_{n+1})$ such that 
  \begin{itemize}
    \item $(\bar{p}_{n+1}, \bar{q}_{n+1}) \in N$;
    \item $\bar{p}_{n+1}$ decides $\bar{q}_{n+1}$ on $(A_{n+1}, \vec{x^n} \restriction 
    \top(\bar{q}_{n+1}))$;
    \item $\vec{x^n} \restriction \top(q^*_{n+1})$ and $\vec{x^n} 
    \restriction \top(\bar{q}_{n+1})$ are $((\bar{p}_{n+1}, \bar{q}_{n+1}),
    A_{n+1})$-consistent;
    \item for all $m \leq n$, we have $(\bar{p}_{n+1}, \bar{q}_{n+1}) \Vdash_{\M} 
  ``x_m \restriction \top(\bar{q}_{n+1}) \in \dot{E}"$.
  \end{itemize}
  
  Now let $D'$ be the set of all $\vec{z} \in T_{\vec{x^n} \restriction \top(\bar{q}_{n+1})}$ for which 
  there exist $(\bar{p}_{n+1,0}, \bar{q}_{n+1,0})$ and $(\bar{p}_{n+1,1}, \bar{q}_{n+1,1})$ 
  in $\M$ and $\varepsilon < \lambda$ such that
  \begin{itemize}
    \item for each $i < 2$, we have $(\bar{p}_{n+1,i}, \bar{q}_{n+1,i}) \leq_{\M} 
    (\bar{p}_{n+1}, \bar{q}_{n+1})$;
    \item $\top(\bar{q}_{n+1,0}) = \top(\bar{q}_{n+1,1}) = \height_T(\vec{z})$;
    \item $(\bar{p}_{n+1,0},\bar{q}_{n+1,0}) \restriction \delta = (\bar{p}_{n+1,1}, 
    \bar{q}_{n+1,1}) \restriction \delta$;
    \item for each $i < 2$, $\vec{x^n} \restriction \top(\bar{q}_{n+1})$ and 
    $\vec{z}$ are $((\bar{p}_{n+1,i}, \bar{q}_{n+1,i}),A_{n+1})$-consistent;
    \item $(\bar{p}_{n+1,0}, \bar{q}_{n+1,0}) \Vdash_{\M} ``\varepsilon \notin \dot{b}"$ and 
    $(\bar{p}_{n+1,1}, \bar{q}_{n+1,1}) \Vdash_{\M} ``\varepsilon \in \dot{b}"$.
  \end{itemize}
  By Lemma \ref{lemma: dense}, $D'$ is a dense open subset of $T_{\vec{x^n} \restriction 
  \top(\bar{q}_{n+1})}$, and $D' \in N$. Therefore, as in the proof of Claim 
  \ref{claim: dense_claim}, we can find $\alpha_{n+1} < \beta$ such that
  \begin{itemize}
    \item $\alpha_{n+1} \geq \max\{\beta_{n+1}, \top(\bar{q}_{n+1})\}$;
    \item $X_n \cup \{x_{n+1}\}$ has unique drop-downs to $\alpha_{n+1}$;
    \item $\vec{x^n} \restriction \alpha_{n+1} \in D'$.
  \end{itemize}
  Since $D'$ and $\vec{x^n} \restriction \alpha_{n+1}$ are in $N$, we can fix 
  $(\bar{p}_{n+1,0}, \bar{q}_{n+1,0})$, $(\bar{p}_{n+1,1}, \bar{q}_{n+1,1})$, and 
  $\varepsilon_{n+1}$ in $N$ witnessing that $\vec{x^n} \restriction \alpha_{n+1}$ 
  is in $D'$. By extending the conditions if necessary, we can assume that
  \begin{itemize}
    \item $\dom(\bar{q}_{n+1,0}) = \dom(\bar{q}_{n+1,1})$;
    \item $\delta \in \dom(\bar{p}_{n+1,0}) \cap \dom(\bar{p}_{n+1,1})$ and 
    $\bar{p}_{n+1,0}(\delta) \perp \bar{p}_{n+1,1}(\delta)$;
    \item for each $i < 2$, $\bar{p}_{n+1, i}$ decides $\bar{q}_{n+1,i}$ on 
    $(A_{n+1}, X_n \cup \{x_{n+1}\})$.
  \end{itemize}
  By Lemma \ref{lemma: one_step} applied in $N$, we can find $(\hat{p}_{n+1,0}, 
  \hat{q}_{n+1,0}),(\hat{p}_{n+1,1},\hat{q}_{n+1,1}) \in \M$ and a finite set 
  $Y$, all in $N$, such that, letting $\alpha^*_{n+1} = \alpha_{n+1}+1$, 
  the following hold:
  \begin{itemize}
    \item for each $i < 2$, we have $\dom(\hat{q}_{n+1,i}) = 
    \dom(\bar{q}_{n+1,i})$ and $(\hat{p}_{n+1,i},\hat{q}_{n+1,i}) \leq_{\M} 
    (\bar{p}_{n+1,i}, \bar{q}_{n+1,i})$;
    \item for each $i < 2$, we have $\top(\hat{q}_{n+1,i}) = \alpha^*_{n+1}$;
    \item $(\hat{p}_{n+1,0},\hat{q}_{n+1,0}) \restriction \delta = 
    (\hat{p}_{n+1,1},\hat{q}_{n+1,1}) \restriction \delta$;
    \item $(X_n \cup \{x_{n+1}\}) \restriction \alpha^*_{n+1} \subseteq Y \subseteq 
    T_{\alpha^*_{n+1}}$;
    \item for each $i < 2$, $(\hat{p}_{n+1,i}, \hat{q}_{n+1,i})$ is 
    $A_{n+1}$-separated on $Y$;
    \item for each $i < 2$, $X_n \restriction \alpha_{n+1}$ and $X_n \restriction 
    \alpha^*_{n+1}$ are $(\hat{p}_{n+1,i}, \hat{q}_{n+1,i})$-consistent on $A_{n+1}$;
    \item for all $i < 2$, $\gamma \in A_{n+1}$, $m \leq n+1$, and $j \in \{-1,1\}$, 
    there is $y \in Y$ such that 
    \[
      \hat{p}_{n+1,i} \restriction \gamma \Vdash_{\P_\gamma} 
      ``\hat{q}_{n+1,i}(\gamma)^j(x_m \restriction \alpha^*_{n+1}) = y".
    \]
  \end{itemize}
  For each $y \in Y \setminus \left( (X_n \cup \{x_{n+1}\}) \restriction \alpha^*_{n+1} 
  \right)$, choose a $z_y \in T_\beta$ such that $z_y \restriction \alpha^*_{n+1} = y$, 
  and set
  \[
    X_{n+1} = X_n \cup \{x_{n+1}\} \cup \left\{ z_y \ \middle| \ y \in 
    Y \setminus \left( (X_n \cup \{x_{n+1}\}) \restriction \alpha^*_{n+1} 
    \right) \right\}.
  \]
  
  Next, working in $N$, fix $r \in \Q$ with $\top(r) = \alpha^*_{n+1}$ and 
  $\dom(r) = \dom(\hat{q}_{n+1,0})$ such that $r \leq_{\Q} q^*_{n+1}$, $r$ is 
  $A_{n+1}$-separated on $X_{n+1} \restriction \alpha^*_{n+1}$, and such 
  that $X_n \restriction \top(q^*_{n+1})$ and $X_n \restriction 
  \alpha^*_{n+1}$ are $(r,A_n)$-consistent.
  To see that such an $r$ can be found, first repeatedly apply Proposition 
  \ref{prop: 54} to find $r_0 \leq_{\Q} q^*_{n+1}$ such that 
  $\top(r_0) = \alpha_{n+1}$, $\dom(r_0) = \dom(\hat{q}_{n+1,0})$ and 
  such that $X_n \restriction \top(q^*_{n+1})$ and $X_n \restriction 
  \alpha_{n+1}$ are $(r_0,A_{n+1})$-consistent.
  Then again apply Proposition \ref{prop: 54} 
  to find $r \leq_{\Q} r_0$ such that $\top(r) = \alpha^*_{n+1}$, 
  $\dom(r) = \dom(r_0)$ and, for all $\gamma \in A_{n+1}$, we have:
  \begin{itemize}
    \item $\Vdash_{\P_\gamma}``X_n \restriction \alpha_{n+1} \text{ and } 
    X_n \restriction \alpha^*_{n+1} \text{ are } r(\gamma)\text{-consistent}"$;
    \item for all $y \in (X_{n+1} \setminus X_n) \restriction \alpha^*_{n+1}$ 
    and all $j \in \{-1,1\}$, we have $\Vdash_{\P_\gamma}``r(\gamma)^j(y) \notin 
    X_{n+1} \restriction \alpha^*_{n+1}$.
  \end{itemize}
  Then, recalling that $q^*_{n+1}$ was chosen to be $A_{n+1}$-separated 
  on $X_n \restriction \top(q^*_{n+1})$, it is readily seen that 
  this $r$ is as desired.
  
  Finally, define $q_{n+1} \in \Q$ with 
  $\top(q_{n+1}) = \alpha^*_{n+1}$ and $\dom(q_{n+1}) = \dom(\hat{q}_{n+1,0})$ by letting, for 
  all $\gamma \in \dom(\hat{q}_{n+1,0})$, $q_{n+1}(\gamma)$ be a $\P_\gamma$-name such that
  \begin{itemize}
    \item $\hat{p}_{n+1,0} \restriction \gamma \Vdash ``q_{n+1}(\gamma) = \hat{q}_{n+1,0}(\gamma)"$;
    \item $\hat{p}_{n+1,1} \restriction \gamma \Vdash ``q_{n+1}(\gamma) = \hat{q}_{n+1,1}(\gamma)$ 
    (note that this is not in conflict with the previous point since, if 
    $\hat{q}_{n+1,0}(\gamma) \neq \hat{q}_{n+1,1}(\gamma)$, then $\gamma > \delta$, and hence 
    $\hat{p}_{n+1,0} \restriction \gamma \perp \hat{p}_{n+1,1} \restriction \gamma$);
    \item if $p^* \in \P_\gamma$ is incompatible with both 
    $\hat{p}_{n+1,0} \restriction \gamma$ and $\hat{p}_{n+1,1} \restriction \gamma$, then 
    $p^* \Vdash ``q_{n+1}(\gamma) = r(\gamma)"$.
  \end{itemize}
  It is readily verified that $q_{n+1}$, $\hat{p}_{n+1,0}$, $\hat{p}_{n+1,1}$, $X_{n+1}$, and 
  $\varepsilon_{n+1}$ satisfy the requirements of the construction.
  
  \begin{claim}
    $\langle q_n \mid n < \omega \rangle$ has a lower bound in $\Q$.
  \end{claim}
  
  \begin{proof}
    Recall that $\alpha^*_n = \top(q_n)$ for all $n < \omega$, and that $\langle 
    \alpha^*_n \mid n < \omega \rangle$ is cofinal in $\beta$. Note also that 
    $\bigcup \{\dom(q_n) \mid n < \omega\} = N \cap \kappa$. For each $\gamma \in N \cap 
    \kappa$, let $n_\gamma$ be the least $n < \omega$ such that $\gamma \in \dom(q_n)$. 
    For each $\gamma \in N \cap \kappa$, let $q^-(\gamma)$ be a $\P_\gamma$-name forced 
    to be equal to $\bigcup \{q_n(\gamma) \mid n_\gamma \leq n < \omega\}$. In particular, 
    $q^-(\gamma)$ is forced to be an automorphism of $T_{<\beta}$.
    
    \begin{subclaim} \label{subclaim: extension}
      For each $\gamma \in N \cap \kappa$, each $x \in T_\beta$, and each $j \in \{-1,1\}$, 
      \[
        \Vdash_{\P_\gamma} ``\exists y \in T_\beta \forall \alpha < \beta \
        [q^-(\gamma)^j(x \restriction \alpha) = y \restriction \alpha]".
      \]
    \end{subclaim}
    
    \begin{proof}
      Suppose for the sake of contradiction that there exists $\gamma \in N \cap \kappa$, 
      $n^* < \omega$, $j \in \{-1,1\}$, and $p^* \in \P_\gamma$ such that
      \[
        p^* \Vdash_{\P_\gamma} ``\neg (\exists y \in T_\beta \forall \alpha < \beta \
        [q^-(\gamma)^j(x_{n^*} \restriction \alpha) = y \restriction \alpha])".
      \]
      Find $n < \omega$ such that $p^* \cap N = p_n$, $n \geq n^*$, and $\gamma \in 
      A_n$. Recall, then, that there is $y \in X_n$ such that 
      $\hat{p}_{n,0} \restriction \gamma \Vdash_{\P_\gamma} ``q_n(\gamma)^j(x_{n^*} 
      \restriction \alpha^*_n) = y \restriction \alpha^*_n"$. Moreover, for all $n' > n$, 
      we have
      \[
        \Vdash_{\P_\gamma} ``X_n \restriction \alpha^*_n \text{ and } X_n \restriction 
        \alpha^*_{n'} \text{ are } q_{n'}(\gamma)\text{-consistent}".
      \]
      It follows that, for all $n' > n$, we have
      \[
        \hat{p}_{n,0} \restriction \gamma \Vdash_{\P_\gamma} ``q_{n'}(\gamma)^j(x_{n^*} 
        \restriction \alpha^*_{n'}) = y \restriction \alpha^*_{n'}",
      \]
      and hence
      \[
        \hat{p}_{n,0} \restriction \gamma \Vdash_{\P_\gamma} ``\forall \alpha < \beta \
        [q^-(\gamma)^j(x_{n^*} \restriction \alpha) = y \restriction \alpha]".
      \]
      But $\hat{p}_{n,0} \restriction \gamma \in N$ and $\hat{p}_{n,0} \restriction \gamma 
      \leq p^* \cap N$. Thus, $\hat{p}_{n,0} \restriction \gamma$ and $p^*$ are compatible 
      in $\P_\gamma$, contradicting the fact that they force contradictory statements.
    \end{proof}
    We now define a lower bound $q^*$ for $\langle q_n \mid n < \omega \rangle$ with 
    $\top(q^*) = \beta$ and $\dom(q^*) = N \cap \kappa$ by letting, for each $\gamma 
    \in N \cap \kappa$, $q^*(\gamma)$ be a $\P_\gamma$-name for the unique element of 
    $\mathrm{Aut}(T_{\leq \beta})$ extending $q^-(\gamma)$. Such a name must
    exist by Subclaim \ref{subclaim: extension} and Proposition \ref{prop: limit_extension}.
    It is readily verified that $q^*$ thus defined is indeed a lower bound for 
    $\langle q_n \mid n < \omega \rangle$.
  \end{proof}
  Fix a lower bound $q^*$ for $\langle q_n \mid n < \omega \rangle$ in $\Q$. We first 
  verify that $(1_{\P},q^*) \Vdash_{\M} ``\dot{b} \cap N \notin V[\dot{G}_\delta]"$. To this 
  end, let $w = N \cap \lambda$, and suppose for the sake of contradiction that 
  there are $(p^{**},q^{**}) \leq_{\M} (1_{\P},q^*)$ and an $\M_\delta$-name $\dot{c}$ 
  such that $(p^{**},q^{**}) \Vdash_{\M} ``\dot{b} \cap w = \dot{c}"$. Fix 
  $n < \omega$ such that $p^{**} \cap N = p_n$. Note that both $\hat{p}_{n,0}$ and 
  $\hat{p}_{n,1}$ are compatible with $p^{**}$, and recall that $\hat{p}_{n,0} 
  \restriction \delta = \hat{p}_{n,1} \restriction \delta$. Let $r_0$ denote 
  $(p^{**} \wedge \hat{p}_{n,0}) \restriction \delta$, i.e., $r_0$ is the 
  greatest common lower bound of $p^{**} \restriction \delta$ and $\hat{p}_{n,0} 
  \restriction \delta$ in $\P_\delta$. Find $(r_1,s_1) \leq_{\M_\delta} (r_0, 
  q^{**} \restriction \delta)$ deciding the truth value of the statement 
  $``\varepsilon_n \in \dot{c}"$. Suppose without loss of generality that 
  $(r_1,s_1) \Vdash_{\M_\delta} ``\varepsilon_n \in \dot{c}"$ (the other case is 
  symmetric). Define $(\hat{p}, \hat{q}) \in \M$ by setting 
  \begin{itemize}
    \item $(\hat{p}, \hat{q}) \restriction \delta = (r_1,s_1)$;
    \item $(\hat{p}, \hat{q}) \restriction [\delta,\kappa) = 
    (p^{**} \wedge \hat{p}_{n,0}, q^{**}) \restriction [\delta,\kappa)$.
  \end{itemize}
  Then $(\hat{p}, \hat{q}) \Vdash_{\M} ``\varepsilon_n \in \dot{c}"$, since 
  $(r_1,s_1)$ forces the same. We also have $(\hat{p}, \hat{q}) \leq_{\M} 
  (p^{**},q^{**})$, so $(\hat{p}, \hat{q}) \Vdash_{\M} ``\dot{b} \cap w = \dot{c}"$, 
  and hence $(\hat{p}, \hat{q}) \Vdash_{\M} ``\varepsilon_n \in \dot{b}"$. On the 
  other hand, we have $(\hat{p}, \hat{q}) \leq_{\M} (\hat{p}_{n,0}, q_n)$, and 
  hence $(\hat{p}, \hat{q}) \Vdash_{\M} ``\varepsilon_n \notin \dot{b}"$. This is a 
  contradiction, completing the verification that 
  $(1_{\P},q^*) \Vdash_{\M} ``\dot{b} \cap N \notin V[\dot{G}_\delta]"$.
  
  We finally verify that $(1_{\P},q^*) \Vdash_{\M} ``T_\beta \subseteq \dot{E}"$. 
  Suppose for the sake of contradiction that there are $(p^{**}, q^{**}) 
  \leq_{\M} (1_{\P},q^*)$ and $m < \omega$ such that $(p^{**},q^{**}) \Vdash_{\M} 
  ``x_m \notin \dot{E}"$. Fix $n < \omega$ such that $m < n$ and $p^{**} \cap N 
  = p_n$. By construction, we know that $(\hat{p}_{n,0}, q_n) \Vdash_{\M} 
  ``x_m \restriction \height(q_n) \in \dot{E}"$, and hence 
  $(\hat{p}_{n,0}, q_n) \Vdash_{\M} ``x_m \in \dot{E}"$. Let 
  $r = \hat{p}_{n,0} \wedge p^{**}$. Then $(r,q^{**})$ forces both 
  $``x_m \in \dot{E}"$ and $``x_m \notin \dot{E}"$. This contradiction concludes 
  the proof of the theorem.
\end{proof}

\begin{theorem} \label{theorem: gmp_thm}
  Suppose that $T$ is a free Suslin tree and $\kappa$ is supercompact. Then, in 
  $V[\bb{M}]$, the following statements hold:
  \begin{enumerate}
    \item $\kappa = \omega_2$;
    \item $T$ is an almost Kurepa Suslin tree;
    \item $\mathsf{GMP}$.
  \end{enumerate}
\end{theorem}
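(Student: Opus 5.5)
We prove the three items in order; the guessing model argument in (3) is the main work.

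\textbf{Item (1).} Since $\kappa$ is inaccessible, a standard $\Delta$-system argument on the supports of $p$ and of $\dom(q)$ shows that $\M$ is $\kappa$-Knaster. Countable domains matter here: there are fewer than $\kappa$ many nice names below each $\gamma<\kappa$. Hence all cardinals $\geq\kappa$ are preserved.

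Next, $\omega_1$ is preserved. By Theorem \ref{thm: approx}(2), $T$ remains Suslin in $V[\M]$, so $T$ is still an $\omega_1$-tree there, which would fail if $\omega_1$ were collapsed. Alternatively, $\omega_1$-approximation with $\delta=0$ rules out a new countable cofinal subset of $\omega_1^V$.

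For every cardinal $\mu<\kappa$, pick a successor $\gamma>\mu$. In $V[\P_\gamma]$ we have $2^\omega\geq|\gamma|$, and $\M$ projects to $\P_\gamma\ast\dot{\bb{A}}(T)$ via the coordinate $\gamma$. So by Proposition \ref{prop: collapse}, $\mu$ is collapsed to $\omega_1$. Finally, $\Add(\omega,\kappa)$ adds $\kappa$ reals. Therefore $\kappa=\omega_2=2^\omega$ in $V[\M]$.

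\textbf{Item (2).} $T$ is Suslin by Theorem \ref{thm: approx}(2). For each $\gamma\in\Sigma(\kappa)$, let $g_\gamma$ be the generic automorphism added at coordinate $\gamma$. It is a full automorphism of $T$, by the genericity remark after the definition of $\bb{A}(T)$.

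Fix $\gamma\neq\gamma'$. We claim that in $V[\M]$ the set
\[
  E_{\gamma\gamma'}=\{x\in T \mid g_\gamma(x)\neq g_{\gamma'}(x)\}
\]
is dense and open. Openness holds because automorphisms preserve the order, so once the images disagree they disagree above as well. For density, given $(p,q)$ and $x$, extend $q$ one level with Proposition \ref{prop: 54}, taking $Y=\{x'\}$ for a successor $x'$ of $x$, so that the two images of $x'$ differ. When $\gamma<\gamma'$, we first decide $q(\gamma)(x')$ with a stronger $p$ and then choose $q(\gamma')$ to avoid it.

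Now let $b$ be $T$-generic over $V[\M]$. Then $b$ meets each $E_{\gamma\gamma'}$, so the branches $g_\gamma[b]$ for $\gamma\in\Sigma(\kappa)$ are pairwise distinct cofinal branches. This gives $\kappa$ many of them. Since $T$ is ccc, forcing with it preserves $\kappa=\omega_2$, so $T$ is Kurepa in $V[\M][b]$, i.e.\ $T$ is almost Kurepa.

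\textbf{Item (3).} This is a Wei\ss-style lifting argument and the main obstacle. Fix a regular $\theta\geq\kappa$ and a $\theta$-supercompact embedding $j:V\to M'$ with critical point $\kappa$, where $j(\kappa)>\theta$ and ${}^{\theta}M'\subseteq M'$.

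By elementarity, $j(\M)=\M^{T}_{j(\kappa)}$ as computed in $M'$; note that $j(T)=T$. Because $\kappa$ is inaccessible, every condition has countable support, and $j$ is the identity on $\M$, we get $\M=j(\M)_\kappa$, which is a regular suborder. Let $G$ be $\M$-generic and $H$ be generic for the quotient, and lift to $j:V[G]\to M'[G\ast H]$.

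Let $N=j[H(\theta)^{V[G]}]$, which lies in $M'[G\ast H]$ by closure and has size $\omega_1$ there. By elementarity, it suffices to show that $N$ is an $\omega_1$-guessing model in $M'[G\ast H]$.

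Take $d\subseteq j(x)$ that is $(\omega_1,N)$-approximated, and let $e=j^{-1}[d]\subseteq x$. The approximation hypothesis shows that every countable piece of $e$ in $V[G]$ lies in $V[G]$. Here we use that $\M$ has the $\omega_1$-covering property, which follows from Theorem \ref{thm: approx}(1) together with the $\kappa$-cc. Apply Theorem \ref{thm: approx}(1) inside $M'$ with $\delta=\kappa$, a limit ordinal, to get that the quotient $j(\M)/G$ has the $\omega_1$-approximation property over $M'[G]$. Combined with the closure of $M'$, this gives $e\in V[G]$. Then $j(e)\in N$ and $j(e)\cap N=d\cap N$, so $d$ is guessed.

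The delicate points are verifying that $\M$ really is the initial segment $j(\M)_\kappa$ with the canonical projection, and transferring countable pieces between $V[G]$ and $M'[G]$. For the latter I would use that ${}^\theta M'\subseteq M'$ and that $\M$ is $\kappa$-cc, so $M'[G]$ is closed under $\theta$-sequences in $V[G]$.
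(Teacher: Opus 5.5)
Your proposal is correct and follows essentially the same route as the paper in all three items: the $\kappa$-cc, the projection $\M\to\P_\gamma\ast\dot{\bb{A}}(T)$ and Proposition \ref{prop: collapse} for (1); pairwise almost disjoint generic automorphisms moved along a $T$-generic branch for (2); and, for (3), lifting $j$ through $j(\M)/G$, whose $\omega_1$-approximation comes from Theorem \ref{thm: approx}(1) applied in the target model, to show that $j``H(\theta)^{V[G]}$ is an $\omega_1$-guessing model. Only small repairs are needed: take $j$ to be $|H(\theta)|$-supercompact rather than $\theta$-supercompact (so that $j``H(\theta)^V\in M'$), check explicitly that $N\in j(C)$ for the given club $C$ (the paper does this via Menas's $C_f$), and drop the appeal to $\omega_1$-covering, which is not needed since every countable $z\in N$ is $j(a)=j``a$ for some countable $a\in H(\theta)^{V[G]}$.
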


\begin{proof}
  Given the previous results from this section, proofs of all three assertions 
  follow standard paths; we provide details for completeness.

  Let $G$ be $\M$-generic over $V$. By a standard counting argument using the 
  inaccessibility of $\kappa$, $\M$ has the $\kappa$-cc in $V$, and hence 
  $\kappa$ remains a cardinal in $V[G]$. Moreover, by Theorem \ref{thm: approx}, 
  $\M$ has the $\omega_1$-approximation property in $V$, and therefore 
  $(\omega_1)^V = (\omega_1)^{V[G]}$. Thus, to prove (1), it suffices to prove 
  that $V[G] \models ``|\gamma| = \omega_1"$ for all uncountable $\gamma < \kappa$. 
  
  To this end, fix an uncountable $\gamma < \kappa$. By increasing it if 
  necessary, assume that $\gamma \in \Sigma(\kappa)$. It is readily shown that 
  the map $\pi_\gamma : \M \ra \P_\gamma \ast \dot{\bb{A}}(T)$ defined by setting 
  $\pi_\gamma(p,q) = (p \restriction \gamma, q(\gamma))$ for all 
  $(p,q) \in \M$ is a projection.\footnote{Here we are adopting the convention that 
  $q(\gamma) = 1_{\dot{\bb{A}}(T)}$ if $\gamma \notin \dom(q)$.} Therefore, 
  $G$ induces a $\P_\gamma \ast \dot{\bb{A}}(T)$-generic filter 
  $G_{0,\gamma} \ast G_{1,\gamma}$ over $V$. In $V[G_{0,\gamma}]$, we have 
  $2^\omega \geq |\gamma|$. Therefore, by Proposition \ref{prop: collapse} 
  applied in $V[G_{0,\gamma}]$, we have $|\gamma| = \omega_1$ in 
  $V[G_{0,\gamma} \ast G_{1,\gamma}]$ and hence in $V[G]$ as well.
  
  To prove (2), first apply Theorem \ref{thm: approx} to conclude that 
  $T$ remains a Suslin tree in $V[G]$. For each $(p,q) \in G$ and 
  $\gamma \in \dom(q)$, let $q(\gamma)_G$ denote the interpretation of 
  $q(\gamma)$ in $V[G]$. For each $\gamma \in \Sigma(\kappa)$, let 
  $g_\gamma = \bigcup \{q(\gamma)_G \mid (p,q) \in G \text{ and } 
  \gamma \in \dom(q)\}$. Then $g_\gamma$ is an automorphism of $T$. 
  Moreover, a routine density argument shows that the sequence of 
  automorphisms $\langle g_\gamma \mid \gamma \in \Sigma(\kappa) \rangle$ 
  is pairwise \emph{almost disjoint}, i.e., for all 
  distinct $\gamma,\delta \in \Sigma(\kappa)$ and all $t \in T$, there 
  is $t' \in T_t$ such that $g_\gamma(t') \neq g_\delta(t')$.
  
  If we now force with $T$ over $V[G]$, we add a generic cofinal branch 
  $b$ through $T$. In $V[G][b]$, for all $\gamma \in \Sigma(\kappa)$ let 
  $b_\gamma = \{g_\gamma(t) \mid t \in b\}$. Then each $b_\gamma$ is a cofinal 
  branch through $T$ and, by the fact that $\langle g_\gamma \mid 
  \gamma \in \Sigma(\kappa) \rangle$ is pairwise almost disjoint and a 
  routine density argument, we have $b_\gamma \neq b_\delta$ for all 
  distinct $\gamma,\delta \in \Sigma(\kappa)$. Since $T$ has the ccc in 
  $V[G]$, and therefore $\kappa = \omega_2^{V[G][b]}$, it follows that 
  $T$ is a Kurepa tree in $V[G][b]$ and is therefore an almost Kurepa 
  Suslin tree in $V[G]$.
  
  It remains to verify that $\GMP$ holds in $V[G]$. To this end, working 
  in $V[G]$, fix a regular uncountable cardinal $\theta \geq \kappa$ and a 
  club $C$ in $\power_\kappa H(\theta)$. We must find an $\omega_1$-guessing model in 
  $C$. Recall that, given a function 
  $f:[H(\theta)]^2 \ra \power_\kappa H(\theta)$, we let $C_f$ denote the set
  \[
    \{X \in \power_\kappa H(\theta) \mid \forall u \in [X]^2 \ f(u) \subseteq X\}.
  \]
  Using Menas's characterization of two-cardinal club filters \cite{menas}, we can fix 
  a function $f: [H(\theta)]^2 \ra \power_\kappa H(\theta)$ such that 
  $C_f \subseteq C$. 
  
  Back in $V$, fix an elementary embedding $j:V \ra M$ witnessing that $\kappa$ is 
  $|H(\theta)|$-supercompact. Note that, in $M$, $j(\M)$ is defined in the same 
  way that $\M$ is defined in $V$, but with length $j(\kappa)$, and that $j(\M)_\kappa = \M$. 
  Moreover, $j \restriction \M$ 
  is the identity map and, by Theorem \ref{thm: approx}, in $M[G]$,
  $j(\M)/G = j(\M)_{\kappa,j(\kappa)}$ has the $\omega_1$-approximation property. 
  Let $H$ be $j(\M)/G$-generic over $V[G]$. Then, in $V[G \ast H]$, we can lift $j$ to 
  $j:V[G] \ra M[G \ast H]$. Note that, in $M[G \ast H]$, $j(C)$ is a club in 
  $\power_{j(\kappa)} H(j(\theta))$ and $C_{j(f)} = j(C_f) \subseteq j(C)$.
  
  Let $N = j``H(\theta)^{V[G]}$. By the closure of $M$ and the fact that $N$ is 
  definable from $j``H(\theta)^V$ and $G \ast H$, it follows that $N \in M[G \ast H]$. 
  Moreover, $M[G \ast H] \models N \in \power_{j(\kappa)} H(j(\theta))$. We claim 
  that $N \in C_{j(f)}$. To this end, fix $u \in [N]^2$, and find 
  $a,b \in H(\theta)^{V[G]}$ such that $u = j(\{a,b\})$. Then 
  $j(f)(u) = j(f(\{a,b\}))$. Since $f(\{a,b\}) \in \power_\kappa H(\theta)^V[G]$ and 
  $\mathrm{crit}(j) = \kappa$, we have $j(f(\{a,b\})) = j``f(\{a,b\}) \subseteq N$. 
  Thus, $N \in C_{j(f)}$, and hence $N \in j(C)$.
  
  We next argue that, in $M[G \ast H]$, $N$ is an $\omega_1$-guessing model. To this end, fix in $M[G \ast H]$ a 
  $z \in N$ and a $d \subseteq z$ such that, for every $x \in \power_{\omega_1} z \cap N$, 
  we have $d \cap x \in N$. Let $z' \in H(\theta)^{V[G]}$ be such that $j(z') = z$, and 
  let $e' = \{a \in H(\theta)^{V[G]} \mid j(a) \in d\}$. Then $e' \subseteq z'$, and 
  $e' \in M[G \ast H]$.
  
  \begin{claim}
    For all $x' \in (\power_{\omega_1} z')^{V[G]}$, we have $e' \cap x' \in M[G]$.
  \end{claim}
  
  \begin{proof}
    Fix $x' \in (\power_{\omega_1} z')^{V[G]}$. Let $x = j(x') = j``x' \in N$.
    By the choice of $d$, we know that $x \cap d \in N$. We can therefore find 
    $y' \in H(\theta)^{V[G]}$ such that $x \cap d = j(y') = j``y'$. But then, 
    unraveling the definitions, we have $e' \cap x' = y' \in V[G]$. 
    Since $H(\theta)^{M[G]} = H(\theta)^{V[G]}$, the desired conclusion follows.
  \end{proof}
  
  Since $j(\M)/G$ has the $\omega_1$-approximation property in $M[G]$, it follows that 
  $e' \in M[G]$; in particular, $e' \in H(\theta)^{V[G]}$. Let $e = j(e')$. Then 
  $e \in N$ and $e \cap N = j``e' = d \cap N$. Thus, $N \in j(C)$ is indeed an $\omega_1$-guessing model 
  in $M[G \ast H]$. By elementarity, in $V[G]$ there exists an $\omega_1$-guessing model in $C$, and hence 
  $\GMP$ holds in $V[G]$.
\end{proof}

The following corollary will now complete the proof of Theorem A.

\begin{corollary} \label{cor: thm_a_cor}
  Assume $\mathrm{Con}(\ZFC + \text{there exists a supercompact cardinal})$. Then it is 
  consistent that $\GMP$ holds but is destructible by a ccc forcing of cardinality $\omega_1$.
\end{corollary}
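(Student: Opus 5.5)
The plan is to derive the corollary directly from Theorem \ref{theorem: gmp_thm}, once we have a ground model that contains both a supercompact cardinal and a free Suslin tree.

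\emph{Ground model.} Start from a model $V_0$ of $\ZFC$ with a supercompact cardinal $\kappa$. Free Suslin trees exist under $\Diamond$ (Jensen's classical construction). Alternatively, one can add such a tree by a forcing of size $\omega_1$; for instance, adding a Cohen subset of $\omega_1$ (i.e., $\Add(\omega_1,1)$) adds a free Suslin tree and forces $\Diamond$. Since this forcing is small relative to $\kappa$, Lévy--Solovay ensures that $\kappa$ remains supercompact in the extension $V = V_0[\Add(\omega_1,1)]$. So in $V$ we have a free Suslin tree $T$ and a supercompact $\kappa$. The step that most needs justification is this preservation-plus-existence step. It is standard, though: small forcing preserves supercompactness, and $\Diamond$ yields a free Suslin tree.

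\emph{Applying the theorem.} Force with $\M = \M^T_\kappa$ over $V$ and let $G$ be generic. By Theorem \ref{theorem: gmp_thm}, in $V[G]$ we have $\kappa = \omega_2$ and $\GMP$ holds. Moreover, $T$ is an almost Kurepa Suslin tree there.

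\emph{Destruction.} Consider forcing with $T$ itself over $V[G]$. Because $T$ is Suslin in $V[G]$, this forcing (the tree ordered by reverse order) is ccc. It also has cardinality $|T| = \omega_1$. Since $T$ is almost Kurepa, in $V[G][b]$ the tree $T$ is a Kurepa tree. $\GMP$ implies $\neg\wKH$, and hence $\neg\KH$, as recalled in the introduction. It follows that $\GMP$ fails in $V[G][b]$. Therefore, in $V[G]$, $\GMP$ holds but is destroyed by a ccc forcing of size $\omega_1$, as the corollary claims. We may also note that $\omega_1$ is preserved, because the forcing is ccc.
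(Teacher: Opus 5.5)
Your proposal is correct and follows the paper's proof essentially step for step. You apply Theorem~\ref{theorem: gmp_thm} to a free Suslin tree $T$ in a ground model with a supercompact cardinal, then observe that forcing with $T$ (ccc, of size $\omega_1$) makes $T$ a Kurepa tree, which is incompatible with $\GMP$. The only difference is that you explicitly justify the ground-model preparation (forcing $\Diamond$ with $\Add(\omega_1,1)$ and using L\'evy--Solovay to keep $\kappa$ supercompact), whereas the paper simply cites the $\Diamond$ construction and says ``we may assume'' a free Suslin tree exists.
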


\begin{proof}
  Start with a model $V$ of $\ZFC$ containing a supercompact cardinal $\kappa$. Since, for example, 
  the classical construction of a Suslin tree from $\diamondsuit$ yields a free Suslin tree, 
  we may assume that there exists a free Suslin tree $T$ in $V$. Then, by Theorem \ref{theorem: gmp_thm}, 
  in $V[\M]$, $\mathsf{GMP}$ holds and $T$ is an almost Kurepa Suslin tree. In particular, 
  $T$ is a ccc forcing of cardinality $\omega_1$ such that, after forcing with $T$, there exists 
  a Kurepa tree. Since $\GMP$ implies the failure of the Kurepa Hypothesis, it follows that 
  $\Vdash_T \neg \GMP$.
\end{proof}

We end this section with a sketch of the proof of Corollary \ref{cor: cor_11}.

\begin{proof}[Sketch of proof of Corollary \ref{cor: cor_11}]
  Suppose that $\kappa$ is an inaccessible cardinal. As in the proof of 
  Corollary \ref{cor: thm_a_cor}, we can fix a free Suslin tree $T$ 
  in $V$, and let $\M = \M^T_\kappa$. We argue that $V[\M]$ 
  satisfies the conclusion of the corollary. Given the proof of Theorem A 
  above, it suffices to show that $\neg \wKH$ holds in $V[\M]$. To this end, 
  let $G$ be $\M$-generic over $V$, and let $S \in V[G]$ be a tree of height 
  and size $\omega_1$. We must show that, in $V[G]$, $S$ has at most 
  $\omega_1$-many cofinal branches.
  
  Since $\kappa$ is inaccessible, we know that $\M$ has the 
  $\kappa$-cc in $V$, and $\kappa = \omega_2$ in $V[G]$. We can therefore 
  find a limit ordinal $\delta < \kappa$ such that $S \in V[G_\delta]$. 
  Recall that, given a tree $R$, $[R]$ denotes the set of cofinal branches 
  through $R$. Since $(2^{\omega_1})^{V[G_\delta]} < \kappa$, we have 
  $([S])^{V[G_\delta]} < \kappa$. By Theorem \ref{thm: approx}, 
  $\M_{\delta,\kappa}$ has the $\omega_1$-approximation property in 
  $V[G_\delta]$, and hence $[S]^{V[G]} = [S]^{V[G_\delta]}$. Since 
  $\kappa = \omega_2$ in $V[G]$, it follows that $S$ has at most 
  $\omega_1$-many cofinal branches in $V[G]$, as desired.
\end{proof}

\section{Weak Kurepa trees} \label{sect: thmb_sect}

In \cite{cox_krueger_indestructible}, Cox and Krueger show that $\GMP$ implies that there are no weak Kurepa trees at $\omega_1$. In contrast to their result, we show in \cite{kurepa_paper} that $\GMP^{\omega_2}(\omega_2)$ does not imply that there are no weak Kurepa trees at $\omega_1$. In fact, we construct a model in which $\GMP^{\omega_2}(\omega_2)$ holds and yet there exists not only a \emph{weak} Kurepa tree, but even a Kurepa tree at $\omega_1$. In this section we refine our result and show that the existence of a weak Kurepa tree at $\omega_1$ is consistent with $\neg \KH$ and $\GMP^{\omega_2}(\omega_2)$, thus proving Theorem B.

\subsection{Forcing for adding a weak Kurepa tree}

We begin this section by introducing a forcing notion to add a weak 
Kurepa tree, which is a variation on the standard forcing for adding a 
Kurepa tree due to Stewart \cite{Stewart}.

\begin{definition}  
Let $\lambda$ be an uncountable ordinal.\footnote{ In practice, $\lambda$ will typically be a cardinal, but 
we will sometimes need to consider $\lambda$ of the form $j(\kappa)$, where $j:V \rightarrow M$ is an 
elementary embedding with critical point $\kappa$, in which case $\lambda$ is a cardinal in $M$ but 
may not be a cardinal in $V$.} We define a poset $\K_\lambda$ which adds a tree with size and height $\omega_1$ with $|\lambda|$-many cofinal branches. Conditions $q \in \K_\lambda$ are pairs $(T_q, f_q)$ such that
  \begin{itemize}
    \item there is $\eta_q < \omega_1$ such that $T_q$ is a normal, infinitely splitting subtree of 
    ${^{<\eta_q + 1}}\omega_1$ of size at most $\omega_1$;
    \item $f_q$ is a countable partial function from $\lambda$ to $T_q \cap {^{\eta_q}}\omega_1$.
  \end{itemize}
  If $q_0, q_1 \in \K_\lambda$, then $q_1 \leq q_0$ if and only if
  \begin{itemize}
    \item $\eta_{q_1} \geq \eta_{q_0}$;
    \item $T_{q_1} \cap {^{<\eta_{q_0} + 1}}\omega_1 = T_{q_0}$;
    \item $\dom(f_{q_1}) \supseteq \dom(f_{q_0})$;
    \item for all $\alpha \in \dom(f_{q_0})$, $f_{q_1}(\alpha) \supseteq f_{q_0}(\alpha)$.
  \end{itemize}
We also include the pair $(\emptyset, \emptyset)$ in $\K_\lambda$ as $1_{\K_\lambda}$.  
\end{definition} 

It is easy to verify that $\K_\lambda$ is separative, and the proof of Lemma 
\ref{C:wKurepaTree} below will show that it adds a tree of height and 
size $\omega_1$ with $|\lambda|$-many branches. Note that $\K_\lambda$ is not $\omega_2$-cc: in fact, it collapses $2^{\omega_1}$ to $\omega_1$ since we can code subsets of $\omega_1$ in the ground model into the levels of the generic tree added by $\K_\lambda$. Therefore if $\lambda\le 2^{\omega_1}$, then the generic tree added by $\K_\lambda$ will not be a weak Kurepa tree.

\begin{lemma}\label{L:K-Knaster}
Let $\lambda$ be an uncountable ordinal and $\mu$ be a regular cardinal such that $\mu>2^{\omega_1}$ and $\mu>\gamma^\omega$ for all $\gamma<\mu$. Then $\K_\lambda$ is $\mu$-Knaster. In particular $\K_\lambda$ is $(2^{\omega_1})^+$-Knaster.
\end{lemma}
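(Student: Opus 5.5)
We will run the standard $\Delta$-system argument for countable-support-type posets, and use the hypothesis $\mu > 2^{\omega_1}$ to handle the tree parts. Fix a sequence $\langle q_\xi \mid \xi < \mu \rangle$ of conditions in $\K_\lambda$; we must find an unboundedly large subset of $\mu$ on which the $q_\xi$ are pairwise compatible.

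\textbf{Step 1: pinning down the tree parts.} Each $T_{q_\xi}$ is a subtree of ${^{<\eta_{q_\xi}+1}}\omega_1$ with $\eta_{q_\xi} < \omega_1$. The number of such subtrees is at most $(\omega_1^{\omega})^{\omega_1} = 2^{\omega_1} < \mu$. Since $\mu$ is regular, we can pass to a subset $I_0 \subseteq \mu$ of size $\mu$ on which $T_{q_\xi} = T^*$ and $\eta_{q_\xi} = \eta^*$ are constant.

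\textbf{Step 2: the domains.} The sets $\dom(f_{q_\xi})$, for $\xi \in I_0$, are countable subsets of $\lambda$. By the $\Delta$-system lemma for countable sets, which applies because $\mu$ is regular and $\gamma^\omega < \mu$ for all $\gamma < \mu$, we find $I_1 \subseteq I_0$ of size $\mu$ such that these domains form a $\Delta$-system with countable root $R$.

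\textbf{Step 3: the root values.} The restrictions $f_{q_\xi} \restriction R$ are functions from the countable set $R$ into the level $T^* \cap {^{\eta^*}}\omega_1$. That level has size at most $\omega_1$, so there are at most $\omega_1^\omega \le 2^{\omega_1} < \mu$ such functions. We can therefore shrink to $I_2 \subseteq I_1$ of size $\mu$ on which $f_{q_\xi} \restriction R$ is constant.

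\textbf{Step 4: compatibility.} For $\xi, \zeta \in I_2$, set $r = (T^*, f_{q_\xi} \cup f_{q_\zeta})$. This union is a function: the two domains meet only in $R$, and on $R$ the two functions agree. Its domain is countable, and its values lie in the top level ${^{\eta^*}}\omega_1$ of $T^*$. Hence $r \in \K_\lambda$, and $r$ extends both $q_\xi$ and $q_\zeta$, since the trees and heights are equal and each function extends the corresponding one trivially. So the conditions indexed by $I_2$ are pairwise compatible, which proves that $\K_\lambda$ is $\mu$-Knaster.

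\textbf{The ``in particular'' clause.} Take $\mu = (2^{\omega_1})^+$. This $\mu$ is regular and exceeds $2^{\omega_1}$. For $\gamma < \mu$ we have $|\gamma| \le 2^{\omega_1}$, so $\gamma^\omega \le (2^{\omega_1})^\omega = 2^{\omega_1} < \mu$.

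The only point needing care is the counting in Step 1: it uses that every condition's tree has countable height and levels of size at most $\omega_1$. The trivial condition $1_{\K_\lambda}$ is compatible with everything, so it causes no difficulty.
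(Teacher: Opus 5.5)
Your proposal is correct and follows the paper's proof essentially step for step. You fix the tree part using $\mu>2^{\omega_1}$, apply the $\Delta$-system lemma to the countable domains, fix the values on the root using $\omega_1^{\omega}<\mu$, and then show the resulting conditions are pairwise compatible, with your explicit common extension $(T^*, f_{q_\xi}\cup f_{q_\zeta})$ and your check of the ``in particular'' clause filling in details the paper leaves implicit.
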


\begin{proof}
Let a set of conditions $\set{q_\alpha=(T_\alpha,f_\alpha)\in\K_\lambda}{\alpha<\mu}$ be given. Since $\mu>2^{\omega_1}$ is regular and there are only $2^{\omega_1}$-many possibilities for $T_\alpha$, there is a tree $T\sub {^{<{\eta+1}}\omega_1}$ of countable height and $I\sub\mu$ of size $\mu$ such that $T_\alpha=T$ for all $\alpha\in I$.

Since $\gamma^\omega<\mu$ for all $\gamma<\mu$, there is $I'\sub I$ of size $\mu$ such that the set $\set{\dom(f_\alpha)}{\alpha\in I'}$ forms a $\Delta$-system with root $a\sub\lambda$. Since $a$ is at most countable, there are at most $2^\omega<\mu$ many functions from $a$ to $T_\eta$ and therefore there is a countable $f$ from $a$ to $T_\eta$ and $J\sub I'$ of size $\mu$ such $f=f_\alpha\cap f_\beta$ for all $\alpha\neq\beta\in J$. Then all conditions in $\set{q_\alpha}{\alpha\in J}$ are compatible.
\end{proof}

\begin{lemma}\label{L:K-closed}
Let $\lambda$ be an uncountable ordinal. Then $\K_\lambda$ is $\omega_1$-closed.
\end{lemma}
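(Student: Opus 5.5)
To show that $\K_\lambda$ is $\omega_1$-closed, I will take a decreasing sequence $\langle q_n \mid n < \omega \rangle$ with $q_n = (T_n, f_n)$ and $\eta_n = \eta_{q_n}$, and build a lower bound by hand. There are two cases.

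\emph{Case 1: the heights stabilize.} Suppose that $\eta_n$ is eventually constant, say from some $n_0$ on. Then $T_n = T_{n_0}$ for all $n \geq n_0$. The functions $f_n$ with $n \geq n_0$ all take values in the same top level, and they extend one another as functions, because for elements of equal height ``$f_{n+1}(\alpha) \supseteq f_n(\alpha)$'' means equality. So the pair $(T_{n_0}, \bigcup_n f_n)$ is a condition and is below every $q_n$.

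\emph{Case 2: the heights are unbounded in a limit.} If there are infinitely many $\eta_n$ that strictly increase, let $\eta = \sup_n \eta_n$, a countable limit ordinal. Put $T' = \bigcup_n T_n$; since $T_{n+1} \cap {^{<\eta_n+1}}\omega_1 = T_n$, this is a normal, infinitely splitting tree of height $\eta$ with $|T'| \leq \omega_1$. For each $\alpha \in \bigcup_n \dom(f_n)$, let $b_\alpha = \bigcup\{f_n(\alpha) \mid \alpha \in \dom(f_n)\}$. This is a function $\eta \to \omega_1$, and every proper initial segment of it lies in $T'$, so it determines a cofinal branch of $T'$.

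The new top level $\eta$ must add these $b_\alpha$, together with enough further branches to keep normality. That is, every node of $T'$ must have an extension at level $\eta$. For each $t \in T'$ choose one cofinal branch through $T'$ that contains $t$; this is possible by normality of each $T_n$ and a countable recursion climbing through the levels $\eta_n$. Let the new level consist of the $b_\alpha$'s together with these chosen branches, and let $T^* = T' \cup \{\text{new level}\}$. Then $|T^*| \leq \omega_1$. The limit clause of normality holds at $\eta$ because nodes at level $\eta$ are identified with their branches, as sequences in ${^{\eta}}\omega_1$. Infinite splitting is only required below the top level, where it is inherited. (If the definition is read as requiring infinite splitting at every node, then instead extend one more level, adding $\omega$ successors to each top node and extending each $b_\alpha$ arbitrarily.)

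Setting $f^*(\alpha) = b_\alpha$ and $q^* = (T^*, f^*)$ with $\eta_{q^*} = \eta$, I check $q^* \leq q_n$: we have $T^* \cap {^{<\eta_n+1}}\omega_1 = T_n$, $\dom f^* \supseteq \dom f_n$, and $f^*(\alpha) \supseteq f_n(\alpha)$. The only point needing care is normality at the limit top level, which requires adding enough branches; since $T'$ is countable-height with at most $\omega_1$ nodes, adding one branch per node preserves the size bound, so I expect no real obstacle.
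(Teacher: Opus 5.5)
Your proposal is correct and matches the paper's proof: in Case 2 you take the union tree, put the obligatory branches $b_\alpha$ on the new limit level $\eta$ together with one chosen cofinal branch through each node, and set $f^*(\alpha)=b_\alpha$. Your Case 1 (eventually constant heights) is the trivial case that the paper sets aside by assuming the heights strictly increase.
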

  
\begin{proof}
    Let $\langle q_n \mid n < \omega \rangle$ be a decreasing sequence from $\K_\lambda$. To avoid 
    trivialities, assume that the sequence $\langle \eta_{q_n} \mid n < \omega \rangle$ is strictly 
    increasing. Let $\eta := \sup\{\eta_{q_n} \mid n < \omega\}$. We will construct a lower bound 
    $q$ for $\langle q_n \mid n < \omega \rangle$ such that $\eta_q = \eta$. Let $T = 
    \bigcup_{n < \omega} T_{q_n}$. Note that $T$ is a normal tree of height $\eta$. To define $T_q$, first note that there are countably many branches through $T$ that we are obliged to extend 
    because of the functions $\{f_{q_n} \mid n < \omega\}$. Namely, let $a = \bigcup_{n < \omega} 
    \dom(f_{q_n})$ and, for each $\alpha \in a$, let 
    \[
      b_\alpha := \bigcup \{f_{q_n}(\alpha) \mid n < \omega \wedge \alpha \in \dom(f_{q_n})\}.
    \]
    Then each $b_\alpha$ is a cofinal branch through $T$, and we are obliged to put $b_\alpha$ 
    in $T_q$.

Then we need to ensure that $T_q$ will be a normal, infinitely splitting subtree of 
    ${^{<\eta + 1}}\omega_1$; namely for every node in $T$ we need to add a node above it on level $\eta$. For every $t\in T$ let $C_t$ be a cofinal branch in $T$ such that $t\in C_t$. Note that such a cofinal branch exists since $T$ is a normal tree of countable height. Let $c_t=\bigcup C_t$, then $c_t\in {^{\eta}}\omega_1$ and extends $t$.  

 At the end, set $T_q := T \cup \{b_\alpha \mid \alpha \in a\} \cup 
    \{c_t \mid t\in T\}$. Let $f_q$ be such that $\dom(f_q) = a$ and, for all 
    $\alpha \in a$, $f_q(a) = b_\alpha$. 
\end{proof}

\begin{lemma}\label{L:wKT-projection}
Let $\lambda<\kappa$ be uncountable ordinals. Then there is a projection from $\K_\kappa$ to $\K_\lambda$.
\end{lemma}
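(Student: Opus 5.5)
The natural candidate is the restriction map $\pi:\K_\kappa\to\K_\lambda$ given by $\pi(T_q,f_q) = (T_q, f_q\restriction\lambda)$, with $\pi(1_{\K_\kappa}) = 1_{\K_\lambda}$. Since $\lambda<\kappa$, $\lambda\subseteq\kappa$, so $f_q\restriction\lambda$ is a countable partial function from $\lambda$ into $T_q\cap{^{\eta_q}}\omega_1$. Hence $\pi(q)\in\K_\lambda$, with the same $\eta_q$ and the same tree. I would check the three defining properties of a projection in turn.

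\emph{Order preservation.} Suppose $q_1\le q_0$ in $\K_\kappa$. The tree clauses ($\eta_{q_1}\ge\eta_{q_0}$ and $T_{q_1}\cap{^{<\eta_{q_0}+1}}\omega_1=T_{q_0}$) do not mention $f$, so they carry over unchanged. Restricting both functions to $\lambda$ preserves the clauses $\dom(f_{q_1}\restriction\lambda)\supseteq\dom(f_{q_0}\restriction\lambda)$ and $f_{q_1}(\alpha)\supseteq f_{q_0}(\alpha)$ for $\alpha\in\dom(f_{q_0})\cap\lambda$. \emph{Top element.} $\pi$ sends $1_{\K_\kappa}$ to $1_{\K_\lambda}$ by definition.

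\emph{The density/lifting condition.} Given $q\in\K_\kappa$ and $r\le\pi(q)$ in $\K_\lambda$, I must find $q'\le q$ with $\pi(q')\le r$. If $q=1$, take $q'=(T_r,f_r)$, viewed as a condition of $\K_\kappa$. Otherwise, let $T_{q'}=T_r$ and $\eta_{q'}=\eta_r\ge\eta_q$. Since $r\le\pi(q)$ gives $T_r\cap{^{<\eta_q+1}}\omega_1=T_q$, the tree clause holds. Set $f_{q'}\restriction\lambda = f_r$. For $\alpha\in\dom(f_q)\setminus\lambda$, choose $f_{q'}(\alpha)\in T_r\cap{^{\eta_r}}\omega_1$ extending $f_q(\alpha)$; such a node exists because $T_r$ is normal, so every node of $T_q$ at level $\eta_q$ has an extension on the top level $\eta_r$ of $T_r$. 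Then $f_{q'}$ is countable, $q'\le q$ (on $\lambda$ because $f_r$ extends $f_q\restriction\lambda$, and off $\lambda$ by choice), and $\pi(q')=r$.

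The only step needing care is this extension of the off-$\lambda$ branches. It relies on normality of $T_r$, in the sense of the first bullet of the normality definition applied inside a tree of height $\eta_r+1$. If $\eta_r=\eta_q$, nothing needs to be extended. I would also note that $T_r$ has size at most $\omega_1$ as required. There is no real obstacle here; the verification is routine.
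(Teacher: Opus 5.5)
Your proposal is correct and matches the paper's proof essentially verbatim. It uses the same restriction map $\pi(T_q,f_q)=(T_q,f_q\restriction\lambda)$ and the same lifting: take $T_r$ as the tree, let $f_r$ govern the coordinates below $\lambda$, and extend the remaining branches $f_q(\alpha)$, for $\alpha\in\dom(f_q)\setminus\lambda$, to level $\eta_r$ using normality of $T_r$. (This set equals the paper's $\dom(f_q)\setminus\dom(f_r)$, since $r\le\pi(q)$.) This yields a condition below $q$ that projects exactly to $r$.
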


\begin{proof}
We define $\pi$ from $\K_\kappa$ to $\K_\lambda$ by letting $\pi(T_q,f_q)=(T_q,f_q\rest\lambda)$ for all $q\in\K_\kappa$. It is routine to verify that $\pi$ is order-preserving and that $\pi(1_{\K_\kappa})=1_{\K_\lambda}$.

Let $q\in\K_\kappa$ and $r\in\K_\lambda$ be such that $r\le \pi(q)=(T_q,f_q\rest\lambda)$. Define $r'\le q$ by first letting $T_{r'}= T_r$ (note that $T_r$ is an end-extension of $T_q$). Let $\dom (f_{r'})=\dom(f_{q})\cup\dom(f_r)$ and define 
\begin{itemize}[--]
\item $f_{r'}(\alpha)=f_r(\alpha)$ for every $\alpha\in\dom(f_r)$ and
\item  $f_{r'}(\alpha)=\tau$, where $\tau\in (T_{r'})_{\eta_{r}}$ with $\tau\supseteq f_q(\alpha)$, for every $\alpha\in \dom(f_q)\setminus\dom(f_r)$. 
\end{itemize}
It is easy to check that $\pi(r')=r$. Therefore $\pi$ is a projection.
\end{proof}

Assume that $\lambda<\kappa$ are uncountable ordinals. Let $H$ be a $\K_\lambda$-generic filter, $T_H=\bigcup\set{T_q}{q\in H}$, and let $\K_{\kappa}/H=\set{r\in\K_\kappa}{\pi(r)\in H}$ be the quotient given by $H$ and the projection $\pi$. Then $\K_\kappa$ is forcing equivalent to a two step iteration $\K_\lambda* \K_\kappa/H$. It is easy to see that in $V[H]$, $\K_\kappa/H$ is forcing equivalent to the forcing notion $\K_{\lambda,\kappa}$, where conditions in $\K_{\lambda,\kappa}$ are pairs $r=(\eta_r,f_r)$ such that $f_r$ is a countable partial function from $\kappa\setminus\lambda$ to $(T_H)_{\eta_r}$ and $r\le q$ if and only if $\eta_r\ge\eta_q$, $\dom(f_q)\sub\dom(f_r)$, and $f_q(\alpha)\sub f_r(\alpha)$ for all $\alpha\in\dom(f_q)$.

\begin{lemma}\label{L:quotient-knaster}
Let $\lambda<\kappa$ be uncountable ordinals and $\mu$ be a regular cardinal such that $\mu>{2^{\omega_1}}$ and $\mu>\gamma^\omega$ for all $\gamma<\mu$. Let $H$ be a $\K_\lambda$-generic filter. Then $\K_{\kappa}/H$ is $\mu$-Knaster in $V[H]$.
\end{lemma}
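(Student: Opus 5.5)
We use the description of the quotient given just before the statement: in $V[H]$, $\K_\kappa/H$ is forcing equivalent to $\K_{\lambda,\kappa}$. Its conditions are pairs $r=(\eta_r,f_r)$ with $\eta_r<\omega_1$ and $f_r$ a countable partial function from $\kappa\setminus\lambda$ into $(T_H)_{\eta_r}$. Since the $\mu$-Knaster property transfers along such an equivalence (via the dense embedding and projection used to identify the two), it suffices to show that $\K_{\lambda,\kappa}$ is $\mu$-Knaster in $V[H]$. The argument is a $\Delta$-system argument in $V[H]$, parallel to the proof of Lemma \ref{L:K-Knaster}. The one difference is that the tree part is now fixed as $T_H$, so it no longer has to be uniformized.

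\textbf{Cardinal arithmetic in $V[H]$.} By Lemma \ref{L:K-closed}, $\K_\lambda$ is $\omega_1$-closed, so it adds no new countable sequences of ground-model elements. Hence for every $\gamma<\mu$, $(\gamma^\omega)^{V[H]}=(\gamma^\omega)^V<\mu$. By Lemma \ref{L:K-Knaster}, $\K_\lambda$ is $\mu$-Knaster and so $\mu$-cc, and therefore $\mu$ remains regular in $V[H]$. We also need $|T_H|<\mu$ in $V[H]$. This holds because $T_H$ is a tree of height $\omega_1$ each of whose levels is countable in the relevant sense: levels of the $T_q$ have size at most $\omega_1$, so $|T_H|\le\omega_1<\mu$ in $V[H]$.

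\textbf{The $\Delta$-system argument.} Work in $V[H]$ and let $\{r_\xi\mid\xi<\mu\}\subseteq\K_{\lambda,\kappa}$.
\begin{enumerate}
\item Since $\mu$ is regular and $\mu>\omega_1$, there is a set $I$ of size $\mu$ on which $\eta_{r_\xi}=\eta$ is constant.
\item Using $\gamma^\omega<\mu$ for all $\gamma<\mu$ and the regularity of $\mu$, thin $I$ to $I'$ of size $\mu$ such that $\{\dom(f_{r_\xi})\mid\xi\in I'\}$ forms a $\Delta$-system with countable root $a$.
\item There are at most $|(T_H)_\eta|^{\omega}\le\omega_1^\omega<\mu$ functions from $a$ into $(T_H)_\eta$. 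So we can thin $I'$ to $J$ of size $\mu$ on which $f_{r_\xi}\restriction a$ is constant.
\end{enumerate}

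\textbf{Compatibility.} For $\xi\neq\zeta$ in $J$, the condition $(\eta,f_{r_\xi}\cup f_{r_\zeta})$ is a well-defined function, since the two domains meet only in $a$, where the functions agree. It is a common extension of $r_\xi$ and $r_\zeta$. Thus every pair from $J$ is compatible, and $\K_{\lambda,\kappa}$ is $\mu$-Knaster.

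The main obstacle I expect is the first part: justifying carefully that the hypotheses $\mu>2^{\omega_1}$ and $\gamma^\omega<\mu$ survive into $V[H]$. Countable closure handles $\gamma^\omega$, and the $\mu$-cc of $\K_\lambda$ from Lemma \ref{L:K-Knaster} handles regularity. Once these are in place, the remaining steps are routine.
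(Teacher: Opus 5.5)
Your proof is correct and follows the paper's argument essentially step for step: you get regularity of $\mu$ in $V[H]$ from the $\mu$-Knaster property of $\K_\lambda$ and preservation of $\gamma^\omega<\mu$ from $\omega_1$-closure, then you fix $\eta$, take a $\Delta$-system on the domains, and fix the values on the root using $\omega_1^\omega<\mu$. Two harmless slips: in general only $(\gamma^\omega)^{V[H]}\le(\gamma^\omega)^V$ holds, not equality, since $\K_\lambda$ collapses $2^{\omega_1}$; and the levels of $T_H$ have size at most $\omega_1$ rather than being countable, which is all you actually use.
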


\begin{proof}

Since $\mu>{2^{\omega_1}}$ is regular and $\mu>\gamma^\omega$ for all $\gamma<\mu$, then $\K_\lambda$ is $\mu$-Knaster and hence $\mu$ is still a regular cardinal in $V[H]$. Moreover since $\K_\lambda$ is $\omega_1$-closed, $\mu>\gamma^\omega$ still 
holds for all $\gamma<\mu$ in $V[H]$.

Work in $V[H]$, where $\K_\kappa/H$ is forcing equivalent to the forcing notion $\K_{\lambda,\kappa}$ isolated above. Let a set of conditions $\set{q_\alpha=(\eta_{\alpha},f_\alpha)\in\K_{\lambda,\kappa}}{\alpha<\mu}$ be given. Since $\mu>2^{\omega_1}$ in $V$, $\mu>\omega_1$ in $V[H]$ and there are only $\omega_1$-many possibilities for $\eta_\alpha$; therefore there is an $\eta$ and an $I\sub\mu$ of size $\mu$ such that $\eta_\alpha=\eta$ for all $\alpha\in I$.

Since $\gamma^\omega<\mu$ for all $\gamma<\mu$, there is $I'\sub I$ of size $\mu$ such that the set $\set{\dom(f_\alpha)}{\alpha\in I'}$ forms a $\Delta$-system with root $a\sub\lambda$. Since $a$ is at most countable, there are at most $\omega_1^\omega<\mu$ many functions from $a$ to $T_\eta$ and therefore there is a countable $f$ from $a$ to $T_\eta$ and $J\sub I'$ of size $\mu$ such $f=f_\alpha\cap f_\beta$ for all $\alpha\neq\beta\in J$. Then all conditions in $\set{q_\alpha}{\alpha\in J}$ are compatible.
\end{proof}

\begin{lemma}
Let $\lambda<\kappa$ be uncountable ordinals and $H$ be a $\K_\lambda$-generic filter. Then $\K_{\kappa}/H$ is $\omega_1$-distributive in $V[H]$.
\end{lemma}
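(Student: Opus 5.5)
The plan is to reduce $\omega_1$-distributivity of $\K_\kappa/H$ in $V[H]$ to the $\omega_1$-closure of $\K_\kappa$ in $V$ (Lemma \ref{L:K-closed}) together with the projection $\pi$ of Lemma \ref{L:wKT-projection}.

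Work in $V[H]$. Fix a condition $r_0 \in \K_\kappa/H$ and a sequence $\langle \dot{D}_n \mid n<\omega\rangle$ of dense open subsets of $\K_\kappa/H$, all lying in $V[H]$. We will find $r \le r_0$ in $\K_\kappa/H$ with $r \in \bigcap_n D_n$. Back in $V$, fix $K_\lambda$-names $\dot{D}_n$ and a condition $s\in H$ forcing that each $\dot D_n$ is dense open in $\dot{\K}_\kappa/\dot H$ and that $r_0\in \dot{\K}_\kappa/\dot H$. By density it suffices to show that the set of $s'\le s$ in $\K_\lambda$ such that $s' \Vdash$ ``some $r\le r_0$ lies in $\bigcap_n \dot D_n$ and $\pi(r) \in \dot H$'' is dense below $s$.

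Given $s_0 \le s$, we may assume $s_0 \le \pi(r_0)$ after strengthening, and then, using the projection property, pick $t_0\le r_0$ in $\K_\kappa$ with $\pi(t_0)= s_0$. Now recursively build a decreasing sequence $t_0 \ge t_1 \ge \cdots$ in $\K_\kappa$. Given $t_n$, the condition $\pi(t_n)$ forces $t_n\in \dot{\K}_\kappa/\dot H$, so by density of $\dot D_n$ there are $s'\le\pi(t_n)$ and $t'\le t_n$ with $s'\Vdash t' \in \dot D_n\cap \dot{\K}_\kappa/\dot H$. Strengthen $s'$ to $s''\le \pi(t')$ (possible since $s'$ forces $\pi(t')\in \dot H$), and use the projection property to find $t_{n+1}\le t'$ with $\pi(t_{n+1})=s''$. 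Then $\pi(t_{n+1})\Vdash t_{n+1}\in \dot D_n$, because $\dot D_n$ is forced to be open and $t_{n+1}\le t'$.

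By Lemma \ref{L:K-closed} the sequence $\langle t_n\rangle$ has a lower bound $t$ in $\K_\kappa$, and $\pi(t)\le \pi(t_n)$ for all $n$. Then $\pi(t)\le s_0$ and $\pi(t)$ forces $t\in \dot{\K}_\kappa/\dot H$, $t\le r_0$, and $t\in \dot D_n$ for every $n$. This gives the required density, so a genericity argument yields the conclusion. Since $\omega_1$-distributivity only requires handling countably many dense open sets, this suffices.

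The step I expect to be the main obstacle is the bookkeeping around the projection. The needed fact is that whenever $s'\le \pi(t)$ there exists $t'\le t$ with $\pi(t')=s'$ exactly, which is what the proof of Lemma \ref{L:wKT-projection} actually gives. Also, $\pi$ is order preserving, so lower bounds are carried along. If there is a worry that $\dot D_n$ might be a name not decided by a single condition, the interleaving of $K_\lambda$-strengthening with $\K_\kappa$-strengthening above handles it. Alternatively, one could argue directly in $V[H]$ with $\K_{\lambda,\kappa}$: that requires showing countable sequences have lower bounds in $V[H]$, i.e.\ that the branches $\bigcup_n f_{r_n}(\alpha)$ get extended in $T_H$, which needs a genericity argument over $\K_\lambda$ anyway. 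So I would use the name-based approach above.
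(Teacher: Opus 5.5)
Your proof is correct, but it takes a different route from the paper's. The paper's argument is two lines, at the level of generic extensions. Since $\K_\kappa$ is forcing equivalent to $\K_\lambda * (\K_\kappa/\dot{H})$ and is $\omega_1$-closed in $V$ (Lemma \ref{L:K-closed}), the extension $V[H][h]$ contains no $\omega$-sequences of ordinals that are not in $V$, hence none that are not in $V[H]$. By the standard characterization, this is exactly $\omega_1$-distributivity of $\K_\kappa/H$ over $V[H]$.

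You instead verify the dense-open-set formulation directly, by a fusion argument carried out in $V$. You interleave decisions about the names $\dot{D}_n$ in $\K_\lambda$ with extensions in $\K_\kappa$ through the projection, and you use closure of $\K_\kappa$ exactly once, to get the lower bound $t$. In effect you reprove, for this instance, the two standard facts the paper invokes: the two-step decomposition along a projection, and the equivalence between distributivity and adding no new countable sequences. The paper's route is shorter. Yours is self-contained, and it makes visible that what is used is closure of $\K_\kappa$ in $V$, not any closure of the quotient in $V[H]$.

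Two small remarks. First, you do not need the exact equality $\pi(t_{n+1}) = s''$ that you single out as the main obstacle. The inequality $\pi(t_{n+1}) \le s''$, which is all the definition of a projection provides, suffices at every step. So your argument works for any projection from an $\omega_1$-closed poset, just as the paper's does. Second, you were right to avoid the alternative of proving countable closure of $\K_{\lambda,\kappa}$ in $V[H]$, because that closure actually fails. By genericity, at some limit level $\eta$ the tree $T_H$ omits a cofinal branch through $(T_H)_{<\eta}$. A descending sequence of conditions whose $f$-values follow that branch then has no lower bound. This is why the paper describes the quotient as only $\omega_1$-distributive.
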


\begin{proof}
The forcing $\K_\kappa$ is forcing equivalent to a two step iteration $\K_\lambda*\K_{\kappa}/\dot{H}$. Since $\K_\kappa$ is $\omega_1$-closed by Lemma \ref{L:K-closed}, $\K_{\kappa}/H$ cannot add new countable sequences of ordinals over $V[H]$, hence it is $\omega_1$-distributive in $V[H]$.
\end{proof}

We fix the following notation: for $r\in \K_\kappa$ and $\lambda<\kappa$, let $r\rest\lambda$ denote $(T_r,f_r\rest\lambda)$, i.e.\  $r\rest\lambda=\pi(r)$ for the projection $\pi$ defined in Lemma \ref{L:wKT-projection}.

\begin{lemma}\label{L:into}
Let $\lambda<\kappa$ be uncountable ordinals and $\dot{H}$ be a canonical $\K_\lambda$-name for the $\K_\lambda$-generic filter. Let $q\in \K_\lambda$ and $r\in\K_\kappa$. Then the following are equivalent.
\begin{enumerate}[(i)]
\item  $q\Vdash r\in \K_\kappa/\dot{H}$;
\item $q\le r\rest\lambda$.
\end{enumerate}
\end{lemma}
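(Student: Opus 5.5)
The plan is to unwind the definition of the quotient, $\K_\kappa/\dot{H} = \{r \in \K_\kappa \mid \pi(r) \in \dot{H}\}$, where $\pi(r) = r\rest\lambda$ is the projection from Lemma \ref{L:wKT-projection}. In this notation, (i) says exactly that $q \Vdash_{\K_\lambda} ``r\rest\lambda \in \dot{H}"$. So the lemma reduces to a general fact: for a separative poset and conditions $q, s$, we have $q \Vdash \check{s} \in \dot{H}$ if and only if $q \le s$. We apply this with $s = r\rest\lambda \in \K_\lambda$.

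For (ii)$\Rightarrow$(i), I would argue directly. Suppose $q \le r\rest\lambda$, and let $H$ be any generic filter with $q \in H$. Since $H$ is upward closed, $r\rest\lambda \in H$, and hence $r \in \K_\kappa/H$.

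For (i)$\Rightarrow$(ii), I would argue by contraposition, using separativity of $\K_\lambda$, which the paper notes is easy to verify. Suppose $q \not\le r\rest\lambda$. Separativity gives $q' \le q$ with $q' \perp r\rest\lambda$. Take a generic $H$ containing $q'$. Then $r\rest\lambda \notin H$, since otherwise $q'$ and $r\rest\lambda$ would have a common extension in $H$. Thus $q'$ forces $r \notin \K_\kappa/\dot{H}$, so $q$ cannot force $r \in \K_\kappa/\dot{H}$.

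The main obstacle is confirming that $\K_\lambda$ really is separative, since the paper only asserts it. I would check it concretely as follows. If $q \not\le s$, then one of the clauses in the definition of $\le$ fails, and in each case I would build an extension of $q$ incompatible with $s$:
\begin{itemize}
\item If $\eta_q < \eta_s$, extend $T_q$ so that it disagrees with $T_s$ at some level in $(\eta_q, \eta_s]$ while staying normal and infinitely splitting; for instance, add a new successor of a node of $T_q$ that does not lie in $T_s$.
\item If $T_q$ differs from $T_s$ below height $\eta_s+1$, then $q$ and $s$ are already incompatible.
\item If $\alpha \in \dom(f_s) \setminus \dom(f_q)$, set $f(\alpha)$ to be a node of the extended top level that does not extend $f_s(\alpha)$; infinite splitting makes such a node available.
\item If $f_q(\alpha) \not\supseteq f_s(\alpha)$ while the trees agree, then $q$ and $s$ are incompatible outright.
\end{itemize}
The edge case is $s = 1_{\K_\lambda} = (\emptyset,\emptyset)$, which is handled trivially.
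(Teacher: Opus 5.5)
Your argument for the lemma itself is exactly the paper's. You unwind $\K_\kappa/\dot{H}$ via $\pi(r)=r\rest\lambda$, obtain (ii)$\Rightarrow$(i) from upward closure of the generic filter, and obtain (i)$\Rightarrow$(ii) from separativity by taking $q'\le q$ incompatible with $r\rest\lambda$.

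The paper simply asserts that $\K_\lambda$ is separative. Your attempt to verify this has a real hole, and the hole sits exactly where the assertion is false.

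\textbf{The third case at $\eta_s=0$.} If $\eta_s=0$, then $T_s=\{\emptyset\}$ and $f_s(\alpha)=\emptyset$, which every node extends. So there is no node ``not extending $f_s(\alpha)$''; infinite splitting says nothing about the top level. In fact $s=(\{\emptyset\},\{(\alpha,\emptyset)\})$ is compatible with every condition: given $u\neq 1_{\K_\lambda}$ with $\alpha\notin\dom(f_u)$, send $\alpha$ to any top node of $T_u$. Hence $1_{\K_\lambda}\Vdash s\in\dot{H}$. Now take $\alpha<\lambda$, let $r=s$ viewed in $\K_\kappa$, and let $q=(\{\emptyset\},\emptyset)$. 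Then (i) holds but (ii) fails. So both the separativity claim and the lemma need a harmless convention, for instance that every condition other than $1_{\K_\lambda}$ has $\eta_q\ge 1$, or else one passes to the separative quotient.

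\textbf{The repair under that convention.} With $\eta_s\ge1$, your third case works. The root of $T_s$ splits and $T_s$ is normal, so level $\eta_s$ of $T_s$ contains a node $t'\neq f_s(\alpha)$. Since $T_q$ agrees with $T_s$ up to level $\eta_s$ and $T_q$ is normal, some top node $t$ of $T_q$ extends $t'$. Then $(T_q,f_q\cup\{(\alpha,t)\})$ extends $q$ and is incompatible with $s$.

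\textbf{A smaller repair in your first case.} A successor of a top node of $T_q$ lying outside $T_s$ need not exist. The tree $T_s$ has size up to $\omega_1$ and may already contain all such successors. Instead, choose any admissible level $\eta_q+1$ that differs from the corresponding level of $T_s$, for example by omitting one successor. That difference alone already forces incompatibility.
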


\begin{proof}
To see that (i) implies (ii), note that if $q\not\le r\rest\lambda$, then by the separativity of $\K_\lambda$ there is $q'\le q$ which is incompatible with $r\rest\lambda$, and hence $q$ does not force $r$ into $\K_\kappa/\dot{H}$. The other direction is clear, since $q\le r\rest\lambda$ means that $q\le\pi(r)$.
\end{proof}

\begin{lemma}\label{C:wKurepaTree}
Assume $\GCH$ and let $\lambda > \omega_2$ be a cardinal. Let $H$ be a $\K_\lambda$-generic filter over $V$. Then the generic tree $T_H=\bigcup\set{T_q}{q\in H}$ is a weak Kurepa tree with $\lambda$-many cofinal branches.
\end{lemma}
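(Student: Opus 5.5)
We have to show three things in $V[H]$: $T_H$ has height and size $\omega_1$, $\omega_1$ is preserved, and there are $\lambda$-many distinct cofinal branches, which then has to count as at least $\omega_2$ in $V[H]$.

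\emph{Cardinals.} By Lemma \ref{L:K-closed}, $\K_\lambda$ is $\omega_1$-closed, so it adds no new countable sequences and $\omega_1^{V[H]}=\omega_1^V$. Under $\GCH$ we have $2^{\omega_1}=\omega_2$, and $\gamma^\omega<\omega_3$ for every $\gamma<\omega_3$. So Lemma \ref{L:K-Knaster} with $\mu=\omega_3$ shows that $\K_\lambda$ is $\omega_3$-Knaster, and all cardinals $\geq\omega_3$ are preserved. The cardinal $\omega_2$ may collapse, as remarked before Lemma \ref{L:K-Knaster}. This is harmless: $\lambda\geq\omega_3$ stays a cardinal, hence $\lambda\geq\omega_2^{V[H]}$. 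The whole point of the hypothesis $\lambda>\omega_2$ is to ensure $\lambda\geq\omega_3^V$.

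\emph{Height and size.} I will use two density arguments. First, for each $\eta<\omega_1$ the set of $q$ with $\eta_q\geq\eta$ is dense. Given $q$, build a descending $\omega$-sequence that extends the top level one step at a time (end-extend $T_q$ by giving each top node $\omega$ successors, and extend each $f_q(\alpha)$ to one of them), then apply Lemma \ref{L:K-closed} at limits. Second, the conditions are compatible in a coherent way: since $q_1\leq q_0$ requires $T_{q_1}\cap{^{<\eta_{q_0}+1}}\omega_1=T_{q_0}$, the trees $T_q$ for $q\in H$ cohere. Hence $T_H$ is a tree of height $\omega_1$ whose $\eta$-th level equals the $\eta$-th level of $T_q$ for any $q\in H$ with $\eta_q\geq\eta$. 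That level is a subset of ${^\eta}\omega_1$ of size at most $\omega_1$. Each such set lies in $V$, because it is determined by a single condition, so each level has size at most $\omega_1$. As there are $\omega_1$ levels and $\omega_1$ is preserved, $|T_H|=\omega_1$.

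\emph{Branches.} For $\alpha<\lambda$, a density argument shows that the set of $q$ with $\alpha\in\dom(f_q)$ is dense. Given $q$, pick any top-level node and assign it to $\alpha$. So $b_\alpha=\bigcup\{f_q(\alpha)\mid q\in H,\ \alpha\in\dom(f_q)\}$ is a function $\omega_1\to\omega_1$. All its restrictions lie in $T_H$, so it determines a cofinal branch of $T_H$. For distinct $\alpha,\beta<\lambda$, the set of $q$ with $f_q(\alpha)\neq f_q(\beta)$ is dense. Given $q$ with both in the domain, extend the top level by one step; each top node has infinitely many successors, so $\alpha$ and $\beta$ can be sent to different ones. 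Hence the branches $b_\alpha$ are pairwise distinct, and $T_H$ has at least $|\lambda|^{V[H]}=\lambda$ cofinal branches. By the cardinal computation, $\lambda\geq\omega_2^{V[H]}$, so $T_H$ is a weak Kurepa tree.

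The one step needing care is the one-step extension used in the density arguments. One must keep $T$ a normal, infinitely splitting subtree of size at most $\omega_1$ while extending every $f_q(\alpha)$ for the countably many $\alpha$ in the domain. This is routine, since we simply add $\omega$ successors to each top node. The conceptual subtlety is rather that $\omega_2$ may be collapsed, and the argument above handles this by using $\lambda\geq\omega_3$ and the $\omega_3$-Knaster property.
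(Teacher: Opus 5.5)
Your proof is correct and follows the paper's argument: $\omega_1$-closure (Lemma \ref{L:K-closed}) preserves $\omega_1$ and gives height and size $\omega_1$, density yields $\lambda$ pairwise distinct branches $b_\alpha$, and $\GCH$ with Lemma \ref{L:K-Knaster} at $\mu=\omega_3$ keeps $\lambda\geq\omega_3^V$ a cardinal, so $\lambda\geq\omega_2^{V[H]}$ even though $\omega_2^V$ collapses. You simply write out the density arguments the paper calls standard; the one small imprecision is that reaching an arbitrary countable height $\eta$ needs a descending recursion of countable length (closure at limit stages), not just a single $\omega$-sequence.
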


\begin{proof}
Since  $\K_\lambda$ is $\omega_1$-closed, $\omega_1$ is preserved by $\K_\lambda$ and the generic tree $T_H=\bigcup\set{T_q}{q\in H}$ is thus a tree with height and size $\omega_1$. By a standard density argument, $T_H$ has $\lambda$-many cofinal branches in $V[H]$. Since $\GCH$ holds in the ground model, $\K_\lambda$ is $\omega_3$-Knaster by Lemma \ref{L:K-Knaster}, hence all cardinals greater than $\omega_2$ are preserved (recall that $2^{\omega_1}$ is always collapsed); in particular $\lambda$ is preserved. Since $\lambda>\omega_2$ in the ground model, $\lambda>\omega_1$ in $V[H]$; therefore the generic tree $T_H$ is a weak Kurepa tree in $V[H]$.
\end{proof}

\subsection{The consistency of $\wKH$ with $\neg\KH$ and $\GMP^{\omega_2}(\omega_2)$}
Before proving the main theorem of this section, we show that no Kurepa trees exist in the generic extension by the product of the Mitchell forcing (as defined in Subsection \ref{mitchell_section}) and the forcing for adding a weak Kurepa tree.
\begin{theorem}\label{Th:NoKurepa}
Suppose that $\kappa$ is an inaccessible cardinal. Then, in the generic extension by $\M(\omega,\kappa)\times \K_\kappa$, there are no Kurepa trees.
\end{theorem}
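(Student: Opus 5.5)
Fix a generic $G \times H$ for $\M(\omega,\kappa)\times\K_\kappa$ and suppose towards a contradiction that $S$ is a Kurepa tree in $V[G\times H]$; we may take $S\subseteq\omega_1$ as a set. In $V[G\times H]$ we have $\kappa=\omega_2$ and $2^\omega=\kappa$, since $\K_\kappa$ is $\omega_1$-closed (Lemma \ref{L:K-closed}), hence does not collapse $\omega_1$ and by Easton's lemma does not hurt the Mitchell properties. So it suffices to show that $S$ has fewer than $\kappa$ cofinal branches.

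\emph{Step 1: capturing $S$ at an intermediate stage.} The product $\M\times\K_\kappa$ is $\kappa$-cc. Indeed, $\M$ is $\kappa$-Knaster, and by Lemma \ref{L:K-Knaster} with $\mu=\kappa$ (legitimate because $\kappa$ is inaccessible) $\K_\kappa$ is $\kappa$-Knaster, and a product of two $\kappa$-Knaster posets is $\kappa$-Knaster. Since $S$ is coded by a subset of $\omega_1$, we can find an inaccessible $\delta<\kappa$ in $A$ (closing under names) such that $S\in V[G_\delta\times H_\delta]$, where $G_\delta$ is $\M_\delta$-generic and $H_\delta$ is $\K_\delta$-generic via the projection of Lemma \ref{L:wKT-projection}. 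In that model $2^{\omega_1}<\kappa$, because $\M_\delta\times\K_\delta$ has size $<\kappa$. Hence $S$ has fewer than $\kappa$ branches in $V[G_\delta\times H_\delta]$.

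\emph{Step 2: no new branches in the remainder.} It remains to show that the quotient from $V[G_\delta\times H_\delta]$ to $V[G\times H]$ adds no cofinal branch to $S$. Decompose the extension as $V[G_\delta][H_\delta][H']$ followed by $\M/G_\delta$. Here $H'$ is generic for $\K_\kappa/H_\delta$, which is $\omega_1$-distributive and $\kappa$-Knaster. By Remark \ref{mitchell_remark}(\ref{quotient_product_prop}), $\M/G_\delta$ is $\Add(\omega,\delta^\dagger-\delta)\ast\dot\M^\delta$, and $\dot\M^\delta$ is a projection of $\Add(\omega,\kappa-\delta^\dagger)\times\Q^*_\delta$ with $\Q^*_\delta$ being $\omega_1$-closed.

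I would rearrange everything as follows. Over $W:=V[G_\delta][\Add(\omega,\delta^\dagger-\delta)]$, force with $\Add(\omega,\kappa)\times(\Q^*_\delta\times\K_\kappa/\ldots)$. The term part of $\K_\kappa$ over $V$ is $\omega_1$-closed in $V$. To handle this, I would instead let $\K_\kappa$ itself be the closed factor: $\K_\kappa$ is $\omega_1$-closed in $V$, and $\Q^*_\delta$ is $\omega_1$-closed in $V[G_\delta\ast\Add]$. The goal is to write the whole remainder as ``ccc $\times$ $\omega_1$-closed'' over a model containing $S$ where $2^\omega\geq\omega_1$. Then Fact \ref{F:ccc_Closed} (with $\kappa=\omega$, $\lambda=\omega_1$, and $2^\omega\ge\omega_1$) shows that the closed part adds no branches to the $\omega_1$-tree $S$. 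The Cohen part $\Add(\omega,\kappa-\delta^\dagger)$ is Knaster, so by Fact \ref{F:Knaster} it adds no branches either. Since branches of $S$ added in the projected extension would already appear in the larger product extension, this suffices.

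\emph{Main obstacle.} The delicate point is ordering the factors so that each closed piece is genuinely $\omega_1$-closed in the model where Fact \ref{F:ccc_Closed} is applied, and the remaining factors are ccc/Knaster. The trouble is that $S$ lives in $V[G_\delta\times H_\delta]$, which already contains a generic for a closed forcing, namely part of $\K$. My first attempt would be to treat $\K_\kappa$ over $V$ as $\K_\delta\times$ (the term forcing for the quotient), using the explicit description of $\K_{\delta,\kappa}$. Alternatively, I would note that $\K_\kappa/H_\delta$ is $\omega_1$-closed in $V[H_\delta]$, since a descending sequence of pairs $(\eta_r,f_r)$ has a lower bound whenever the generic tree $T_{H_\delta}$ has branches through the limit. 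I expect this to need a small genericity argument or a passage to a dense closed subset. Then I would apply Easton's lemma (Fact \ref{L:Easton}) to see that $\M$ stays ccc-like after the closed forcing. Finally, I would apply Fact \ref{F:ccc_Closed} over $V[H]$ with $\P=\M_\delta\ast\Add$ and $\Q=\Q^*_\delta\times(\K_\kappa/H_\delta)$, mirroring Unger's argument.
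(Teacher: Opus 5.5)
Your outline matches the paper's: capture $S$ at a stage where $2^{\omega_1}<\kappa$, then use Fact \ref{F:Knaster} for the Cohen part and Fact \ref{F:ccc_Closed} for the closed Mitchell part. But the step you call the main obstacle is a genuine gap, and your proposed remedy fails, because $\K_\kappa/H_\delta$ is not $\omega_1$-closed in $V[H_\delta]$.

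\textbf{Why the quotient is not closed.} A descending sequence $(\eta_n,f_n)$ in the quotient with $\eta=\sup_n\eta_n$ a limit has a lower bound only if each branch $\bigcup_n f_n(\beta)$ is continued on level $\eta$ of $T_{H_\delta}$. That level is fixed by $H_\delta$. Below any $q\in\K_\delta$ you can build, in $V$, a descending $\langle q_n\rangle$ in $\K_\delta$ and nodes $t_n\in T_{q_n}\cap{}^{\eta_{q_n}}\omega_1$ such that $\bigcup_n t_n$ differs from each of the countably many branches a lower bound of $\langle q_n\rangle$ is obliged to continue. Then take that lower bound, as in Lemma \ref{L:K-closed}, chosen to omit $\bigcup_n t_n$. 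For any fixed $\beta\in[\delta,\kappa)$, it forces $\langle(\eta_{q_n},\{(\beta,t_n)\})\mid n<\omega\rangle$ to be a descending sequence in the quotient with no lower bound. So only $\omega_1$-distributivity is available, as the paper says explicitly. You do not exhibit a dense $\omega_1$-closed subset, and Fact \ref{F:ccc_Closed} cannot be applied to this factor.

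\textbf{Problems with your final application of Fact \ref{F:ccc_Closed}.}
\begin{enumerate}
\item $\M_\delta\ast\Add$ is not ccc, since it makes $\delta=\omega_2$.
\item $\Q^*_\delta$ is not a poset in the model over which that $\P$ is forced.
\item Working over the full $V[H]$ would wreck your Step 1, because $2^{\omega_1}\geq\kappa$ there.
\end{enumerate}
The paper avoids the first two by passing to $\Add(\omega,\kappa)\times\Q\times\K_\kappa$, where $\Q$ is the term forcing. Then the only ccc factor is $\Add(\omega,\theta)$, and the closed factor $\Q^\theta$ lives in the common ground model $V[G_\theta][H_\theta][h]$.

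\textbf{The missing idea: a wide lower bound instead of closure.} Work in $V[G_\theta]$ and suppose $\dot b$ is a $\P_\theta\times\K_\theta\ast(\K_\kappa/\dot H_\theta)$-name for a new branch. One builds a labeled system consisting of:
\begin{enumerate}
\item a single descending sequence $\langle q_n\rangle$ in $\K_\theta$;
\item conditions $r_s\in\K_\kappa$ for $s\in{}^{<\omega}2$, with $r_s\restriction\theta=q_{|s|}$;
\item maximal antichains $A_s\subseteq\P_\theta$ such that, for every $p\in A_s$, the conditions $(p,q_{n+1},r_{s^\frown 0})$ and $(p,q_{n+1},r_{s^\frown 1})$ decide $\dot b$ differently below $\gamma_{n+1}$.
\end{enumerate}
This is Claims \ref{C:diff}--\ref{cl:conditions}. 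The tree parts of all coordinates are kept equal, so that the limit stages of the antichain construction admit lower bounds with a common $\K_\theta$-part.

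One then takes a single lower bound $q$ of $\langle q_n\rangle$ whose top level contains the branches $d^x_\alpha$ for every $x\in{}^\omega 2$. This is possible only because levels of $\K$-conditions may be uncountable. Thus $q$ simultaneously forces every fusion $r_x$ into the quotient, and $(p^*,q)$ forces level $\sup_n\gamma_n$ of $\dot S$ to be uncountable. Without this argument or an equivalent one, your proof does not go through.
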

\begin{proof}

Let $\kappa$ be an inaccessible cardinal, and assume that $\GCH$ holds.

First note that it is enough to show that there are no Kurepa trees in the generic extension by $\Add(\omega,\kappa)\times \Q\times \K_\kappa$, where $\Q$ is the term forcing of the Mitchell forcing $\M(\omega,\kappa)$ (see Remark \ref{mitchell_remark} for more details; formally, $\Q$ is the set of all conditions $(\emptyset, q) \in 
\M(\omega,\kappa)$, with the order inherited from $\M(\omega,\kappa)$).  To see this, first observe that $\Add(\omega,\kappa)\times \Q\times \K_\kappa$ is forcing equivalent to $(\M(\omega,\kappa)*\S)\times \K_\kappa$ for some quotient forcing $\S$ by Remark \ref{mitchell_remark}(\ref{mitchell_product}). This in turn is forcing equivalent to $(\M(\omega,\kappa)\times \K_\kappa)*\S$. Moreover, $\S$ does not collapse any cardinals over $V[\M(\omega,\kappa)\times \K_\kappa]$, since it is easily checked that the cardinals are the same in both extensions; i.e. $\omega_1$ and all cardinals greater or equal to $\kappa$ are preserved in both extensions and cardinals between $\omega_1$ and $\kappa$ are collapsed in both extensions. Hence, any Kurepa tree in $V[\M(\omega,\kappa)\times \K_\kappa]$ remains a Kurepa tree in $V[\Add(\omega,\kappa)\times \Q\times \K_\kappa]$.

Let $ F\times G\times H$ be $\P\times \Q\times \K_\kappa$-generic over $V$, where $\P$ denotes $\Add(\omega,\kappa)$.  Assume that $S$ is an $\omega_1$-tree in $V[F][G][H]$. Since $S$ has size $\omega_1$ and $\P\times \Q\times\K_\kappa$ is $\kappa$-Knaster, there is a nice $\P\times \Q\times\K_\kappa$-name $\dot{S}$ of size less than $\kappa$ for $S$. Since $\dot{S}$ has size less than $\kappa$,  $\dot{S}$ is a $\P_\theta\times \Q_\theta\times \K_\theta$-name for some regular cardinal $\theta < \kappa$, where $\P_\theta=\Add(\omega,\theta)$ and $\Q_\theta=\set{(\emptyset, q\rest\theta)}{(\emptyset, q)\in \Q}$. Let $F_\theta$ denote the $\P_\theta$-generic over $V$ determined by $F$; i.e.\ $F_\theta=\set{p\rest\theta}{p\in F}$, $G_\theta$ denote the $\Q_\theta$-generic over $V[F_\theta]$ determined by $G$; i.e.\ $G_\theta=\set{(\emptyset, q\rest\theta)}{(\emptyset, q)\in G}$, and let $H_\theta$ denote the $\K_\theta$-generic filter over $V[F_\theta][G_\theta]$ determined by $H$ and $\pi$, where $\pi$ is the projection from $\K_\kappa$ to $\K_\theta$ from Lemma \ref{L:wKT-projection}. The $\omega_1$-tree $S$ is an element of $V[F_\theta][G_\theta][H_\theta]$ and has at most $(2^{\omega_1})^{V[F_\theta][G_\theta][H_\theta]}$-many cofinal branches here, which is less than $\kappa$, since $\kappa$ is still inaccessible in $V[F_\theta][G_\theta][H_\theta]$. We show that the quotient forcing $\P^\theta\times\Q^\theta\times \K_\kappa/H_\theta$ cannot add a cofinal branch to $S$ over $V[F_\theta][G_\theta][H_\theta]$. Here $\P^\theta$ denotes $\Add(\omega,[\theta,\kappa))$ and $\Q^\theta$ denotes the quotient forcing $\Q/{G_\theta}$. It will follow that $S$ has ${<}\kappa=\omega_2$-many cofinal branches in $V[F][G][H]$, i.e.\ it is not a Kurepa tree. Since $S$ was an arbitrary $\omega_1$-tree, this will conclude the proof of Theorem \ref{Th:NoKurepa}. The proof that there are no Kurepa trees in $V[F][G][H]$ is relatively long; for easier reading, it is divided into several claims (Claims \ref{C:diff} to \ref{cl:last}).

First we observe that we can express the generic extension $V[F][G][H]$ as a forcing extension by $(\P_\theta\times\Q_\theta\times\K_\theta)*(\K_{\kappa}/\dot{H}_\theta\times\Q^\theta\times \P^\theta)$: Since $\P$ is forcing equivalent to $\P_\theta\times \P^\theta$, we can view $F$ as a filter $F_\theta\times f$  which is $\P_\theta\times \P^\theta$-generic over $V$. Similarly, since $\Q$ is forcing equivalent to $\Q_\theta*\dot{\Q}^\theta$, we can view $G$ as a filter $G_\theta*g$ which is generic over $V[F_\theta][f]$ and lastly since $\K_\kappa$ is forcing equivalent to $\K_\theta*\K_{\kappa}/\dot{H}_\theta$, we can view $H$ as a filter $H_\theta*h$ which is generic over $V[F_\theta][f][G_\theta][g]$. Therefore $V[F][G][H]$ is equal to the generic extension $V[F_\theta][f][G_\theta][g][H_\theta][h].$

Moreover, since $\P^\theta$ and $\Q_\theta*\Q^\theta$ are both in $V[F_\theta]$ and $G_\theta*g$ is generic over $V[F_\theta][f]$, we have $V[F_\theta][f][G_\theta][g][H_\theta][h]=V[F_\theta][G_\theta][g][f][H_\theta][h]$. Similarly, we can swap $g\times f$ and $H_\theta*h$ since both $\Q^\theta\times\P^\theta$ and $\K_\theta*\K_{\kappa}/H_\theta$ are in $V[F_\theta][G_\theta]$ and $H_\theta*h$ is generic over $V[F_\theta][H_\theta][g][f]$. It follows that $$V[F_\theta][G_\theta][g][f][H_\theta][h]=V[F_\theta][G_\theta][H_\theta][h][g][f].$$

Since $\P^\theta=\Add(\omega,[\theta,\kappa))$, it is $\omega_1$-Knaster in $V[F_\theta][G_\theta][H_\theta][h][g]$ and hence it cannot add new cofinal branches to an $\omega_1$-tree by Fact \ref{F:Knaster}. Therefore, any cofinal branch through $S$ is already in $V[F_\theta][G_\theta][H_\theta][h][g]$. 

Now, note that $\Q^\theta$ is $\omega_1$-closed in $V[G_\theta]$ and also $\K_\kappa$ is $\omega_1$-closed in $V[G_\theta]$ since $\Q_\theta$ and $\K_\kappa$ are both $\omega_1$-closed in $V$; therefore $\Q^\theta$ is $\omega_1$-closed in $V[G_\theta][H_\theta][h]$. The forcing $\P_\theta$ is ccc in $V[G_\theta][H_\theta][h]$ since it is just Cohen forcing for adding subsets of $\omega$. Therefore we can apply Fact \ref{F:ccc_Closed} to $\P_\theta$ and $\Q^\theta$ over $V[G_\theta][H_\theta][h]$ to show that $\Q^\theta$ cannot add new cofinal branches to an $\omega_1$-tree over $V[F_\theta][G_\theta][H_\theta][h]$ and hence any cofinal branch through $S$ is already in $V[F_\theta][G_\theta][H_\theta][h]$

To show that $\K_{\kappa}/H_\theta$ cannot add a cofinal branch to $S$ over $V[F_\theta][G_\theta][H_\theta]$, we will work in $V[G_\theta]$. Note that $\K_{\kappa}/H_\theta$ is only $\omega_1$-distributive in $V[G_\theta][H_\theta]$ and therefore we cannot use Fact \ref{F:ccc_Closed}. Instead, we will show that if there is a $\P_\theta\times\K_\theta*(\K_{\kappa}/\dot{H}_\theta)$-name $\dot{b}$ and a condition $(p^*,q^*,r^*)\in \P_\theta\times\K_\theta*(\K_\kappa/\dot{H}_\theta)$ which forces that $\dot{b}$  is a cofinal branch through $\dot{S}$ that is not in $V[G_\theta][\dot{F}_\theta][\dot{H}_\theta]$, then there is a condition $q\le q^*$ such that $(p^*,q)$ forces that $\dot{S}$ has an uncountable level. This is a contradiction, since we can assume that $(p^*,q^*,r^*)$ is in $F_\theta\times H_\theta*h$ and that $(p^*,q^*)$ forces that $\dot{S}$ is an $\omega_1$-tree.

The proof is similar to the argument that an $\omega_1$-closed forcing over a ccc forcing does not add cofinal branches to $\omega_1$-trees: we will build a tree $\mathcal{T}$ of conditions in $\K_\kappa$ labeled by ${^{<\omega}2}$ and we will diagonalize over antichains in $\P_\theta$, but since we are working with the quotient $\K_{\kappa}/\dot{H}_\theta$ and we work in $V[G_\theta]$ and not in $V[G_\theta][H_\theta]$, we will guide this construction by a decreasing sequence of length $\omega$ of conditions in $\K_\theta$, which will force conditions from $\mathcal{T}$ into the quotient $\K_{\kappa}/\dot{H}_\theta$.

We make a natural identification and view $(\dot{S}, <_{\dot{S}})$ as a name for a tree with underlying set $\omega_1\times\omega$, assuming that, for each $\delta<\omega_1$, the $\delta^{\mathrm{th}}$ level of $\dot{S}$ is forced to  be  equal to $\{\delta\}\times\omega$. Hence the domain of the tree exists in $V[G_\theta]$, with the forcing deciding the ordering $<_{\dot{S}}$ on the tree. Going forward, 
for $\delta < \omega_1$, we let $\dot{S}_\delta$ denote $\{\delta\} \times \omega$.

Work in $V[G_\theta]$. Assume that $\dot{b}$ is a $\P_\theta\times \K_\theta*(\K_{\kappa}/\dot{H}_\theta)$-name and $(p^*,q^*,\dot{r}^*)\in \P_\theta\times\K_\theta*(\K_{\kappa}/\dot{H}_\theta)$ forces that $\dot{b}$ is a cofinal branch through $\dot{S}$ that is not in $V[G_\theta][\dot{F}_\theta][\dot{H}_\theta]$. By our assumption about $\dot{S}$, we can think of $\dot{b}$ as a name for an element of ${^{\omega_1}}\omega$. We can also assume that there is $r^* \in \K_\kappa$ such that $q^* \Vdash \dot{r}^* = r^*$. We will build by induction on $\omega$ the following objects:
\begin{itemize}
\item a decreasing sequence $\seq{q_n}{ n<\omega }$  of conditions in $\K_\theta$ with $q_0 \le q^*$;
\item a labeled tree $\mathcal{T}=\set{r_s}{s\in {^{<\omega}}2 }$ of conditions in $\K_{\kappa}$, with $r_s \le r^*$ for all $s\in {^{<\omega}}2 $;
\item a maximal antichain $A_s$ in $\P_\theta$ below $p^*$ for all $s\in {^{<\omega}}2 $;
\item a strictly increasing sequence $\seq{\gamma_n}{ n<\omega }$ of ordinals below $\omega_1$;
\end{itemize}

such that the following hold for all $n$ and all $s\in {^{<\omega}}2$ of length $n$:

\begin{enumerate}[(a)]
\item $q_n=r_s\rest\theta$; in particular $q_n\Vdash r_s\in \K_\kappa/\dot{H}_\theta$;
\item for all $p\in A_{s}$ the conditions $(p,q_{n+1},r_{s^{\smallfrown}0})$  and  $(p,q_{n+1},r_{s^{\smallfrown}1})$ decide $\dot{b}$ up to $\gamma_{n+1}$ differently; i.e.\ there are $\delta
\le\gamma_{n+1} $ and $\tau_{s^{\smallfrown}0}\neq \tau_{s^{\smallfrown}1}$ in $\dot{S}_\delta$ such that $(p,q_n,r_{s^{\smallfrown}0})\Vdash \dot{b}(\delta)= \tau_{s^{\smallfrown}0}$ and $(p, q_n,r_{s^{\smallfrown}1})\Vdash \dot{b}(\delta)= \tau_{s^{\smallfrown}1}$;
\item $(q_{|t|},r_t)\le (q_{|s|},r_s)$ for all $s\subseteq t$ in  ${^{<\omega}}2$.
\end{enumerate}

\begin{definition}\label{def:system}
Let us call a system as above, satisfying conditions (a)--(c), a \emph{labeled system} for $\dot{b}$. We say just a labeled system if $\dot{b}$ is clear from the context.
\end{definition}

A labeled system will be constructed below, using Claims \ref{C:diff}, \ref{C:antichains}, and \ref{cl:conditions}.

\begin{claim}\label{C:diff}
For every $(p,q,r^0) , (p,q,r^1) \le (p^*,q^*,r^*)$ in $\P_\theta\times \K_\theta*(\K_{\kappa}/\dot{H}_\theta)$ and $\gamma'<\omega_1$ there are $\gamma'<\gamma<\omega_1$, $(p',q',u^0)\le (p,q,r^0)$  and $(p',q',u^1)\le (p,q,r^1)$ such that $(p',q',u^0)$ and $(p',q',u^1)$ decide $\dot{b}(\gamma)$ differently and the condition $q'$ extends both $u^0\rest\theta$ and $u^1\rest\theta$. Moreover we can ensure that $T_{q'}=T_{u^0}=T_{u^1}$.
\end{claim}

\begin{proof}
Let $(p,q,r^0) , (p,q,r^1) \le (p^*,q^*,r^*)$ and $\gamma'<\omega_1$ be given. Fix for the moment a $\P_\theta\times\K_\theta$-generic $G_P\times G_K$ over $V[G_\theta]$ such that $(p,q)\in G_P\times G_K$, and work in $V[G_\theta][G_P\times G_K]$. Since $\dot{b}$ is forced by $(p^*,q^*,r^*)$ to be a new cofinal branch through $\dot{S}$ and $(p^*,q^*)\in G_P\times G_K$, there are $\tilde{r}^0\le r^0$, $\tilde{r}^1\le r^1$ and $\gamma>\gamma'$ such that $\tilde{r}^0$ and $\tilde{r}^1$ decide $\dot{b}(\gamma)$ differently; i.e.\ there are $\tau^0\neq\tau^1$ in $\dot{S}_\gamma$ such that $\tilde{r}^0\Vdash \dot{b}(\gamma)=\tau^0$ and $\tilde{r}^1\Vdash \dot{b}(\gamma)=\tau^1$. 

Since $\tilde{r}^0$ and $\tilde{r}^1$ are in $\K_{\kappa}/G_K$, $\tilde{r}^0\rest\theta$ and $\tilde{r}^1\rest\theta$ are in $G_K$ and hence they are compatible. Let $q'$ be a common extension of $\tilde{r}^0\rest\theta$, $\tilde{r}^1\rest\theta$ and $q$ such that, for each $i < 2$, 
we have $(q', \tilde{r}^i) \Vdash \dot{b}(\gamma) = \tau^i$ in 
$V[G_\theta][G_P]$. Since $G_P$ is $\P_\theta$-generic over $V[G_\theta]$, there is a condition $p'\in G_P$ which forces this, and therefore $(p',q',\tilde{r}^0)$ and $(p',q',\tilde{r}^1)$ decide $\dot{b}(\gamma)$ differently over $V[G_\theta]$. Since we assume that $p\in G_P$ we can take $p'$ such that it extends $p$.

Since $q'$ extends both $\tilde{r}^0 \rest \theta$ and 
$\tilde{r}^1\rest\theta$, $T_{q'}$ is an end-extension of both 
$T_{\tilde{r}^0}$ and $T_{\tilde{r}^1}$. Let us define 
$u^i=(T_{u^i}, f_{u^i})\le (T_{\tilde{r}^i}, f_{\tilde{r}^i})= \tilde{r}^i$ for $i<2$ as follows: set $T_{u^i}=T_{q'}$ and $\dom (f_{u^i})=\dom(f_{q'}) \cup \dom(f_{\tilde{r}^i})$, and define

\begin{itemize}[--]
\item $f_{u^i}(\delta)=f_{q'}(\delta)$ for every $\delta\in\dom(f_{q'})$ and
\item  $f_{u^i}(\delta)=\tau$, where $\tau\in (T_{q'})_{\eta_{q'}}$ with $\tau\supseteq f_{\bar{r}^i}(\delta)$, for every $\delta\in \dom(f_{\bar{r}^i})\setminus\dom(f_{q'})$. 
\end{itemize}

It is easy to see that  $q'$ still extends $u^i\rest\theta$ for $i<2$ and therefore it forces $u^i$ into $\K_{\kappa}/\dot{H}_\theta$. Moreover, by the definition of $u^i$ for $i<2$, $(q', u^i)$ extends $(q',\tilde{r}^i)$ and therefore  $(p',q',u^0)$ and $(p',q',u^1)$ decide $\dot{b}(\gamma)$ differently. 
\end{proof}

\begin{claim}\label{C:antichains}
For every $(q,r^0) , (q,r^1) \le (q^*,r^*)\in \K_\theta*(\K_{\kappa}/\dot{H}_\theta)$ and $\gamma'<\omega_1$ there are $\gamma'<\gamma<\omega_1$, a maximal antichain $A$ in $\P_\theta$ below $p^*$, $(q',\bar{r}^0)\le (q,r^0)$, and $(q',\bar{r}^1)\le (q,r^1)$ such that for every $p\in A$, $(p, q',\bar{r}^0)$ and $(p,q',\bar{r}^1)$ decide $\dot{b}$ up to $\gamma$ differently and the condition $q'$ extends both $\bar{r}^0\rest\theta$ and $\bar{r}^1\rest\theta$.

\end{claim}

\begin{proof}
Let $(q,r^0) , (q,r^1) \le (q^*,r^*)\in \K_\theta*(\K_{\kappa}/\dot{H}_\theta)$ and $\gamma'<\omega_1$ be given. We proceed by induction of length $\alpha$ for some $\alpha$ less than $\omega_1$ that will be determined during the construction, and we construct $q'$, $\bar{r}^0$ and $\bar{r}^1$ such that $(q',\bar{r}^0)$ is a lower bound of a decreasing sequence $\seq{(q'_\beta,r^0_{\beta})\le (q,r^0) }{\beta<\alpha}$ and $(q',\bar{r}^{1})$ is a lower bound of a decreasing sequence $\seq{(q'_\beta,r^1_{\beta})\le (q,r^1) }{\beta<\alpha}$. In the process, we also construct a maximal antichain $A=\set{p_\beta}{\beta < \alpha}$ in $\P_\theta$, and $\gamma$ will be a supremum of increasing sequence of ordinals $\seq{\gamma_\beta<\omega_1}{\beta<\alpha}$.

To begin, let $p$ be an arbitrary condition below $p^*$. By Claim \ref{C:diff}, there are $p_0\le p$, $q'_0\le q$, $r^0_0\le r^0$,$r^1_{0}\le r^1$ and $\gamma_0>\gamma'$ such that $(p_0,q'_0, r^0_{0})$, $(p_0,q'_0,r^1_{0})$ decide $\dot{b}(\gamma_0)$ differently, $q'_0$ extends both conditions $r^0_{0}\rest\theta$ and $r^1_{0}\rest\theta$, and moreover $T_{q'_0}=T_{r^0_{0}}=T_{r^1_{0}}$.

Let $\alpha<\omega_1$ and assume that for all $\beta<\alpha$, we have constructed $p_\beta$, $\gamma_{\beta}$, $q'_\beta$, $r^0_\beta$, and $r^1_\beta$. If there is a condition $p$ below $p^*$ which is incompatible with all conditions in $\set{p_\beta}{\beta<\alpha}$, we fix such a $p$ and proceed as follows. 

If $\alpha=\beta+1$, by Claim \ref{C:diff}, there are $p_\alpha\le p$, $q'_\alpha\le  q'_\beta$, $r^0_\alpha\le r^0_\beta $, $r^1_{\alpha}\le r^1_\beta$ and $\gamma_\alpha>\gamma_\beta$ such that $(p_\alpha,q'_\alpha, r^0_{\alpha})$, $(p_\alpha,q'_\alpha,r^1_{\alpha})$ decide $\dot{b}(\gamma_\alpha)$ differently, $q'_\alpha$ extends both conditions $r^0_{\alpha}\rest\theta$ and $r^1_{\alpha}\rest\theta$, and moreover $T_{q'_\alpha}=T_{r^0_{\alpha}}=T_{r^1_{\alpha}}$. 

If $\alpha$ is limit, we begin by constructing appropriate lower bounds of $\seq{(q'_\beta,r^0_{\beta})\le (q,r^0) }{\beta<\alpha}$ and $\seq{(q'_\beta,r^1_{\beta})\le (q,r^1) }{\beta<\alpha}$ such that the lower bounds will have the same first coordinate.\footnote{Note that $\K_\theta*(\K_{\kappa}/\dot{H}_\theta)$ is $\omega_1$-closed, therefore the lower bounds of $\seq{(q'_\beta,r^0_{\beta})\le (q,r^0) }{\beta<\alpha}$ and $\seq{(q'_\beta,r^1_{\beta})\le (q,r^1) }{\beta<\alpha}$ always exist; however, for our construction we need to ensure that the lower bounds have the same first coordinate.} Let $q'_\beta=(T_\beta,f_\beta)$, $r^0_\beta=(S^0_\beta,g^0_\beta)$, and $r^1_\beta  =(S^1_\beta,g^1_\beta)$ for all $\beta<\alpha$. Note that at each step $\beta<\alpha$ of the construction we ensured that  $T_\beta=S^0_\beta=S^1_\beta$, hence $T=\bigcup_{\beta<\alpha}T_\beta=\bigcup_{\beta<\alpha} S^0_{\beta}=\bigcup_{\beta<\alpha} S^1_{\beta}$

If $\height(T)$ is a limit ordinal $\eta<\omega_1$ we define an end extension $T'$ of $T$ by one level. To define $T'$, first note that there are countably many branches through $T$ that we are obliged to extend because of the functions $\set{f_{\beta}}{ \beta < \alpha}$, $\set{g^0_{\beta}}{ \beta < \alpha}$, and $\set{g^1_{\beta}}{ \beta < \alpha}$. Namely, let $a = \bigcup_{\beta< \alpha} 
    \dom(f_{\beta})$ and, for each $\delta \in a$, let 
    \[
      b_\delta := \bigcup \set{f_{\beta}(\delta)}{\beta < \alpha \wedge \delta\in \dom(f_{\beta})}.
    \]
    Then each $b_\delta$ is a cofinal branch through $T$, and we are obliged to put $b_\delta$ 
    in $T'$.

Similarly for functions $\set{g^i_{\beta}}{ \beta < \alpha}$ for $i<2$, let $c_i = \bigcup_{\beta< \alpha} 
    \dom(g^i_{\beta})$ and, for each $\delta \in c_i$, let 
    \[
      d^i_\delta := \bigcup \set{g^i_{\beta}(\delta)}{\beta < \alpha \wedge \delta\in \dom(g^i_{\beta})}.
    \]
    Then each $d^i_\delta$ is a cofinal branch through $T$, and we are obliged to put $d^i_\delta$ 
    in $T'$.

As in the proof of Lemma \ref{L:K-closed}, for each $t \in T$, let $E_t$ be a cofinal branch in $T$ 
containing $t$, and let $e_t = \bigcup E_t$. Then set 
\[
  T' = T\cup\set{b_\delta}{\delta\in a}\cup\set{d^0_\delta}{\delta \in c_0}\cup \set{d^1_\delta}{\delta   \in c_1} \cup \set{e_t}{t \in T}.
\]
Now let us define $\bar{q}=(T_{\bar{q}}, f_{\bar{q}}) $ as follows: $T_{\bar{q}}=T'$, $\dom (f_{\bar{q}})=a$ and, for all $\delta \in a$, $f_{\bar{q}}(\delta) = b_\delta$. Similarly define $u^i=(T_{u^i}, f_{u^i}) $ for $i<2$ as follows:  $T_{u^i}=T'$, $\dom (f_{u^i})=c^i$ and, for all $\delta \in c^i$, $f_{u^i}(\delta) = d^i_\delta$.

If $\height(T)$ is a successor ordinal $\eta+1<\omega_1$, then there is $\beta^*<\alpha$ such thet for all $\beta$ between $\beta^*$ and $\alpha$, $T_\beta=T_{\beta^*}=T$ and hence $S^0_\beta=S^1_\beta=T_{\beta^*}=T$ for all $\beta$ between $\beta^*$ and $\alpha$. Now, $T$ is a normal tree and we can easily define lower bounds using $T$.  Let us define $\bar{q}=(T_{\bar{q}}, f_{\bar{q}}) $ as follows: $T_{\bar{q}}=T$, $\dom (f_{\bar{q}})=\bigcup_{\beta< \alpha} \dom(f_{\beta})$ and, for all $\delta \in \dom (f_{\bar{q}})$, $f_{\bar{q}}(\delta) = f_\beta(\delta)$, where $\beta\ge\beta^*$ is such that $\delta\in\dom(f_\beta)$. Analogously, we define $u^i=(T_{u^i}, f_{u^i}) $ for $i<2$ as follows: $T_{u^i}=T$, $\dom (f_{u^i})=\bigcup_{\beta< \alpha} 
    \dom(g^i_{\beta})$ and, for all $\delta \in \dom (f_{u^i})$, $f_{u^i}(\delta) = g^i_\beta(\delta)$,  where $\beta\ge\beta^*$ is such that $\delta\in\dom(g^i_\beta)$. 

This finishes the construction of appropriate lower bounds $(\bar{q},u^i)$ of $\seq{(q'_\beta,r^i_{\beta})\le (q,r^i) }{\beta<\alpha}$ for $i<2$. Now let $\gamma$ be the supremum of $\set{\gamma_\beta}{\beta<\alpha}$. By Claim \ref{C:diff}, there are $p_\alpha\le p$, $q'_\alpha\le \bar{q}$, $r^0_\alpha\le u^0$, $r^1_\alpha\le u^1$ and $\gamma_\alpha>\gamma$ such that $(p_0,q'_\alpha, r^0_\alpha)$, $(p_0,q'_\alpha,r^1_\alpha)$ decide $\dot{b}(\gamma_\alpha)$ differently and $q'_\alpha$ extends both $r^0_\alpha\rest\theta$ and $r^1_\alpha\rest\theta$ and moreover $T_{q'_\alpha}=T_{r^0_\alpha}=T_{r^1_\alpha}$.

If there is no $p$ below $p^*$ which is incompatible with all conditions in $\set{p_\beta}{\beta<\alpha}$ we stop the construction, set $A=\set{p_\beta}{\beta<\alpha}$, and  let $\gamma$ be the supremum of $\seq{\gamma_\beta}{\beta<\alpha}$. If $\alpha=\beta+1$ for some $\beta<\omega$, we set $q'=q'_\beta$
 and $\bar{r}^i=r^i_\beta$ for $i<2$. If $\alpha$ is limit, we construct $q'$ and $\bar{r}^i$, for $i<2$, as $\bar{q}$ and $u^i$, for $i<2$, in the limit case of the induction. It is readiliy verified that the objects thus constructed satisfy the conclusion of the claim.
\end{proof}

W are now ready to construct our labeled system. The construction is by induction on $\omega$.

Let $\gamma_0$ be an arbitrary ordinal below $\omega_1$ and let $(q_0,r_\emptyset)$ be $(q^*,r^*)$. By Lemma \ref{L:into}, $q^*\le r^*\rest\theta$ since $q^*$ forces $r^*$ into $\K_{\kappa}/\dot{H}_\theta$ and $\K_\theta$ is separative. If $q^*\neq r^*\rest\theta$, we can extend $r^*$ appropriately to ensure condition (a); for more details, see Claim \ref{cl:conditions} below in the successor step of the construction.

Now fix $n < \omega$ and assume that we have constructed $\gamma_n$, $q_n$, and $r_s$ for all $s \in {^n2}$. Let $\seq{s_i}{i<2^n}$ enumerate ${^n2}$. We describe how to construct 
$\gamma_{n+1}$, $q_{n+1}$, $A_{s\rest n}$, and $r_s$ for $s \in {^{n+1}}2$.

We proceed by induction on $2^n=m$. Let us start with $s_0$. By Claim \ref{C:antichains}, there are $q^0\le q_n$, $r'_{s_0{}^\smallfrown 0}$,$r'_{s_0{}^\smallfrown 1}\le r_{s_0}$,  a maximal antichain $A_{s_0}$ in $\P_\theta$, and $\gamma^0\ge\gamma_n$ such that for all $p\in A_{s_0}$, $(p,q^0,r'_{s_0{}^\smallfrown 0})$ and $(p,q^0,r'_{s_0{}^\smallfrown 1})$ decide $\dot{b}$ up to $\gamma^0$ differently and $q^0$ extends both $r'_{s_0{}^\smallfrown 0}\rest\theta$ and $r'_{s_0{}^\smallfrown 1}\rest\theta$. 

Now fix $1 \leq i < m$, and suppose that $\gamma^{i-1}$ and $q^{i-1}$ have been constructed. By Claim \ref{C:antichains}, there are $q^{i}\le q^{i-1}$, $r'_{s_{i}{}^\smallfrown 0}$, $r'_{s_{i}{}^\smallfrown 1}\le r'_{s_{i}}$, a maximal antichain $A_{s_i}$  in $\P_\theta$, and $\gamma^{i}\ge\gamma^{i-1}$ such that for all $p\in A_{s_i}$, $(p, q^{i},r'_{s_{i}{}^\smallfrown 0})$, $(p,q^{i},r'_{s_{i}{}^\smallfrown 1})$ decide $\dot{b}$ up to $\gamma^{i}$ differently and $q^{i}$ extends both $r'_{s_i{}^\smallfrown 0}\rest\theta$ and $r'_{s_{i}{}^\smallfrown 1}\rest\theta$. 

Let $q_{n+1}$ be $q^{m-1}$ and $\gamma_{n+1}$ be $\gamma^{m-1}$. It follows by the construction that the objects  $q_{n+1}$, $\gamma_{n+1}$, $r'_{s^\smallfrown j}$, for $j<2$ and all $s\in {^n2}$ satisfy the desired conditions (b) and (c).

However, we have only ensured that $q_{n+1}$ extends $r'_{s}\rest\theta$ for $s\in {^{n+1}2}$, but not that they are equal as is required in condition (a). To ensure condition (a), we define appropriate extensions of $r'_{s}$ for all $s\in {^{n+1}2}$.  

\begin{claim}\label{cl:conditions}
The objects constructed above can be extended to satisfy conditions (a)--(c) of a labeled system in Definition \ref{def:system}.
\end{claim}

\begin{proof}
Since $q_{n+1}$ extends $r'_{s}\rest\theta$ for all $s\in {^{n+1}2}$, $T_{q_{n+1}}$ is an end-extension of $T_{r'_{s}}$ for all $s\in {^{n+1}2}$. Let $\eta+1$ be the height of $T_{q_{n+1}}$.

Now, we define extensions of $r'_{s}$ for $s\in {^{n+1}2}$. For $s\in {^{n+1}2}$, let $f_s$ be a function such that $\dom(f_s)=\dom(f_{r'_{s}})\cap [\theta,\kappa)$ and for each $\alpha\in\dom(f_s)$, let $f_s(\alpha)\supseteq f_{r'_{s}}(\alpha)$ be some node of $T_{q_{n+1}}$ on level $\eta$. Set $r_s=(T_{q_{n+1}},f_{q_{n+1}}\cup f_s)$ for $s\in {^{n+1}2}$. Clearly, $r_s$ are conditions in $\K_\kappa$ for all $s\in {^{n+1}2}$ such that $q_{n+1}$ is equal to $r_s\rest\theta$, hence condition (a) holds. Since $r_{s}$ extends $r'_{s}$ for all 
$s\in {^{n+1}2}$, conditions (b) and (c) are still satisfied for $q_{n+1}$, $\gamma_{n+1}$, $A_s$, and $r_{s^\smallfrown j}$, for $j<2$ and all $s\in {^n2}$.
\end{proof}

This completes our construction of a labeled system. Let $\gamma$ be the supremum of $\seq{\gamma_n}{ n<\omega }$. To finish the proof of Theorem \ref{Th:NoKurepa}, we would like to find a lower bound $q$ of the sequence $\seq{q_n}{ n<\omega }$ and a lower bound $r_x$ of sequences $\seq{r_{x\rest n}}{n<\omega}$ for all $x\in {^{\omega}2}$ such that $q$ forces $r_x$ into the quotient $\K_{\kappa}/\dot{H}_\theta$ for every  $x\in {^{\omega}2}$, thus ensuring that every $(q,r_x)$ is a condition in $\K_\theta*(\K_{\kappa}/\dot{H}_\theta)$. If this is the case, then $(p^*,q)$ will force over $V[G_\theta]$ that level $\gamma$ of $\dot{S}$ has size $2^\omega$, as argued below. 

We now construct the required conditions. Begin by letting $T^*=\bigcup_{n<\omega} T_{q_n}$ and $a=\bigcup_{n<\omega}\dom(f_{q_n})$. Let us assume that  $T^*$ is a normal tree with a limit height. If the height of $T^*$ is a successor ordinal, then the construction of the appropriate lower bounds is analogous but simpler and we leave it as an exercise for the reader. Let $\eta<\omega_1$ denote the height of $T^*$. By condition (a), $T^*=\bigcup_{n<\omega} T_{r_{x\rest n}}$ and $a=\bigcup_{n<\omega}\dom(f_{r_{x\rest n}})\cap\theta$ for all $x\in {^{\omega}2}$.

Note that if we want to ensure that a lower bound of $\seq{q_n}{ n<\omega }$ forces a lower bound of $\seq{r_{x\rest n}}{n<\omega}$, for some $x\in{^\omega 2}$, into the quotient, we are obliged not only to extend all cofinal branches through $T^*$ which are given by functions $\set{f_{q_n}}{n < \omega}$ (recall the proof that $\K_\kappa$ is $\omega_1$-closed), but also to extend all cofinal branches given by $\set{f_{r_{x\rest n} }}{n < \omega}$; otherwise it can happen that the lower bounds will be incompatible. Therefore if we want to ensure that a lower bound of $\seq{q_n}{ n<\omega }$ forces lower bounds of $\seq{r_{x\rest n}}{n<\omega}$ for $x\in {^\omega 2}$ into the quotient, we are obliged to extend uncountably many cofinal branches of $T^*$. This can be done because the trees in the conditions can be wide.

To build suitable lower bounds, we will proceed similarly as in the proof of Lemma \ref{L:K-closed}.
First apply the construction in Lemma \ref{L:K-closed} to find $q'$ which is a lower bound of $\seq{{q_n}}{n < \omega}$ such that $T_{q'}$ has height $\eta+1$ and $\dom(f_{q'})=a$.

For each $x\in {^\omega2}$, let $a_x=\bigcup_{n<\omega} \dom(f_{r_x\rest n})\setminus\theta$. In order to ensure that $q$ is compatible with a lower bound of  $\seq{r_{x\rest n} }{n < \omega}$, we are obliged to extend cofinal branches given by $\set{r_{x\rest n} }{n < \omega}$. For $\alpha \in a_x$, let 

\begin{equation}
      d^x_\alpha := \bigcup \set{f_{r_{x\rest n} }(\alpha)}{ n < \omega \wedge \alpha \in \dom(f_{r_{x\rest n} })}
\end{equation}
and let $D_x=\set{d^x_\alpha}{\alpha\in a_x}$. Moreover, let us define $f_x$ to be a function with domain $a_x$ such that for each $\alpha\in a_x$, $f_x(\alpha)=d^x_\alpha$.

At the end of the construction, set $T_q = T_{q'} \cup \bigcup_{x\in {}^\omega 2} D_{x}$ and $f_q=f_{q'}$. Set $T_{r_{x}}=T_q$ and $f_{r_{x}}=f_{q'}\cup f_{x}$ for every $x\in {}^\omega 2$.
   
\begin{claim} \label{cl:last} The following hold:
\begin{enumerate}[(i)]
\item $q$ is a condition in $\K_\theta$;
\item $r_{x}$ is a condition in $\K_\kappa$ for all $x\in {}^\omega 2$;
\item $q$ forces $r_{x}$ into the quotient $\K_\kappa/\dot{H}_\theta$, for all $x\in {}^\omega2$.
\end{enumerate}
\end{claim}

\begin{proof}
Note that $T_q$ is a normal tree since $T_{q'}$ is normal and $T_q = T_{q'} \cup \bigcup_{x \in {^{\omega}}2} D_x$, hence $q$ is a condition in $\K_\theta$ and also $r_x$ are conditions in $\K_\kappa$ for all $x\in{}^\omega 2$.

By the definition of $r_{x}$, $r_{x}\rest\theta= q$ and hence $q$ forces $r_{x}$ into the quotient $\K_\kappa/\dot{H}_\theta$ for every $x\in{}^\omega 2$.
\end{proof}

It is now straightforward to check that $(p^*,q)$ forces over $V[G_\theta]$ that level $\gamma$ of $\dot{S}$ has size $2^\omega$:  Let $G_P\times G_K$ be a $\P_\theta\times\K_\theta$-generic over $V[G_\theta]$ which contains $(p^*,q)$. Since $q$ forces $r_x$ into the quotient $\K/\dot{H}_\theta$ for all $x\in{}^{\omega}2$, $r_x$ is in $\K/G_K$. For all $x\in{}^{\omega}2$, we take $r'_x\le r_x$ which decides $\dot{b}(\gamma)$, i.e., $r'_x\Vdash \dot{b}(\gamma)=\tau_x$ for some $\tau_x\in \dot{S}_\gamma$. Now it is enough to show that for all $x\neq y\in{}^{\omega}2$, $\tau_x\neq \tau_y$. However, this follows from the construction of our labeled system. Let $x\neq y\in{}^{\omega}2$ be given, let $s=x\cap y$, and without loss of generality assume that $x$ extends $s^\smallfrown0$ and $y$ extends $s^ \smallfrown 1$. Now, since $p^*$ is in $G_P$ and $A_s$ is a maximal antichain below $p^*$ there is $p\in A_s$ which is in $G_P$. Since $(p,q_{n+1}, r_{s^\smallfrown0})$ and $(p,q_{n+1}, r_{s^\smallfrown 1})$ decides $\dot{b}$ up to $\gamma_{n+1}<\gamma$ differently, there is $\delta<\gamma$ and $\tau_0\neq \tau_1$ in $\dot{S}_\delta$ such that $(p,q_{n+1}, r_{s^\smallfrown 0})\Vdash \dot{b}(\delta)=\tau_0$ and $(p,q_{n+1}, r_{s^\smallfrown 1})\Vdash\dot{b}(\delta)=\tau_1$. Since $q\le q_{n+1}$ and $q$ is in $G_K$, $(p,q_{n+1})$ is in $G_P\times G_K$ and therefore  $ r_{s^\smallfrown 0}\Vdash \dot{b}(\delta)=\tau_0$ and $r_{s^\smallfrown 1}\Vdash\dot{b}(\delta)=\tau_1$ over $V[G_\theta][G_P][G_K]$. Since $r'_x\le r_x\le r_{s^\smallfrown 0}$ and $r'_x\Vdash \dot{b}(\gamma)=\tau_x$, it holds $\tau_0<_S \tau_x$. Similarly we can show that $\tau_1<_S\tau_x$. Since $\tau_0\neq\tau_1$ it follows that $\tau_x\neq \tau_y$. Therefore $(p^*,q)$ forces over $V[G_\theta]$ that level $\gamma$ of $\dot{S}$ has size $2^\omega$, hence $(p^*,q)$ forces that $\dot{S}$ is not an $\omega_1$-tree, which is a contradiction. This concludes the proof of Theorem \ref{Th:NoKurepa}.
\end{proof}

The following theorem will now establish Theorem B.

\begin{theorem}\label{Th:negKH+wKH+A-subtree}
  Suppose that there is a supercompact cardinal $\kappa$. Then there is a forcing extension 
  in which 
  \begin{enumerate}
    \item $2^{\omega}=\omega_2=\kappa$;
    \item $\GMP^{\omega_2}(\omega_2)$ holds;
    \item there are no Kurepa trees, but there is a weak Kurepa tree.
  \end{enumerate}
\end{theorem}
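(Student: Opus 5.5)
We will force over a model with $\GCH$ (obtained by a preliminary forcing preserving supercompactness of $\kappa$) with the product $\M(\omega,\kappa)\times\K_\kappa$, where $\M(\omega,\kappa)$ is the classical Mitchell forcing of Subsection \ref{mitchell_section}. Item (3), the absence of Kurepa trees, is exactly Theorem \ref{Th:NoKurepa}.

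For the weak Kurepa tree, note that by Lemma \ref{L:K-Knaster} the forcing $\K_\kappa$ is $\kappa$-Knaster, and it is $\omega_1$-closed by Lemma \ref{L:K-closed}. Hence $\M(\omega,\kappa)$ remains $\kappa$-cc after $\K_\kappa$ by Fact \ref{F:ccc-Knaster}. Moreover, by Easton's lemma (Fact \ref{L:Easton}) applied to the projection $\Add(\omega,\kappa)\times\Q\times\K_\kappa$, the forcing $\K_\kappa$ adds no new $\omega$-sequences over $V[\M]$. We then need the following facts:
\begin{enumerate}
\item $\omega_1$ is preserved;
\item every cardinal in $(\omega_1,\kappa)$ is collapsed, since $\M$ already does this;
\item $\kappa$ becomes $\omega_2=2^\omega$.
\end{enumerate}
The generic tree $T_H$ has height and size $\omega_1$. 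It carries $\kappa$ distinct cofinal branches by density, since the functions $f_q$ code them. Thus it is a weak Kurepa tree, which gives (1) and (3).

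For (2) we follow the lifting argument of Theorem \ref{theorem: gmp_thm}, now for $\omega_2$-guessing models. Fix $\theta$, a club $C=C_f$ in the extension, and $j:V\to M$ witnessing $|H(\theta)|$-supercompactness. Then $j(\M\times\K_\kappa)=j(\M)\times\K_{j(\kappa)}$. We have $j(\M)_\kappa=\M$, and $\K_\kappa$ is the image of $\K_{j(\kappa)}$ under the projection of Lemma \ref{L:wKT-projection} (with $j``\K_\kappa$ pointwise equal to $\K_\kappa$ on the relevant coordinates, since conditions are countable and $\mathrm{crit}(j)=\kappa$). So we force with the quotient $(j(\M)/G)\times(\K_{j(\kappa)}/H)$ and lift $j$. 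As before, $N=j``H(\theta)^{V[G\times H]}$ lies in $j(C)$. To show that $N$ is an $\omega_2$-guessing model, take $d$ approximated by $N$ on sets of size ${<}\omega_2$ in $N$, pull it back to $e'$, and observe that $e'\cap x'\in V[G\times H]$ for all $x'$ of size $\omega_1$ there. It then suffices that the quotient has the $\omega_2$-approximation property over $M[G\times H]$.

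The main obstacle is proving this $\omega_2$-approximation property for the quotient. The plan is to analyze $j(\M)/G$ as in Remark \ref{mitchell_remark}(\ref{quotient_product_prop}), so that it is projected onto by $\Add(\omega,\cdot)\times\Q^*$ with $\Q^*$ $\omega_1$-closed, and to treat $\K_{j(\kappa)}/H$ via Lemma \ref{L:quotient-knaster} (Knaster) and $\omega_1$-distributivity. The cleanest route is to quote the standard fact that a product of a forcing that is $\omega_1$-cc with a forcing that is $\omega_1$-closed (or lies below such a product) has the $\omega_2$-approximation property. The $\K$-quotient is only $\omega_1$-distributive and not closed, so here we would instead mimic Claims \ref{C:diff}--\ref{cl:last}. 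That means building a labeled system of conditions guided by a decreasing sequence in $\K_\kappa$ and deciding pieces of a putative new approximated set, which yields $2^\omega$ (hence $\omega_2$) distinct restrictions to a set of size $\omega_1$ in the ground model. Since $2^\omega=\omega_2$ in $M[G\times H]$, we need the split pieces to be elements of a single set of size $\omega_1$ that are all forced to lie in the ground model, contradicting that there are only $\omega_1$ candidates. Making the split land in a set of size $\omega_1$ rather than $\omega$ is where we expect to need care, using $\omega_1$-length constructions as in Claim \ref{C:antichains}. Alternatively, one could cite the corresponding preservation result from \cite{kurepa_paper} (in the spirit of Fact \ref{closed_preservation_fact} with $\mu=\omega_1$), which handles exactly $\GMP^{\omega_2}(\omega_2)$ after Mitchell-type forcing times a closed-enough forcing. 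Elementarity then yields a guessing model in $C$.
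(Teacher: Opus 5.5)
Your overall plan matches the paper's. You use a $\GCH$ ground model and the product $\M(\omega,\kappa)\times\K_\kappa$, get item (3) from Theorem~\ref{Th:NoKurepa}, get item (1) and the weak Kurepa tree by Knaster/Easton arguments, and get item (2) by lifting $j$ through $(j(\M)/G)\times(\K_{j(\kappa)}/H)$. Working with $\omega_2$-guessing models directly instead of $\ISP^{\omega_2}(\omega_2)$ is harmless.

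The gap is in the one step that carries the weight of (2): showing that the pulled-back set $e'$ already lies in $M[G\times H]$. The ``standard fact'' you want to quote is false: a ccc forcing times an $\omega_1$-closed forcing need not have the $\omega_2$-approximation property. Take the ccc factor trivial and the closed factor $\Coll(\omega_1,\omega_2)$. In the extension, a cofinal subset of $\omega_2^V$ of order type $\omega_1$ is not in $V$. Yet its intersection with any ground-model set of size $\omega_1$ (which is bounded in $\omega_2^V$) is countable, hence in $V$. The closed part $\Q^*_\kappa$ of the Mitchell quotient collapses every cardinal in $(\kappa,j(\kappa))$, so no general approximation property is available there. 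Your fallback splitting argument also cannot work as described. In $M[G\times H]$ we have $2^\omega=2^{\omega_1}=\kappa$, and adding $h$ does not change $2^\omega$. So for $|x'|=\omega_1$ the possible values of $e'\cap x'$ (all lying in $M[G\times H]$) number $\kappa=2^\omega$, not $\omega_1$. Producing $2^\omega$ distinct values therefore contradicts nothing, however carefully the $\omega_1$-length constructions are arranged.

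The missing idea is to push $2^\omega$ above the width before exploiting closure. The paper writes $Q_\M$ as $\Add(\omega,\kappa^\dagger)*\dot{\M}^\kappa$ (Remark~\ref{mitchell_remark}(\ref{quotient_product_prop})). The Cohen part is Knaster and cannot add the branch. Afterwards $2^\omega=\kappa^\dagger$ exceeds the width of the list, which is at most $\omega_3^{V[G][H]}$; in your formulation, this bounds the number of possible values of $e'\cap x'$. Since $\M^\kappa$ is a projection of $\Add(\omega,j(\kappa))\times\Q^*_\kappa$ with $\Q^*_\kappa$ $\omega_1$-closed, Fact~\ref{closed_preservation_fact} with $\mu=\omega$ shows that $\Q^*_\kappa$ adds no such branch. (Your $\mu=\omega_1$ would require $\omega_2$-closure.)

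You have also placed the difficulty on the wrong factor: $Q_\K=\K_{j(\kappa)}/H$ needs no labeled system. By Lemma~\ref{L:quotient-knaster} and Fact~\ref{F:ccc-Knaster} it is $\kappa$-Knaster over $M[G][H]$, and a $\kappa$-Knaster forcing has the $\kappa$-approximation property, with $\kappa=\omega_2$. The reason: its square is $\kappa$-cc, and two mutually generic interpretations of a new set must differ. A maximal antichain in the square deciding a point of disagreement yields a ground-model set $z$ of size ${<}\kappa$. A condition $p$ deciding the trace on $z$ then makes $(p,p)$ force agreement on $z$, a contradiction. Labeled systems were needed in Theorem~\ref{Th:NoKurepa} only because there the trees have height $\omega_1$, where $\kappa$-Knasterness gives nothing.
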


\begin{proof}
For simplicity, assume that $\GCH$ holds in $V$.
The generic extension is obtained by forcing with a product of the Mitchell forcing (as defined in Subsection \ref{mitchell_section}) and the forcing for adding a weak Kurepa tree with $\kappa$-many branches: $\M(\omega,\kappa)\times\K_\kappa$, which we denote by $\M\times\K$ for simplicity. Recall from Subsection \ref{mitchell_section} that the set $A$ we are using to define $\M(\omega,\kappa)$ is the set of inaccessible cardinals below $\kappa$. Let $G\times H$ be $\M\times \K$-generic over $V$. The Mitchell forcing preserves $\omega_1$ and all cardinal greater than or equal to $\kappa$ and forces $2^\omega=\kappa=\omega_2$. Since $\K$ is $\kappa$-Knaster in $V$, $\K$ is still $\kappa$-Knaster in $V[G]$ by Fact \ref{F:ccc-Knaster}, and therefore it preserves all cardinals greater or equal to $\kappa$. It also preserves $\omega_1$ over $V[G]$, since it is $\omega_1$-closed in $V$ and therefore it is $\omega_1$-distributive in $V[G]$ by the standard product analysis of the Mitchell forcing $\M$. This finishes the proof of (1).

There is a weak Kurepa tree $T$ in $V[H]$ with $\kappa$-many cofinal branches by the definition of $\K$. Since $\omega_1$ is preserved by $\K\times\M$, $T$ is still an $\omega_1$-tree with $\kappa=\omega_2$-many cofinal branches in $V[H][G]=V[G][H]$.

We now establish $\GMP^{\omega_2}(\omega_2)$. Note that by Fact \ref{isp_guessing_fact}, $\GMP^{\omega_2}(\omega_2)$ is equivalent to $\ISP^{\omega_2}(\omega_2)$. To show that $\ISP^{\omega_2}(\omega_2)$ holds in $V[G][H]$ we use the lifting argument and analysis of the quotient as in \cite{C:trees}. Fix a 
cardinal $\lambda > \kappa$; by increasing it if necessary, assume that $\lambda^{<\lambda} = \lambda$. We will establish $\ISP^{\omega_2}(\omega_2,\lambda)$ 
in $V[G][H]$. To begin, let
\begin{equation}\label{eq:j} j: V\to M 
\end{equation} be a supercompact elementary embedding with critical point $\kappa$ given by a normal ultrafilter $U$ on $\power_\kappa(\lambda)$; i.e. $M\cong \Ult(V,U)$. 
We lift the elementary embedding $j$ in two steps. Note that $j(\M(\omega,\kappa))=\M(\omega,j(\kappa))$ and recall that there is a projection from $\M(\omega,j(\kappa))$ to $\M(\omega,\kappa)$. Let us denote the quotient $\M(\omega,j(\kappa))/G$ by $Q_\M$ and let $g$ be $Q_\M$-generic over $V[G][H]$. Then we can lift in $V[G][g]$ the embedding to $j:V[G]\to M[G][g]$. Now, $j(\K_\kappa)=\K_{j(\kappa)}$ and there is a projection from $\K_{j(\kappa)}$ to $\K_\kappa$ by Lemma \ref{L:wKT-projection}.  Let us denote the quotient $\K _{j(\kappa)}/H$ by $Q_\K$ and let $h$ be $Q_\K$-generic over $V[G][H][g]$. We can lift the embedding in $V[G][H][g][h]$ further to 
\begin{equation}\label{eq:lift} j:V[G][H]\to
 M[G][H][g][h].\end{equation} 

Let $D=\seq{d_x}{x\in\power_\kappa(\lambda)^{V[G][H]}}$ be a $\kappa$-slender list in $V[G][H]$. We want to show that there is an ineffable branch $b$ through $D$ in $V[G][H]$. Let us consider the image of $D$ under $j$: $$j(\seq{d_x}{x\in\power_\kappa(\lambda)^{V[G][H]}})=\seq{d'_y}{y\in\power_{j(\kappa)}(j(\lambda))^{M[G][H][g][h]}}.$$ The set $j``\lambda$ is a subset of $j(\lambda)$ of size $<\! j(\kappa)$. We define our ineffable branch  $b:\lambda\to 2$ in $V[G][H][g][h]$  as follows:\footnote{We identify subsets of $\lambda$ and $j''\lambda$ with their characteristic functions to make this formally correct.} \begin{equation} \mbox{for } \alpha<\lambda,~ b(\alpha)=d'_{j''\lambda}(j(\alpha)).\end{equation}

Before we prove the following claim, let us state some simple properties of the lifted embedding $j$. Since we assume $\lambda^{<\lambda}=\lambda$, $\vert H(\lambda)\vert =\lambda$, and this still holds in $V[G][H]$, i.e.\ $\vert H(\lambda)^{V[G][H]}\vert =\lambda$. In (\ref{eq:lift}) we lifted an embedding generated by a normal ultrafilter $U$ on $\power_\kappa(\lambda)$ and therefore $j``\lambda$ is an element of $j(C)$ for every club $C$ in $\power_\kappa(\lambda)^{V[G][H]}$; since there is a bijection between $\lambda$ and $H(\lambda)^{V[G][H]}$, there is a correspondence between $\power_\kappa\lambda^{V[G][H]}$ and $\power_\kappa H(\lambda)^{V[G][H]}$, and in particular it holds for every club $C \in V[G][H]$ in $\power_\kappa H(\lambda)^{V[G][H]}$ that \begin{equation}\label{eq:H} j``H(\lambda)^{V[G][H]}\in j(C).\end{equation}

\begin{claim}\label{claim:rest}
For each $x\in\power_\kappa\lambda^{V[G][H]}$, $b\rest x\in M[G][H]$.
\end{claim}

\begin{proof}
By slenderness of $D$, we can fix a club $C$ in $\power_\kappa H(\lambda)^{V[G][H]}$ such that for every $N \in C$, $N \prec H(\lambda)^{V[G][H]}$, and for every $x \in N$ of size $\omega_1$, we have $d_{N \cap \lambda} \cap x \in N$.

Let $x\in\power_\kappa\lambda^{V[G][H]}$ be arbitrary. By the $\kappa$-cc of the whole forcing, it holds that $$x\in H(\lambda)^{V[G][H]}=H(\lambda)^{M[G][H]}$$ and therefore $j``x=j(x)\in j``H(\lambda)^{M[G][H]}$. Let us denote $H(\lambda)^{M[G][H]}$ by $N$. Notice that $j``N$ is an elementary submodel of $j(H(\lambda)^{V[G][H]})$ by a general model-theoretic argument, or by invoking (\ref{eq:H}). Since $j`` H(\lambda)^{M[G][H]}\in j(C)$, we have $d'_{(j``N)\cap j(\lambda)}\rest j``x=d'_{j``\lambda}\rest j``x\in j``N$. It follows that there is a function $f$ in $N \sub M[G][H]$ with domain $x$ such that for every $\alpha \in x$, $$f(\alpha) = d'_{j``\lambda}(j(\alpha)).$$ By the definition of $b$, $f = b \rest x$, and the proof is finished.
\end{proof}

\begin{claim}
$b$ is in $M[G][H]$.
\end{claim}

\begin{proof}
We show that $b$ -- which is approximated in $M[G][H]$ on sets in $\power_\kappa\lambda^{V[G][H]} = \power_\kappa\lambda^{M[G][H]}$ by Claim \ref{claim:rest} -- cannot be added by $Q_\K\times Q_\M$ over $M[G][H]$ and therefore it is already in $M[G][H]$. 

We first argue that $Q_\K$ is $\omega_1$-distributive and $\omega_2$-Knaster over $M[G][H]$. In $M$, $j(\K)\simeq \K*Q_\K$ is $\omega_1$-closed and therefore by the product analysis of $\M$, it is $\omega_1$-distributive in $M[G]$ and therefore $Q_\K$ is $\omega_1$-distributive in $M[G][H]$. Regarding the $\omega_2$-Knaster property, first note that $Q_\K$ is $\kappa$-Knaster in $M[H]$ by Lemma \ref{L:quotient-knaster}. Since $\M$ is $\kappa$-Knaster in $M$ it is still $\kappa$-Knaster in $M[H]$ by Fact \ref{F:ccc-Knaster} and therefore $Q_\K$ is $\kappa$-Knaster in $M[H][G]=M[G][H]$ by Fact \ref{F:ccc-Knaster}. Since $Q_\K$ is $\kappa$-Knaster in $M[G][H]$ it has the $\kappa=\omega_2$-approximation property and cannot add $b$ over $M[G][H]$.

Now, we will show that over $M[G][H][h]$, $Q_\M$ also cannot add $b$. In $M[G][H][h]$, $\kappa=\omega_2=2^\omega$ and $Q_\M$ is forcing equivalent to 
$\Add(\omega,\kappa^\dagger)*\M^\kappa$, where $\kappa^\dagger$ is the first inaccessible above $\kappa$. The branch $b$ cannot be added by $\Add(\omega,\kappa^\dagger)$ since this forcing is $\omega_1$-Knaster. 

In $M[G][H][h][\Add(\omega,\kappa^\dagger)]$, $2^\omega = \kappa^\dagger > (\omega_2)^{V[G][H]}$ and $D$ is a list with width 
at most $((2^{\omega_1})^+)^{V[G][H]}) = (\omega_3)^{V[G][H]}$. By a standard product analysis (see Section \ref{mitchell_section}), $\M^\kappa$ is a projection of a product of the form $\Add(\omega,j(\kappa))\times \Q^*_\kappa$, where $\Q^*_\kappa$ is $\omega_1$-closed. Again the branch $b$ cannot be added by $\Add(\omega,j(\kappa))$ since this forcing is $\omega_1$-Knaster. Since $D$ has width $(\omega_3)^{V[G][H]}<2^\omega$ in $M[G][H][h][\Add(\omega,\kappa^\dagger)]$, we can apply Fact \ref{closed_preservation_fact} to $\Add(\omega,j(\kappa))$ as $P$ and $\Q^*_\kappa$ as $Q$ over the model $M[G][H][h][\Add(\omega,\kappa^\dagger)]$. Therefore, the forcing $\Q^*_\kappa$ cannot add $b$ over the model 
\[
  M[G][H][h][\Add(\omega,\kappa^\dagger)][\Add(\omega,j(\kappa))],
\] 
hence the cofinal branch $b$ cannot be added by $\M^\kappa$ over $M[G][H][h][\Add(\omega,\kappa^\dagger)]$.
\end{proof}

\begin{claim}
The function $b$ is an ineffable branch through $D$.
\end{claim}

\begin{proof}
We need to show that the set $S=\set{x\in\power_\kappa(\lambda)}{b\rest x= d_x}$ is stationary, hence it is enough to show that $j``\lambda$ is in $j(S)=\set{y\in\power_{j(\kappa)}(j(\lambda))}{j(b)\rest y= d'_y}$. However this follows from the definition of $b$: $j(b)\rest j``\lambda=j``b=d'_{j``\lambda}$.
\end{proof}

This finishes the proof of (2). By Theorem \ref{Th:NoKurepa}, there are no Kurepa trees in the generic extension $V[G][H]$ and hence we have (3), thus finishing the proof of Theorem \ref{Th:negKH+wKH+A-subtree}.
\end{proof}

\begin{remark}
  If one starts with large cardinals weaker than supercompact, one can obtain 
  variations on Theorem \ref{Th:negKH+wKH+A-subtree} with item (2) weakened. 
  For example, a straightforward adaptation of the proof of Theorem 
  \ref{Th:negKH+wKH+A-subtree} shows that, if $\kappa$ is a weakly compact cardinal 
  and $G \times H$ is $\M(\omega,\kappa) \times \K_\kappa$-generic over $V$, 
  then $V[G][H]$ satisfies $\TP(\omega_2) + \wKH + \neg \KH$.
\end{remark}

\section{Failure of Kurepa hypothesis and weak Kurepa hypothesis}
\label{sect: kurepa_sect}

In this section, we survey what is known about the (in)destructibility of $\neg \KH$ and $\neg \wKH$ and provide a direct proof that $\neg \wKH$ is always preserved by $\sigma$-centered forcing. To begin, observe that, since every Kurepa tree is a weak Kurepa tree, $\neg\wKH$ implies $\neg\KH$. In contrast to $\neg\wKH$, which implies the failure of $\CH$, $\neg\KH$ is consistent with $\CH$. The consistency of both principles follows from the consistency of an inaccessible cardinal. The classical model of $\neg\KH$ is the extension by the Levy collapse $\Coll(\omega_1, <\kappa)$, where $\kappa$ is inaccessible in the ground model; the classical model of $\neg\wKH$ is the extension by $\M(\omega, \kappa)$, where $\kappa$ is inaccessible in the ground model.

Assume now that $\kappa$ is an inaccessible cardinal. An argument by Todorcevic \cite{T:wKH} shows that in the extension by $\M(\omega, \kappa)$, $\neg\wKH$ is indestructible under all ccc forcings of cardinality $\omega_1$. An analogous result holds for $\neg\KH$ in the extension by the Levy collapse $\Coll(\omega_1, <\kappa)$.

However, by a result of Jensen, $\square_{\omega_1}$ implies that there is a ccc forcing which adds a Kurepa tree (cf.\ \cite{JS:KH}). Therefore, if $\kappa$ is an inaccessible cardinal which is not Mahlo, then there is a ccc forcing which adds a Kurepa tree in the generic extensions by both $\Coll(\omega_1, <\kappa)$ and $\M(\omega, \kappa)$. By the previous paragraph, this ccc forcing must have size greater than $\omega_1$.

With respect to $\neg\KH$, the assumption of $\square_{\omega_1}$ is necessary by a result of Jensen and Schlechta \cite{JS:KH}. They show that if $\kappa$ is Mahlo, then $\neg\KH$ is indestructible under all ccc forcings in the extension by $\Coll(\omega_1, <\kappa)$. The analogous result remains open for Mitchell forcing $\M(\omega, \kappa)$ and $\neg\wKH$, where $\kappa$ is a Mahlo cardinal in the ground model.

Returning to ccc forcing of size $\omega_1$: in \cite{ks}, Krueger and the second author show that, assuming the consistency of only an inaccessible cardinal, it is consistent with $\CH$ that the Kurepa Hypothesis fails and yet there exists an almost Kurepa Suslin tree. In particular, there is a ccc forcing of size $\omega_1$ which adds a Kurepa tree. As Corollary \ref{cor: cor_11} shows, assuming the consistency of an inaccessible cardinal, it is consistent that $\neg\wKH$ holds while an almost Kurepa Suslin tree exists.

In particular, both $\neg\KH$ and $\neg\wKH$ are indestructible under all ccc forcings of size $\omega_1$ in their classical models, but for both, there are models in which they are destructible by ccc forcing of size $\omega_1$.

So far, we have considered only ccc forcings. What do we know about forcings satisfying a stronger property? In \cite{HLHS}, Honzik and authors showed that $\GMP$ implies that $\neg \wKH$ is preserved by any $\sigma$-centered forcing. That is, if $V$ is a transitive model satisfying $\GMP$, $\mathbb{P} \in V$ is $\sigma$-centered, and $G$ is $\mathbb{P}$-generic over $V$, then $V[G]$ satisfies $\neg\wKH$. In particular, $\neg \wKH$ is preserved over models of $\GMP$ by adding any number of Cohen reals. Since the argument requires only a guessing model property for small sets, which is equivalent to $\neg \wKH$, it follows that $\neg \wKH$ is always preserved by $\sigma$-centered forcings. 

Below, we provide a direct proof that $\neg \wKH$ is preserved by any $\sigma$-centered forcing without referencing the guessing model property. First, let us recall the definition of $\sigma$-centered forcing.

\begin{definition}\label{def:centered} 
Let $\P$ be a forcing. We say that $\P$ is \emph{$\sigma$-centered} if $\P$ can be written as the union of a family $\set{\P_n\sub \P}{n<\omega}$ such that for every $n<\omega$: \begin{equation}\label{eq:c1} \mbox{for every $p,q \in \P_n$ there exists $r \in \P_n$ with $r \le p,q$}.\end{equation}
\end{definition}

It follows that $\P$ can be written as a union of $\omega$-many filters if we close each $\P_n$ upwards.

Some definitions of $\sigma$-centeredness require just the compatibility of the conditions, with a witness not necessarily in $\P_n$. The condition (\ref{eq:c1}) in this case reads:
\begin{multline}
\label{eq:c2} \mbox{for every $k<\omega$ and every sequence $p_0, p_1, \ldots, p_{k-1}$}\\\mbox{of conditions in $\P_n$ there exists $r \in \P$ with $r \le p_i$ for every $0 \le i < k$}.
\end{multline}

The conditions (\ref{eq:c1}) and (\ref{eq:c2}) are not in general equivalent (see Kunen \cite{Kunen:new}, before Exercise III.3.27), but the distinction is not so important for us because the common forcings such as the Cohen forcing and the Prikry forcings are all centered in the stronger sense of (\ref{eq:c1}). Also note that the conditions are equivalent for Boolean algebras: the definition (\ref{eq:c2}) means that each $\P_n$ is a system with FIP (finite intersection property), and as such can be extended into a filter.

\begin{theorem} \label{thm: sigma_centered}
The failure of the weak Kurepa Hypothesis is preserved by any $\sigma$-centered forcing. That is, if $V$ is a transitive model satisfying $\neg\wKH$, $\P \in V$ is $\sigma$-centered, and $G$ is $\P$-generic over $V$, then $V[G]$ satisfies $\neg\wKH$.
\end{theorem}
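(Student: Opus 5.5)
Suppose toward a contradiction that $p\in\P$ forces that $\dot S$ is a tree of height and size $\omega_1$ with $\omega_2$-many cofinal branches. Since $\neg\wKH$ implies $2^{\omega}\geq\omega_2$, we will need to preserve $\omega_1$. A $\sigma$-centered forcing is ccc, so $\omega_1$ is preserved, and every subset of $\omega_1$ in $V[G]$, including $\dot S$ and each branch, has a nice name of size $\omega_1$. By the ccc we may identify the underlying set of $\dot S$ with $\omega_1$ in $V$, with level $\delta$ forced to be $\{\delta\}\times\omega$, so only the order is named. Fix names $\langle \dot b_\xi \mid \xi<\omega_2\rangle$ forced by $p$ to be pairwise distinct cofinal branches. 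Each $\dot b_\xi$ can be viewed as a name for a function $\omega_1\to\omega$, with $\dot b_\xi(\delta)$ the node chosen on level $\delta$.

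Fix a centering $\P=\bigcup_n\P_n$ with each $\P_n$ a filter, as noted after Definition \ref{def:centered}. The plan is to build, in $V$, a single tree $R$ of height and size $\omega_1$ whose cofinal branches encode the $\dot b_\xi$. Nodes of $R$ at level $\alpha$ will be pairs $(n,s)$ with $n<\omega$ and $s:\alpha\to\omega$ such that, for every $\delta<\alpha$, some condition in $\P_n$ forces $\dot b(\delta)=s(\delta)$ for some branch name. More simply, for each $\xi$ and $n$ define a \emph{partial} function $b^n_\xi(\delta)=k$ iff some $r\in\P_n$ forces $\dot b_\xi(\delta)=k$. Because $\P_n$ is a filter, this is single-valued: two conditions in $\P_n$ forcing different values would have a common extension, which is impossible.

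The key step is a pigeonhole argument in $V[G]$. For each $\xi$, pick $n_\xi$ such that, for cofinally many $\delta$, the value $\dot b_\xi(\delta)$ is decided by some condition in $G\cap\P_{n_\xi}$. Such an $n$ exists since $\omega_1$ is regular and each value is decided by some condition in $G$. Then $b^{n_\xi}_\xi$ is defined on an unbounded set of levels and agrees with the true branch there. Back in $V$, for each $n$, let $R_n$ be the tree of all initial segments of the partial functions $b^n_\xi$, where $\xi<\omega_2$ ranges over those for which $b^n_\xi$ has unbounded domain. To get size $\omega_1$, we use that in $V[G]$ these functions are restrictions of branches through $S$ (or rather, compatible with $S$'s order via conditions in $G$). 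Since $S$ has countable levels, the restrictions to each $\alpha$ come from only countably many nodes. The issue is that this must be verified in $V$, not $V[G]$. In $V$, the set $\{b^n_\xi\restriction\alpha \mid \xi\}$ could a priori be large. We handle this by working with the ordered version: two branch names $\xi,\eta$ whose $n$-decided values coincide at a level $\delta$ must coincide below $\delta$ on all decided levels, by the same filter argument, since $\P_n$ forces the tree order among the nodes consistently. Hence the set of $b^n_\xi\restriction\alpha$ with $\alpha$ in the domain is indexed by $\omega$ (the value at level $\alpha$), giving countable levels. Thus $R=\bigsqcup_n R_n$, suitably defined to use only levels in the domains, is a tree in $V$ of height and size $\omega_1$.

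Finally, distinct $\xi,\eta$ with $n_\xi=n_\eta$ give distinct partial functions with unbounded domains, hence distinct cofinal branches of $R$. Some $n$ is $n_\xi$ for $\omega_2$-many $\xi$ in $V[G]$, and $\omega_2$ is preserved. Since the map $\xi\mapsto b^n_\xi$ lies in $V$, we get $\omega_2$ distinct cofinal branches in $V$, so $R$ is a weak Kurepa tree in $V$, contradicting $\neg\wKH$. The main obstacle is the countability of levels of $R$ in $V$, i.e., making the tree order of $\dot S$ interact correctly with the single-valuedness coming from each $\P_n$. I would first try to require that the conditions in $\P_n$ also decide the relevant instances of $<_{\dot S}$, which the filter property makes consistent.
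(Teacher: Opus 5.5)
Your tree differs from the paper's mainly in packaging. The paper orders $\omega\times\omega_1$ by letting $(n,\alpha)<(n,\beta)$ iff some member of $\P_n$ forces $\alpha<_{\dot T}\beta$, and turns each branch name $\dot b$ into $\{(n,\alpha)\mid \text{some member of }\P_n\text{ forces }\alpha\in\dot b\}$, which is essentially the graph of your $b^n_\xi$. You instead take the tree of initial segments of these partial functions.

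The genuine gap is the step that transfers the $\omega_2$ branches back to $V$. You assert, without argument, that $\xi\neq\eta$ with $n_\xi=n_\eta=n$ gives $b^n_\xi\neq b^n_\eta$, and the $V[G]$ picture you set up does not yield this. Nothing prevents the unbounded set of levels on which $G\cap\P_n$ decides $\dot b_\xi$ from being disjoint from the one on which it decides $\dot b_\eta$. Outside that set, the values of $b^n_\xi$ are witnessed by members of $\P_n$ that need not lie in $G$, so they need not be values of the actual branch $b_\xi$. Hence $b^n_\xi=b^n_\eta$ does not let you conclude that $b_\xi$ and $b_\eta$ share nodes cofinally.

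The missing idea is the ccc argument the paper uses. Suppose $b^n_\xi=b^n_\eta$ has uncountable domain $D$. For each $\delta\in D$, choose a common extension $r_\delta\in\P_n$ of the witnesses, so $r_\delta\Vdash\dot b_\xi(\delta)=\dot b_\eta(\delta)$. By the ccc, some condition forces $\{\delta\in D\mid r_\delta\in\dot G\}$ to be uncountable. That condition forces $\dot b_\xi=\dot b_\eta$, contradicting that the names are forced distinct. You also do not need $V[G]$ or the $n_\xi$ at all. In $V$, every value $\dot b_\xi(\delta)$ is decided by some condition, which lies in some $\P_n$, so some $\dom(b^n_\xi)$ is uncountable and you can pigeonhole in $V$.

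Second, your level count is wrong as stated, though repairable. A weak Kurepa tree need not have countable levels (e.g.\ ${}^{<\omega_1}2$ under $\CH$), so level $\delta$ should be named $\{\delta\}\times\omega_1$, padding with maximal nodes. Nor should you expect $R_n$ to have countable levels: for $\alpha\notin\dom(b^n_\xi)$, $b^n_\xi\restriction\alpha$ depends on where the domain resumes above $\alpha$. Indeed, an argument of this kind giving countable levels would show that $\sigma$-centered forcing preserves $\neg\KH$, which the paper lists as open.

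What you need is only $|R_n|\le\omega_1$, and your filter argument gives it once you notice it yields more than agreement on common domains. Suppose $b^n_\xi(\alpha)=b^n_\eta(\alpha)=k$ and $b^n_\xi(\delta_0)=j$ with $\delta_0<\alpha$. A common extension in $\P_n$ of the witnesses for $\dot b_\xi(\delta_0)=j$, $\dot b_\xi(\alpha)=k$ and $\dot b_\eta(\alpha)=k$ forces $\dot b_\eta(\delta_0)=j$, so the domains below $\alpha$ coincide as well. Hence $b^n_\xi\restriction\alpha$ is determined by the pair $(\alpha',b^n_\xi(\alpha'))$ with $\alpha'=\min(\dom(b^n_\xi)\setminus\alpha)$. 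Taking as nodes the pairs $(\alpha,b^n_\xi\restriction\alpha)$, each level of $R_n$ has size at most $\omega_1$, and height $\omega_1$ is automatic. This is the counterpart of the paper's maximality argument for its branches $d$.

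Finally, work below $p$, e.g.\ with the directed sets $\P_n\cap\{q\mid q\leq p\}$. That way every condition you use forces the relevant properties of $\dot S$ and the $\dot b_\xi$, and the ccc step above applies.
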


\begin{proof}
Fix a forcing notion $\P = \bigcup_{n<\omega} \P_n$, where each $\P_n$ is as in the definition of a $\sigma$-centered forcing; i.e., for every $n<\omega$ and every $p,q \in \P_n$, there exists $r \in \P_n$ such that $r \le p,q$. Let $G$ be $\P$-generic over $V$. Suppose that $\dot{T}$ be a $\P$-name for a weak Kurepa tree $T$, and assume for simplicity that $1_{\P}$ forces this. We also assume that the underlying set of $T$ is $\omega_1$.

We will define a weak Kurepa tree $S$ in the ground model, which will complete the proof of the theorem. The underlying set of $S$ will be $\omega \times \omega_1$; we define the ordering $<_S$ as follows: for $(n, \alpha), (m, \beta) \in S$, let $(n, \alpha) <_S (m, \beta)$ if and only if $n = m$ and there is some $p \in \P_n$ which forces $\alpha <_{\dot{T}} \beta$. Note that $<_S$ is a strict partial order: antisymmetry follows because any two conditions in $\P_n$ are compatible, and transitivity uses the fact that the compatibility of conditions in $\P_n$ is witnessed by a condition also in $\P_n$.

Now, we show that $S$ is indeed a weak Kurepa tree. Clearly, $S$ has size $\omega_1$. To see that $S$ has height $\omega_1$, first we prove that it cannot have height greater than $\omega_1$. If so, then there is $(n,\gamma)$ in $S_{\omega_1}$ for some $(n,\gamma)\in S$; this means that $\pred_S((n,\gamma))=\set{(n,\alpha)<_{S}(n,\gamma)}{(n,\alpha)\in S}$ has order type $\omega_1$. Let $I=\set{\alpha<\omega_1}{(n,\alpha)\in \pred_S((n,\gamma))}$. Then from the definition of $<_S$, it follows that for each $\alpha\in I$, there is $p_\alpha\in \P_n$ which forces that $\alpha<_{\dot{T}}\gamma$. Since $\P$ is ccc, there is $\alpha\in I$ such that $p_\alpha\Vdash\set{\beta\in I}{p_\beta\in \dot{G}}$ has size $\omega_1$, where $\dot{G}$ is a name for the generic filter over $\P$. Then we can see that $p_\alpha$ forces that $\gamma$ has $\omega_1$-many predecessors and hence has height at least $\omega_1$ in $T$, which is a contradiction since $\dot{T}$ is forced by $1_\P$ to be a tree of height $\omega_1$.

To see that $S$ does not have countable height, let $\dot{b}$ be a name for a cofinal branch in $T$. Then, since $\dot{b}$ is forced to be a cofinal branch, in particular uncountable, there is $n$ such that the set $I=\set{\alpha<\omega_1}{\exists p\in\P_n(p\Vdash \alpha\in\dot{b})}$ is uncountable. It is easy to see that for any $\alpha, \beta \in I$, $(n, \alpha)$ and $(n, \beta)$ are comparable in $S$; hence, $d=\set{(n, \alpha)}{\alpha \in I}$ is an uncountable chain through $S$. Therefore, $S$ cannot have countable height, which, combined with the previous paragraph, gives that $S$ has height exactly $\omega_1$. 

Moreover, we can show that $d$ as in the previous paragraph is a cofinal branch; i.e., we prove that $d$ is a maximal chain. Let $(n,\gamma)$ be a node of $S$ such that it is comparable with all $(n,\alpha)$ in $d$. Then since $S$ has height $\omega_1$, $(n,\gamma)$ cannot be above all nodes in $d$; therefore there is $(n,\alpha)\in d$ such that $(n,\gamma)<_S(n,\alpha)$. By the definition of $<_S$, there is $p\in \P_n$ which forces $\gamma<_{\dot{T}}\alpha$ and by the definition of $d$, there is $q\in \P_n$ which forces that $\alpha\in \dot{b}$. Since both $p$ and $q$ are in $\P_n$, there is $r\in \P_n$ below both $p$ and $q$, hence $r$ forces that $\gamma\in\dot{b}$. Since $r\in \P_n$, $(n,\gamma)\in d$. 

A similar argument shows that $S$ has at least $\omega_2$ many cofinal branches. Let $\langle \dot{b}_\alpha \mid \alpha < \omega_2 \rangle$ be a sequence of $\P$-names for cofinal branches through $\dot{T}$ such that, for all $\alpha < \beta < \omega_2$, 
$1_{\P} \Vdash \dot{b}_\alpha \neq \dot{b}_\beta$. For each $\alpha < \omega_2$, pick an uncountable subset $I_\alpha \subseteq \omega_1$ and $n_\alpha < \omega$ as in the previous paragraphs. Then $d_\alpha = \set{(n_\alpha, \gamma)}{\gamma \in I_\alpha}$ is a cofinal branch through $S$. Since there are $\omega_2$-many such branches, there exists $J \subseteq \omega_2$ of size $\omega_2$ and some $n < \omega$ such that $n_\alpha = n$ for all $\alpha \in J$.

To finish the proof, we show that the cofinal branches $\set{d_\alpha}{\alpha \in J}$ are pairwise distinct. Let $\alpha \neq \beta \in J$ and assume for a contradiction that $d_\alpha=d_\beta$. This in particular means that each $\gamma\in I_\alpha$ is in $I_\beta$; therefore, for each $\gamma\in I_\alpha$, there is $p_\gamma\in \P_n$ which forces that $\gamma\in \dot{b}_\beta$. Again, since $\P$ is ccc there is $\gamma\in I_\alpha$ such that $p_\gamma\Vdash\set{\delta\in I_\alpha}{p_\delta\in \dot{G}}$ has size $\omega_1$, where $\dot{G}$ is a name for a generic filter over $\P$. If $p_\gamma\in G$, then in $V[G]$, it holds that $b_\alpha=b_\beta$. This is a contradiction with the assumption that $1_\P\Vdash \dot{b}_\alpha\neq\dot{b}_\beta$.
\end{proof}

Let us note that the status of the corresponding question regarding $\neg \KH$ remains open, even for the forcing adding a single Cohen real. See the open questions in the next section.

\section{Open questions} \label{sect: questions}

We show in Theorem A that $\GMP$ is consistently destructible by a ccc forcing, and even by one of size 
$\omega_1$. The corresponding question for $\TP(\omega_2)$, asked already in 
\cite{UNGER:1} and \cite{hs_indestructibility}, remains open. We restate it here.

\begin{question} \label{question: tp}
  Is it consistent that $\TP(\omega_2)$ holds and is indestructible under ccc forcing? 
  In the opposite direction, is it consistent that $\TP(\omega_2)$ holds and there is a ccc forcing 
  that adds an $\omega_2$-Aronszajn tree?
\end{question}

A natural first place to look in an attempt to answer Question \ref{question: tp} is our 
model $V[\M]$ for Theorem A. Note that, in that model $\GMP$ fails in the extension after forcing with an almost Kurepa Suslin tree $T$, simply because $T$ becomes a Kurepa tree and $\GMP$ implies there are no Kurepa trees. This is not the case for $\TP(\omega_2)$ or $\GMP^{\omega_2}(\omega_2)$. By a result of Cummings \cite{C:trees}, it is consistent from a weakly compact cardinal that $\TP$ and $\KH$ hold together, and by a result of the authors \cite{kurepa_paper}, it is consistent from a supercompact cardinal that $\GMP^{\omega_2}(\omega_2)$ and $\KH$ hold together.

\begin{question}
Suppose that $T$ is a free Suslin tree and $\kappa$ is a supercompact cardinal. Let $V[\M]$ be our model for Theorem A from Section \ref{sect: thma_sect}. Does $\TP(\omega_2)$, or even $\GMP^{\omega_2}(\omega_2)$, hold in $V[\bb{M}][T]$?
\end{question}

As mentioned above, it was proven in \cite{HLHS} that $\GMP$ is always preserved by adding any 
number of Cohen reals, providing one direction in which our Theorem A is sharp. By Theorem \ref{thm: sigma_centered}, the same is true for 
$\neg \wKH$, a consequence of $\GMP$. However, the status of the corresponding question regarding 
$\neg \KH$ and $\TP(\omega_2)$, two other consequences of $\GMP$, remains open, even for the forcing 
to add a single Cohen real.

\begin{question}
  Is it consistent that $\neg \KH$ holds but is destructible under adding a single Cohen real? 
  Is it consistent that $\TP(\omega_2)$ holds but is destructible under adding a single 
  Cohen real?
\end{question}

\appendix
\section{} \label{appendix_a}

In this appendix, we fulfill a promise made in Remark \ref{remark: term_forcing} 
by providing an example of a two-step iteration of the form $\Add(\omega,1) \ast 
\dot{\Q}$ such that $\dot{\Q}$ is forced to be totally proper, and hence 
$\omega_1$-distributive, and yet forcing with the associated term forcing 
$\Q$ over $V$ collapses $\omega_1$. 

Let $T$ be a normal coherent Aronszajn tree consisting of finite-to-one functions into 
$\omega$. More precisely:
\begin{itemize}
  \item $T \subseteq {^{<\omega_1}}\omega$ is $\subseteq$-downward closed, and 
  $(T, \subseteq)$ is a normal Aronszajn tree;
  \item every element of $T$ is a finite-to-one function;
  \item for all $\alpha < \omega_1$ and all $s,t \in T_\alpha$, we have 
  $s =^* t$, i.e., the set $\{\eta < \alpha \mid s(\alpha) \neq t(\alpha)\}$ 
  is finite.
\end{itemize}
It is straightforward to construct such Aronszajn trees in $\ZFC$.
Given a real $r \in {^{\omega}}\omega$, let $T^r$ be the subtree of 
${^{<\omega_1}}\omega$ defined by setting $T^r = \{r \circ t \mid t \in T\}$; 
note that this definition continues to make sense for reals $r$ existing only 
in outer models of $V$. The following is well-known (see, e.g., the end of \cite{todorcevic_ctble_ordinals}).

\begin{theorem} \label{thm: cohen_suslin}
  Suppose that $r \in {^{\omega}}\omega$ is Cohen-generic over $V$. Then, in 
  $V[r]$, $T^r$ is a Suslin tree.
\end{theorem}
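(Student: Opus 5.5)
We prove that $T^r$ is Suslin in $V[r]$ by showing that every maximal antichain of $T^r$ lying in $V[r]$ is countable. Since $T^r$ is an $\omega_1$-tree, infinitely splitting when built from a suitable $T$, the statement is equivalent to the following: for every $\mathrm{Add}(\omega,1)$-name $\dot{A}$ for a maximal antichain of $T^r$ and every condition $p$, there are $p' \le p$ and $\delta<\omega_1$ such that $p' \Vdash \dot{A} \subseteq T^r_{<\delta}$.

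The plan is a countable elementary submodel argument. Fix a countable $N \prec H(\theta)$ containing $T$, $\dot{A}$ and $p$, and let $\delta = N\cap\omega_1$. We aim to show that $1 \Vdash$ every node of $T^r_\delta$ lies above some element of $\dot{A}\cap T^r_{<\delta}$. Maximality then gives $\dot A \subseteq T^r_{<\delta}$, because any element of $\dot{A}$ at height $\ge\delta$ would lie above a node of level $\delta$ and hence be comparable with an earlier element of $\dot{A}$.

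Fix $t\in T_\delta$ and a condition $q$ (a finite partial function $\omega \to \omega$). We look for $q'\le q$ and $\alpha<\delta$ such that $q'\Vdash (r\circ t)\restriction\alpha \in \dot{A}$. The key point is coherence together with finite-to-one-ness. Since $t$ is finite-to-one, $t^{-1}[\dom q]$ is finite, say contained in some $\beta_0<\delta$. Choose $s\in T_{\beta}\cap N$ for some $\beta \in (\beta_0,\delta)$; coherence makes $s =^* t\restriction\beta$. Working inside $N$, consider the set $D$ of pairs $(q'', u)$ with $u \in T$ extending $s$ (up to a finite modification matching $t$) and $q''\Vdash r\circ u\in\dot A$ or $q''\Vdash r\circ u$ is above an element of $\dot{A}$. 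By maximality of $\dot{A}$, any extension can be strengthened to such a pair. By elementarity, the relevant witnesses can be found in $N$, so below height $\delta$.

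The main obstacle is transferring the witness $u\in N$ to the actual branch $t\restriction\alpha$. The two functions differ on a finite set $F$ of coordinates. So we must choose $q''$ so that $r$ takes equal values on $u(\xi)$ and $t(\xi)$ for $\xi\in F$. The danger is that these constraints conflict with $q$ or with each other. Here I would use finite-to-one-ness and the genericity of $r$ over the pair. Because $q$ constrains only finitely many inputs, we can first pick $u$ in $N$ so that its finite disagreement set with $t$ avoids preimages of $\dom(q)$ and of the finitely many values where $q''$ must commit. This uses that $t$ and $u$ are finite-to-one, so only finitely many coordinates matter. We then extend $q$ to make $r(u(\xi)) = r(t(\xi))$ on the remaining finite set, after checking that these are fresh points of $\omega$.

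The subtle step is the order of quantifiers. The decision $q''\Vdash r\circ u\in\dot A$ is made in $N$ and may itself introduce new finite constraints. I would handle this with a single density argument in $N$ over the name $\dot{A}$, applied to the finite-modification class $\{u : u=^* t\restriction\alpha\}$. This class is not in $N$, but it is approximated by patterns $(s, \text{finite partial changes})$, which are in $N$. This shows that the set of $q'$ forcing some $(r\circ t)\restriction\alpha\in\dot A$ below $\delta$ is dense. Genericity then finishes the proof.
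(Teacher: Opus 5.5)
The paper does not prove this statement; it only cites Todorcevic. So I am judging your sketch on its own terms. Your reduction to sealing the level $\delta=N\cap\omega_1$ is fine, since every Cohen condition lies in $N$. The gap is the combinatorial core.

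\textbf{The gap.} For $t\in T_\delta$ and a witness $(q'',u)\in N$, you need an extension of $q''$ forcing $r(u(\xi))=r(t(\xi))$ on the finite set where $u$ and $t\restriction\height(u)$ disagree. You propose to get this by choosing $u$ so that this set avoids the preimages of $\dom(q'')$. But $u$ is not yours to choose: its relevant initial segment must represent an element of $\dot A$ decided by $q''$. Moreover, $\dom(q'')$ is only known after $u$ has been fixed. The promised ``single density argument over patterns'' is never supplied. In fact no argument that uses only coherence and finite-to-one-ness (the only properties of $T$ you invoke) can supply it.

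\textbf{A counterexample.} Fix a coherent sequence $\langle f_\beta\mid\beta<\omega_1\rangle$ of finite-to-one functions $f_\beta:\beta\to\omega$. Let $e_\beta$ agree with $f_\beta$, except that $e_\beta(\lambda+1)=f_\beta(\lambda)$ for every nonzero limit $\lambda$ with $\lambda+1<\beta$. The $e_\beta$ are still coherent and finite-to-one. Let $T$ be the set of all $x\in{}^{\alpha}\omega$, $\alpha<\omega_1$, with $x=^*e_\alpha$.
\begin{enumerate}
\item $T$ is a normal, infinitely splitting, coherent tree of finite-to-one functions, closed under finite modifications.
\item $T$ is Aronszajn: a cofinal branch would give a function on $\omega_1$ all of whose countable initial segments are finite-to-one, which is impossible.
\end{enumerate}
Fix $a\neq b$ in $\omega$. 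For each nonzero limit $\lambda$, let $t_\lambda\in T_{\lambda+2}$ agree with $e_{\lambda+2}$ except that $t_\lambda(\lambda)=a$ and $t_\lambda(\lambda+1)=b$. For limits $\lambda<\mu$ we have $t_\mu(\lambda)=t_\mu(\lambda+1)$. So if $r(a)\neq r(b)$, then $r\circ t_\mu$ takes equal values at $\lambda$ and $\lambda+1$, while $r\circ t_\lambda$ does not. Hence $\{r\circ t_\lambda\}$ is an uncountable antichain of $T^r$ for every nonconstant $r$, in particular every Cohen real.

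\textbf{Where your argument breaks on this tree.} Take $p$ forcing $r(a)\neq r(b)$, and let $\dot A$ name a maximal antichain containing these nodes. Let $t=e_{\delta+2}\restriction\delta$. Then $r\circ t\subseteq r\circ t_\delta\in\dot A$, so your sealing claim fails for $t$. Concretely, take the witnesses $u=t_\lambda$ with $\lambda<\delta$. Making $r\circ u$ an initial segment of $r\circ t$ would require
$$r(a)=r(t(\lambda))=r(t(\lambda+1))=r(b),$$
which contradicts $p$.

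So a correct proof must use additional structure of a specific tree. The same example shows that the hypotheses on $T$ listed in the appendix are not sufficient on their own. The cited result therefore has to be applied to a suitably specific tree.
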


Let $\P = \Add(\omega,1)$ be the forcing to add a single Cohen real. In this section, we think of 
the underlying set of $\P$ as ${^{<\omega}}\omega$, ordered by reverse inclusion. 
Let $\dot{G}$ be the canonical $\P$-name for the generic filter, and let 
$\dot{r}$ be a $\P$-name for $\bigcup \dot{G}$. By Theorem 
\ref{thm: cohen_suslin}, we have $\Vdash_{\P} ``T^{\dot{r}} \text{ is Suslin}"$.
In particular, considering $T^{\dot{r}}$ as a $\P$-name for a forcing notion 
(recall that the forcing order for a tree is the opposite of the tree order), 
we have $\Vdash_{\P} ``T^{\dot{r}} \text{ is ccc and totally proper}"$. In 
particular, the two-step iteration $\P \ast T^{\dot{r}}$ is ccc. 
Let $\Q$ denote the term forcing associated with this two-step iteration. 
In particular, conditions in $\Q$ are all $\P$-names forced by $1_{\P}$
to be elements of $T^{\dot{r}}$, and, given $\dot{s},\dot{t} \in \Q$, we set 
$\dot{t} \leq_{\Q} \dot{s}$ iff $1_{\P}$ forces that $\dot{t}$ extends 
$\dot{s}$ as a forcing condition, i.e., iff $1_{\P} \Vdash_{\P} `` 
\dot{s} \leq_{T^{\dot{r}}} \dot{t}"$.

\begin{proposition}
  Forcing with $\Q$ over $V$ collapses $\omega_1$.
\end{proposition}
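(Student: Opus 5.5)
The plan is to show directly that, if $H$ is $\Q$-generic over $V$, then in $V[H]$ there is a map from the countable set ${^{<\omega}}\omega$ with range cofinal in $\omega_1^V$. That gives $\cf(\omega_1^V)=\omega$ in $V[H]$, so $\omega_1$ is collapsed. The whole argument exploits the freedom that conditions of $\Q$ are arbitrary $\P$-names, hence may be defined by cases on the Cohen real $\dot r$.

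First I would describe conditions concretely. Since $T$ is normal and every element of $T^{\dot r}$ has the form $\dot r\circ t$ with $t\in T$, every $\dot t\in\Q$ is (up to equivalence) given by a maximal antichain $\mathcal A\subseteq{^{<\omega}}\omega$ and an assignment $s\mapsto t_s\in T$, with $s\Vdash \dot t=\dot r\circ t_s$. I would also check that $\dot t'\le_\Q\dot t$ holds iff for every $r$ the node $r\circ t'$ extends $r\circ t$. The essential point is that a name may be extended \emph{independently} on different cones $[s]$ of the Cohen forcing: given $\dot t$, a condition $s_0$ and heights $\gamma_n$, there is $\dot t'\le_\Q \dot t$ such that, on each cone $[s_0{}^\frown\langle n\rangle]$, the evaluation of $\dot t'$ has height at least $\gamma_n$, while $\dot t'=\dot t$ off $[s_0]$. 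This uses normality of $T$: every node has extensions at all higher levels, and these are transported by $r\circ(\cdot)$.

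Next, in $V[H]$ define $F:{^{<\omega}}\omega\to\omega_1^V+1$ by letting $F(s)$ be the supremum of all $\gamma$ such that $s\Vdash_\P \height(\dot t)\ge\gamma$ for some $\dot t\in H$. By the density argument above, for each $\alpha<\omega_1$ and each $\dot t\in\Q$ there is an extension forcing, on some cone $[s]$, height above $\alpha$. Hence the range of $F$ is cofinal in $\omega_1^V$.

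It remains to see that $F(s)<\omega_1^V$ for enough $s$, for instance densely many. This is where I expect the main obstacle, and where the specific choice of $T$ must enter. Without it the argument is vacuous: for $\Add(\omega_1,1)$-type iterands the term forcing is $\omega_1$-closed and collapses nothing. My first attempt would be a density argument against $F(s)=\omega_1$. Elements of $H$ are pairwise compatible in $\Q$, so any two of them are forced by $1_\P$ to have comparable evaluations. I would then use coherence and finite-to-one-ness of $T$ to arrange, by genericity, that on a fixed cone the evaluations of names in $H$ are pinned to the values of $r$ on a finite set. If that fails, the fallback is to code the collapse directly. Fix in $V$ enumerations $e_\alpha:\omega\to\alpha$. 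By a genericity argument, for each $\alpha$ choose names that on the cones $[s{}^\frown\langle n\rangle]$ take nodes $r\circ t$ with $t$ ranging over the countable level $T_{e_\alpha(n)}$. Because the elements of $T$ are $=^*$-coherent and finite-to-one, finitely many values of $r$ then determine which node was chosen, and one can read off from $H$ a surjection from $\omega$ onto $\alpha$ for cofinally many $\alpha$ in a uniform, countably indexed way.
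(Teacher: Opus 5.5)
There is a genuine gap: the step you flag as the main obstacle is actually false, so the strategy cannot be completed. Your own normality argument shows that, for each $\alpha<\omega_1$, the set $D_\alpha=\{\dot t\in\Q \mid \Vdash_\P \height(\dot t)\ge\alpha\}$ is dense in $\Q$: extend each $t_s$ above height $\alpha$ on every piece of the antichain. The generic $H$ meets every $D_\alpha$, so $F(s)=\omega_1^V$ for \emph{every} $s\in{}^{<\omega}\omega$. Heights of names in $H$ are unbounded on every cone, so no map built from such heights can witness $\cf(\omega_1^V)=\omega$. The fallback coding idea has the same defect. Whatever a particular name codes on the cones $[s^\frown\langle n\rangle]$ is overwritten by its extensions in $H$, and you never identify a quantity that is invariant under $\le_\Q$ and could be read off from $H$. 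As written, it is not an argument.

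The missing idea is to argue by contradiction and pull the generic object back to a cofinal branch through $T$ itself. This is what the paper does.
\begin{enumerate}
\item Suppose $\omega_1^V$ survives in $V[H]$. Then $T$, being a coherent tree of finite-to-one functions, remains Aronszajn there.
\item Pick $\dot t_\alpha\in H\cap D_\alpha$, and $p_\alpha\in\P$, $s_\alpha\in T_{\beta_\alpha}$ with $p_\alpha\Vdash\dot t_\alpha=\dot r\circ s_\alpha$.
\item Since $\P$ is countable, there are a single $p$ and an uncountable $A\subseteq\omega_1$ such that $p_\alpha=p$ for all $\alpha\in A$, and $\beta_\alpha<\alpha'$ whenever $\alpha<\alpha'$ are in $A$.
\item Compatibility of $\dot t_\alpha$ and $\dot t_{\alpha'}$ in $\Q$, which you did observe, gives $p\Vdash\dot r\circ s_\alpha\subseteq\dot r\circ s_{\alpha'}$.
\item Let $k=\dom(p)$. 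Cohen genericity above $k$, combined with finite-to-one-ness, shows that the finite sets $B_\alpha=\{\eta<\beta_\alpha\mid s_\alpha(\eta)<k\}$ are $\subseteq$-increasing, hence eventually constant.
\item After thinning $A$ so that $s_\alpha\restriction B$ is fixed, the same genericity argument shows $s_\alpha=s_{\alpha'}\restriction\beta_\alpha$. This gives a cofinal branch through $T$ in $V[H]$, a contradiction.
\end{enumerate}
Your remarks about $1_\P$-forced comparability and pinning to finitely many values of $r$ are the right ingredients. They must be used to produce this branch through $T$, not to bound $F$.
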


\begin{proof}
  Let $H$ be $\Q$-generic over $V$, and suppose for the sake of contradiction 
  that $(\omega_1)^V$ remains uncountable in $V[H]$. In particular, since 
  $T$ is a coherent Aronszajn tree consisting of finite-to-one functions, 
  $T$ remains Aronszajn in $V[H]$. In $V$,
  for each $\alpha < \omega_1$, let $D_\alpha$ be the set of $\dot{t} \in \Q$ 
  such that $\Vdash_{\P}``\height_{T^{\dot{r}}}(\dot{t}) \geq \alpha"$. By a 
  straightforward argument using the normality of $T$ and the maximum principle, 
  each $D_\alpha$ is a dense, open subset of $\Q$; therefore, in $V[H]$, for 
  each $\alpha < \omega_1$, we can fix a $\dot{t}_\alpha \in H \cap D_\alpha$. 
  For each $\alpha < \omega_1$, fix a $p_\alpha \in \P$, a $\beta_\alpha \in 
  [\alpha,\omega_1)$, and an $s_\alpha \in T_{\beta_\alpha}$ such that, in $V$, 
  we have $p_\alpha \Vdash_{\P} ``\dot{t}_\alpha = \dot{r} \circ s_\alpha"$. 
  Since $\omega_1$ is preserved in $V[H]$, we can fix in $V[H]$ a single 
  $p \in \P$ and a stationary $A \subseteq \omega_1$ such that 
  \begin{itemize}
    \item for all $\alpha \in A$, we have $p_\alpha = p$;
    \item for all $\alpha < \alpha'$, both in $A$, we have $\beta_\alpha < 
    \alpha'$.
  \end{itemize}
  In particular, it follows that, for all $\alpha < \alpha'$, both in $A$, 
  we have, in $V$, $p \Vdash_{\P} ``\dot{r} \circ s_\alpha \subseteq 
  \dot{r} \circ s_{\alpha'}"$. Let $k = \dom(p)$, and, for each $\alpha \in A$, 
  let $B_\alpha = \{\eta < \beta_\alpha \mid s_\alpha(\eta) < k\}$; note that 
  $B_\alpha \in [\beta_\alpha]^{<\omega}$.
  
  \begin{claim}
    For all $\alpha < \alpha'$, both in $A$, we have $B_\alpha \subseteq 
    B_{\alpha'}$.
  \end{claim}
  
  \begin{proof}
    Suppose for the sake of contradiction that $\alpha < \alpha'$ are both in 
    $A$ and $\eta \in B_\alpha \setminus B_{\alpha'}$. Let 
    $j' = s_{\alpha'}(\eta)$ and $j = s_{\alpha}(\eta)$, noting that 
    $j < k \leq j'$. Find $p' \leq_{\P} p$ such that $p'(j') \neq p(j)$. 
    Then $p' \Vdash_{\P} ``(\dot{r} \circ s_\alpha)(\eta) \neq 
    (\dot{r} \circ s_{\alpha'})(\eta)"$, contradicting the fact that 
    $p \Vdash_{\P} ``\dot{r} \circ s_\alpha \subseteq 
    \dot{r} \circ s_{\alpha'}"$.
  \end{proof}
  Since $B_\alpha$ is finite for each $\alpha \in A$ and $\langle B_\alpha \mid 
  \alpha \in A \rangle$ is $\subseteq$-increasing, by removing an 
  initial segment of $A$ if necessary, we can assume that there is 
  $B \in [\omega_1]^{<\omega}$ such that $B_\alpha = B$ for all 
  $\alpha \in A$. Now find an uncountable $A^* \subseteq A$ and a fixed 
  function $f:B \rightarrow \omega$ such that $s_\alpha \restriction B = 
  f$ for all $\alpha \in A^*$.
  
  \begin{claim}
    For all $\alpha < \alpha'$, both in $A^*$, we have $s_\alpha = s_{\alpha'} 
    \restriction \beta_\alpha$.
  \end{claim}
  
  \begin{proof}
    Fix $\alpha < \alpha'$ in $A^*$ and $\eta < \beta_\alpha$. If 
    $\eta \in B$, then $s_\alpha(\eta) = f(\eta) = s_{\alpha'}(\eta)$. 
    If $\eta \notin B$, then we have $s_\alpha(\eta), s_{\alpha'}(\eta) 
    \geq k$. Suppose for the sake of contradiction that 
    $s_\alpha(\eta) = j \neq j' = s_{\alpha'}(\eta)$. Find $p' \leq_{\P} 
    p$ such that $p'(j) \neq p'(j')$. Then $p' \Vdash_{\P} ``
    (\dot{r} \circ s_\alpha)(\eta) \neq (\dot{r} \circ s_{\alpha'})(\eta)"$, 
    contradicting the fact that 
    $p \Vdash_{\P} ``\dot{r} \circ s_\alpha \subseteq 
    \dot{r} \circ s_{\alpha'}"$.
  \end{proof}
  
  Now the claim implies that the downward closure of $\{s_\alpha \mid \alpha \in 
  A^*\}$ is a cofinal branch through $T$ in $V[H]$, contradicting the fact that 
  $T$ remains an Aronszajn tree in $V[H]$.
\end{proof}
\bibliographystyle{plain}
\bibliography{bib}

\end{document}